\theoremstyle{plain}
\newtheorem{thm}{Theorem}[section]
\newtheorem{theorem}[thm]{Theorem}
\newtheorem{lemma}[thm]{Lemma}
\newtheorem{cor}[thm]{Corollary}
\newtheorem{corollary}[thm]{Corollary}
\newtheorem{prop}[thm]{Proposition}
\newtheorem{problem}[thm]{Problem}
\newtheorem{claim}{}[thm] 
\newenvironment{subproof}{\begin{proof}[Subproof.]}{\end{proof}}
\DeclareMathOperator{\cl}{cl}
\DeclareMathOperator{\fcl}{fcl}
\DeclareMathOperator{\si}{si}
\DeclareMathOperator{\co}{co}
\DeclareMathOperator{\gf}{GF}
\newcommand{\del}{\hspace{-0.5pt}\backslash}
\newcommand{\delete}{\del}
\newcommand{\contract}{/}
\newcommand{\nfd}{\ensuremath{(F_{7}^{-})^{*}}}
\newcommand{\agde}{\ensuremath{\mathrm{AG}(2, 3)\hspace{-0.5pt}\backslash e}}
\newcommand{\dy}{\ensuremath{\Delta}-\ensuremath{\nabla}}
\newcommand{\hydra}{\mathbb{H}}
\begin{document}
 \title[The $\{U_{2,5}, U_{3,5}\}$-fragile matroids]{The structure of $\{U_{2,5}, U_{3,5}\}$-fragile matroids}

\author{Ben Clark}
\address{Department of Mathematics, Louisiana State University, Baton Rouge, Louisiana, USA}
\email{bclark@lsu.edu}
\author{Dillon Mayhew}
\address{School of Mathematics and Statistics, Victoria University of Wellington, New Zealand}
\email{dillon.mayhew@vuw.ac.nz}
\author{Stefan van Zwam}
\address{Department of Mathematics, Louisiana State University, Baton Rouge, Louisiana, USA}
\email{svanzwam@math.lsu.edu}
\author{Geoff Whittle}
\address{School of Mathematics and Statistics, Victoria University of Wellington, New Zealand}
\email{geoff.whittle@vuw.ac.nz}
\date{\today}

\keywords{matroid theory, partial field, excluded minors, fragile matroid}
\subjclass{05B35}

\begin{abstract}
  Let $\mathcal{N}$ be a set of matroids. A matroid $M$ is \emph{strictly $\mathcal{N}$-fragile} if $M$ has a member of $\mathcal{N}$ as minor and, for all $e \in E(M)$, at least one of $M\delete e$ and $M\contract e$ has no minor in $\mathcal{N}$. In this paper we give a structural description of the strictly $\{U_{2,5},U_{3,5}\}$-fragile matroids that have six inequivalent representations over $\mathrm{GF}(5)$. Roughly speaking, these matroids fall into two classes. The matroids without an $\{X_8, Y_8, Y_8^{*}\}$-minor are constructed, up to duality, from one of two matroids by gluing wheels onto specified triangles. On the other hand, those matroids with an $\{X_8, Y_8, Y_8^{*}\}$-minor can be constructed from a matroid in $\{X_8, Y_8, Y_8^{*}\}$ by repeated application of elementary operations, and are shown to have path width 3.  
  
  The characterization presented here will be crucial in finding the explicit list of excluded minors for two classes of matroids: the \emph{Hydra-5-representable matroids} and the \emph{2-regular matroids}.
\end{abstract}

\maketitle

\section{Introduction}

Certain minor-closed classes of matroids can be characterized by the property of having a representation by a matrix over a certain \emph{partial field} (see Section \ref{preliminaries} for a definition). In this paper we consider two partial fields, the \emph{Hydra-5 partial field} $\hydra_5$ introduced by Pendavingh and Van Zwam \cite{pendavingh2010confinement} and the \emph{2-regular partial field} $\mathbb{U}_2$ introduced by Semple \cite{Semple} (called the \emph{2-uniform partial field} by Pendavingh and Van Zwam). This paper is a major step toward solving the following problem:
\begin{problem}
  Find the full sets of excluded minors for the classes of $\hydra_5$-representable and $\mathbb{U}_2$-representable matroids.
\end{problem}
These classes are closely related (the latter is a subset of the former), and it seems that any differences in their analysis can be confined to finite case checks. The excluded minors for the class of $\hydra_5$-representable matroids are the first step in a 5-step process to find the excluded minors for the class of $\gf(5)$-representable matroids, as described in \cite{mayhew2010stability}. The excluded minors for the class of $\mathbb{U}_2$-representable matroids are a first step in a process to find the excluded minors for the Golden Mean matroids (see \cite{Whi05}), and should shed light on the following problem:
\begin{problem}
  Find the partial field $\mathbb{P}$ such that the set of $\mathbb{P}$-representable matroids is exactly the set of matroids representable over $\gf(4)$ and all larger fields.
\end{problem}
Van Zwam \cite[Conjecture 4.4.2]{van2009partial} conjectured a possible answer.

It is well-established that an important step in the search for the excluded minors mentioned above, is to find the matroids that are \emph{fragile} with respect to an appropriately chosen set of matroids in the class. In particular, in forthcoming work, Clark, Oxley, Semple, and Whittle will prove the following.
\begin{theorem}\label{thm:fragileexcludedconnection}
  Let $M$ be a sufficiently large excluded minor for the class of matroids representable over $\mathbb{H}_5$ or $\mathbb{U}_2$. Then $M$ has a $\{U_{2,5}, U_{3,5}\}$-minor, and if $M$ has a pair of elements $a,b$ such that $M\del a,b$ is $3$-connected with a $\{U_{2,5}, U_{3,5}\}$-minor, then $M\del a,b$ is a $\{U_{2,5}, U_{3,5}\}$-fragile matroid. 
\end{theorem}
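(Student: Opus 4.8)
The plan is to argue by contradiction from the minimality of an excluded minor. The two ingredients I would use are the \emph{stability} of the classes of $\hydra_5$- and $\mathbb{U}_2$-representable matroids with respect to $\{U_{2,5},U_{3,5}\}$, and the fact that $U_{2,5}$ and $U_{3,5}$ are \emph{strong stabilizers} for these classes, together with the Splitter Theorem and Bixby's Lemma for the attendant connectivity bookkeeping. Write $\mathbb{P}$ for whichever of $\hydra_5$ and $\mathbb{U}_2$ is under consideration and let $M$ be an excluded minor for $\mathbb{P}$-representability with $|E(M)|$ large, so that every proper minor of $M$ is $\mathbb{P}$-representable; since $\mathbb{P}$-representability is preserved under $1$- and $2$-sums, the standard argument shows $M$ is $3$-connected. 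I would treat $\mathbb{P}=\hydra_5$ as the template and, as the introduction remarks, dispose of the $\mathbb{U}_2$ case by finite case checks, with $\gf(4)$ playing the role of $\gf(5)$.

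To show that $M$ has a $\{U_{2,5},U_{3,5}\}$-minor, I would suppose not and invoke the stability results of \cite{pendavingh2010confinement,mayhew2010stability}: the excluded minors for $\mathbb{P}$-representability that contain no $\{U_{2,5},U_{3,5}\}$-minor form a finite family of small matroids (it includes $F_7$ and $F_7^{*}$). Since $M$ is large it is not in this family; but $M$ is, by supposition, an excluded minor for $\mathbb{P}$-representability with no $\{U_{2,5},U_{3,5}\}$-minor, so it is in the family, a contradiction.

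For the second assertion I would take a pair $a,b$ for which $M' := M\del a,b$ is $3$-connected and has a $\{U_{2,5},U_{3,5}\}$-minor, and suppose for a contradiction that $M'$ is \emph{not} $\{U_{2,5},U_{3,5}\}$-fragile; the goal is then to produce a $\mathbb{P}$-representation of $M$. Non-fragility hands me an element $e\in E(M')$ for which both $M'\del e$ and $M'\contract e$ have $\{U_{2,5},U_{3,5}\}$-minors; hence both $M\del e$ and $M\contract e$ are \emph{proper} minors of $M$ --- so $\mathbb{P}$-representable --- and each still has a $\{U_{2,5},U_{3,5}\}$-minor (on the contraction side one uses that deletions and contractions of disjoint sets commute). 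Passing to simplifications and cosimplifications as needed (this is where Bixby's Lemma and the Splitter Theorem enter, and where I would use that $M'$ is $3$-connected with a $\{U_{2,5},U_{3,5}\}$-minor and that $|E(M)|$ is large, so as to locate a $\{U_{2,5},U_{3,5}\}$-minor $N_0$ that is common to the two sides), I would fix a $\mathbb{P}$-representation of $N_0$ --- one of six when $\mathbb{P}=\hydra_5$ --- and use that $U_{2,5}$ and $U_{3,5}$ are strong stabilizers to extend it \emph{uniquely} to $\mathbb{P}$-representations of the $3$-connected reductions of $M\del e$ and $M\contract e$. The single-element bridging lemma for partial-field representations then furnishes a $\mathbb{P}$-representation of $M$ \emph{provided} these two representations are compatible, and making them compatible is precisely where the non-fragility of $M'$ must be brought to bear. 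This contradicts that $M$ is an excluded minor, so $M'$ is $\{U_{2,5},U_{3,5}\}$-fragile.

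The hard part will be this last step. Excluded minors are exactly the matroids where such gluing arguments fail, so the proof must isolate the non-fragility hypothesis inside the bridging argument and apply it with some precision, while controlling up to six inequivalent representations at every stage. The connectivity bookkeeping is a second genuine difficulty: $M\del e$, $M\contract e$, and the single-element moves near $a$ and $b$ need not be $3$-connected; $U_{2,5}$ is not cosimple and $U_{3,5}$ is not simple; and producing a $\{U_{2,5},U_{3,5}\}$-minor that survives on both sides, through simplification, cosimplification, fans, and small separations, requires care. The supporting facts --- that $U_{2,5}$ and $U_{3,5}$ are strong stabilizers for $\hydra_5$ and $\mathbb{U}_2$, the stability statements, and the bridging lemma --- are themselves substantial, but should be available from \cite{pendavingh2010confinement,mayhew2010stability} and the partial-field representation machinery. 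Finally, it remains to quantify ``sufficiently large'' so that it exceeds every finite exceptional list arising above.
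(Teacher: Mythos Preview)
This theorem is not proved in the paper. It appears in the introduction, prefaced by ``in forthcoming work, Clark, Oxley, Semple, and Whittle will prove the following,'' and serves purely as motivation for why the structure of $\{U_{2,5},U_{3,5}\}$-fragile matroids matters for the excluded-minor project. There is thus no proof in the paper to compare your proposal against.

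Regarding the proposal on its own terms: you have identified the right framework --- strong stabilizers plus a bridging/gluing argument --- and you are correct that $U_{2,5}$ and $U_{3,5}$ are strong stabilizers for both $\hydra_5$ and $\mathbb{U}_2$. But what you have written is a plan, not a proof, and you have yourself put your finger on the gap. The sentence ``making them compatible is precisely where the non-fragility of $M'$ must be brought to bear'' is where all the content lives, and nothing in the proposal says \emph{how} a flexible element of $M\del a,b$ forces the $\mathbb{P}$-representations of $M\del e$ and $M/e$ (extended uniquely from a common $N_0$) to agree. In the arguments that actually establish results of this shape one does not simply invoke a single-element bridging lemma; one builds an auxiliary $\mathbb{P}$-representable ``companion'' matroid $M'$ that agrees with $M$ on $E(M)-\{a,b\}$, and then uses the flexible element, together with a delicate analysis of how $a$ and $b$ sit relative to the stabilizing minor, to show that $M = M'$. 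That analysis is lengthy and is the substance of the forthcoming paper cited here. Your connectivity worries are also genuine: the hypothesis is about $M\del a,b$, not about $M\del e$ or $M/e$, and carrying $3$-connectedness and the location of a common $\{U_{2,5},U_{3,5}\}$-minor through those single-element moves --- with $U_{2,5}$ not cosimple and $U_{3,5}$ not simple --- is part of the real work, not merely bookkeeping.
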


In this paper we give a structural characterization of these $\{U_{2,5},U_{3,5}\}$-fragile matroids. We need some definitions before stating our main result.

Let $\mathcal{N}$ be a set of matroids. We say that a matroid $M$ has an \textit{$\mathcal{N}$-minor} if some minor $N$ of $M$ is isomorphic to a member of $\mathcal{N}$. Let $x \in E(M)$. If $M\del x$ has an $\mathcal{N}$-minor, then $x$ is $\mathcal{N}$-\textit{deletable}. If $M/x$ has an $\mathcal{N}$-minor, then $x$ is $\mathcal{N}$-\textit{contractible}. If $x$ is both $\mathcal{N}$-deletable and $\mathcal{N}$-contractible, then we say that $x$ is \textit{$\mathcal{N}$-flexible}, and if $x$ is neither, it is \emph{$\mathcal{N}$-essential}. We say that the matroid $M$ is \textit{$\mathcal{N}$-fragile} if no element of $M$ is $\mathcal{N}$-flexible. If, in addition, $M$ has an $\mathcal{N}$-minor, then we say that $M$ is \textit{strictly $\mathcal{N}$-fragile}. In this paper, whenever we omit the ``$\mathcal{N}$-'' prefix from these terms, assume that $\mathcal{N}=\{U_{2,5}, U_{3,5}\}$.  

A matroid has \emph{path width} at most $k$ if there is an ordering $(e_1, e_2, \ldots, e_n)$ of its ground set such that $\{e_1, \ldots, e_t\}$ is $k$-separating for all $t$. \emph{Gluing a wheel to $M$} is the process of taking the generalized parallel connection of $M$ with $M(\mathcal{W}_n)$ along a triangle $T$, and deleting any subset of $T$ containing the rim element. In this paper we prove the following:
\begin{thm}
  Let $\mathbb{P} \in \{\mathbb{H}_5, \mathbb{U}_2\}$. Let $M$ be a strictly fragile, $\mathbb{P}$-representable matroid with $|E(M)| \geq 10$. Then either
  \begin{enumerate}
    \item $M$ or $M^*$ can be obtained by gluing up to three wheels to $U_{2,5}$; or
    \item $M$ has path width at most 3.
  \end{enumerate}
\end{thm}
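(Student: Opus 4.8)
The plan is to reduce to the $3$-connected $\mathbb{H}_5$-representable case, run a splitter-type induction that builds $M$ from a small strictly fragile matroid one element at a time, and then split the analysis according to whether $M$ has a minor in $\{X_8, Y_8, Y_8^*\}$. Since every $\mathbb{U}_2$-representable matroid is $\mathbb{H}_5$-representable, I would first assume $M$ is $\mathbb{H}_5$-representable; what we exploit is that in the relevant range its members carry six inequivalent $\mathrm{GF}(5)$-representations, and this is precisely what constrains the possible one-element extensions. Next I would dispose of low connectivity: if $M$ has a $1$- or $2$-separation, then fragility forces the ``far'' side of the separation to be a very small, fan-like piece, since a $2$-sum with any matroid carrying its own $\{U_{2,5},U_{3,5}\}$-minor would make an element on the far side flexible. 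So such an $M$ is obtained from a $3$-connected strictly fragile matroid by attaching fans, and one checks directly that it then either fits the wheel-gluing description (conclusion (1)) or has path width at most $3$ (conclusion (2)). Hence I may assume $M$ is $3$-connected.

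\emph{The chain.} By Seymour's Splitter Theorem, together with the refinements that keep each intermediate matroid $3$-connected and inside the $\mathbb{H}_5$-representable class, there is a sequence $M_0, M_1, \dots, M_k = M$ of $3$-connected $\mathbb{H}_5$-representable matroids with $M_0$ a small strictly fragile matroid and $M_{i+1}$ a single-element extension or coextension of $M_i$. Since each $M_i$ is a minor of $M$ and $M$ is fragile, every $M_i$ is strictly fragile, and the element added at step $i$ is non-flexible in $M$ --- hence at that stage it is deletable-but-not-contractible or contractible-but-not-deletable. The admissible starting matroids $M_0$ are the $3$-connected strictly fragile matroids of bounded size; these can be enumerated, the ones that matter being $U_{2,5}$, one further small matroid up to duality, and $X_8$, $Y_8$, $Y_8^*$.

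\emph{Classifying the local moves.} The heart of the argument is a case analysis of the strictly fragile single-element extensions and coextensions of the matroids that occur, performed using the $\mathbb{H}_5$-representations: an extension amounts to adding a point to the projective configuration of a representation of $M_i$, with the inequivalent representations limiting the options, and fragility then eliminates all but a handful of them (blocking sequences are used to detect when a candidate extension fails to be $3$-connected or introduces a flexible element). I would prove: (a) if $M_i$ has no $\{X_8,Y_8,Y_8^*\}$-minor, then every strictly fragile move lengthens a fan, i.e.\ it enlarges one of at most three wheels glued along triangles of $U_{2,5}$ or its dual, and any other extension or coextension is shown, by tracking the triangles and triads of $M_i$, to create a flexible element; and (b) once a move produces an $\{X_8,Y_8,Y_8^*\}$-minor, $M_i$ already has path width $3$, and every subsequent strictly fragile extension or coextension is, up to duality and the $\Delta$-$\nabla$ exchanges, an addition at one of the two ends of a path-width-$3$ ordering, so path width $3$ persists. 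Assembling (a), (b), and the base-case list gives the dichotomy in the theorem.

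\emph{Main obstacle.} The work lies entirely in the last step. One must carry the equivalence classes of $\mathbb{H}_5$-representations through each extension and coextension, handle every configuration of triangles and triads --- including degenerate situations where a new point lies on several lines --- and prove that the fan/path structure is genuinely forced rather than merely typical; in particular one must rule out any sporadic $3$-connected strictly fragile matroid with $|E(M)| \geq 10$ lying outside both families. Clearing away the finitely many small exceptional configurations is also where the hypothesis $|E(M)| \geq 10$ is used.
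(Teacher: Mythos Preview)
Your high-level architecture matches the paper: reduce to $3$-connected, split on whether $M$ has an $\{X_8,Y_8,Y_8^*\}$-minor, and handle the two cases separately. For case (a) (no such minor) the paper does not carry out the representation-by-representation analysis you describe; it imports the result from prior work (the fan-extension theorem of \cite{chun2013fan} together with a finite computer enumeration in \cite{chunfragile}), so your plan for (a) is plausible but would be re-proving something the paper cites.

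The substantive divergence, and the real gap, is in case (b). You propose to maintain ``path width $\leq 3$'' as the inductive invariant along a splitter chain and argue that every strictly fragile extension or coextension adds an element at an end of a path-width-$3$ ordering. That invariant is too weak to propagate: path width $3$ alone does not pin down \emph{which} $3$-separations the new element must respect, nor does it give you the leverage to rule out extensions that destroy every path ordering simultaneously. The paper confronts exactly this issue by replacing ``path width $3$'' with a much more rigid invariant: membership in the class $\mathcal{P}$ of matroids describable by an explicit \emph{path sequence} built from $X_8$ via generalized $\Delta$-$Y$ exchanges on two fixed $4$-element sets $S,C$ and wheel-gluings. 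This records not just the existence of a path of $3$-separations but a specific constructive witness, and an entire section of rearrangement lemmas (commuting $\Delta$-$Y$ steps, moving between $S$ and $C$, absorbing fan ends) is needed to make this invariant usable.

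Even with that stronger invariant, the paper does \emph{not} succeed in showing directly that every fragile extension stays in $\mathcal{P}$. Instead it runs a minimal-counterexample argument: take $M\notin\mathcal{P}$ minimal, pick $x$ with $M\backslash x\in\mathcal{P}$, and classify how $x$ sits relative to the displayed $3$-separations into three cases (guts, blocking, spanning-and-cospanning). The guts case yields a direct contradiction via $\Delta$-$Y$ reductions; the other two cases only bound $|E(M)|\leq 12$, and the proof is finished by a computer check of all candidates up to that size. Your proposal's final paragraph essentially acknowledges that the hard content lives here, but the method you sketch---tracking equivalence classes of $\mathbb{H}_5$-representations through each step and exhausting triangle/triad configurations---is not how the paper proceeds and would need its own substantial development to become a proof.
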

In fact, our main result (Theorem \ref{thm:mainresult}) describes the structure of the matroids in this class in much more detail, using the concept of \emph{generalized $\Delta-Y$ exchange}.

The paper is organized as follows. In Section \ref{preliminaries}, we collect basic results on the generalized $\Delta-Y$ exchange and partial fields. In Section \ref{sec:noX} we describe the matroids in our classes on at most 9 elements. We identify three special matroids $X_8, Y_8, Y_8^*$, and describe the exact structure of all matroids in our classes that have no $\{X_8, Y_8, Y_8^*\}$-minor. These results rely on computer calculations carried out with SageMath \cite{sage,sage-matroid} and a result from \cite{chun2013fan}. In Section \ref{sec:X} we describe the exact structure of the matroids in our class that do have a $\{X_8, Y_8, Y_8^*\}$-minor, and state our main result. The remainder of the paper is devoted to the proof of the main result.

Any undefined matroid terminology will follow Oxley \cite{oxley2011matroid}. In addition, we say $x$ is in the \emph{guts} of a $k$-separation $(A,B)$ if $x \in \cl(A-x) \cap \cl(B-x)$, and we say $x$ is in the \emph{coguts} of $(A,B)$ if $x$ is in the guts of $(A,B)$ in $M^*$. We also use the shorthand $x\in \cl^{(*)}(X)$ if either $x\in \cl(X)$ or $x\in \cl^{*}(X)$. 

\section{Preliminaries}
\label{preliminaries}

\subsection{Partial fields}
We start with a brief introduction to partial fields. We refer to \cite{pendavingh2010confinement} for a more thorough treatment and proofs.

Let $R$ be a ring and $G$ a subgroup of the units of $R$ with $-1 \in G$. We say $\mathbb{P} = (R,G)$ is a \emph{partial field}. A matrix over $\mathbb{P}$ is a \emph{$\mathbb{P}$-matrix} if $\det(D) \in G \cup \{0\}$ for all square submatrices $D$ of $A$. If $\phi:R\to S$ is a ring homomorphism, then $\phi(A)$ is an $(S, \phi(G))$-matrix. 

A rank-$r$ matroid with ground set $E$ is \emph{$\mathbb{P}$-representable} if there is an $r\times E$ $\mathbb{P}$-matrix such that for each $r\times r$ submatrix $D$, $\det(D) \neq 0$ if and only if the corresponding subset of $E$ is a basis of $M$. We write $M = M[A]$. Note that $M[A] = M[\phi(A)]$.

Denote the group generated by a set $X$ by $\langle X \rangle$. The partial fields relevant to this paper are
\begin{itemize}
  \item The \emph{near-regular partial field}
  \begin{align*}
    \mathbb{U}_1 = \left(\mathbb{Z}[\alpha,\frac{1}{\alpha},\frac{1}{1-\alpha}], \{\pm \alpha^i (1-\alpha)^j : i,j \in \mathbb{Z}\}\right);
  \end{align*}
  \item The \emph{2-regular partial field} 
  \begin{align*}
  \mathbb{U}_2 = \left(\mathbb{Q}(\alpha,\beta), \langle -1, \alpha, \beta, 1-\alpha, 1-\beta,\alpha-\beta\rangle\right);  
  \end{align*}
  \item The \emph{Hydra-5 partial field}
  \begin{align*}
    \mathbb{H}_5 = \big(\mathbb{Q}(\alpha, \beta, \gamma), \langle -1, & \alpha, \beta, \gamma, \alpha-1,\beta-1,\gamma-1,\alpha-\gamma,\\ &\gamma-\alpha\beta,(1-\gamma)-(1-\alpha)\beta\rangle\big).
  \end{align*}
\end{itemize}
Note that instead of the field of fractions $\mathbb{Q}(\alpha,\beta,\gamma)$ we could have taken the smallest subring in which all group elements are invertible, as was done for $\mathbb{U}_1$. We have

\begin{lemma}\label{lem:H5sixhoms}
  \cite[Lemma 5.17]{pendavingh2010confinement}
  Let $M$ be a 3-connected matroid with a $\{U_{2,5},U_{3,5}\}$-minor. Then $M$ is $\mathbb{H}_5$-representable if and only if $M$ has six inequivalent representations over $\gf(5)$.
\end{lemma}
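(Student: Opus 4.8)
The plan is to establish both directions by relating $\mathrm{GF}(5)$-representations to partial-field homomorphisms out of $\mathbb{H}_5$, since the ``$\mathbb{H}_5$-representable'' side of the equivalence is designed precisely so that its universal partial field detects multiplicity of $\mathrm{GF}(5)$-representations. First I would recall the general machinery (from \cite{pendavingh2010confinement}): a $3$-connected matroid $M$ with a $U_{2,5}$- or $U_{3,5}$-minor has a \emph{universal partial field} $\mathbb{P}_M$, and every $\mathrm{GF}(5)$-representation of $M$ corresponds to a partial-field homomorphism $\mathbb{P}_M \to \mathrm{GF}(5)$; two representations are equivalent (up to row/column scaling and field automorphisms of $\mathrm{GF}(5)$, of which there are none nontrivial) exactly when the homomorphisms agree. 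Concretely one fixes a $3$-connected stabilizer configuration --- here $U_{2,5}$ together with a normalized basis --- and the inequivalent $\mathrm{GF}(5)$-representations of $M$ are counted by the partial-field homomorphisms from $\mathbb{P}_M$ to $\mathrm{GF}(5)$ that extend the fixed representation of $U_{2,5}$.

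For the forward direction, suppose $M$ is $\mathbb{H}_5$-representable. Then there is a partial-field homomorphism $\psi\colon \mathbb{H}_5 \to \mathbb{P}_M$ (equivalently $M$ has an $\mathbb{H}_5$-matrix), so every homomorphism $\mathbb{P}_M \to \mathrm{GF}(5)$ composes with $\psi$ to give a homomorphism $\mathbb{H}_5 \to \mathrm{GF}(5)$, and conversely. It therefore suffices to count the homomorphisms $\mathbb{H}_5 \to \mathrm{GF}(5)$ that are compatible with a fixed $\{U_{2,5},U_{3,5}\}$-minor, i.e.\ that send the three ``generic'' parameters $\alpha,\beta,\gamma$ to values realizing $U_{2,5}$ (or $U_{3,5}$) and keeping all the listed fundamental elements ($\alpha,\beta,\gamma,\alpha-1,\ldots,(1-\gamma)-(1-\alpha)\beta$) nonzero. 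This is a finite computation over $\mathrm{GF}(5)$: one solves for the triples $(\alpha,\beta,\gamma)\in\mathrm{GF}(5)^3$ avoiding the excluded hyperplanes, and checks that, up to the symmetries of $U_{2,5}$, there are exactly six such assignments --- this is exactly the content of the cross-ratio analysis of $U_{2,5}$ over $\mathrm{GF}(5)$, where the five points of the line have $\binom{5}{2}=10$ ordered cross-ratios collapsing appropriately to six. Hence $M$ has exactly six inequivalent $\mathrm{GF}(5)$-representations.

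For the converse, suppose $M$ is $3$-connected with a $\{U_{2,5},U_{3,5}\}$-minor and has six inequivalent $\mathrm{GF}(5)$-representations. Each such representation is a homomorphism $\mathbb{P}_M \to \mathrm{GF}(5)$; I would show that having all six forces $\mathbb{P}_M$ to admit a homomorphism to $\mathbb{H}_5$, because $\mathbb{H}_5$ is (by its construction in \cite{pendavingh2010confinement}) the universal partial field whose $\mathrm{GF}(5)$-homomorphisms realize all six cross-ratio classes on a $U_{2,5}$-minor simultaneously. More precisely, a matroid with a $U_{2,5}$-minor is $\mathbb{H}_5$-representable iff its universal partial field maps onto the appropriate ``six-point'' configuration, and the obstruction to this is exactly the loss of one of the six representations; so six inequivalent representations means no obstruction. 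This reduces to a stabilizer-type argument: $U_{2,5}$ (equivalently $U_{3,5}$) is a strong $\mathbb{H}_5$-stabilizer for the class, so the representation count is determined on a small minor and then propagated. The main obstacle I anticipate is the converse direction --- making rigorous the claim that ``six $\mathrm{GF}(5)$-representations $\Rightarrow$ $\mathbb{H}_5$-representability'' rather than merely ``$\mathbb{H}_5$-representability $\Rightarrow$ six representations.'' This requires the full structure of $\mathbb{H}_5$ as the universal partial field associated with six-fold representability over $\mathrm{GF}(5)$, including the verification that no smaller or incomparable partial field could account for all six classes; fortunately this is precisely \cite[Lemma 5.17]{pendavingh2010confinement}, so in the present paper I would simply cite it and use the statement as a black box, as the excerpt does.
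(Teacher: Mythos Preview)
The paper does not prove this lemma at all; it is stated with a citation to \cite[Lemma~5.17]{pendavingh2010confinement} and used as a black box. Your proposal ultimately lands in the same place---cite the result---so in that sense it matches the paper exactly.

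That said, the sketch you give along the way has a genuine slip that would derail an actual proof. If $M$ is $\mathbb{H}_5$-representable, the universal-partial-field machinery gives a homomorphism $\mathbb{P}_M \to \mathbb{H}_5$, not $\mathbb{H}_5 \to \mathbb{P}_M$ as you wrote. With the arrow reversed, your composition argument breaks: one should compose $\mathbb{P}_M \to \mathbb{H}_5$ with the six homomorphisms $\mathbb{H}_5 \to \mathrm{GF}(5)$ to obtain six homomorphisms $\mathbb{P}_M \to \mathrm{GF}(5)$, and then argue that these are pairwise inequivalent (which requires that the six $\mathrm{GF}(5)$-images of the $U_{2,5}$-minor are already inequivalent). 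Your cross-ratio remark about ``$\binom{5}{2}=10$ collapsing to six'' is also not quite the right bookkeeping; the six comes from the $5!/20 = 6$ projective equivalence classes of labellings of five points on a line, or equivalently from directly enumerating triples $(\alpha,\beta,\gamma)$ in $\mathrm{GF}(5)$ avoiding the forbidden loci. None of this matters for the present paper, since the lemma is imported wholesale, but if you intend the sketch to stand on its own you should fix the arrow direction and tighten the count.
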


By combining partial-field homomorphism results of \cite[Theorem 1.3(ii)]{pendavingh2010confinement} and \cite[Corollary 3.1.3]{Semple} we have the following. 
\begin{lemma}
\label{H5hom}
Let $M$ be $\mathbb{P}$-representable for some $\mathbb{P}\in \{\mathbb{U}_2, \mathbb{H}_5\}$. If $\mathbb{F}$ is a field with at least $5$ elements, then $M$ is $\mathbb{F}$-representable.
\end{lemma}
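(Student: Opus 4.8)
The plan is to unpack the cited homomorphism results into a direct construction of $\mathbb{F}$-representations. Recall the basic principle behind partial-field homomorphisms: if $\varphi\colon R\to\mathbb{F}$ is a ring homomorphism into a field $\mathbb{F}$, and $M=M[A]$ for a $\mathbb{P}$-matrix $A$ over $\mathbb{P}=(R,G)$ with $\varphi(G)\subseteq\mathbb{F}\setminus\{0\}$, then $M$ is $\mathbb{F}$-representable: every subdeterminant of $A$ lies in $G\cup\{0\}$, so a nonzero subdeterminant is carried by $\varphi$ into $\varphi(G)\subseteq\mathbb{F}\setminus\{0\}$ and stays nonzero, while zero subdeterminants stay zero; hence $\varphi(A)$ is an $\mathbb{F}$-matrix with $M[\varphi(A)]=M[A]=M$. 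So it suffices to produce, for every field $\mathbb{F}$ with $|\mathbb{F}|\ge 5$, a ring homomorphism from the ring of $\mathbb{P}$ into $\mathbb{F}$ that maps the distinguished group into $\mathbb{F}\setminus\{0\}$.

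First I would reduce to the case $\mathbb{P}=\mathbb{H}_5$. The substitution $\alpha\mapsto\alpha$, $\beta\mapsto\gamma$ defines a ring homomorphism $\mathbb{Q}(\alpha,\beta)\to\mathbb{Q}(\alpha,\beta,\gamma)$ under which the generators $-1,\alpha,\beta,1-\alpha,1-\beta,\alpha-\beta$ of the group of $\mathbb{U}_2$ map to $-1,\alpha,\gamma,-(\alpha-1),-(\gamma-1),\alpha-\gamma$ respectively, each of which lies in the group of $\mathbb{H}_5$; this is a homomorphism of partial fields, so composing with representations shows that every $\mathbb{U}_2$-representable matroid is $\mathbb{H}_5$-representable. (Alternatively, one can run the counting argument below directly for $\mathbb{U}_2$, where only $a,b,1-a,1-b,a-b$ need be made nonzero; that already succeeds when $|\mathbb{F}|\ge 4$.)

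Now fix a field $\mathbb{F}$ with $|\mathbb{F}|\ge 5$. Taking the ring of $\mathbb{H}_5$ to be $\mathbb{Z}[\alpha,\beta,\gamma]$ localized at the ten generators listed in the definition — a ring generated over $\mathbb{Z}$ by the group $G$ — a ring homomorphism into $\mathbb{F}$ carrying $G$ into $\mathbb{F}\setminus\{0\}$ is precisely a choice of images $a,b,c\in\mathbb{F}$ of $\alpha,\beta,\gamma$ making the ten polynomials
\[ -1,\quad a,\quad b,\quad c,\quad a-1,\quad b-1,\quad c-1,\quad a-c,\quad c-ab,\quad (1-c)-(1-a)b \]
all nonzero. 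I would choose these greedily: take $a\in\mathbb{F}\setminus\{0,1\}$, then $c\in\mathbb{F}\setminus\{0,1,a\}$, and finally $b\in\mathbb{F}\setminus\{0,1,ca^{-1},(1-c)(1-a)^{-1}\}$, the last two fractions being defined because $a\ne 0,1$. The set forbidden for $b$ has at most four elements, so since $|\mathbb{F}|\ge 5$ a valid $b$ exists; and with these choices each of the ten polynomials is nonzero: $a-1,b-1,c-1\ne0$ as none of $a,b,c$ equals $1$; $a-c\ne0$ as $c\ne a$; $c-ab\ne0$ as $b\ne ca^{-1}$; and $(1-c)-(1-a)b\ne0$ as $b\ne(1-c)(1-a)^{-1}$. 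This gives the required homomorphism $\mathbb{H}_5\to\mathbb{F}$, and with the reduction it proves the lemma.

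There is no serious obstacle here; the one point demanding care is the final bookkeeping — confirming that the four values forbidden for $b$ really capture all constraints involving $b$ (any coincidences among them only shrink the forbidden set), and that every generator of $G$ has been accounted for. The threshold $|\mathbb{F}|\ge 5$ is exactly what the count needs, and it is sharp: $U_{2,5}$ is $\mathbb{H}_5$-representable but has no representation over $\gf(4)$.
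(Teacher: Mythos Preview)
Your argument is correct and is precisely an unpacking of the homomorphism results the paper cites: the lemma is proved in the paper by citation only, and your greedy choice of $a,c,b$ (together with the paper's own remark that the ring of $\mathbb{H}_5$ may be taken to be the $\mathbb{Z}$-subring generated by the group) makes the needed partial-field homomorphisms explicit. The reduction of $\mathbb{U}_2$ to $\mathbb{H}_5$ via $\beta\mapsto\gamma$ is a clean touch, though as you note the direct count for $\mathbb{U}_2$ is even easier.
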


\subsection{Connectivity}
If $M$ is a connected matroid such that $\min\{r(X),r(Y)\}=1$ or $\min\{r^{*}(X),r^{*}(Y)\}=1$ for every $2$-separation $(X,Y)$ of $M$, then we say that $M$ is $3$-connected \textit{up to series and parallel classes}. A subset $S$ of $E(M)$ is a \textit{segment} if every $3$-element subset of $S$ is a triangle. A \textit{cosegment} is a segment of $M^{*}$. 

The next result implies that every $\{U_{2,5}, U_{3,5}\}$-fragile matroid is $3$-connected up to series and parallel classes. 

\begin{prop}
\label{fcon}
\cite[Proposition 4.3]{mayhew2010stability}
Let $M$ be a matroid with a $2$-separation $(A,B)$, and let $N$ be a $3$-connected minor of $M$. Assume $|E(N)\cap A|\geq |E(N)\cap B|$. Then $|E(N)\cap B|\leq 1$. Moreover, unless $B$ is a parallel or series class, there is an element $x\in B$ such that both $M\del x$ and $M/x$ have a minor isomorphic to $N$. 
\end{prop}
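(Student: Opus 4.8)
The plan is to prove Proposition~\ref{fcon} by a careful analysis of how the $3$-connected minor $N$ sits inside $M$ relative to the $2$-separation $(A,B)$, combined with the standard theory of $2$-separations. First I would recall that a $2$-separation $(A,B)$ of $M$ means $\lambda_M(A) = r(A) + r(B) - r(M) = 1$ with $|A|, |B| \geq 2$. The key structural input is that the $2$-separation of $M$ induces, after deleting/contracting down to $N$, a partition of $E(N)$; since $N$ is $3$-connected, this induced partition cannot be a genuine $2$-separation of $N$, so one side of it must be ``small.'' Concretely, if $N = M / C \del D$ for disjoint $C, D$ with $C \cup D = E(M) \setminus E(N)$, then writing $A_N = A \cap E(N)$ and $B_N = B \cap E(N)$, one uses the inequality $\lambda_N(A_N) \leq \lambda_M(A) = 1$ (connectivity does not increase under taking minors, for a fixed partition). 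Hence $(A_N, B_N)$ is at most a $2$-separation of $N$; since $N$ is $3$-connected and $|E(N)| \geq 4$, either $|A_N| \leq 1$ or $|B_N| \leq 1$. By the hypothesis $|E(N) \cap A| \geq |E(N) \cap B|$, this forces $|B_N| = |E(N) \cap B| \leq 1$, giving the first assertion.

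For the ``moreover'' part I would argue as follows. We now know $|E(N) \cap B| \leq 1$, so almost all of $N$ is captured inside $A$. The goal is to find $x \in B$ that is simultaneously deletable and contractible keeping an $N$-minor. I would first reduce to the case where $B$ contains no element of $E(N)$: if there is an element $b \in E(N) \cap B$, then in the minor of $M$ used to realize $N$, $b$ is either contracted-to-nothing or deleted-to-nothing issue does not arise — rather $b$ survives in $N$, and I would instead work with a slightly different element of $B$. The cleaner approach: since $|B| \geq 2$ and $B$ is not a series or parallel class, $B$ (as a matroid restriction / the structure $M|B$ together with its behaviour) is not just a line or a coline through the ``guts'' element of the separation. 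The standard fact here is that a $2$-separating set $B$ that is neither a parallel class nor a series class contains an element $x$ such that both $(A \cup x, B \setminus x)$-type reductions behave well — more precisely, one shows $B$ has an element $x$ with $x \notin \cl(A)$ or $x \notin \cl^*(A)$ in a controlled way, allowing both $M \del x$ and $M / x$ to retain the $2$-separation structure and hence retain the full part of $N$ living in $A$.

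The heart of the argument is therefore a lemma of the following shape: if $(A,B)$ is a $2$-separation, $|B| \geq 2$, and $B$ is neither a parallel class nor a series class of $M$, then there exists $x \in B$ such that $\lambda_{M\del x}(A) \leq 1$ and $\lambda_{M/x}(A) \leq 1$ (with the appropriate side still at least size $2$, or with $N$'s minor realized entirely within the non-$x$ side). I would prove this by case analysis on the local structure of $B$: pick $x \in B$ not in the closure of $A$ in $M$ and not in the coclosure of $A$ in $M$ — such an $x$ exists precisely because $B$ is not a parallel class (which would put all of $B$ in $\cl(A)$ up to one element) and not a series class (dually). For such $x$, deleting $x$ keeps $r(A) + r(B \setminus x) - r(M\del x) \leq 1$ by a rank count, and contracting $x$ does likewise on the dual side; in either case $A$ (or $A \cup x$) still contains a copy of $N$ because, by the first part, $N$ uses at most one element of $B$ and that element can be chosen $\neq x$. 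The main obstacle I anticipate is handling the boundary cases where $|B| = 2$ or where $E(N) \cap B$ is nonempty, since then the freedom to choose $x$ avoiding $E(N)$ and avoiding both closures simultaneously is tight; I would dispatch these by direct inspection, using that a $2$-element $2$-separating set which is neither parallel nor series must consist of two elements spanning the ``guts'' line together with the rest, so at least one of them is in general position. The bookkeeping with series/parallel classes and with which minor of $M$ realizes $N$ is the fiddly part, but the conceptual skeleton — connectivity is monotone under fixed partitions, $3$-connectivity of $N$ forces one side small, non-degeneracy of $B$ gives a movable element — is robust.
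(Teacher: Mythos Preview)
The paper does not supply its own proof of this proposition; it is quoted from \cite[Proposition 4.3]{mayhew2010stability} and used as a black box. So there is nothing in the present paper to compare against, and I will assess your sketch on its own merits.

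Your treatment of the first assertion is correct and standard: monotonicity of the connectivity function under minors, applied to the induced partition $(A\cap E(N),\,B\cap E(N))$, together with $3$-connectedness of $N$, forces $|E(N)\cap B|\le 1$.

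The ``moreover'' part, however, has a genuine gap. You correctly guess that the right element is some $x\in B$ with $x\notin\cl_M(A)$ and $x\notin\cl^{*}_M(A)$, and such an $x$ does exist under the hypotheses (this is cleanest via the $2$-sum $M=M_A\oplus_2 M_B$ with basepoint $p$: the two conditions translate to ``$x$ is neither parallel to nor in series with $p$ in $M_B$'', and an orthogonality argument shows that if no such $x$ existed then $B$ would be a parallel or a series class). But the lemma you call the ``heart of the argument'' --- that $\lambda_{M\del x}(A)\le 1$ and $\lambda_{M/x}(A)\le 1$ --- is vacuous, since connectivity never increases under minors; it does not imply that $N$ is a minor of both $M\del x$ and $M/x$. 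Your sentence ``$A$ (or $A\cup x$) still contains a copy of $N$ because $N$ uses at most one element of $B$ and that element can be chosen $\ne x$'' is exactly the step that needs proof, and you have not given one: knowing $x\notin E(N)$ only tells you that \emph{one} of $M\del x$, $M/x$ has $N$ as a minor (whichever operation was applied to $x$ in a fixed realization of $N$). You must still explain why the \emph{other} operation also preserves an $N$-minor. That is precisely where the choice $x\notin\cl_M(A)\cup\cl^{*}_M(A)$ earns its keep: in the $2$-sum picture it guarantees that $p$ is not a coloop of $M_B\del x$ and not a loop of $M_B/x$, so $M_A$ --- and hence $N$, up to relabelling its single element in $B$ --- is a minor of each of $M\del x$ and $M/x$. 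Without this link your argument does not close.
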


The following is an easy consequence of the property that strictly $\{U_{2,5}, U_{3,5}\}$-fragile matroids are $3$-connected up to parallel and series classes. 
\begin{lemma}
\label{incotri}
Let $M$ be a strictly $\{U_{2,5}, U_{3,5}\}$-fragile matroid with at least $8$ elements. If $S$ is a triangle or $4$-element segment of $M$ such that $E(M)-S$ is not a series or parallel class of $M$, then $S$ is coindependent in $M$. If $C$ is a triad or $4$-element cosegment of $M$ such that $E(M)-S$ is not a series or parallel class of $M$, then $C$ is independent. 
\end{lemma}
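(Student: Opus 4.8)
The plan is to prove the statement for a triangle or $4$-element segment $S$, and then obtain the statement for a triad or $4$-element cosegment $C$ by applying the first case to $M^{*}$. This reduction is legitimate: $\{U_{2,5},U_{3,5}\}$ is closed under duality (indeed $U_{2,5}^{*}=U_{3,5}$), so $M$ is strictly fragile if and only if $M^{*}$ is; cosegments of $M$ are segments of $M^{*}$; $C$ is independent in $M$ exactly when it is coindependent in $M^{*}$; and series and parallel classes interchange under duality. So assume $S$ is a triangle or $4$-element segment, $E(M)-S$ is not a series or parallel class, and --- for a contradiction --- $S$ is \emph{not} coindependent, that is, $r(E(M)-S)<r(M)$.

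The crux is to force $(S, E(M)-S)$ to be a $2$-separation. Record the routine facts: $r(S)=2$, since $S$ is a segment with at least three elements; $M$ is connected, being $3$-connected up to series and parallel classes, and (having at least eight elements) hence has no loops and no coloops; and distinct elements of a segment lie in distinct parallel classes, since otherwise a $3$-element subset of $S$ would properly contain a $2$-circuit and so could not be a triangle. Connectivity gives $\lambda(S)=r(S)+r(E(M)-S)-r(M)\geq 1$, so $r(E(M)-S)\geq r(M)-1$; with $r(E(M)-S)<r(M)$ this forces $r(E(M)-S)=r(M)-1$ and $\lambda(S)=1$. As $|S|\leq 4$ and $|E(M)|\geq 8$, both sides have at least two elements, so $(S,E(M)-S)$ is a $2$-separation. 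Since $M$ is $3$-connected up to series and parallel classes, one of $r(S),r(E(M)-S),r^{*}(S),r^{*}(E(M)-S)$ equals $1$; and since $r(S)=2$ and $r^{*}(S)=|S|+r(E(M)-S)-r(M)=|S|-1\geq 2$, this value is $r(E(M)-S)$ or $r^{*}(E(M)-S)$. I would split on whether $r(M)=2$, which separates exactly these possibilities.

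If $r(M)\geq 3$ then $r(E(M)-S)=r(M)-1\geq 2$, so $r^{*}(E(M)-S)=1$, and hence $E(M)-S$ lies in a single parallel class $Q$ of $M^{*}$. If $Q=E(M)-S$ then $E(M)-S$ is a series class of $M$, contrary to hypothesis; otherwise some $s\in S$ lies in $Q$, so $\{s,y\}$ is a cocircuit of $M$ for every $y\in E(M)-S$, and for any triangle $T$ with $s\in T\subseteq S$ we have a circuit $T$ meeting the cocircuit $\{s,y\}$ in exactly $s$, violating orthogonality. If instead $r(M)=2$ then $r(E(M)-S)=1$, so $E(M)-S$ lies in a single parallel class $P$ of $M$; but $M$ is strictly fragile of rank $2$, so it has a $U_{2,5}$-minor and hence at least five parallel classes. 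If $P=E(M)-S$ this contradicts the hypothesis, and otherwise $P$ meets $S$, so every parallel class of $M$ meets $S$ and there are at most $|S|\leq 4$ of them --- a contradiction either way.

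The only step needing genuine care is the rank-$2$ subcase: there the $2$-separation $(S,E(M)-S)$ already satisfies the defining condition of being $3$-connected up to series and parallel classes (because $r(E(M)-S)=1$), so that hypothesis is vacuous and one must instead use the $\{U_{2,5},U_{3,5}\}$-minor together with a parallel-class count. Everything else reduces to bookkeeping with the connectivity function, the definition, and orthogonality, which is why the lemma is billed as an easy consequence.
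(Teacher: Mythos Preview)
Your proof is correct and follows exactly the approach the paper indicates: the paper omits the proof entirely, stating only that the lemma is ``an easy consequence of the property that strictly $\{U_{2,5}, U_{3,5}\}$-fragile matroids are $3$-connected up to parallel and series classes,'' and you have supplied precisely those details. Your observation that the rank-$2$ case requires separate treatment (since there the $2$-separation already satisfies the definition vacuously, and one must invoke the $U_{2,5}$-minor to count parallel classes) is a genuine subtlety the paper glosses over.
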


We also use the following consequence of orthogonality.

\begin{lemma}
\label{clandco}
 Let $(X,\{e\},Y)$ be a partition of $E(M)$. Then $e\in \cl(X)$ if and only if $e\notin \cl^{*}(Y)$.
\end{lemma}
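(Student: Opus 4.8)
The plan is to derive this from the standard rank formula for the dual matroid. Recall that for any $A \subseteq E(M)$ one has $r_{M^*}(A) = |A| + r_M(E(M) \setminus A) - r_M(E(M))$ (see Oxley \cite{oxley2011matroid}). I would apply this identity to $A = Y$ and to $A = Y \cup \{e\}$, using that $E(M) \setminus Y = X \cup \{e\}$ and $E(M) \setminus (Y \cup \{e\}) = X$. Subtracting the two expressions gives
\[
  r_{M^*}(Y \cup \{e\}) - r_{M^*}(Y) = 1 + r_M(X) - r_M(X \cup \{e\}),
\]
so the right-hand side equals $0$ exactly when $r_M(X \cup \{e\}) = r_M(X) + 1$ and equals $1$ exactly when $r_M(X \cup \{e\}) = r_M(X)$. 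The former case says $e \notin \cl(X)$ and $e \in \cl^*(Y)$; the latter says $e \in \cl(X)$ and $e \notin \cl^*(Y)$. This is precisely the claimed equivalence.

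If one prefers an argument that is visibly a consequence of orthogonality: for one direction, suppose $e \in \cl(X)$ and $e \in \cl^*(Y)$. Then $M$ has a circuit $C$ with $e \in C \subseteq X \cup \{e\}$ and a cocircuit $C^*$ with $e \in C^* \subseteq Y \cup \{e\}$. Since $X$ and $Y$ are disjoint, $C \cap C^* = \{e\}$, contradicting orthogonality. For the converse, suppose $e \notin \cl(X)$. Take a basis $B_X$ of $M|X$; then $B_X \cup \{e\}$ is independent, so it extends to a basis $B$ of $M$ with $B \cap X = B_X$. The fundamental cocircuit $C^* = C^*_M(e,B)$ satisfies $C^* \cap B = \{e\}$, and for each $x \in X \setminus B_X$ the fundamental circuit $C_M(x,B)$ is contained in $B_X \cup \{x\}$, hence avoids $e$, so $x \notin C^*$. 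Thus $C^* \subseteq Y \cup \{e\}$ with $e \in C^*$, giving $e \in \cl^*(Y)$.

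There is no genuine obstacle here; the only thing to watch is the bookkeeping with the complements $E(M)\setminus Y$ and $E(M)\setminus(Y\cup\{e\})$ in the rank-formula computation, or, in the second approach, the verification that no element of $X$ can lie in the fundamental cocircuit of $e$.
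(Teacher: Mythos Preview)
Your proposal is correct. The paper does not actually supply a proof of this lemma; it merely introduces it as ``the following consequence of orthogonality'' and states it without argument. Your second argument is exactly the justification the paper alludes to, while your first argument via the dual rank formula is an equally valid (and arguably slicker) alternative that bypasses the need to build the fundamental cocircuit explicitly.
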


\subsection{Delta-Y exchange}
The generalized $\Delta-Y$ exchange of Oxley, Semple, and Vertigan \cite{oxley2000generalized} is used frequently. Let $M$ be a matroid with $U_{2,k}$-restriction $A$, and suppose $A$ is coindependent in $M$. The \textit{generalized $\Delta-Y$ exchange on $A$}, denoted by $\Delta_A(M)$, is defined to be the matroid obtained from $P_A(\Theta_k,M)\del A$ by relabeling the elements in $E(\Theta_k) - A$ by $A$ using their natural bijection (see Oxley \cite[Proposition 11.5.1]{oxley2011matroid} for the definition of $\Theta_k$). So $E(\Delta_A(M)) = E(M)$. Let $M$ be a matroid such that $M^{*}$ has $U_{2,k}$-restriction $A$, and suppose $A$ is independent in $M$. The \textit{generalized $Y$-$\Delta$-exchange on $A$}, denoted by $\nabla_A(M)$, is defined to be the matroid obtained from $(P_A(\Theta_k,M^{*})\del A)^{*}$ by relabeling the elements in $E(\Theta_k) - A$ by $A$. That is, $\nabla_A(M)=(\Delta_A(M^{*}))^{*}$. 

We now state some of the key properties here.

\begin{lemma}
\label{2.5}
\cite[Lemma 2.5]{oxley2000generalized}
For all $k\geq 2$, the restriction of $(P_A(\Theta_k,M)\del A)^{*}$ to $E(\Theta_k)-A$ is isomorphic to $U_{2,k}$ if and only if $A$ is coindependent in $M$.
\end{lemma}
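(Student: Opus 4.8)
\textit{Proof proposal.} The plan is to reduce the statement to a short rank computation inside $P := P_A(\Theta_k, M)$, using two standard facts about the generalized parallel connection along the modular line $A$ and the explicit structure of $\Theta_k$. Recall from Oxley \cite[Proposition 11.5.1]{oxley2011matroid} that $E(\Theta_k) = A \cup B$ with $A = \{a_1,\dots,a_k\}$, $B = \{b_1,\dots,b_k\}$, that $\Theta_k$ has rank $k$, that $\Theta_k|A \cong U_{2,k}$, that $B$ is a basis of $\Theta_k$, and that the fundamental circuit of $a_i$ with respect to $B$ is $\{a_i\} \cup (B \del b_i)$. From Oxley \cite[Section 11.4]{oxley2011matroid} I will use that $r(P) = r(\Theta_k) + r(M) - 2 = k + r(M) - 2$, that $P|E(M) = M$, and that the rank of a set splits over the connection when one side is skew to the glued flat: $r_P(X_1 \cup X_2) = r_{\Theta_k}(X_1) + r_M(X_2)$ whenever $X_1 \subseteq E(\Theta_k)$ satisfies $\cl_{\Theta_k}(X_1) \cap A = \emptyset$ and $X_2 \subseteq E(M)$. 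Writing $E_0 := E(M) \del A$, so $E(P \del A) = B \cup E_0$, the first step is the duality identity $N^*|X = (N/(E(N) \del X))^*$ with $N = P\del A$ and $X = B$, which gives
\[
 (P\del A)^* | B \;=\; \big((P\del A)/E_0\big)^* \;=\; \big(P/E_0 \del A\big)^*.
\]
Since $U_{k-2,k}$ is precisely the dual of $U_{2,k}$, it therefore suffices to prove that $(P\del A)/E_0 \cong U_{k-2,k}$ if and only if $A$ is coindependent in $M$.

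Because $B$ is spanning in $\Theta_k$ we have $A \subseteq \cl_{\Theta_k}(B) \subseteq \cl_P(B)$, so $r(P \del A) = r(P)$, and hence $r\big((P\del A)/E_0\big) = r(P) - r_P(E_0) = k + r(M) - 2 - r_M(E_0)$. For the ``only if'' direction I would argue by contraposition: if $A$ is not coindependent in $M$ then $E_0$ does not span $M$, so $r_M(E_0) \le r(M) - 1$ and the displayed rank is at least $k-1 > k-2 = r(U_{k-2,k})$; a matroid of the wrong rank cannot be $U_{k-2,k}$. For the ``if'' direction, assume $E_0$ spans $M$ and choose a basis $B_M \subseteq E_0$ of $M$. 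Then $(P\del A)/E_0$ is a matroid on the $k$-element set $B$ of rank $r(P) - r(M) = k-2$, so it is enough to show that every $(k-2)$-subset $B'$ of $B$ is independent in it. Fix such a $B'$ and any $i$; since $B$ is a basis of $\Theta_k$, the only circuit of $\Theta_k$ through $a_i$ inside $\{a_i\}\cup B$ is $\{a_i\}\cup(B\del b_i)$, which has $k-1 > k-2 = |B'|$ elements off $a_i$ and so is not contained in $\{a_i\}\cup B'$; hence $a_i \notin \cl_{\Theta_k}(B')$. As this holds for all $i$, we get $\cl_{\Theta_k}(B') \cap A = \emptyset$, so the splitting rank formula yields $r_P(B' \cup B_M) = r_{\Theta_k}(B') + r_M(B_M) = (k-2) + r(M)$. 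Since $\cl_P(B_M) \supseteq \cl_M(B_M) = E(M) \supseteq E_0 \supseteq B_M$, we also have $r_P(B' \cup E_0) = r_P(B' \cup B_M)$ and $r_P(E_0) = r(M)$, whence $r_{(P\del A)/E_0}(B') = r_P(B'\cup E_0) - r_P(E_0) = k-2 = |B'|$. Thus every $(k-2)$-subset of this rank-$(k-2)$ matroid on $B$ is a basis, so it is $U_{k-2,k}$; dualizing gives $(P\del A)^*|B \cong U_{2,k}$, as required.

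I do not expect a genuine conceptual obstacle here; the part needing the most care is the correct invocation of the generalized-parallel-connection machinery — in particular pinning down the precise hypothesis (``$\cl_{\Theta_k}(X_1)\cap A = \emptyset$'') under which ranks add over the connection, and keeping the structural facts about $\Theta_k$ straight ($B$ a basis, the fundamental circuits $\{a_i\}\cup(B\del b_i)$, rank $k$). One should also sanity-check the degenerate case $k=2$, where $\Theta_2$ has parallel pairs: there $(P\del A)/E_0$ has rank $0$ and is simply $U_{0,2} = U_{k-2,k}$, consistent with the general argument, but the intermediate remarks about loops should be phrased so as to remain valid (which is why the argument above works only with ranks of subsets of $B$, never identifying individual loops).
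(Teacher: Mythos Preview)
The paper does not actually prove this lemma: it is simply quoted from Oxley--Semple--Vertigan \cite{oxley2000generalized} with no argument given, so there is no in-paper proof to compare against. That said, your proposal is a correct and self-contained proof. The reduction via $(P\del A)^*|B = ((P\del A)/E_0)^*$ is clean, and the rank computation $r((P\del A)/E_0) = k + r(M) - 2 - r_M(E_0)$ immediately dispatches the ``only if'' direction. For the ``if'' direction, the one step you flagged as delicate --- the splitting formula $r_P(B'\cup B_M) = r_{\Theta_k}(B') + r_M(B_M)$ --- is in fact easy to justify directly in this instance without appealing to a general rank formula: since $B_M$ spans $M$ in $P$ we have $A \subseteq \cl_P(B_M)$, and since $A$ is modular in $\Theta_k$ with $\cl_{\Theta_k}(B')\cap A = \emptyset$, modularity gives $r_{\Theta_k}(B'\cup A) = (k-2)+2 = k$, so $B'\cup A$ spans $\Theta_k$; hence $\cl_P(B'\cup B_M) \supseteq E(\Theta_k)\cup E(M) = E(P)$ and $B'\cup B_M$ is a spanning set of size $r(P)$, therefore a basis. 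You might replace the invocation of a general splitting formula with this two-line argument to make the proof fully self-contained.
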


\begin{lemma}
\label{2.11}
\cite[Lemma 2.11]{oxley2000generalized}
Let $A$ be a coindependent segment in $M$. Then $\nabla_A(\Delta_A(M))$ is well-defined and $\nabla_A(\Delta_A(M))=M$.
\end{lemma}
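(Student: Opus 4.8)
I would organize the argument in two stages: well-definedness, and the identity itself, the second reducing to a computation internal to $\Theta_k$.

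\smallskip

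\noindent\textbf{Well-definedness.} Since $A$ is a segment of $M$, the common restriction $M|A=\Theta_k|A\cong U_{2,k}$ is a line, and in the construction of $\Theta_k$ this line is a \emph{modular} flat; hence the generalized parallel connection $P_A(\Theta_k,M)$ exists, and as $A$ is coindependent in $M$ (the standing hypothesis), $\Delta_A(M)$ is defined. To apply $\nabla_A$ to $N:=\Delta_A(M)$ we must check that $N^*$ has $A$ as a $U_{2,k}$-restriction and that $A$ is independent in $N$. The first is immediate from Lemma \ref{2.5}: coindependence of $A$ in $M$ gives that the restriction of $(P_A(\Theta_k,M)\del A)^{*}$ to $E(\Theta_k)-A$ is $U_{2,k}$, and after relabelling this set is exactly $A$. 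For the second, $E(\Theta_k)-A$ is independent in $\Theta_k$, hence independent in $P_A(\Theta_k,M)$ (whose restriction to $E(\Theta_k)$ is $\Theta_k$) and in $P_A(\Theta_k,M)\del A$; relabelling, $A$ is independent in $N$. So $\nabla_A(\Delta_A(M))$ is well-defined.

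\smallskip

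\noindent\textbf{The identity.} Unwinding the definition of $\nabla_A$, we have $\nabla_A(\Delta_A(M))=\bigl(\Delta_A((\Delta_A(M))^{*})\bigr)^{*}$, so it suffices to prove $\Delta_A\bigl((\Delta_A(M))^{*}\bigr)=M^{*}$. Now $(\Delta_A(M))^{*}=(P_A(\Theta_k,M)\del A)^{*}=P_A(\Theta_k,M)^{*}\contract A$ (followed by the relabelling), and applying $\Delta_A$ means forming $P_A\bigl(\Theta_k,P_A(\Theta_k,M)^{*}\contract A\bigr)$ and deleting $A$. The engine is the self-duality of $\Theta_k$: the bijection $a_i\leftrightarrow b_i$ is an isomorphism $\Theta_k\to\Theta_k^{*}$, so $\Theta_k^{*}$ again carries $A$ as a modular line, with the two distinguished $k$-subsets interchanged. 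Combining this with the duality formula for a generalized parallel connection along a modular flat (so that, up to the relabelling, $P_A(\Theta_k,M)^{*}$ is again a generalized parallel connection of $\Theta_k$ with $M^{*}$), the two successive $\Theta_k$-gluings telescope: gluing $\Theta_k$ to $\Delta_A(M)$ along the new cosegment $A$ and deleting undoes the original gluing-and-deletion and returns $M^{*}$. I would isolate this telescoping as a lemma purely about $\Theta_k$ — essentially the statement that $\Theta_k\contract A$, relabelled and glued back along the appropriate line, is inverse to $\Theta_k\del A$ — and then transport it across the parallel connection, using that the restriction to $E(M)$ and the relevant modularity are both preserved, so that $M$ is carried along untouched.

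\smallskip

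\noindent\textbf{Main obstacle and a fallback.} The hard part is the duality bookkeeping: making the telescoping step precise requires the exact shape of the dual of a generalized parallel connection along a modular flat (a generalized series connection), careful tracking of which elements are deleted and which are contracted under $(\cdot)^{*}$, and the matching of the natural bijections $E(\Theta_k)-A\to A$ appearing at the two levels. If the synthetic argument becomes unwieldy, a robust alternative is to verify the identity at the level of circuits: using the explicit circuit description of $\Theta_k$ together with the circuit description of a generalized parallel connection, one checks directly that a subset of $E(M)$ is a circuit of $\nabla_A(\Delta_A(M))$ if and only if it is a circuit of $M$, splitting into cases according to the intersection with $A$ and invoking orthogonality (Lemma \ref{clandco}) to deal with the guts and coguts elements. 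In either approach the content is entirely local to $\Theta_k$ and the parallel connection, with $M$ entering only through its unchanged restriction.
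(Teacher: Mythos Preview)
The paper does not prove this lemma at all: it is quoted verbatim as \cite[Lemma 2.11]{oxley2000generalized} and used as a black box. So there is no ``paper's own proof'' to compare against; the relevant comparison would be with the original argument of Oxley, Semple, and Vertigan.

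As for the proposal itself: your well-definedness paragraph is correct and complete --- you invoke Lemma~\ref{2.5} exactly as intended to see that $A$ is an independent cosegment of $\Delta_A(M)$. The identity paragraph, however, is a sketch rather than a proof. You correctly identify the two ingredients (self-duality of $\Theta_k$ under $a_i\leftrightarrow b_i$, and the behaviour of generalized parallel connection under duality), but the sentence ``the two successive $\Theta_k$-gluings telescope'' is precisely the content of the lemma and you have not carried it out. Concretely, the dual of $P_A(\Theta_k,M)$ is \emph{not} literally a generalized parallel connection of $\Theta_k^{*}$ with $M^{*}$; the duality formula for $P_A$ along a modular flat produces a different construction (sometimes called a generalized series connection, or equivalently a contraction along the common flat in the free product), and making the telescoping precise requires either that formula in full or a direct circuit/rank computation. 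Your ``fallback'' via circuits is in fact closer to how the original paper handles it: Oxley--Semple--Vertigan give an explicit rank formula for $\Delta_A(M)$ in terms of the rank function of $M$, and the inversion then becomes a direct calculation. If you want a self-contained proof, I would recommend that route over the synthetic duality argument, which is easy to state but fiddly to execute correctly.
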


A key property of the Delta-Y exchange is that it preserves $\mathbb{P}$-representability for any partial field $\mathbb{P}$.

\begin{lemma}
\cite[Lemma 3.4]{oxley2000generalized}
 $\Theta_3$ is regular, and $\Theta_4$ is near-regular.
\end{lemma}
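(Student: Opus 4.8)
The plan is to verify both claims by exhibiting explicit representations. For $\Theta_3$, recall from \cite[Section 11.5]{oxley2011matroid} that $\Theta_3 \cong M(K_4)$: if $A$ is a triangle of $M(K_4)$ and $B$ is the complementary triad, then $P_A(M(K_4), M)$ realises exactly the classical $\Delta$-$Y$ exchange along $A$. Since every graphic matroid is regular, $\Theta_3$ is regular; equivalently, one writes down a signed incidence matrix of $K_4$, which is totally unimodular.

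For $\Theta_4$ (a rank-$4$ matroid on $8$ elements, with distinguished $U_{2,4}$-restriction $A$ and complementary set $B$, as in \cite[Proposition 11.5.1]{oxley2011matroid}), I would produce an explicit $4 \times 8$ matrix $[I_4 \mid D]$ over the near-regular partial field $\mathbb{U}_1 = (\mathbb{Z}[\alpha, \alpha^{-1}, (1-\alpha)^{-1}], \{\pm \alpha^i (1-\alpha)^j : i,j \in \mathbb{Z}\})$, with the identity columns labelled by $B$ and the columns of $D$ labelled by $A$, chosen so that the columns of $D$ lie on a common line (which forces the $U_{2,4}$-restriction on $A$) and so that the remaining defining dependencies of $\Theta_4$ are realised. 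It then remains to check (i) that $[I_4 \mid D]$ is a $\mathbb{U}_1$-matrix, i.e. every subdeterminant lies in $\{0\} \cup \{\pm \alpha^i (1-\alpha)^j\}$, and (ii) that $M[I_4 \mid D] \cong \Theta_4$, by confirming that the $4 \times 4$ minors vanish precisely on the non-bases prescribed by the definition of $\Theta_4$. A convenient alternative to handling $\mathbb{U}_1$ directly is to exhibit instead representations of $\Theta_4$ over $\gf(3)$ and over $\gf(8)$ (or over $\mathbb{Q}$) and invoke the characterisation of the near-regular matroids as those representable over both $\gf(3)$ and $\gf(8)$.

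The substantive work is step (ii): transcribing the full circuit/independence structure of $\Theta_4$ from Oxley and confirming the candidate matrix realises it exactly. This is a finite verification, and while it can be done by hand it is naturally discharged by a short computation in SageMath \cite{sage, sage-matroid}, as the paper does for its other small-matroid claims; once the representation is checked, near-regularity of $\Theta_4$ follows at once. (One could also argue via the known excluded-minor list for the near-regular matroids, checking that the $8$-element matroid $\Theta_4$ contains none of them; but since that list is itself a nontrivial input, exhibiting the representation is the more self-contained route.)
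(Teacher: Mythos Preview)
The paper does not prove this lemma; it simply quotes it from \cite[Lemma 3.4]{oxley2000generalized}. Your approach---noting that $\Theta_3 \cong M(K_4)$ is graphic and hence regular, and for $\Theta_4$ exhibiting an explicit $\mathbb{U}_1$-representation and checking subdeterminants---is correct and is precisely the argument in the original reference. The alternative route via $\gf(3)$- and $\gf(8)$-representability is also sound, though as you observe it leans on a heavier external result.
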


\begin{lemma}
 \label{3.5}
 \cite[Lemma 3.5]{oxley2000generalized}
 Let $k\geq 2$, and let $M$ be a matroid such that $M|A\cong U_{2,k}$. Let $\mathbb{P}$ be a partial field. If $M$ and $\Theta_k$ are $\mathbb{P}$-representable, then the generalized parallel connection $P_A(\Theta_k, M)$ of $\Theta_k$ and $M$ across $A$ is $\mathbb{P}$-representable. 
\end{lemma}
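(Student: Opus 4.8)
The plan is to exhibit $P_A(\Theta_k,M)$ as the matroid of an explicit matrix over $\mathbb{P}$, built by gluing together a $\mathbb{P}$-representation of $\Theta_k$ and a $\mathbb{P}$-representation of $M$ that have been adjusted so as to agree on the common set $A$. This imitates the classical fact that the generalized parallel connection along a modular flat preserves representability over a field (Brylawski; see Oxley \cite{oxley2011matroid}); the work is to carry the argument out inside the partial field. Recall first that by the construction of $\Theta_k$ (Oxley \cite[Proposition 11.5.1]{oxley2011matroid}) the set $A$ is a modular rank-$2$ flat of $\Theta_k$, which is exactly why $P_A(\Theta_k,M)$ is defined.

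First I would obtain compatible representations. Fix any $\mathbb{P}$-representation $A_M$ of $M$, and let $C$ be the $\mathbb{P}$-representation of $U_{2,k}=M|A=\Theta_k|A$ that it induces on the columns labelled by $A$; after row operations and column scalings we may assume that two fixed elements $a_1,a_2$ of $A$ appear as the first two coordinate columns. I would then extend $C$ to a $\mathbb{P}$-representation $A_\Theta$ of $\Theta_k$ --- either by invoking a partial-field form of Brylawski's theorem that a representation of a modular flat always extends, or by writing the columns of $\Theta_k$ indexed by $E(\Theta_k)\setminus A$ explicitly from its circuit structure, in which case the requirement that the resulting matrix be a $\mathbb{P}$-matrix for $\Theta_k$ unwinds into three-term Pl\"ucker relations among the $2\times2$ minors of $C$, all of which are units of $\mathbb{P}$ because $C$ represents $U_{2,k}$. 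Now $A_\Theta$ and $A_M$ restrict to the same matrix on the columns $A$.

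Next I would glue. Extending $\{a_1,a_2\}$ to a basis of each of $\Theta_k$ and $M$ (which always meets $A$ in precisely $\{a_1,a_2\}$, since $A$ has rank $2$), write $A_\Theta=[\,I\mid D_\Theta\,]$ and $A_M=[\,I\mid D_M\,]$ in standard form with the two rows indexed by $\{a_1,a_2\}$ and the $k-2$ columns indexed by $A\setminus\{a_1,a_2\}$ identified, and overlay them into a matrix $[\,I\mid D\,]$ on ground set $E(\Theta_k)\cup E(M)$ by placing zeros in the two off-diagonal blocks. It then remains to check that $[\,I\mid D\,]$ is a $\mathbb{P}$-matrix whose nonvanishing maximal minors are supported exactly on the bases of $P_A(\Theta_k,M)$. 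The claim about which maximal minors vanish is the classical combinatorial description of independence in a generalized parallel connection and needs no change. For the $\mathbb{P}$-matrix property: since the columns of $[\,I\mid D\,]$ that span $\langle A\rangle$ are supported only on the two shared rows $\{a_1,a_2\}$, after a reordering the matrix is block lower triangular, and a cofactor expansion along the rows outside $\{a_1,a_2\}$ shows that the determinant of any square submatrix equals, up to sign, a product of a minor of $A_\Theta$, a minor of $A_M$, and the determinant of a residual block of order at most $2$ on the rows $\{a_1,a_2\}$ (all other expansion terms vanishing). That residual determinant is a minor of $C$ except when its columns straddle both sides; in the straddling case one must use the structure of $\Theta_k$ near $A$ --- a consequence of $A$ being a modular flat --- to see that it still factors through $\mathbb{P}$-minors of $A_M$ and units of $\mathbb{P}$, rather than being an arbitrary element of the ambient ring. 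Granting this, every subdeterminant of $[\,I\mid D\,]$ lies in $G\cup\{0\}$, and the nonzero maximal ones index the bases of $P_A(\Theta_k,M)$.

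The matroid-level bookkeeping here is routine and identical to the field treatment. The step I expect to be the main obstacle is the control of these straddling determinants --- the only place where it matters that $\mathbb{P}$ is a partial field rather than a field. It is there that one has to pin down precisely enough how $\Theta_k$ is attached to $A$ (equivalently, the representational consequences of $A$ being a modular flat) to be sure that gluing introduces no quantities beyond products of $\mathbb{P}$-minors and group elements. Finally, the degenerate case $k=2$, in which $\langle A\rangle$ is all of $\langle E(\Theta_2)\rangle$ and $P_A(\Theta_2,M)$ amounts to a relabelling, can be handled directly.
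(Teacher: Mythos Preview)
The paper does not prove this lemma; it is quoted without argument from Oxley, Semple, and Vertigan \cite[Lemma~3.5]{oxley2000generalized}, so there is no in-paper proof against which to compare yours. Your outline --- normalise $\mathbb{P}$-representations of $\Theta_k$ and $M$ to agree on the columns indexed by $A$, overlay them into a single matrix with zero off-diagonal blocks, and verify that every subdeterminant lies in $G\cup\{0\}$ --- is the natural adaptation of Brylawski's modular-flat argument and matches the approach of the cited source.

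That said, as you yourself signal, the proposal is an outline rather than a proof, and the two places you hand-wave are exactly where the content lies. First, extending an arbitrary $\mathbb{P}$-representation $C$ of $U_{2,k}$ to one of $\Theta_k$ genuinely requires the explicit coordinates for $\Theta_k$ (as in \cite[Proposition~11.5.1]{oxley2011matroid}); you cannot simply invoke ``a partial-field form of Brylawski's theorem'', since that is essentially what is being proved. Second, the ``straddling'' $2\times 2$ determinants on the shared rows $\{a_1,a_2\}$ --- one column from the $\Theta_k$ side, one from the $M$ side --- have the form $ps-qr$ with $p,q,r,s\in G$, and over a general partial field such a sum need not lie in $G\cup\{0\}$. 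Writing ``granting this'' and appealing abstractly to modularity does not close the gap; the resolution in \cite{oxley2000generalized} again uses the explicit structure of $\Theta_k$ to force these mixed minors into the right form. Until those two computations are actually carried out, the argument is incomplete at its most delicate point.
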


\begin{cor}
\label{3.7}
\cite[Lemma 3.7]{oxley2000generalized}
Let $\mathbb{P}$ be a partial field. Then $M$ is $\mathbb{P}$-representable if and only if $\Delta_A(M)$ is $\mathbb{P}$-representable.
\end{cor}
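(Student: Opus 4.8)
The plan is to obtain the equivalence from Lemma~\ref{3.5} (the generalized parallel connection of $\mathbb{P}$-representable matroids along a common $U_{2,k}$-restriction is $\mathbb{P}$-representable), the elementary closure of $\mathbb{P}$-representability under duality and minors, and the identity $\nabla_A(\Delta_A(M))=M$ of Lemma~\ref{2.11}.

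For the forward implication, assume $M$ is $\mathbb{P}$-representable, where, as in the definition of $\Delta_A$, $A$ is a coindependent $U_{2,k}$-restriction of $M$. The first observation is that $\Theta_k$ is itself $\mathbb{P}$-representable: in every application in this paper $A$ is a triangle or a $4$-element segment, so $k\in\{3,4\}$, and $\Theta_3$ is regular while $\Theta_4$ is near-regular; hence each is $\mathbb{P}$-representable for the partial fields of interest here (for $\mathbb{P}\in\{\mathbb{H}_5,\mathbb{U}_2\}$ this uses the partial-field homomorphisms from the regular, respectively near-regular, partial field into $\mathbb{P}$), and in general one simply carries ``$\Theta_k$ is $\mathbb{P}$-representable'' as a hypothesis. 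Given this, Lemma~\ref{3.5} shows $P_A(\Theta_k,M)$ is $\mathbb{P}$-representable, deletion preserves $\mathbb{P}$-representability so $P_A(\Theta_k,M)\del A$ is $\mathbb{P}$-representable, and $\Delta_A(M)$ is obtained from $P_A(\Theta_k,M)\del A$ merely by relabelling part of its ground set, so it too is $\mathbb{P}$-representable.

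For the converse, assume $\Delta_A(M)$ is $\mathbb{P}$-representable. By Lemma~\ref{2.11}, $\nabla_A(\Delta_A(M))$ is well-defined and equal to $M$; unwinding the definition $\nabla_A(N)=(\Delta_A(N^{*}))^{*}$, this says precisely that $A$ is a coindependent $U_{2,k}$-restriction of $(\Delta_A(M))^{*}$ and that $M=\bigl(\Delta_A((\Delta_A(M))^{*})\bigr)^{*}$. Since the dual of a $\mathbb{P}$-representable matroid is $\mathbb{P}$-representable, $(\Delta_A(M))^{*}$ is $\mathbb{P}$-representable; applying the forward implication to $(\Delta_A(M))^{*}$ (with the same $k$, so the hypothesis on $\Theta_k$ is unchanged) gives that $\Delta_A\bigl((\Delta_A(M))^{*}\bigr)$ is $\mathbb{P}$-representable, and dualizing once more yields that $M$ is $\mathbb{P}$-representable.

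The only step requiring genuine care is the $\mathbb{P}$-representability of $\Theta_k$ — this is exactly what forces the restriction to small $k$ (always available in this paper) or an explicit hypothesis on $\Theta_k$; everything else is a routine manipulation of the definitions of $\Delta_A$ and $\nabla_A$ together with the standard closure of representability under minors and duality. The one substantive external input is Lemma~\ref{3.5}, namely that two $\mathbb{P}$-representations agreeing on the common restriction $A$ can be glued; I would take this as given.
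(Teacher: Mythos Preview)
The paper does not supply its own proof of this corollary: it is imported verbatim from \cite[Lemma 3.7]{oxley2000generalized}, so there is nothing in the present paper to compare your argument against line by line. That said, your argument is correct and is exactly the natural route (and, as far as one can tell from the surrounding lemmas the paper quotes from \cite{oxley2000generalized}, the route taken there): use Lemma~\ref{3.5} to get $\mathbb{P}$-representability of $P_A(\Theta_k,M)$, delete $A$, and for the converse use $\nabla_A(\Delta_A(M))=M$ from Lemma~\ref{2.11} together with closure of representability under duality. Your caveat about $\Theta_k$ is also the right one: as stated in the paper the corollary is implicitly restricted to those $k$ for which $\Theta_k$ is $\mathbb{P}$-representable, and in every use here $|A|\in\{3,4\}$ with $\Theta_3$ regular and $\Theta_4$ near-regular, so the hypothesis is met for $\mathbb{P}\in\{\mathbb{U}_2,\mathbb{H}_5\}$.
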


The following two results on minors are crucial.

\begin{lemma}
\label{2.13}
\cite[Lemma 2.13]{oxley2000generalized}
Suppose that $\Delta_A(M)$ is defined. If $x\in A$ and $|A|\geq 3$, then $\Delta_{A-x}(M\del x)$ is also defined, and $\Delta_A(M)/x=\Delta_{A-x}(M\del x)$.
\end{lemma}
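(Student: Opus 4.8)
The plan is to verify the two assertions — that $\Delta_{A-x}(M\del x)$ is defined, and that $\Delta_A(M)/x = \Delta_{A-x}(M\del x)$ — by unwinding the definitions in terms of generalized parallel connections and exploiting the good behaviour of $P_A(\Theta_k, M)$ under minor operations. First I would check that $\Delta_{A-x}(M\del x)$ is defined: since $\Delta_A(M)$ is defined, $A$ is a coindependent $U_{2,k}$-restriction of $M$; deleting $x\in A$ leaves $A-x$ a $U_{2,k-1}$-restriction of $M\del x$, and $A-x$ remains coindependent in $M\del x$ (a subset of a coindependent set, after deleting one of its own elements, stays coindependent — this is a standard duality fact, equivalently $A-x$ is independent in $(M\del x)^*=M^*/x$ because $A$ is independent in $M^*$). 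With $k-1\geq 2$ the exchange $\Delta_{A-x}(M\del x)$ is therefore well-defined.

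For the main identity, the key structural input is the behaviour of the generalized parallel connection under contraction and deletion along elements of the modular flat, together with the commuting diagram $\Theta_{k-1} = \Theta_k / y$ for a suitable rim-type element $y$ of $\Theta_k$ (this is part of the structure of $\Theta_k$ from Oxley, Proposition 11.5.1). Concretely, I would argue that contracting $x$ commutes appropriately: $P_A(\Theta_k, M)/x$ can be reorganized, using the fact that $A$ is a modular line in both $\Theta_k$ and $M$, as a generalized parallel connection $P_{A-x}(\Theta_k/x', M\del x)$ of the two relevant pieces, where $x'$ is the copy of $x$ on the $\Theta_k$ side. One then identifies $\Theta_k / x'$ with $\Theta_{k-1}$ on ground set matching $A-x$ under the natural bijection. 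Deleting the (relabeled) set $A-x$ and comparing with the definition of $\Delta_{A-x}(M\del x)$ gives the claimed equality of matroids on the common ground set $E(M)-x$; one must also check the relabeling bijections on $E(\Theta_k)-A$ versus $E(\Theta_{k-1})-(A-x)$ are consistent, which follows from how the natural bijection restricts.

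The technical heart — and the step I expect to be the main obstacle — is justifying that the contraction $P_A(\Theta_k,M)/x$ decomposes as the generalized parallel connection of the contracted pieces across $A-x$. This is not formal: generalized parallel connections only commute nicely with a minor operation on $e$ when the relevant closure/modularity hypotheses are preserved, so I would need to verify that $A-x$ is still a modular flat (a line) in $\Theta_k/x'$ and in $M\del x$, and that no new elements enter the closure of $A-x$ in a way that breaks the identification. An alternative, perhaps cleaner, route is to pass to representability: by Corollary \ref{3.7} both sides are $\mathbb{P}$-representable for every $\mathbb{P}$ over which $M$ is representable, so it suffices to track an explicit $\mathbb{P}$-matrix through the construction, but since the statement is about arbitrary matroids $M$ one cannot rely on representability in general, and the matroid-theoretic argument via modular flats seems unavoidable. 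In either case, once the decomposition of the parallel connection is in hand, matching up the deletions and the relabelings is routine bookkeeping.
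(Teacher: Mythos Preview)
The paper does not give its own proof of this lemma: it is imported verbatim from \cite{oxley2000generalized} and used as a black box, so there is nothing in the present paper to compare your argument against.

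Judged on its own, your outline is the standard route and is essentially how the result is established in the cited source: one exploits the minor--commuting properties of the generalized parallel connection together with the recursion $\Theta_k/x'\del x \cong \Theta_{k-1}$. One point you should tighten: in $\Delta_A(M)$ the element now labeled $x$ is \emph{not} the original $x\in A$ (that element was deleted), but rather the element $x'\in E(\Theta_k)-A$ that the natural bijection sends to $x$. Hence the object to analyse is $P_A(\Theta_k,M)\del A\,/\,x'$, not $P_A(\Theta_k,M)/x$ as you wrote. Since $x'\notin\cl(E(M))$, the contraction passes into the $\Theta_k$ factor, and then deleting $x$ (as part of $\del A$) yields the $\Theta_{k-1}$ factor; the remaining bookkeeping with the relabelings is as you say. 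Your alternative route via explicit representations should be discarded, for exactly the reason you identify: the statement concerns arbitrary matroids.
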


\begin{lemma}
\label{2.16}
\cite[Lemma 2.16]{oxley2000generalized}
Suppose that $\Delta_A(M)$ is defined.
\begin{enumerate}
\item[(i)] If $x\in E(M)-A$ and $A$ is coindependent in $M\del x$, then $\Delta_A(M\del x)$ is defined and $\Delta_A(M)\del x=\Delta_A(M\del x)$;
\item[(ii)] If $x\in E(M)-\cl(A)$, then $\Delta_A(M/x)$ is defined and $\Delta_A(M)/x=\Delta_A(M/x)$;
\end{enumerate}
\end{lemma}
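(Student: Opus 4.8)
The plan is to reduce both statements to the standard behaviour of the generalized parallel connection $P_A(\Theta_k,\cdot)$ under single-element deletion and contraction, exploiting that $\Delta_A(M)$ is by definition obtained from $N:=P_A(\Theta_k,M)\del A$ by a relabeling that renames only the elements of $E(\Theta_k)-A$. Two preliminary observations make the definedness claims routine. First, whether $P_A(\Theta_k,M')$ is defined depends only on $\Theta_k$ and on the isomorphism type of $M'|A$ (because $A$ is a modular flat of $\Theta_k$), so it suffices in each case to check that the relevant minor of $M$ still has $A$ as a $U_{2,k}$-restriction and that $A$ is coindependent there. Second, since $E(\Theta_k)\cap E(M)=A$, the element $x\in E(M)-A$ is disjoint from $E(\Theta_k)-A$, so the relabeling never renames $x$ and never renames anything to $x$; consequently the relabeling commutes with $\del x$ and with $/x$.

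For (i): since $x\in E(M)-A$ we have $(M\del x)|A=M|A\cong U_{2,k}$, and $A$ is coindependent in $M\del x$ by hypothesis, so $\Delta_A(M\del x)$ is defined. Deletion of the disjoint sets $\{x\}$ and $A$ commutes, and deleting an element of $E(M)-A$ passes through the generalized parallel connection, i.e. $N\del x=P_A(\Theta_k,M\del x)$ (a standard property; see \cite[Section~11.4]{oxley2011matroid}). Hence, suppressing the relabeling,
\[
\Delta_A(M)\del x=(N\del A)\del x=(N\del x)\del A=P_A(\Theta_k,M\del x)\del A=\Delta_A(M\del x).
\]

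For (ii): the hypothesis $x\in E(M)-\cl(A)$ gives in particular $x\notin A$, and since $x\notin\cl_M(A)$ one has $(M/x)|A=M|A\cong U_{2,k}$; moreover $A$ remains coindependent because $(M/x)^*=M^*\del x$ and $A\subseteq E(M)-x$ is independent in $M^*$. So $\Delta_A(M/x)$ is defined. As $N|E(M)=M$ we have $\cl_N(A)\cap E(M)=\cl(A)$, so the hypothesis says exactly $x\notin\cl_N(A)$, which is the condition under which contraction passes through the generalized parallel connection, $N/x=P_A(\Theta_k,M/x)$ (\cite[Section~11.4]{oxley2011matroid}). Since $/x$ commutes with $\del A$,
\[
\Delta_A(M)/x=(N\del A)/x=(N/x)\del A=P_A(\Theta_k,M/x)\del A=\Delta_A(M/x).
\]

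The only substantive input is the contraction-commutation property of the generalized parallel connection, together with the observation that the restriction $x\notin\cl(A)$ in (ii) is precisely what makes that property apply: contracting an element of $\cl(A)-A$ would collapse the long line $A$, so that $(M/x)|A$ is no longer $U_{2,k}$ and $\Delta_A(M/x)$ is not even defined. I expect the real work to be bookkeeping — keeping straight the relabeling and the order in which $\del A$, $\del x$, and $/x$ are applied — rather than any genuine difficulty, since the needed parallel-connection facts are standard.
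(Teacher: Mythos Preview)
The paper does not prove this lemma; it is quoted verbatim from \cite[Lemma~2.16]{oxley2000generalized} and used as a black box. Your argument is the standard one and is correct in substance: both parts reduce to the well-known commutation of deletion and contraction with the generalized parallel connection, together with the easy checks that $M\del x$ (respectively $M/x$) still has $A$ as a coindependent $U_{2,k}$-restriction, and the observation that the relabeling of $E(\Theta_k)-A$ by $A$ never touches $x$.

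One small notational slip: you set $N:=P_A(\Theta_k,M)\del A$, but your displayed computations then write $N\del A$ and $N\del x = P_A(\Theta_k,M\del x)$, which only makes sense if $N=P_A(\Theta_k,M)$. With that correction the chains of equalities are fine. The justification ``$N|E(M)=M$, hence $\cl_N(A)\cap E(M)=\cl_M(A)$'' is likewise valid for $N=P_A(\Theta_k,M)$ (this is exactly the restriction property of the generalized parallel connection), and it is the right way to translate the hypothesis $x\notin\cl_M(A)$ into the hypothesis needed for the contraction rule of $P_A$.
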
  

The next two results are employed for rearranging path sequences.

\begin{lemma}
\label{2.18}
\cite[Lemma 2.18]{oxley2000generalized}
Let $M$ be a matroid, and $S$ and $T$ be disjoint subsets of $E(M)$ such that $|S|\geq 2$ and $|T|\geq 2$. If $S$ and $T$ are both coindependent segments, then 
\begin{equation*}
\Delta_{S}(\Delta_{T}(M))= \Delta_{T}(\Delta_{S}(M)).
\end{equation*}
\end{lemma}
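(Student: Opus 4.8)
The plan is to reduce the identity to an order-independence property of the generalized parallel connection. Write $k=|S|$ and $\ell=|T|$. Since $T$ is a coindependent segment, $\Delta_T(M)$ is defined, and one checks that $S$ is again a coindependent $U_{2,k}$-restriction of $\Delta_T(M)$: it is a $U_{2,k}$-restriction because $P_T(\Theta_\ell,M)$ restricts to $M$ on $E(M)$ and deleting the set $T$, which is disjoint from $S$, leaves $S$ untouched; and it is coindependent because $E(\Theta_\ell)-T$ is spanning in $\Theta_\ell$ (a standard property of $\Theta_\ell$), so $T$ is coindependent in $P_T(\Theta_\ell,M)$ and hence so is $S$. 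Thus $\Delta_S(\Delta_T(M))$ is defined, and by symmetry so is $\Delta_T(\Delta_S(M))$.

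Unwinding the definition, $\Delta_S(\Delta_T(M))=\Delta_S\bigl(P_T(\Theta_\ell,M)\del T\bigr)$ up to relabeling; since $T$ is disjoint from $S$ we may pull the deletion of $T$ back out through the exchange on $S$ using Lemma~\ref{2.16}(i), which gives
\[
\Delta_S(\Delta_T(M))\;=\;P_S\bigl(\Theta_k,\,P_T(\Theta_\ell,M)\bigr)\del(S\cup T)
\]
up to the natural bijections (each of $E(\Theta_k)-S$ back to $S$ and $E(\Theta_\ell)-T$ back to $T$, which act on disjoint sets). Symmetrically,
\[
\Delta_T(\Delta_S(M))\;=\;P_T\bigl(\Theta_\ell,\,P_S(\Theta_k,M)\bigr)\del(S\cup T)
\]
up to the same bijections. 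So the lemma follows once we establish
\[
P_S\bigl(\Theta_k,\,P_T(\Theta_\ell,M)\bigr)\;=\;P_T\bigl(\Theta_\ell,\,P_S(\Theta_k,M)\bigr);
\]
that is, gluing a copy of $\Theta_k$ onto $M$ along $S$ and a copy of $\Theta_\ell$ onto $M$ along $T$ can be carried out in either order when $S\cap T=\emptyset$.

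For this last equality I would argue with rank functions. Both matroids have ground set $E(M)\cup(E(\Theta_k)-S)\cup(E(\Theta_\ell)-T)$ and restrict to $M$, $\Theta_k$, and $\Theta_\ell$ on the obvious subsets, and each is obtained from the previous matroid by a generalized parallel connection along a flat ($S$, respectively $T$) that is modular in the copy of $\Theta$ being attached --- the key structural feature of the matroids $\Theta_k$, which also guarantees that every parallel connection written above is legitimate. The standard rank formula for the generalized parallel connection along a modular flat then expresses $r_{P_S(\Theta_k,P_T(\Theta_\ell,M))}$ in terms of the rank functions of $\Theta_k$, $\Theta_\ell$, $M$, and their restrictions to $S$ and $T$; because $S\cap T=\emptyset$, expanding the formula in either order yields the same expression, the two ``correction'' terms involving disjoint ground-set coordinates and so not interacting. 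Equivalently, one can use the characterization of $P_F(N_1,N_2)$, for $F$ modular in $N_1$, as the freest matroid on $E(N_1)\cup E(N_2)$ inducing $N_1$ and $N_2$; order-independence of two such amalgamations along the disjoint loci $S$ and $T$ is then immediate.

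The main obstacle is exactly this final equality: writing down the rank-formula bookkeeping carefully, and in particular checking at each stage that the gluing really is along a flat that is modular in the whole matroid one is gluing onto (not just in the $\Theta$ factor), so that the formula applies. By comparison, the well-definedness step --- that $S$ remains a coindependent segment after the exchange on $T$, and that the hypotheses of Lemma~\ref{2.16}(i) are met --- is routine. As an alternative one could induct on $|S|+|T|$, using Lemma~\ref{2.13} to write $\Delta_S(\Delta_T(M))/x=\Delta_{S-x}(\Delta_T(M\del x))$ for $x\in S$ with $|S|\geq 3$ (applying Lemma~\ref{2.16}(i) to the deletion), invoking the inductive hypothesis on $M\del x$, and then reversing the computation with the roles of $S$ and $T$ swapped (moving a contraction past $\Delta_T$ via Lemma~\ref{2.16}(ii)); since two matroids of equal rank that agree under contraction of every element coincide, this pins down $\Delta_S(\Delta_T(M))$ once the base case $|S|=|T|=2$ is settled directly. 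The delicate point in that approach is to cover, uniformly, the elements lying in no segment, which is why the parallel-connection argument is cleaner.
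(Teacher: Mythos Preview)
The paper does not prove this lemma; it is quoted from \cite{oxley2000generalized} without argument, so there is no in-paper proof to compare against. Your outline---unwind both sides to iterated generalized parallel connections with $\Theta_k$ and $\Theta_\ell$, pull the deletions through via Lemma~\ref{2.16}(i), and then invoke order-independence of the two parallel connections along the disjoint flats $S$ and $T$---is the standard route and is correct in substance.

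Two places want tightening. First, your coindependence check: the clause ``$T$ is coindependent in $P_T(\Theta_\ell,M)$ and hence so is $S$'' is a non sequitur as written. The actual reason $S$ is coindependent in $\Delta_T(M)$ is that $E(\Theta_\ell)-T$ spans $T$ inside $P_T(\Theta_\ell,M)$, so together with $E(M)-S-T$ it spans all of $E(M)-S$, which in turn spans $E(M)$ since $S$ was coindependent in $M$; this is the routine fix you need to supply (and the same observation feeds the repeated applications of Lemma~\ref{2.16}(i)). Second, the final associativity identity is close in spirit to Lemma~\ref{lem:chaingpc} of this paper but not literally an instance of it: there the modularity hypotheses are placed on $M_2$ and $M_3$, whereas here the modular flats live in the two copies of $\Theta$, so you cannot simply cite \ref{lem:chaingpc}. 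Your proposed rank-function (or freest-amalgam) verification does handle it, and your identification of this bookkeeping as the main obstacle is accurate. The inductive alternative you sketch at the end is workable but, as you yourself note, less clean than the direct parallel-connection argument.
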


\begin{cor}
\label{2.19}
 \cite[Corollary 2.19]{oxley2000generalized}
 Let $M$ be a matroid, and $S$ and $T$ be disjoint subsets of $E(M)$ such that $|S|\geq 2$ and $|T|\geq 2$. 
 \begin{enumerate}
  \item[(i)] If $S$ and $T$ are both independent cosegments, then 
  \begin{equation*}
\nabla_{S}(\nabla_{T}(M))= \nabla_{T}(\nabla_{S}(M)).
\end{equation*}
 \item[(ii)] If $S$ is an independent cosegment and $T$ is a coindependent segment, then
   \begin{equation*}
\nabla_{S}(\Delta_{T}(M))= \Delta_{T}(\nabla_{S}(M)).
\end{equation*}
 \end{enumerate}
\end{cor}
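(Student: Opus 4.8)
The plan is to reduce both parts to Lemma~\ref{2.18}, using the defining identity $\nabla_A(\cdot)=(\Delta_A((\cdot)^{*}))^{*}$ --- so that $(\nabla_A(K))^{*}=\Delta_A(K^{*})$ --- and the inverse law of Lemma~\ref{2.11}. The formal manipulations are short; the real content is a batch of bookkeeping lemmas guaranteeing that every iterated operation below is defined. Specifically, I would first establish that $\Delta$ and $\nabla$, applied along a coindependent segment resp.\ independent cosegment $S$, leave a disjoint segment or cosegment $T$ intact together with its (co)independence status (including the mixed case where $S$ is a cosegment and $T$ a segment, and vice versa), and that if $S$ is an independent cosegment of $M$ then $S$ becomes a coindependent segment of $\nabla_S(M)$, with $\Delta_S(\nabla_S(M))=M$. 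The ``segment'' part of each such claim is immediate, since $\Delta_S(M)$ agrees with $M$ on $E(M)-S$; the ``(co)independence'' part, and the corresponding statements about triads, come down to rank and circuit computations in $P_S(\Theta_{|S|},M)$ of exactly the kind behind Lemmas~\ref{2.5}--\ref{2.18}. These bookkeeping lemmas are the main obstacle, and in practice I would cite them from \cite{oxley2000generalized}.

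Granting them, part (i) is pure duality: $S$ and $T$ are disjoint coindependent segments of $M^{*}$, so Lemma~\ref{2.18} gives $\Delta_S(\Delta_T(M^{*}))=\Delta_T(\Delta_S(M^{*}))$, and applying $(\cdot)^{*}$ together with $(\nabla_A(K))^{*}=\Delta_A(K^{*})$ yields
\begin{equation*}
\nabla_S(\nabla_T(M))=\bigl(\Delta_S(\Delta_T(M^{*}))\bigr)^{*}=\bigl(\Delta_T(\Delta_S(M^{*}))\bigr)^{*}=\nabla_T(\nabla_S(M)).
\end{equation*}

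Part (ii) is essentially self-dual --- the identity for $(M,S,T)$ is the dual of the identity for $(M^{*},T,S)$, which is again an instance of (ii) --- so duality alone does not close it, and I would use the inverse law. Set $N:=\nabla_S(M)$; by the bookkeeping lemmas $\Delta_S(N)=M$, the set $S$ is a coindependent segment of $N$, and $T$ remains a coindependent segment of $N$ disjoint from $S$. Lemma~\ref{2.18} then gives $\Delta_T(M)=\Delta_T(\Delta_S(N))=\Delta_S(\Delta_T(N))$, and since $S$ is still a coindependent segment of $\Delta_T(N)$, Lemma~\ref{2.11} cancels $\nabla_S\Delta_S$:
\begin{equation*}
\nabla_S(\Delta_T(M))=\nabla_S\bigl(\Delta_S(\Delta_T(N))\bigr)=\Delta_T(N)=\Delta_T(\nabla_S(M)),
\end{equation*}
as desired.
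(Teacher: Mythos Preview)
The paper does not give its own proof of this corollary; it is simply quoted from \cite[Corollary~2.19]{oxley2000generalized}, so there is nothing in the present paper to compare against.

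That said, your derivation is correct and is the natural one. Part~(i) is exactly Lemma~\ref{2.18} applied to $M^{*}$ and then dualised, using $(\nabla_A(K))^{*}=\Delta_A(K^{*})$. For part~(ii) you rightly observe that duality alone does not suffice, and your substitution $N=\nabla_S(M)$ followed by Lemma~\ref{2.18} on the pair $(S,T)$ in $N$ and then cancellation via Lemma~\ref{2.11} is clean and standard. The bookkeeping claims you flag---that a disjoint coindependent segment $T$ of $M$ remains one in $\nabla_S(M)$, that $S$ becomes a coindependent segment of $\nabla_S(M)$ with $\Delta_S(\nabla_S(M))=M$, and so on---are indeed the substance of the matter, and are established in the source you cite.
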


Let $M$ be a matroid with an $\mathcal{N}$-minor. We say a segment $S$ of a matroid $M$ is \textit{$\mathcal{N}$-allowable} if $S$ is coindependent and some element of $S$ is not $\mathcal{N}$-deletable. Write $\mathcal{N}^* := \{N^{*} : N\in \mathcal{N}\}$. A cosegment $C$ of $M$ is \textit{$\mathcal{N}$-allowable} if the segment $C$ of $M^{*}$ is $\mathcal{N}^{*}$-allowable. We say $S$ is an \textit{$\mathcal{N}$-allowable set} if $S$ is either an $\mathcal{N}$-allowable segment or an $\mathcal{N}$-allowable cosegment of $M$. The next two results give conditions for allowable segments and cosegments to go up the minor order, and are used later in reductions.

\begin{lemma}
\label{asegsgoup}
Let $M$, $M\del x$ be strictly $\mathcal{N}$-fragile matroids such that $M\del x$ has no $\mathcal{N}$-essential elements. If $A\subseteq E(M\del x)$ is an $\mathcal{N}$-allowable segment of $M\del x$, then $A$ is an $\mathcal{N}$-allowable segment of $M$.
\end{lemma}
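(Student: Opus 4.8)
\emph{Proof proposal.} The plan is to verify directly that each of the three defining properties of an $\mathcal{N}$-allowable segment is inherited by $A$ when we pass from $M\del x$ up to $M$. Since $A\subseteq E(M\del x)$ we have $x\notin A$, which is what makes all three transfers go through.

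First, the segment property. The circuits of $M\del x$ are exactly the circuits of $M$ that avoid $x$, so every $3$-element subset of $A$ that is a triangle of $M\del x$ is a triangle of $M$; hence $A$ is a segment of $M$. Second, coindependence: I would argue at the level of rank functions. Coindependence of $A$ in $M\del x$ says that $E(M)-A-x$ spans $M\del x$; ranks of subsets avoiding $x$ agree in $M$ and $M\del x$, and reinserting $x$ into $E(M)-A-x$ cannot lower the rank, so $E(M)-A$ spans $M$, i.e.\ $A$ is coindependent in $M$. (The only edge cases are $x$ being a loop or coloop of $M$, and these are excluded outright: if $x$ were a loop or coloop then $M\del x$ and $M/x$ would be isomorphic, both having an $\mathcal{N}$-minor since $M$ does, making $x$ flexible and contradicting that $M$ is fragile.)

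The substantive step is producing an element of $A$ that is not $\mathcal{N}$-deletable in $M$. By hypothesis some $a\in A$ is not $\mathcal{N}$-deletable in $M\del x$. Because $M\del x$ has no $\mathcal{N}$-essential elements, $a$ must be $\mathcal{N}$-contractible in $M\del x$, so $(M\del x)/a=(M/a)\del x$ has an $\mathcal{N}$-minor, and therefore so does $M/a$. Thus $a$ is $\mathcal{N}$-contractible in $M$; since $M$ is strictly $\mathcal{N}$-fragile, $a$ is not $\mathcal{N}$-flexible, so $a$ is not $\mathcal{N}$-deletable in $M$. Combining the three facts, $A$ is a coindependent segment of $M$ containing a non-$\mathcal{N}$-deletable element, i.e.\ an $\mathcal{N}$-allowable segment of $M$ (noting that $M$ has an $\mathcal{N}$-minor, so the notion applies).

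This is a diagram chase through the definitions rather than a computation, so I do not expect a genuine obstacle; the only point needing a moment's care is the coindependence transfer, and even there the loop/coloop subtlety is dispatched immediately using fragility. The real content is simply unwinding ``no $\mathcal{N}$-essential elements'' together with ``strictly $\mathcal{N}$-fragile'' in the correct order.
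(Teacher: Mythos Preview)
Your proof is correct and complete. The paper omits the proof entirely as straightforward, and your argument is precisely the direct verification one would expect: each defining property of an $\mathcal{N}$-allowable segment transfers from $M\del x$ to $M$, with the only nontrivial step being the use of the ``no essential elements'' hypothesis together with fragility of $M$ to show the non-deletable element of $A$ remains non-deletable.
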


We omit the straightforward proof.


\begin{lemma}
\label{whencsegsgoup}
Let $M$, $M\del x$ be strictly $\mathcal{N}$-fragile matroids such that $M\del x$ has no $\mathcal{N}$-essential elements. If $A\subseteq E(M\del x)$ is an $\mathcal{N}$-allowable cosegment of $M\del x$ and $x\in \cl(E(M)-(A\cup x))$, then $A$ is an $\mathcal{N}$-allowable cosegment of $M$.
\end{lemma}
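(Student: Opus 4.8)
The plan is to check, one at a time, the three conditions defining an $\mathcal{N}$-allowable cosegment of $M$: that $A$ is a cosegment of $M$, that $A$ is coindependent in $M^{*}$, and that some element of $A$ is not $\mathcal{N}^{*}$-deletable in $M^{*}$ (equivalently, not $\mathcal{N}$-contractible in $M$). For the first two I expect to need only the hypothesis $x\in\cl(E(M)-(A\cup x))$ together with elementary rank bookkeeping; the third is where the fragility hypotheses do the work.

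First I would translate the closure hypothesis. Applying Lemma \ref{clandco} to the partition $(E(M)-(A\cup x),\{x\},A)$ shows that $x\in\cl(E(M)-(A\cup x))$ is equivalent to $x\notin\cl^{*}(A)=\cl_{M^{*}}(A)$. In particular $x$ is not a loop of $M^{*}$ and, by monotonicity of closure, $x\notin\cl_{M^{*}}(S)$ for every $S\subseteq A$, so $r_{(M\del x)^{*}}(S)=r_{M^{*}/x}(S)=r_{M^{*}}(S\cup x)-1=r_{M^{*}}(S)$ for all such $S$; hence $(M\del x)^{*}|A$ and $M^{*}|A$ are the same matroid. Since $A$ is a cosegment of $M\del x$, i.e.\ a segment of $(M\del x)^{*}$, it is therefore a segment of $M^{*}$, i.e.\ a cosegment of $M$. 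For coindependence: $A$ being an $\mathcal{N}$-allowable cosegment of $M\del x$ makes $A$ coindependent in $(M\del x)^{*}$, i.e.\ independent in $M\del x$, and independence of a subset of $E(M)-x$ is unaffected by deleting $x$, so $A$ is independent in $M$, that is, coindependent in $M^{*}$.

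For the third condition, I would take $a\in A$ witnessing that the segment $A$ of $(M\del x)^{*}$ is $\mathcal{N}^{*}$-allowable; that is, $a$ is not $\mathcal{N}^{*}$-deletable in $(M\del x)^{*}$, which by dualizing means $(M\del x)/a$ has no $\mathcal{N}$-minor, i.e.\ $a$ is not $\mathcal{N}$-contractible in $M\del x$. Since $M\del x$ is strictly $\mathcal{N}$-fragile and has no $\mathcal{N}$-essential elements, $a$ must then be $\mathcal{N}$-deletable in $M\del x$, so $(M\del a)\del x=(M\del x)\del a$ has an $\mathcal{N}$-minor, and hence so does $M\del a$. Thus $a$ is $\mathcal{N}$-deletable in $M$; as $M$ is $\mathcal{N}$-fragile, $a$ is not $\mathcal{N}$-flexible, hence not $\mathcal{N}$-contractible in $M$, i.e.\ not $\mathcal{N}^{*}$-deletable in $M^{*}$. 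So the segment $A$ of $M^{*}$ is $\mathcal{N}^{*}$-allowable, and therefore $A$ is an $\mathcal{N}$-allowable cosegment of $M$.

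I do not anticipate a genuine obstacle: the whole argument is careful bookkeeping through duality and deletion/contraction. The one point that must be handled with care is recognizing that the hypothesis $x\in\cl(E(M)-(A\cup x))$ is exactly what prevents $A$ from being a cosegment only ``after contracting $x$''; without it, $A$ could be a cosegment of $M\del x$ that fails to be a cosegment of $M$.
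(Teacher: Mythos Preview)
Your proof is correct and follows essentially the same approach as the paper's own proof: both use the closure hypothesis (via Lemma~\ref{clandco}) to show $x\notin\cl^{*}(A)$ and hence that $A$ remains an independent cosegment of $M$, and both use the ``no $\mathcal{N}$-essential elements'' hypothesis together with fragility of $M$ to push the non-contractible witness up from $M\del x$ to $M$. Your write-up is simply more explicit where the paper is terse.
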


\begin{proof}
Assume that $A\subseteq E(M\del x)$ is an $\mathcal{N}$-allowable cosegment of $M\del x$ and $x\in \cl(E(M)-(A\cup x))$. Then $A$ is a coindependent segment of $M^{*}/x$, and $x\notin \cl^{*}(A)$, so $A$ is a coindependent segment of $M^{*}$. Finally, since $M\del x$ has no $\mathcal{N}$-essential elements $A$ has an $\mathcal{N}$-deletable element in $M\del x$ and hence in $M$.    
\end{proof}

We have the following properties on connectivity, rank and closure for the generalized $\Delta-Y$ exchange.

\begin{lemma}
\label{9.3}
 \cite[Lemma 9.3]{hall2004fork}
Let $M$ be a $3$-connected matroid.
\begin{itemize}
 \item[(i)] If $A$ is a segment of $M$, then, for all $A'\subseteq A$, $P_A(\Theta_k, M)\del A'$ is $3$-connected up to series pairs.
 \item[(ii)] If $A$ is a cosegment of $M$, then, for all $A'\subseteq A$, $(P_A(\Theta_k, M)\del A')^{*}$ is $3$-connected up to parallel pairs.
\end{itemize}
\end{lemma}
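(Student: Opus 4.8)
The plan is to prove both parts by analysing the $2$-separations of the matroid in question directly, exploiting that $M$ and $\Theta_k$ are each $3$-connected and that $A$ sits inside $\Theta_k$ as a modular flat: a $k$-point line whose complementary set $B := E(\Theta_k)-A$ is a cosegment. The cases $k\le 2$ are immediate, so assume $k\ge 3$, and write $N := P_A(\Theta_k, M)$, with ground set $A\cup B\cup C$ where $C := E(M)-A$; then $N\del B = M$ and $N\del C = \Theta_k$. A first step is to record that $N$ is $3$-connected, which follows from $3$-connectivity of $M$ and $\Theta_k$ together with standard facts about the connectivity of generalized parallel connections along a modular flat.

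The observation driving everything is that, because $A$ spans a rank-$2$ flat of $N$, every cocircuit of $N$ meeting $A$ contains all but at most one element of $A$ (equivalently, every hyperplane either contains the line through $A$ or meets it in a point). Hence deleting a set $A'\subseteq A$ can create a new small cocircuit only by shrinking one of these ``large-on-$A$'' cocircuits, and the small cocircuits so produced are series pairs of a very restricted type: one element of $B$ whose partner $a_i$ has just been deleted, paired with a single element of $A-A'$ or of $C$. I would turn this into a proof either by induction on $|A'|$, removing one element of $A$ at a time and checking that $3$-connectivity up to series pairs is preserved, or in a single pass: take a hypothetical $2$-separation $(X,Y)$ of $N' := N\del A'$, re-add $A'$ to the side already containing most of the line $A$ to obtain a separation of $N$ of small order, and use $3$-connectivity of $N$ to force $(X,Y)$ to be localised. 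The case analysis keeps track of how $(X,Y)$ meets the three parts $A'' := A-A'$, $B$, and $C$: a separation essentially confined to the $M$-side is excluded by $3$-connectivity of $M$; one confined to the $\Theta_k$-side is dealt with using the structure of the deletions $\Theta_k\del A'$ (which can be described explicitly); and a separation crossing the interface is controlled using modularity of $A$ and the pairing $a_i\leftrightarrow b_i$.

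Part (ii) I would obtain as the dual of part (i): when $A$ is a cosegment of $M$, pass to $M^*$, in which $A$ is a segment, apply the analysis of (i) there, and dualise --- using that $\Theta_k$ is isomorphic to its own dual via $a_i\leftrightarrow b_i$ and that the dual of a generalized parallel connection is a generalized series connection --- so that ``$3$-connected up to series pairs'' turns into ``$3$-connected up to parallel pairs''.

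The step I expect to be the main obstacle is precisely the interface case: a putative $2$-separation $(X,Y)$ that splits the residual line $A''$ and the cosegment $B$ nontrivially between the two sides. No soft connectivity argument handles this; one has to use the exact position of $B$ relative to the line $A$ in $\Theta_k$ --- modularity of $A$, and the way deleting $a_i$ ``frees'' $b_i$ --- to show that any such separation collapses to one of the permitted series pairs. It is also here that the bookkeeping, keeping the connectivity function under control as the elements of $A'$ are removed, is heaviest.
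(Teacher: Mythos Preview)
The paper does not prove this lemma: it is quoted verbatim from Hall, Oxley, and Semple \cite[Lemma 9.3]{hall2004fork} and used as a black box, so there is no proof in the paper to compare your attempt against.

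Your outline for (i) is the natural one and matches, in broad strokes, how such results are typically proved: show $N=P_A(\Theta_k,M)$ is $3$-connected, then analyse a putative $2$-separation $(X,Y)$ of $N\del A'$ according to how it meets $A-A'$, $B$, and $C$. Your identification of the ``interface case''---where $(X,Y)$ splits both the residual line and $B$---as the place requiring the concrete structure of $\Theta_k$ (the natural bijection $a_i\leftrightarrow b_i$ and the cocircuits of $\Theta_k$) is accurate; that is exactly where the work lies.

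One caution on (ii): the statement as recorded here writes $P_A(\Theta_k,M)$ with $A$ a \emph{cosegment} of $M$, but the generalized parallel connection $P_A(\Theta_k,M)$ is only defined when $M|A\cong\Theta_k|A\cong U_{2,k}$, i.e.\ when $A$ is a segment of $M$. The intended statement is the dual of (i), and the object in question is (after unwinding conventions) $(P_A(\Theta_k,M^{*})\del A')^{*}$; your plan to obtain (ii) by dualising (i) is then correct, but you should make the translation explicit rather than treat $P_A(\Theta_k,M)$ literally with $A$ a cosegment.
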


Performing generalized $\Delta-Y$ exchanges on a set $A$ of a matroid preserves the connectivity of separations that are nested with $A$. We omit the easy proof.

\begin{lemma}
\label{pathDY1}
Let $M$ be a matroid, and let $(X,Y)$ be a partition of $E(M)$. If $A$ is an allowable segment of $M$, and $A\subseteq X$, then $\lambda_M(X)=\lambda_{\Delta_A(M)}(X)$. Moreover, for any $e\notin A$ and $Z\in \{X,Y\}$, $e\in \cl_M(Z)$ if and only if $e\in \cl_{\Delta_A(M)}(Z)$ and $e\in \cl^{*}_M(Z)$ if and only if $e\in \cl^{*}_{\Delta_A(M)}(Z)$.
\end{lemma}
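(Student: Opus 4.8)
The statement to prove is Lemma~\ref{pathDY1}: for a matroid $M$ with a partition $(X,Y)$ of $E(M)$ and an allowable segment $A \subseteq X$, we have $\lambda_M(X) = \lambda_{\Delta_A(M)}(X)$, and closures/coclosures of $X$ and $Y$ are preserved for elements outside $A$. Since an allowable segment is by definition coindependent, $\Delta_A(M)$ is well-defined. The natural approach is to work directly with the generalized parallel connection $P_A(\Theta_k, M)$ and track what happens to the connectivity function and to closure as we pass from $M$ to $P_A(\Theta_k, M)$ to $P_A(\Theta_k,M)\del A$, then relabel. Write $\Theta = \Theta_k$ with $|A| = k$, and let $B := E(\Theta) - A$ be the "new copy" of $A$ that replaces it after relabeling. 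The key structural fact (from Oxley--Semple--Vertigan, and implicit in the setup) is that in $P_A(\Theta, M)$, the set $A \cup B = E(\Theta)$ is a modular flat-like configuration: $A$ is a segment, $B$ is a cosegment on the complementary side, and $A$ is in the closure of $B$ (and $B$ in the coclosure of $A$) inside $E(\Theta)$.

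**Key steps.** First I would set $N := P_A(\Theta, M)$ and recall that $r_N(Z \cup A) = r_M(Z \cup A)$ for $Z \subseteq E(M) - A$, since $\Theta$ adds nothing to the rank of any set already containing $A$ (this is the defining property of generalized parallel connection across the modular flat $A$). Second, I would compute $\lambda_N(X \cup B)$. Because $A$ is a segment and $B = E(\Theta)-A$ sits "above" it, every element of $B$ lies in $\cl_N(X)$ (indeed in $\cl_N(A) \subseteq \cl_N(X)$ modulo the segment structure of $\Theta$), so adjoining $B$ to $X$ does not change $\lambda$: $\lambda_N(X \cup B) = \lambda_N(X)$. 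Third, I would delete $A$: since $A \subseteq \cl_N^{*}(B)$ (the cosegment side) — equivalently $A$ is coindependent with each element of $A$ in the coclosure of $E(\Theta) - A$ — deleting $A$ from $N$ does not change $\lambda_{N}(X\cup B)$ when restricted to the complementary partition; more precisely $\lambda_{N \del A}((X - A) \cup B) = \lambda_N(X \cup B)$, using the standard fact that deleting a set in the coclosure of its complement preserves the connectivity of any separation it straddles on the right side. Chaining these three equalities and using $\lambda_N(X) = \lambda_M(X)$ (the parallel connection adds the modular flat but leaves $\lambda$ of sets containing $A$ unchanged — here I may need to first observe $\lambda_N(X) = \lambda_N(X\cup B)$ differs from $\lambda_M(X)$ only in bookkeeping since $r_N(X) = r_M(X)$ and $r_N(Y) = r_M(Y)$ when $X \supseteq A$), and finally relabeling $B$ as $A$ (a bijection fixing everything else), yields $\lambda_{\Delta_A(M)}(X) = \lambda_M(X)$. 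For the "moreover" clause, the same bookkeeping shows that for $e \notin A$ and $Z \in \{X, Y\}$, the rank of $Z$ and of $Z \cup e$ are the same in $M$ and in $N \del A$ (after relabeling), hence $e \in \cl_M(Z) \iff e \in \cl_{\Delta_A(M)}(Z)$; the coclosure statement follows by applying the closure statement to $M^*$ and invoking $\nabla_A(\Delta_A(M)) = M$ from Lemma~\ref{2.11}, or directly via Lemma~\ref{clandco} (which converts the coclosure claim for $Z$ into a closure claim for the complement).

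**Main obstacle.** The routine part is the rank/connectivity bookkeeping once the right modular-flat statements about $\Theta$ inside $P_A(\Theta, M)$ are in hand; the genuine content is verifying precisely how $A$ and $B = E(\Theta) - A$ are positioned relative to $X$ and $Y$ in $P_A(\Theta, M)$ — specifically that $B \subseteq \cl_N(X)$ and $A \subseteq \cl_N^{*}(B \cup (Y \text{-side}))$ so that neither adjoining $B$ to $X$ nor deleting $A$ perturbs $\lambda$. This is where one must use the explicit structure of $\Theta_k$ (that $E(\Theta_k)$ is, up to the segment/cosegment duality built into it, a "$\Delta$-$Y$ gadget" with $A$ a segment and $E(\Theta_k)-A$ its $Y$-dual), together with the hypothesis that $A$ is a coindependent segment of $M$ and hence, via Lemma~\ref{2.5}, that $E(\Theta_k)-A$ is a $U_{2,k}$-restriction of the relevant dual. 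I expect this positioning argument — essentially, that the separation $(X, Y)$ "extends" to a separation of $P_A(\Theta_k, M)$ of the same order with $E(\Theta_k)$ lying entirely on the $X$-side — to be the crux, and it is exactly the content flagged by the sentence preceding the lemma: \emph{performing generalized $\Delta$-$Y$ exchanges on $A$ preserves the connectivity of separations that are nested with $A$}. Since $A \subseteq X$ means $(X, Y)$ is nested with $A$, once that principle is established in the form "$\lambda_{P_A(\Theta_k,M)}(X \cup (E(\Theta_k)-A)) = \lambda_M(X)$", the lemma follows immediately together with the relabeling.
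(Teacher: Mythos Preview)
The paper omits the proof entirely, so there is nothing to compare your approach against. Your overall plan---pass to $N := P_A(\Theta_k, M)$, track ranks, delete $A$, relabel---is the natural one and does work, but two of your key positioning claims are stated backwards, and as written the argument fails.

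You assert that $B := E(\Theta_k) - A$ lies in $\cl_N(A)$, hence in $\cl_N(X)$, so that $\lambda_N(X \cup B) = \lambda_N(X)$. This is false for $k \geq 3$: $\Theta_k$ has rank $k$ while $A$ has rank $2$, so $B$ is not spanned by $A$. Your third-step claim $A \subseteq \cl^*_N(B)$ also fails: for $a \in A$ one has $a \in \cl^*_N(B)$ if and only if $a \notin \cl_N(E(N) - (B\cup a)) = \cl_M(E(M) - a)$, which would force $a$ to be a coloop of $M$. The correct fact is the reverse inclusion: $B$ is a basis of $\Theta_k$, so $A \subseteq \cl_{\Theta_k}(B) \subseteq \cl_N(B)$. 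With this in hand the computation is straightforward. Since $X \cup B$ contains the modular flat $A$, the rank formula for the generalized parallel connection gives
\[
r_{N}(X \cup B) = r_{\Theta_k}(E(\Theta_k)) + r_M(X) - r(A) = r_M(X) + k - 2,
\]
and then $r_{N \del A}((X - A) \cup B) = r_N((X-A) \cup B) = r_N(X \cup B)$ precisely because $A \subseteq \cl_N(B)$. Combined with $r_{N\del A}(Y) = r_N(Y) = r_M(Y)$ (from $N|E(M) = M$) and $r(N \del A) = r(N) = r(M) + k - 2$, one gets $\lambda_{\Delta_A(M)}(X) = \lambda_M(X)$. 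The closure statement follows by applying the same rank identities to $Z$ and $Z \cup e$ (the shift by $k-2$ cancels), and the coclosure statement then follows via Lemma~\ref{clandco} exactly as you outlined.
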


We now work towards a proof that the generalized $\Delta-Y$ operations preserve fragility. We use the excluded-minor characterisation of near-regular matroids.

\begin{thm}
\cite[Theorem 1.2]{hall2011excluded}
\label{thmnr}
The excluded minors for the class of near-regular matroids are
$U_{2,5}$, $U_{3,5}$, $F_{7}$, $F_{7}^{*}$, $F_{7}^{-}$, \nfd, \agde,
$(\agde)^{*}$, $\Delta_{T}(\agde)$, and $P_{8}$.
\end{thm}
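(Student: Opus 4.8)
The plan is to classify the excluded minors by first cutting down to the $3$-connected ternary case, and then splitting a minimal obstruction according to which small field it fails to be representable over. (The statement is Theorem~1.2 of \cite{hall2011excluded}; the sketch below is of the kind the stabilizer and $\Delta$-$Y$ machinery assembled above is designed to support.) For the reductions: near-regular matroids form a minor-closed class that is also closed under direct sums and $2$-sums (a $2$-sum of $\mathbb{U}_1$-representable matroids is $\mathbb{U}_1$-representable), so every excluded minor $M$ is $3$-connected apart from a few small matroids that one disposes of by hand. Near-regular matroids are ternary, and the excluded minors for ternary representability are $U_{2,5}$, $U_{3,5}$, $F_{7}$, $F_{7}^{*}$; hence a non-ternary excluded minor for near-regularity is one of these four, and each is easily checked to qualify. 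So assume $M$ is ternary, with every proper minor near-regular. A matroid with no $U_{2,4}$-minor is binary, and a binary ternary matroid is regular, hence near-regular; therefore $M$ has a $U_{2,4}$-minor.

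Next I would exploit the structure of ternary matroids. By Whittle's analysis of the fields over which ternary matroids are representable, a matroid is near-regular if and only if it is representable over $\gf(3)$, $\gf(4)$ and $\gf(5)$ -- equivalently, if and only if it is both dyadic and a $\sqrt[6]{1}$-matroid. Since $M$ is not near-regular while all of its proper minors are, $M$ is minor-minimal subject to being ternary and either non-$\gf(4)$-representable or non-$\gf(5)$-representable. The problem is invariant under duality and, by Corollary~\ref{3.7}, under generalized $\Delta$-$Y$ exchange, and the ten listed matroids fall into the orbits $\{U_{2,5},U_{3,5}\}$, $\{F_{7},F_{7}^{*}\}$, $\{F_{7}^{-},\nfd\}$, $\{\agde,(\agde)^{*},\Delta_{T}(\agde)\}$ and the self-dual $P_{8}$, so it is enough to pin down one representative of each. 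In the non-$\gf(4)$ case one applies the Geelen--Gerards--Kapoor excluded-minor theorem for $\gf(4)$: as $M$ is ternary it has no $U_{2,6}$, $U_{4,6}$ or $P_{6}$ minor, so it has an $F_{7}^{-}$ or $\nfd$ minor, whence minimality gives $M \in \{F_{7}^{-},\nfd\}$.

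The non-$\gf(5)$ case -- $M$ ternary, not dyadic, all proper minors near-regular -- is the crux, since no excluded-minor list for the dyadic matroids is available. Here one leans on the fact that the proper minors of $M$ are already $\sqrt[6]{1}$, together with stabilizer theory: $U_{2,4}$ is a strong stabilizer for the partial fields in play, and Kahn's uniqueness of $\gf(4)$-representations of $3$-connected matroids, together with the corresponding rigidity of $\mathbb{U}_1$- and dyadic representations, confines a failure of dyadicity to a bounded configuration around a fixed $U_{2,4}$-minor. This bounds $|E(M)|$ by a small constant, after which an exhaustive search through the $3$-connected ternary matroids up to that bound -- deciding near-regularity by a partial-field representability algorithm -- returns exactly $\agde$, $(\agde)^{*}$, $\Delta_{T}(\agde)$ and $P_{8}$. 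One finishes by verifying directly that each of the ten listed matroids is non-near-regular while all of its proper minors are near-regular. The main obstacle is precisely this dyadic step: one cannot quote a finite excluded-minor description of the dyadic matroids, so the argument must extract from ``all proper minors near-regular'' enough rigidity (via strong stabilizers and uniqueness of representations) to force a size bound, and then make the resulting case analysis and the computation genuinely tractable; everything else is routine bookkeeping around the ternary and $\gf(4)$ excluded-minor theorems and the duality/$\Delta$-$Y$ symmetry of the problem.
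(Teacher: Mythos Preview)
The paper does not prove this theorem at all: it is quoted as \cite[Theorem~1.2]{hall2011excluded} and used as a black box (specifically, to derive Corollary~\ref{nreg}). So there is no proof in the paper to compare your sketch against; I can only assess your outline on its own terms.

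Your broad strategy---reduce to the $3$-connected ternary case via the Bixby--Seymour list, then invoke Whittle's characterization of near-regular as ``ternary and $\gf(4)$-representable and $\gf(5)$-representable'' to split into a $\gf(4)$-failure branch and a $\gf(5)$-failure branch---is indeed the shape of the argument in \cite{hall2011excluded}. However, there is a concrete error in your case split. The Geelen--Gerards--Kapoor list of excluded minors for $\gf(4)$ contains not only $U_{2,6}$, $U_{4,6}$, $P_6$, $F_7^-$, $(F_7^-)^*$, but also $P_8$ and $P_8''$; of these, $P_8$ is ternary. Hence your sentence ``as $M$ is ternary it has no $U_{2,6}$, $U_{4,6}$ or $P_6$ minor, so it has an $F_7^-$ or $(F_7^-)^*$ minor'' is false: $M=P_8$ is ternary, not $\gf(4)$-representable, and has none of those five matroids as a minor. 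In fact $P_8$ is dyadic (representable over every field of odd characteristic) and not $\gf(4)$-representable, so it belongs in your non-$\gf(4)$ branch, not in the non-$\gf(5)$ branch where you placed it. Conversely, $\agde$ and its dual and $\Delta$-$Y$ variants are $\sqrt[6]{1}$-matroids (hence $\gf(4)$-representable) but not dyadic, so they are correctly in the non-$\gf(5)$ branch.

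Beyond that misplacement, your non-$\gf(5)$ branch is only a sketch: the assertion that stabilizer rigidity ``bounds $|E(M)|$ by a small constant'' is the entire content of the theorem on that side, and you have not indicated how the bound is obtained. The actual argument in \cite{hall2011excluded} is substantially more delicate than a direct size bound followed by a search; it uses the machinery of strong stabilizers, the lift theorem for partial fields, and a careful analysis of how a dyadic failure can occur in a matroid all of whose proper minors are near-regular. Your acknowledgment that this step is ``the main obstacle'' is apt, but as written the sketch does not contain the idea that resolves it.
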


We let $\mathcal{EX}(\mathbb{U}_1)$ denote the set of excluded minors for the class of near-regular matroids. We now obtain a representability certificate of losing the $\{U_{2,5}, U_{3,5}\}$-minor.

\begin{cor}
\label{nreg}
Let $M$ be a $\mathbb{U}_2$- or $\mathbb{H}_5$-representable matroid. Then $M$ is near-regular if and only if $M$ has no $\{U_{2,5}, U_{3,5}\}$-minor.
\end{cor}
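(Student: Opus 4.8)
The plan is to prove Corollary~\ref{nreg} by combining Theorem~\ref{thmnr} with the two representability facts established just above: Lemma~\ref{H5hom} (matroids representable over $\mathbb{U}_2$ or $\mathbb{H}_5$ are representable over every field with at least five elements) and the homomorphism/representability results summarised in Lemma~\ref{lem:H5sixhoms} and the surrounding discussion. The forward direction is immediate: a near-regular matroid has no minor in $\mathcal{EX}(\mathbb{U}_1)$, and $U_{2,5}, U_{3,5} \in \mathcal{EX}(\mathbb{U}_1)$ by Theorem~\ref{thmnr}, so $M$ has no $\{U_{2,5}, U_{3,5}\}$-minor. The content is entirely in the converse.

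For the converse, suppose $M$ is $\mathbb{U}_2$- or $\mathbb{H}_5$-representable and has no $\{U_{2,5}, U_{3,5}\}$-minor; I want to conclude $M$ is near-regular, i.e.\ $M$ has no minor in $\mathcal{EX}(\mathbb{U}_1)$. The strategy is to go through the list in Theorem~\ref{thmnr} and rule out each excluded minor. Two of them, $U_{2,5}$ and $U_{3,5}$, are excluded by hypothesis. For the remaining excluded minors $N \in \{F_7, F_7^*, F_7^-, \nfd, \agde, (\agde)^*, \Delta_T(\agde), P_8\}$, the key observation is that each of these is \emph{not} representable over some field $\mathbb{F}$ with $|\mathbb{F}| \geq 5$: for instance $F_7$ and $F_7^*$ are representable only in characteristic $2$, hence not over $\gf(5)$; $F_7^-$ and $\nfd$ are $\gf(5)$-non-representable (in fact they are representable only over fields of characteristic $2$ or $3$ respectively — one should pin down the precise statement); $P_8$ is not $\gf(5)$-representable; and $\agde$, its dual, and $\Delta_T(\agde)$ are likewise ruled out over $\gf(5)$ (the $\Delta$-$Y$ case using Corollary~\ref{3.7} if one wants to reduce it to $\agde$, although it is cleaner to just cite the known representability of $\agde$ directly). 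Since minors of $\mathbb{F}$-representable matroids are $\mathbb{F}$-representable, if $M$ had any such $N$ as a minor then $M$ would fail to be $\gf(5)$-representable, contradicting Lemma~\ref{H5hom}. Therefore $M$ has no minor in $\mathcal{EX}(\mathbb{U}_1)$, so $M$ is near-regular.

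The main obstacle — really more a bookkeeping point than a genuine difficulty — is making sure that every non-$U_{2,5}, U_{3,5}$ member of $\mathcal{EX}(\mathbb{U}_1)$ is provably non-representable over at least one field of size $\geq 5$, and citing the correct reference (Oxley's book, or the original sources) for each. One has to be slightly careful with $P_8$ and with the Fano-type matroids: $F_7$ is $\gf(q)$-representable iff $q$ is even, so it is non-$\gf(5)$-representable; $F_7^-$ is representable over $\gf(q)$ iff $q \in \{3,4,5,\dots\}$? — no: $F_7^-$ is representable over all fields with at least three elements except possibly needs checking, so one might instead use a field like $\gf(7)$ or appeal to the fact that $F_7^-$ \emph{is} $\gf(5)$-representable, in which case it must be excluded on other grounds. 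This means the cleanest argument is not "non-representable over $\gf(5)$" uniformly, but rather: for each $N$ in the list, exhibit \emph{some} prime power $q \geq 5$ over which $N$ is not representable, and apply Lemma~\ref{H5hom} with $\mathbb{F} = \gf(q)$. Since $\mathbb{U}_1$-representability is equivalent to representability over all fields with at least three elements, and each listed excluded minor is by definition not near-regular, each is non-representable over \emph{some} such field; one then only needs that field to have order $\geq 5$, which is automatic except possibly for $\gf(3)$ and $\gf(4)$ obstructions. A short case analysis (or a direct appeal to the fact that near-regular matroids are exactly the matroids representable over $\gf(5)$ and all larger fields, i.e.\ the intersection over $q \geq 5$ of the $\gf(q)$-representable classes, combined with Lemma~\ref{H5hom}) closes this gap cleanly, and that is the route I would write up.
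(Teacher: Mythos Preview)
Your proposal is correct and follows essentially the same route as the paper's proof: the forward direction is immediate from Theorem~\ref{thmnr}, and for the converse you argue that each matroid in $\mathcal{EX}(\mathbb{U}_1)\setminus\{U_{2,5},U_{3,5}\}$ fails to be $\gf(q)$-representable for some prime power $q\geq 5$, and then invoke Lemma~\ref{H5hom}. The paper handles that last representability claim by citing well-known results rather than working through the individual cases as you begin to do, but the logical structure is identical.
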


\begin{proof}
If $M$ is near-regular, then $M$ has no $\{U_{2,5}, U_{3,5}\}$-minor by Theorem \ref{thmnr}. Conversely, suppose $M$ is $\mathbb{P}$-representable for some $\mathbb{P}\in \{\mathbb{U}_2, \mathbb{H}_5\}$ and that $M$ has no $\{U_{2,5}, U_{3,5}\}$-minor. It follows from Theorem \ref{thmnr} and well-known results (see \cite{hall2011excluded, oxley2011matroid}) that, for each matroid $M'$ in $\mathcal{EX}(\mathbb{U}_1)-\{U_{2,5}, U_{3,5}\}$, there is some prime power $q\geq 5$ such that $M'$ is not $\gf(q)$-representable. Therefore $M$ also has no minor in $\mathcal{EX}(\mathbb{U}_1)-\{U_{2,5}, U_{3,5}\}$ by Lemma \ref{H5hom}, so $M$ is near-regular.  
\end{proof}

We can now show how the generalized $\Delta-Y$ operations can be used to build new fragile matroids.

\begin{lemma}
\label{amove}
Let $\mathbb{P}\in \{\mathbb{U}_2, \mathbb{H}_5\}$, and let $M$ be a strictly $\{U_{2,5}, U_{3,5}\}$-fragile $\mathbb{P}$-representable matroid. If $A$ is an allowable segment of $M$ with $|A| \in \{3,4\}$, then $\Delta_A(M)$ is a $\{U_{2,5}, U_{3,5}\}$-fragile $\mathbb{P}$-representable matroid. Moreover, $A$ is an allowable cosegment of $\Delta_A(M)$.
\end{lemma}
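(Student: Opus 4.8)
The plan is to establish the three assertions — $\mathbb{P}$-representability of $\Delta_A(M)$, $\{U_{2,5},U_{3,5}\}$-fragility of $\Delta_A(M)$, and that $A$ is an allowable cosegment of $\Delta_A(M)$ — more or less in that order, leaning on the minor-commutation lemmas (\ref{2.13}, \ref{2.16}) and the near-regular certificate (Corollary \ref{nreg}). Since $A$ is allowable, it is by definition a coindependent segment, so $\Delta_A(M)$ is defined; $\mathbb{P}$-representability is then immediate from Corollary \ref{3.7}. For the cosegment claim, note that $A$ is a segment of $M$, so $\Theta_k|A \cong U_{2,k}$ sits inside $P_A(\Theta_k,M)$ as a modular line, and the copy of $A$ in $E(\Theta_k)-A$ is a cosegment of $P_A(\Theta_k,M)\del A$ by Lemma \ref{2.5} (or directly from the structure of $\Theta_k$); since $\nabla_A(\Delta_A(M)) = M$ is defined (Lemma \ref{2.11}), $A$ is an independent cosegment of $\Delta_A(M)$, and it has a non-$\{U_{2,5},U_{3,5}\}$-deletable — equivalently, via Lemma \ref{clandco}-type orthogonality in the dual, non-contractible — element because $A$ was allowable in $M$; this makes $A$ an allowable cosegment of $\Delta_A(M)$ in the sense defined before Lemma \ref{asegsgoup}.

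The heart of the argument is fragility. Let $M' = \Delta_A(M)$ and take any $x \in E(M') = E(M)$; I must show $x$ is not $\{U_{2,5},U_{3,5}\}$-flexible in $M'$. Split into cases according to the position of $x$ relative to $A$. If $x \in A$, then by Lemma \ref{2.13} we have $M'/x = \Delta_{A-x}(M\del x)$, and — since $A-x$ is still a coindependent segment of $M\del x$ when $|A|\ge 3$ — by Lemma \ref{2.16}(i) applied in the other direction (or a direct check) $M'\del x$ is related to a $\Delta$-exchange of $M$ on $A$ restricted appropriately; in any case both $M'\del x$ and $M'/x$ are obtained from minors of $M$ by generalized $\Delta$-$Y$ exchanges, and by Corollary \ref{3.7} such exchanges preserve both $\mathbb{P}$-representability and, crucially, the presence/absence of a $\{U_{2,5},U_{3,5}\}$-minor via Corollary \ref{nreg} (a $\mathbb{P}$-representable matroid has a $\{U_{2,5},U_{3,5}\}$-minor iff it is not near-regular, and near-regularity is visibly preserved by $\Delta$-$Y$ since $\Theta_3,\Theta_4$ are near-regular). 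So the flexibility status of $x$ in $M'$ is forced by the corresponding status in $M$, which is not flexible. If $x \notin \cl_M(A)$, then $M'/x = \Delta_A(M/x)$ by Lemma \ref{2.16}(ii) and $M'\del x = \Delta_A(M\del x)$ whenever $A$ stays coindependent in $M\del x$ (Lemma \ref{2.16}(i)); again Corollary \ref{nreg} transports the non-flexibility of $x$ from $M$ to $M'$. The remaining case is $x \in \cl_M(A) - A$, i.e.\ $x$ lies on the line spanned by $A$ but is not one of its points; here I would argue that because $M$ is $3$-connected up to series and parallel classes (Proposition \ref{fcon}, via the discussion preceding Lemma \ref{incotri}) and $A$ is coindependent, such an $x$ either does not exist or forms a parallel pair with a point of $A$, and a parallel element is never flexible for a connected matroid with at least the relevant minor, so it can be handled directly or reduced to the $x\in A$ case after a simplification.

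The main obstacle I anticipate is the bookkeeping in the case $x \in \cl_M(A)$ together with making sure Lemma \ref{2.16}(i) genuinely applies — i.e.\ that $A$ remains coindependent in $M \del x$ whenever we need to delete $x$ — and symmetrically that deleting an element of $A$ interacts correctly with the $\Delta$-$Y$ exchange (this is exactly what Lemma \ref{2.13} handles on the contraction side, but the deletion side needs care because $\Delta_A(M)\del x$ for $x\in A$ is not literally a $\Delta$-exchange of $M\del x$). I expect to resolve this by passing through the dual and the $\nabla$ operation, or by invoking Lemma \ref{9.3} to control the low connectivity that arises, so that in every case $M'\del x$ and $M'/x$ are expressed as $\Delta$- or $\nabla$-exchanges of minors of $M$; once that is done, Corollary \ref{nreg} does all the remaining work of tracking the $\{U_{2,5},U_{3,5}\}$-minor, and fragility of $M'$ follows from fragility of $M$. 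The argument also needs $|E(M)|$ large enough that "$3$-connected up to series and parallel classes'' is available and the degenerate parallel/series cases are negligible; the hypotheses of the lemma (and the standing assumption $|E(M)| \ge 10$ in the theorem it feeds) ensure this.
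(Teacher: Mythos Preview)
Your overall architecture---use the commutation lemmas \ref{2.13} and \ref{2.16} together with the near-regular certificate (Corollary \ref{nreg}) to transport non-deletability and non-contractibility from $M$ to $\Delta_A(M)$---is exactly the paper's approach, and it works cleanly in the generic subcases. But two of your case analyses have real gaps.

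First, your treatment of $x\in\cl_M(A)-A$ is wrong: the claim that such an $x$ ``either does not exist or forms a parallel pair with a point of $A$'' is simply false. If $|A|=3$ and $M$ is simple, the line $\cl_M(A)$ can contain four or five points (e.g.\ $M=U_{2,5}$), and an extra point on that line is not parallel to anything. This case requires genuine work: in $\Delta_A(M)/e$ the pair $(A,B)$ becomes a $2$-separation, and the paper argues via a $2$-sum decomposition that if $e$ were contractible then some $a\in A$ would be flexible. You cannot dodge this by invoking $|E(M)|\ge 10$; the lemma carries no such hypothesis and is applied to $8$-element matroids. Second, for $x\in A$ on the deletion side, $\Delta_A(M)\del x$ is \emph{not} expressible as a $\Delta$- or $\nabla$-exchange of a minor of $M$: deleting a point of the cosegment $A$ in $\Delta_A(M)$ leaves $A-x$ as a series class, not a situation covered by \ref{2.13} or \ref{2.16}. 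The paper instead shows directly that the non-deletable element $x\in A$ becomes non-contractible in $\Delta_A(M)$ (via \ref{2.13}), and then uses the series class $A-y$ in $\Delta_A(M)\del y$ to force every other $y\in A$ to be non-deletable. A similar issue arises for $e\notin A$ when $A$ fails to remain coindependent in $M\del e$, so that Lemma \ref{2.16}(i) does not apply; the paper handles this with a separate $2$-separation argument. Your sketch needs these three pieces filled in; the rest is fine.
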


\begin{proof}
It follows immediately from Corollary \ref{3.7} and Corollary \ref{nreg} that $\Delta_A(M)$ is a $\mathbb{P}$-representable matroid with a $\{U_{2,5}, U_{3,5}\}$-minor. Moreover, $A$ is an independent cosegment of $\Delta_A(M)$ by Lemma \ref{2.5}. It remains to show that $\Delta_A(M)$ is $\{U_{2,5}, U_{3,5}\}$-fragile, and that some element of $A$ is non-contractible in $\Delta_A(M)$. 
 
Since $A$ is an allowable segment of $M$, there is some $x\in A$ that is non-deletable in $M$. Then $M\del x$ is near-regular by Lemma \ref{nreg}, so $\Delta_{A-x}(M\del x)$ is also near-regular by Lemma \ref{3.7}. But $\Delta_A(M)/x=\Delta_{A-x}(M\del x)$ by Lemma \ref{2.13}, so $\Delta_A(M)/x$ has no $\{U_{2,5}, U_{3,5}\}$-minor by Corollary \ref{nreg}. Thus $x$ is non-contractible in $\Delta_A(M)$, so $A$ is an allowable cosegment of $\Delta_A(M)$. Now suppose that $\Delta_A(M)\del y$ has a $\{U_{2,5}, U_{3,5}\}$-minor for some $y\in A-x$. But $A-y$ is a non-trivial series class of $\Delta_A(M)\del y$, so $x$ is contractible in $\Delta_A(M)\del y$; a contradiction because $x$ is non-contractible in $\Delta_A(M)$. Thus each $y\in A-x$ is non-deletable in $\Delta_A(M)$.

If $\cl_{\si(M)}(A)$ contains at least 5 elements, then $M$ must have rank 2. Since $A$ is allowable, some element is non-deletable, so this closure has exactly five elements. Let $e \in \cl_M(A) - A$ be such that $M|(A\cup e) \cong U_{2,5}$. Since $A$ is coindependent, there is an $f \in M$ such that $\cl_M(\{e,f\}) = E(M)$. It is easy to see that, in $\Delta_M(A)$, element $e$ is not deletable, and $f$ is not contractible, so the result holds. Hence we may assume that $\cl_{\si(M)}(A)$ has at most four elements.

Suppose that $e\in E(M)-A$ is non-deletable in $M$. Then $M\del e$ is near-regular by Corollary \ref{nreg}. First, suppose that $A$ is not coindependent in $M\del e$. Let $B = E(M) - (A\cup e)$. Then $(A,B)$ is a 2-separation of $M\del e$, and either $|A| = 3$ or $|A| = 4$ and $B$ spans a point of $A$. Suppose $\Delta_A(M)\del e$ has a $\{U_{2,5},U_{3,5}\}$-minor $N$. Then $|E(N)\cap A| \leq 1$ or $|E(N)\cap B| \leq 1$. We claim that the former holds. Indeed: suppose $E(N)\cap B = \{f\}$. After deleting or contracting all elements of $B - \{f\}$ so that the $N$-minor is preserved, we are left with a matroid of rank 3 on 4 elements, or a matroid of rank 4 on 5 elements. Neither can have a minor isomorphic to $N$, a contradiction. Hence $|E(N)\cap A| \leq 1$. Suppose $x$ is deletable in $\Delta_A(M)\del e$. Then $A-x$ is a series class in this matroid, so we can contract all but one of the elements and still preserve $N$. But now we have obtained a matroid that is a minor of $M\del e$, a contradiction. Hence $x \in E(N)$. By the above, all $y \in A-x$ are contractible but not deletable. Once again, contracting them results in a minor of $M\del e$, a contradiction.

Now $A$ is a coindependent segment in $M\del e$. The matroid $\Delta_A(M\del e)$ is near-regular by Lemma \ref{3.7}. Thus $\Delta_A(M\del e)$ has no $\{U_{2,5},U_{3,5}\}$-minor. But $\Delta_A(M\del e)=\Delta_A(M)\del e$, so $e$ is non-deletable in $\Delta_A(M)$.

Next suppose that $e\in E(M)-A$ is deletable but non-contractible in $M$. First suppose that $e\in \cl_M(A)-A$. Seeking a contradiction, suppose that $e$ is contractible in $\Delta_A(M)$. Then $\Delta_A(M)/e$ has an $N$-minor for some $N\in \{U_{2,5}, U_{3,5}\}$. Now $(A,E(M)-A)$ is a $3$-separation of $\Delta_A(M)$ and $e$ is in the guts of $(A,E(M)-A)$, so $\Delta_A(M)/e$ has a $2$-separation $(A,B)$. Suppose $|B\cap E(N)| \leq 1$. Contract $e$ in $\Delta_A(M)$, and contract or delete all other members of $B$ preserving $N$, until only a single element $f \in B$ remains. Then we must have $|A| = 4$, and $E(N) = A\cup f$. But since $e$ is in parallel with some element of $A$ in $M$, there will be a triangle in the rank-3 matroid $N$, a contradiction. Hence $|A\cap E(N)| \leq 1$.

 Since $e$ is in the guts of the $3$-separation $(A,E(M)-A)$ in $\Delta_A(M)$, it follows that $A$ contains a circuit $C$ of $\Delta_A(M)/e$. Let $a\in A$ be an element in the circuit $C$. We claim that $a$ is flexible in $\Delta_A(M)$. We can assume that $\Delta_A(M)/e$ is the $2$-sum of matroids $M_A$ and $M_B$ with basepoint $p$. Let $C_a$ and $C_a^{*}$ be a maximum-sized circuit and cocircuit of $M_A$ containing $\{a,p\}$. If $|C_a|>2$, then $\Delta_A(M)/e/a$ has an $N$-minor, hence $a$ is contractible in $\Delta_A(M)$. Dually, if $|C_a^{*}|>2$, then $\Delta_A(M)/e\del a$ has an $N$-minor, hence $a$ is deletable in $\Delta_A(M)$. Thus $a$ is flexible unless $|C_a|=2$ or $|C_a^{*}|=2$. Suppose that $|C_a|=2$ or $|C_a^{*}|=2$. Then $a\in \cl^{(*)}_{\Delta_A(M)/e}(B)$. But $a\in \cl_{\Delta_A(M)/e}(A-a)$ since $a$ is in the circuit $C$, and $a\in \cl^{*}_{\Delta_A(M)/e}(A-a)$ because $A$ is a cosegment of $\Delta_A(M)/e$. Thus $a\notin \cl^{(*)}_{\Delta_A(M)/e}(B)$ by Lemma \ref{clandco}; a contradiction. Thus $e\in \cl_M(A)-A$ is non-contractible in $\Delta_A(M)$.
 
We may therefore assume that $e\notin \cl_M(A)-A$. Then $A$ is a coindependent segment of $M/e$ and $\Delta_A(M/e)$ is well-defined. Now $\Delta_A(M)/e=\Delta_A(M/e)$ by Lemma \ref{2.16}. But $M/e$ is near-regular, so $\Delta_A(M/e)$ is near-regular. Therefore $\Delta_A(M/e)$ has no $\{U_{2,5}, U_{3,5}\}$-minor by Corollary \ref{nreg}, and so $e$ is non-contractible in $\Delta_A(M)$. 
\end{proof}

\subsection{Gluing on wheels}
Let $M$ be a matroid, and $(a,b,c)$ an ordered subset of $E(M)$ such that $T = \{a,b,c\}$ is a triangle. Let $N$ be obtained from $M(\mathcal{W}_r)$ by relabeling some triangle as $\{a,b,c\}$, where $a,c$ are spoke elements, and let $X \subseteq \{a,b,c\}$ such that $b \in X$. Following the terminology from \cite{chun2013fan}, we say the matroid $M' := P_T(M,N) \delete X$ was obtained from $M$ by \emph{gluing an $r$-wheel onto $(a,b,c)$}. We also say that $M^{*}$ is obtained from $N^{*}$ by gluing a wheel to the triad $T$. 

\begin{lemma}
\label{fanfragile}
Let $\mathbb{P}\in \{\mathbb{U}_2, \mathbb{H}_5\}$, and let $M$ be a strictly $\{U_{2,5}, U_{3,5}\}$-fragile $\mathbb{P}$-representable matroid. Let $A = \{a,b,c\}$ be an allowable triangle of $M$, such that $b$ is non-deletable. 
Let $M'$ be obtained from $M$ by gluing an $r$-wheel onto $(a,b,c)$, where $X \subseteq \{a,b,c\}$ is as above. If $M'$ is $3$-connected, then $M'$ is a strictly $\{U_{2,5}, U_{3,5}\}$-fragile $\mathbb{P}$-representable matroid. Moreover, $F = E(\mathcal{W}_r) - X$ is the set of elements of a fan, the spoke elements of $F$ are non-contractible in $M'$, and the rim elements of $F$ are non-deletable in $M'$. 
\end{lemma}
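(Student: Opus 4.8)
The plan is to verify the four claims of the lemma — $\mathbb{P}$-representability, the presence of a $\{U_{2,5},U_{3,5}\}$-minor, the fan structure of $F$, and fragility — more or less in that order, the fragility of the elements of $F$ being the substantial part.

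\textbf{Representability and the $\{U_{2,5},U_{3,5}\}$-minor.} The wheel $M(\mathcal{W}_r)$ is graphic, hence regular, hence $\mathbb{P}$-representable, and its hub triangle $T$ is a modular flat of it; so $P_T(M,N)$ is $\mathbb{P}$-representable by the argument of Lemma \ref{3.5} (with $\Theta_k$ replaced by $M(\mathcal{W}_r)$), and hence so is $M' = P_T(M,N)\backslash X$. For the minor I would show that $M$ itself is a minor of $M'$: the generalized parallel connection can be ``undone'' by contracting and deleting appropriate elements of $E(N)-T$, using fan elements of $F$ to play the roles of the deleted elements of $X\cap T$, so that one recovers $M$ up to relabelling; since $M$ has a $\{U_{2,5},U_{3,5}\}$-minor, so does $M'$.

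\textbf{The fan $F$.} In $M(\mathcal{W}_r)$, deleting the set $X\cap T$ from the hub triangle breaks the rim cycle into a path, so $E(\mathcal{W}_r)-X$ carries a fan ordering in $M(\mathcal{W}_r)\backslash(X\cap T)$, whose spoke and rim elements are inherited from the spokes and rim of $\mathcal{W}_r$. Since $N$ is a restriction of $P_T(M,N)$, the triangles and triads internal to this fan remain circuits and cocircuits of $P_T(M,N)$ — for the triads one notes that a hyperplane of $N$ containing $T$ extends to a hyperplane of $P_T(M,N)$; this also follows from orthogonality and Lemma \ref{9.3} — and they persist after deleting $X$. Together with the hypothesis that $M'$ is $3$-connected, this shows $F$ is a fan of $M'$ with the stated labelling.

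\textbf{Fragility at the elements of $E(M)-\{a,b,c\}$.} Let $e$ be such an element. If $e$ is non-deletable in $M$, then $M\backslash e$ has no $\{U_{2,5},U_{3,5}\}$-minor, so is near-regular by Corollary \ref{nreg}. Since deletion of an element outside the triangle commutes with the generalized parallel connection, $M'\backslash e = P_T(M\backslash e,N)\backslash X$; this is obtained by deleting $X$ from the generalized parallel connection, across a modular triangle, of the two near-regular matroids $M\backslash e$ and $N$, hence is near-regular by Lemma \ref{3.5} applied with $\mathbb{P}=\mathbb{U}_1$, hence has no $\{U_{2,5},U_{3,5}\}$-minor by Corollary \ref{nreg}; so $e$ is non-deletable in $M'$. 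The case where $e$ is non-contractible in $M$ is symmetric, using $P_T(M,N)/e = P_T(M/e,N)$ when $e\notin\cl_M(T)$ and a brief separate argument when $e\in\cl_M(T)-T$. Thus every element of $E(M)-\{a,b,c\}$ is non-flexible in $M'$.

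\textbf{Fragility at the elements of $F$ — the main obstacle.} I must show each spoke element $s$ of $F$ is non-contractible and each rim element $d$ of $F$ is non-deletable in $M'$. The idea is that contracting a spoke, or deleting a rim element, of the fan ``runs the gluing backwards'': I would show that $M'/s$ and $M'\backslash d$ are each obtainable, from a near-regular matroid — such as $M\backslash b$, which is near-regular because $b$ is non-deletable — by gluing wheels, and then the iterated argument of the previous paragraph shows these matroids are near-regular and hence have no $\{U_{2,5},U_{3,5}\}$-minor by Corollary \ref{nreg}. To keep the bookkeeping tractable I would induct on $r$: contracting the spoke of $F$ furthest from $T$ and deleting the adjacent rim element turns ``glue an $r$-wheel onto $(a,b,c)$'' into ``glue an $(r-1)$-wheel onto $(a,b,c)$'' (contracting an outer spoke and deleting its incident rim edge of $\mathcal{W}_r$ produces $\mathcal{W}_{r-1}$), reducing matters to a base case and to the two fan elements meeting $T$, where the explicit minors from the first step can be used directly. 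The genuinely delicate point is to exclude a $\{U_{2,5},U_{3,5}\}$-minor that uses fan elements in an unanticipated way: here I would use that $U_{2,5}$ and $U_{3,5}$ are $3$-connected while the relevant deletion or contraction leaves $M'$ with $2$-separations strung along the remnant of the fan, so that Proposition \ref{fcon} confines any such minor to the near-regular core, a contradiction. The fan-reduction machinery of \cite{chun2013fan} should streamline this, and I expect this step to absorb most of the effort.
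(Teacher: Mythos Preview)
Your approach is workable but takes a genuinely different route from the paper's, and in doing so you end up re-deriving much of Lemma~\ref{amove} rather than invoking it.

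The paper's proof is a short induction on $r$ powered entirely by Lemma~\ref{amove}. The base case observes that gluing a $3$-wheel onto $(a,b,c)$ is precisely $\Delta_{\{a',b,c'\}}(M_+)$, where $M_+$ is $M$ with $a',c'$ added parallel to $a,c$; Lemma~\ref{amove} then yields fragility, $\mathbb{P}$-representability, and the allowable-cosegment structure of the new triad in one stroke. For the inductive step, gluing an $(r+1)$-wheel onto $M$ is the same (via Lemmas~\ref{lem:chaingpc} and~\ref{lem:growwheel}) as gluing a $3$-wheel onto a fan triangle of $M_r$, so the base case applies again with $M_r$ in place of $M$. No direct manipulation of $P_T(M,N)$ is needed.

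Your direct GPC analysis is sound in outline, but one step is underestimated: the case $e\in\cl_M(T)-T$ with $e$ non-contractible is not a ``brief separate argument''. In the proof of Lemma~\ref{amove} this is exactly the hardest case, occupying roughly a third of the argument; it requires a $2$-separation analysis of the contracted matroid showing that a putative $\{U_{2,5},U_{3,5}\}$-minor would force an element of $A$ to be flexible. You would have to replicate that work here. Similarly, your fan-element step (showing $M'/s$ and $M'\backslash d$ are near-regular via $M\backslash b$) and the appeal to Proposition~\ref{fcon} amount to re-proving the remaining cases of Lemma~\ref{amove} in situ. The paper's insight is that wheel-gluing \emph{is} iterated $\Delta$-$Y$ exchange with parallel extension, so all of this has already been paid for.
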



The proof hinges on the following observations, whose proofs we omit.

\begin{lemma}\label{lem:chaingpc}
  Let $M_1, M_2, M_3$ be matroids, such that $E(M_1)\cap E(M_2) = A$, $E(M_2)\cap E(M_3) = B$, $E(M_1)\cap E(M_3) = \emptyset$, and such that $A$ is a modular flat of $M_2$, and $B$ a modular flat of $M_3$. Then
  \begin{align*}
    P_A(M_1, P_B(M_2,M_3)) = P_B(M_3, P_A(M_1,M_2)).
  \end{align*}
\end{lemma}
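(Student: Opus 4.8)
The plan is to verify the identity by unpacking the definition of the generalized parallel connection as the matroid whose flats are the ``joint'' flats of the two constituents, and then checking that both sides produce the same collection of flats (equivalently, the same rank function) on the common ground set $E(M_1)\cup E(M_2)\cup E(M_3)$. Recall that when $A$ is a modular flat of $M_2$ and $M_1|A = M_2|A$, the flats of $P_A(M_1,M_2)$ are exactly the sets $F \subseteq E(M_1)\cup E(M_2)$ such that $F\cap E(M_1)$ is a flat of $M_1$, $F\cap E(M_2)$ is a flat of $M_2$, and these two flats agree on $A$. The key point is that modularity is what guarantees this intersection of flat lattices is again a geometric lattice, so the construction is associative-like whenever the relevant flats remain modular after amalgamation.

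First I would record the standard fact (from Oxley, or Brylawski's original treatment) that if $B$ is a modular flat of $M_3$ with $B\cap E(M_1)=\emptyset$ and $B$ disjoint from $A$ as a set, then $B$ remains a modular flat of $P_A(M_1,M_2)$, because $B$ lies entirely inside the $M_2$-side and generalized parallel connection does not change the rank of subsets of $E(M_2)$ nor destroy modularity of flats disjoint from the gluing set. Symmetrically, $A$ remains a modular flat of $P_B(M_2,M_3)$. Consequently both iterated parallel connections $P_A(M_1, P_B(M_2,M_3))$ and $P_B(M_3, P_A(M_1,M_2))$ are well-defined. Then I would show their flat lattices coincide: a set $F$ is a flat of $P_A(M_1,P_B(M_2,M_3))$ iff $F\cap E(M_1)$ is a flat of $M_1$, $F\cap(E(M_2)\cup E(M_3))$ is a flat of $P_B(M_2,M_3)$, and they agree on $A$; expanding the middle condition gives that $F\cap E(M_2)$ is a flat of $M_2$, $F\cap E(M_3)$ is a flat of $M_3$, agreeing on $B$. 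Since $A\subseteq E(M_2)$ and $B\subseteq E(M_2)$ and $E(M_1)\cap E(M_3)=\emptyset$, this is a symmetric condition in the three matroids, and precisely the same unpacking applied to $P_B(M_3,P_A(M_1,M_2))$ yields the identical list of constraints. Hence the two matroids have the same flats, so they are equal.

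The main obstacle, and the step that needs the most care, is confirming that the modularity hypotheses propagate correctly through the first parallel connection so that the \emph{second} one is legitimately defined with all of Brylawski's hypotheses in place — in particular that $M_1$ and $P_B(M_2,M_3)$ still restrict to the same matroid on $A$ (true because $A\subseteq E(M_2)$ and $P_B$ fixes the restriction to $E(M_2)$), and that $A$ is still modular in $P_B(M_2,M_3)$. Since the excerpt explicitly says the proof is omitted, I would state the two propagation facts as the crux, cite the associativity/modularity machinery for generalized parallel connection in Oxley~\cite{oxley2011matroid} (Section 11.4) and Brylawski, and then present the flat-lattice comparison above as a short symmetric computation. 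No delicate estimates are involved; the entire content is bookkeeping with modular flats.
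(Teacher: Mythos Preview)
The paper explicitly omits the proof of this lemma (it says ``whose proofs we omit'' just before the statement), so there is no argument to compare against. Your approach---verifying that both iterated parallel connections are well-defined and then showing they have the same lattice of flats via the standard description of flats of a generalized parallel connection---is the natural one and is correct.

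One small slip worth fixing: you write that $B$ ``remains a modular flat of $P_A(M_1,M_2)$, because $B$ lies entirely inside the $M_2$-side,'' but the hypothesis only says $B$ is modular in $M_3$, not in $M_2$, so that sentence does not type-check. Fortunately it is also unnecessary: $B$ modular in $M_3$ is already enough to make $P_B(M_3, P_A(M_1,M_2))$ well-defined, so the right-hand side needs no propagation argument at all. The genuine propagation step is for the left-hand side, namely that $A$, a modular flat of $M_2$, remains a modular flat of $P_B(M_2,M_3)$ (which in particular requires $M_3$ to be loopless so that $A$ is still a flat). This is standard (see the treatment of generalized parallel connection in Oxley~\cite{oxley2011matroid}, Section~11.4, or Brylawski's original paper), and once it is in place your symmetric flat-lattice computation goes through cleanly.
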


\begin{lemma}\label{lem:growwheel}
  Let $N'$ be isomorphic to the $r$-wheel, with $T$ a triangle. Let $N$ be obtained from $N'$ by adding elements in parallel to each of the spokes in $T$. Let $M$ be isomorphic to $M(\mathcal{W}_3)$ with a triangle labeled by $T$. Then $P_T(N, M)\delete T = \Delta_{T}(N)$ is isomorphic to $M(\mathcal{W}_{r+1})$.
\end{lemma}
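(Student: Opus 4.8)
The plan is to reduce the statement to a transparent graph computation. First I would reconcile the two descriptions of the matroid on the left. Recall that $\Theta_3\cong M(K_4)$, where the distinguished triangle $T$ is any triangle of $K_4$ and $E(\Theta_3)-T$ is the triad of edges at the remaining vertex; the natural bijection pairs each edge of $T$ with the edge of $E(\Theta_3)-T$ disjoint from it. Since $M(\mathcal{W}_3)\cong M(K_4)$ — with a spoke--rim--spoke triangle playing the role of $T$ and the complementary triad being the star at the fourth vertex — I would regard $M$ as $\Theta_3$ with $E(\Theta_3)-T$ already relabelled by $T$ via this bijection, so that the stated equality holds on the nose. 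As $T$ is a (modular) flat of the binary matroid $M$, the generalized parallel connection $P_T(N,M)$ is defined, and under this identification $P_T(N,M)\del T=P_T(\Theta_3,N)\del T=\Delta_T(N)$. It then remains to prove $\Delta_T(N)\cong M(\mathcal{W}_{r+1})$.

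Next I would compute $\Delta_T(N)$ graphically. Realize $N=M(G)$, where $G$ is the wheel graph with hub $v_0$ and rim $v_1,\dots,v_r$, spokes $s_i=v_0v_i$ and rim edges $t_i=v_iv_{i+1}$ (subscripts modulo $r$), together with an extra edge $\hat a$ parallel to $s_1$ and an extra edge $\hat c$ parallel to $s_2$; here $T=\{s_1,t_1,s_2\}$ is the triangle subgraph on $\{v_0,v_1,v_2\}$, with $a=s_1$, $b=t_1$, $c=s_2$. Since $\Theta_3\cong M(K_4)$ and $N$ are both graphic and $T$ is realised as a triangle subgraph in each, the generalized parallel connection along $T$ is graphic, obtained by gluing the two graphs along $T$; concretely $P_T(\Theta_3,N)=M(G')$, where $G'$ is $G$ with a new vertex $w$ and new edges $wv_0,wv_1,wv_2$ adjoined. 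Deleting the edges of $T$ and applying the relabelling from the definition of $\Delta_T$, I obtain $\Delta_T(N)=M(H)$, where $H$ is the graph on $\{v_0,v_1,\dots,v_r,w\}$ with edge set
\[
\{\hat a,\ \hat c,\ wv_0,\ s_3,\ s_4,\ \dots,\ s_r\}\ \cup\ \{wv_1,\ wv_2,\ t_2,\ t_3,\ \dots,\ t_r\}.
\]
Equivalently, $H$ is obtained from $G$ by the graph-theoretic $\Delta$--$Y$ move at the triangle $T$.

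Finally I would identify $H$ as a wheel. The vertex $v_0$ has degree $r+1$ in $H$, being incident to $\hat a=v_0v_1$, $\hat c=v_0v_2$, $wv_0$, and $s_3,\dots,s_r$. Deleting these $r+1$ edges leaves exactly the cycle through the vertices $v_1,w,v_2,v_3,\dots,v_r$ (closing $v_r$ back to $v_1$) on the edges $wv_1,wv_2,t_2,t_3,\dots,t_{r-1},t_r$, which visits each of the $r+1$ rim vertices once; and each of those rim vertices is joined to $v_0$ by precisely one of the deleted edges. Hence $H$ is isomorphic to the wheel $\mathcal{W}_{r+1}$ with hub $v_0$, so $\Delta_T(N)=M(H)\cong M(\mathcal{W}_{r+1})$, as required. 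As a numerical check, $H$ has $(r+1)+(r+1)=2(r+1)$ edges and $r+2$ vertices, matching $\mathcal{W}_{r+1}$.

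The step I expect to need the most care is the identification of $P_T(\Theta_3,N)$ with the graph gluing, together with the incidence bookkeeping: one must confirm that the modularity hypothesis for the generalized parallel connection is satisfied (it is, since $\Theta_3=M(K_4)$ is binary, so $T$ is a modular flat of it) and that the relabelling from the definition of $\Delta_T$ is applied correctly, and then trace out the rim cycle of $H$ without index errors. The remaining verifications — that $T$ is a coindependent $U_{2,3}$-restriction of $N$, so $\Delta_T(N)$ is defined, and the edge/vertex counts — are routine.
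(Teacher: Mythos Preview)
Your proposal is correct. The paper omits the proof of this lemma entirely (it is one of two ``observations, whose proofs we omit''), so there is no approach to compare; your direct graph-theoretic computation---identifying $\Theta_3\cong M(K_4)\cong M(\mathcal W_3)$, realising $P_T(N,M)$ as a graph gluing along the triangle $T$, and then reading off that the resulting graph $H$ is $\mathcal W_{r+1}$ with hub $v_0$ and rim $v_1,w,v_2,\dots,v_r$---supplies exactly the routine verification the authors elide. Your check that $T$ is a modular flat of the second argument $M$ matches the paper's convention for $P_T(\,\cdot\,,\,\cdot\,)$ (cf.\ Lemma~\ref{lem:chaingpc}), and the edge and degree counts are right.
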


\begin{proof}[Proof of Lemma \ref{fanfragile}]
We proceed by induction on the rank $r$ of the wheel glued onto $A=(a,b,c)$. Since representability and fragility are minor-closed properties, it suffices to prove the result when the set of deleted elements when gluing is $X = \{b\}$. Let $M_r$ be the matroid obtained by gluing an $r$-wheel onto $M$ and deleting $X$. For the base case, observe that since $A$ is allowable, $a$ and $c$ are non-contractible. Let $M_+$ be the matroid obtained from $M$ by adding elements $a', c'$ in parallel with $a$ and $c$ respectively, and consider $M_3 = P_{\{a',b,c'\}}(M_+, W) \del \{a',b,c'\} = \Delta_{\{a',b,c'\}}(M_+)$. The conclusion now follows from Lemma \ref{amove}.

Assume the result holds for $M_r$. Let $T$ be any triangle of $M(\mathcal{W}_r)$ other than $\{a,b,c\}$. Clearly $T$ is allowable. Now apply the base case with $M$ replaced by $M_r$. This matroid, $M_{r+1}'$ say, satisfies all conclusions of the lemma. That $M_{r+1}'$ is isomorphic to $M_{r+1}$ follows from Lemmas \ref{lem:chaingpc} and \ref{lem:growwheel}.
\end{proof}

\section{Small cases and fan-extensions}\label{sec:noX}
In \cite{chunfragile}, we used a computer to enumerate all 3-connected, strictly $\{U_{2,5},U_{3,5}\}$-fragile, $\mathbb{H}_5$-representable matroids on up to nine elements. Figures \ref{fig:H5_5}--\ref{fig:H5_9} give some geometric representations, and Figure \ref{fig:M71} gives a labeling of the elements of some of them. We refer the reader to \cite{chunfragile} for a more detailed description of the construction of the matroids in Figures \ref{fig:H5_5}--\ref{fig:M71} from either $U_{2,5}$ or $M_{7,1}$.

In \cite{chunfragile} we apply the main result of \cite{chun2013fan} together with a case analysis (carried out by a computer) to prove the following Theorem.
\begin{thm}
 \cite[Theorem 1.3]{chunfragile}
 \label{computer1}
  Let $M'$ be a $3$-connected strictly $\{U_{2,5}, U_{3,5}\}$-fragile $\mathbb{H}_5$-representable matroid. Then $M'$ is isomorphic to a matroid $M$ for which one of the following holds:
  \begin{enumerate}
  \item[(i)] $M$ has an $\{X_8,Y_8,Y_8^{*}\}$-minor;
  \item[(ii)] $M\in \{U_{2,6},U_{4,6},P_6, M_{9,9}, M_{9,9}^{*}\}$;
  \item[(iii)] $M$ or $M^{*}$ can be obtained from $U_{2,5}$ (with groundset $\{a,b,c,d,e\}$) by gluing wheels to $(a,c,b)$,$(a,d,b)$,$(a,e,b)$;
  \item[(iv)] $M$ or $M^{*}$ can be obtained from $U_{2,5}$ (with groundset $\{a,b,c,d,e\}$) by gluing wheels to $(a,b,c)$,$(c,d,e)$;
  \item[(v)] $M$ or $M^{*}$ can be obtained from $M_{7,1}$ by gluing a wheel to $(1,3,2)$.
  \end{enumerate}
\end{thm}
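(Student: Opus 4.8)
The plan is a strong induction on $|E(M')|$. The base case $|E(M')|\leq 9$ is exactly the computer enumeration recalled above: with the complete list of $3$-connected strictly $\{U_{2,5},U_{3,5}\}$-fragile $\mathbb{H}_5$-representable matroids on at most nine elements in hand (Figures \ref{fig:H5_5}--\ref{fig:M71}), one checks each against (i)--(v) directly. Indeed the matroids in (ii) are precisely the small exceptions: those on at most nine elements that have no $\{X_8,Y_8,Y_8^{*}\}$-minor and are not obtainable from $U_{2,5}$ or $M_{7,1}$ by gluing wheels in one of the configurations (iii)--(v). For the inductive step, suppose $|E(M')|\geq 10$. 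If $M'$ has an $\{X_8,Y_8,Y_8^{*}\}$-minor then (i) holds, so assume it does not; we must show $M'$ satisfies (iii), (iv), or (v).

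The structural engine is the main theorem of \cite{chun2013fan}, which asserts (roughly) that whenever a $3$-connected strictly fragile matroid $M$ of the kind considered here has a maximal fan whose length is at least an absolute constant, there is a $3$-connected strictly fragile matroid $M_{0}$ with $|E(M_{0})|<|E(M)|$ such that $M$ or $M^{*}$ is obtained from $M_{0}$ by gluing an $r$-wheel onto an allowable triangle of $M_{0}$, in the sense of Lemma \ref{fanfragile}. The first task is therefore to show that $M'$ does contain such a long fan; this uses the small-case enumeration together with the connectivity facts in Proposition \ref{fcon} and Lemmas \ref{incotri} and \ref{clandco}, and the fact that no element of $M'$ is flexible, to argue that a $3$-connected strictly fragile $\mathbb{H}_5$-representable matroid on at least ten elements with no $\{X_8,Y_8,Y_8^{*}\}$-minor already carries a fan long enough to invoke \cite{chun2013fan}. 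Applying it, $M'$ or $(M')^{*}$ is obtained from such an $M_{0}$ by gluing a wheel; $M_{0}$ is $3$-connected and strictly fragile by the cited result, and since $M_{0}$ is a minor of $M'$ it is $\mathbb{H}_5$-representable and has no $\{X_8,Y_8,Y_8^{*}\}$-minor. (Lemma \ref{fanfragile} records the converse direction, that gluing the wheel back onto $M_{0}$ does return a strictly fragile matroid.)

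By the inductive hypothesis $M_{0}$ satisfies one of (i)--(v), and since it has no $\{X_8,Y_8,Y_8^{*}\}$-minor it satisfies (ii), (iii), (iv), or (v). The proof then concludes with a finite case analysis: for each matroid on the list (ii)--(v) and each of its allowable triangles $T$, one identifies the matroid obtained by gluing a wheel onto $T$ and checks that, up to isomorphism and duality, it again satisfies (iii), (iv), or (v), or else is not $3$-connected and so does not arise as our $M'$. Here the configurations (iii)--(v) are closed under gluing in the expected way: adding a further wheel at one of the three distinguished triangles of $U_{2,5}$ in (iii), at one of the two triangles in (iv), or at the triangle $(1,3,2)$ of $M_{7,1}$ in (v), merely lengthens the relevant fan; gluing at any other allowable triangle of these matroids, or at one of the matroids in (ii)---several of which, such as $U_{2,6}$ and $U_{4,6}$, have no allowable triangle whatsoever---either destroys $3$-connectivity or produces a matroid already covered by (iii)--(v). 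Carrying out this bookkeeping, namely tracking which triangles remain allowable after each gluing and recognising each resulting matroid as one already on the list, is the part done by computer in \cite{chunfragile}, and it is the main obstacle: the argument is conceptually routine but the number of (matroid, allowable triangle) pairs together with the isomorphism and duality identifications makes a by-hand verification impractical. Combining the base case, the reduction via \cite{chun2013fan}, and this case analysis completes the induction.
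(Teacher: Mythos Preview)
Your proposal matches the paper's approach. The paper does not actually prove this theorem here; it is quoted from \cite{chunfragile}, and the only description given is that the proof ``appl[ies] the main result of \cite{chun2013fan} together with a case analysis (carried out by a computer).'' Your sketch---induct on $|E(M')|$, handle $|E(M')|\leq 9$ by the enumeration, and for larger matroids with no $\{X_8,Y_8,Y_8^*\}$-minor invoke \cite{chun2013fan} to peel off a wheel, then verify by computer that the wheel-gluing configurations (iii)--(v) are closed under this operation---is exactly that outline.

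One point deserves more care. You assert that a $3$-connected strictly fragile $\mathbb{H}_5$-representable matroid on at least ten elements with no $\{X_8,Y_8,Y_8^*\}$-minor ``already carries a fan long enough to invoke \cite{chun2013fan},'' and you justify this only by gesturing at Proposition~\ref{fcon} and Lemmas~\ref{incotri} and~\ref{clandco}. Those statements are general connectivity facts and do not by themselves force the existence of a long fan; the actual argument in \cite{chunfragile} handles this differently, by computing the full list of $9$-element matroids in the class and checking directly that every $3$-connected fragile single-element extension or coextension of each one either acquires an $\{X_8,Y_8,Y_8^*\}$-minor or is a fan-extension in the sense of \cite{chun2013fan}. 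In other words, the ``long fan'' is not deduced from abstract connectivity lemmas but is itself part of the computer verification at the boundary of the induction. This is not a fatal gap in your outline---you correctly identify the computer check as the load-bearing step---but your description of what that check establishes is slightly off.
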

We also obtain the following Corollary of Theorem \ref{computer1}.
\begin{cor}
 \cite[Corollary 1.4]{chunfragile}
 \label{computer2}
  Let $M'$ be a $3$-connected strictly $\{U_{2,5}, U_{3,5}\}$-fragile $\mathbb{U}_2$-representable matroid. Then $M'$ is isomorphic to a matroid $M$ for which one of the following holds: 
    \begin{enumerate}
  \item[(i)] $M$ has an $\{X_8,Y_8,Y_8^{*}\}$-minor;
  \item[(ii)] $M\in \{M_{9,9}, M_{9,9}^{*}\}$;
  \item[(iii)] $M$ or $M^{*}$ can be obtained from $U_{2,5}$ (with groundset $\{a,b,c,d,e\}$) by gluing wheels to $(a,c,b)$,$(a,d,b)$,$(a,e,b)$;
  \item[(iv)] $M$ or $M^{*}$ can be obtained from $U_{2,5}$ (with groundset $\{a,b,c,d,e\}$) by gluing wheels to $(a,b,c)$,$(c,d,e)$;
  \item[(v)] $M$ or $M^{*}$ can be obtained from $M_{7,1}$ by gluing a wheel to $(1,3,2)$.
  \end{enumerate}
\end{cor}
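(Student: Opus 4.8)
The plan is to derive the corollary directly from Theorem \ref{computer1}. The one preliminary fact to record is that every $\mathbb{U}_2$-representable matroid is $\mathbb{H}_5$-representable; this is well known, and in any case is immediate from the partial-field homomorphism $\mathbb{U}_2\to\mathbb{H}_5$ given by $\alpha\mapsto\alpha$, $\beta\mapsto\gamma$, since this sends the generators $-1,\alpha,\beta,1-\alpha,1-\beta,\alpha-\beta$ of the unit group of $\mathbb{U}_2$ to $-1,\alpha,\gamma,1-\alpha,1-\gamma,\alpha-\gamma$, each of which is (up to sign) a generator of the unit group of $\mathbb{H}_5$, so that applying the homomorphism to a $\mathbb{U}_2$-matrix of $M'$ produces an $\mathbb{H}_5$-matrix of $M'$. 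Hence the matroid $M'$ of the corollary satisfies the hypotheses of Theorem \ref{computer1}, so it is isomorphic to a matroid $M$ for which one of the conclusions (i)--(v) of Theorem \ref{computer1} holds. Conclusions (i), (iii), (iv) and (v) of Theorem \ref{computer1} are word-for-word identical to conclusions (i), (iii), (iv) and (v) of the corollary, so in those cases there is nothing further to do; it remains only to treat the case where $M$ satisfies conclusion (ii) of Theorem \ref{computer1}, that is, $M\in\{U_{2,6},U_{4,6},P_6,M_{9,9},M_{9,9}^{*}\}$, and to show that there in fact $M\in\{M_{9,9},M_{9,9}^{*}\}$.

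For that, I would use the partial-field homomorphism $\mathbb{U}_2\to\gf(4)$ defined by $\alpha\mapsto\omega$, $\beta\mapsto\omega^2$, where $\gf(4)=\gf(2)(\omega)$ with $\omega^2+\omega+1=0$: this sends $\alpha,\beta,1-\alpha,1-\beta,\alpha-\beta$ to $\omega,\omega^2,\omega^2,\omega,1$, all nonzero, so it shows that every $\mathbb{U}_2$-representable matroid is $\gf(4)$-representable. But $U_{2,6}$, $U_{4,6}$ and $P_6$ are three of the five excluded minors for the class of $\gf(4)$-representable matroids (see \cite{oxley2011matroid}), hence none of them is $\gf(4)$-representable, hence none of them is $\mathbb{U}_2$-representable. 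Since $M\cong M'$ is $\mathbb{U}_2$-representable, this forces $M\in\{M_{9,9},M_{9,9}^{*}\}$, which is conclusion (ii) of the corollary, and completes the proof. For completeness (and to see the list cannot be further shortened) one also checks that $M_{9,9}$ and $M_{9,9}^{*}$ are $\mathbb{U}_2$-representable, by exhibiting explicit $\mathbb{U}_2$-matrices, as is done in \cite{chunfragile}.

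There is no serious obstacle here: the whole argument is a short deduction from Theorem \ref{computer1}. The only real content is the pruning of $U_{2,6}$, $U_{4,6}$, $P_6$ from conclusion (ii), which the homomorphism $\mathbb{U}_2\to\gf(4)$ disposes of cleanly by reducing it to the excluded-minor characterisation of $\gf(4)$-representability. If one wished to avoid invoking that characterisation, the same pruning could instead be carried out by a direct finite check: in any $\mathbb{U}_2$-representation of $U_{2,6}$, three of the six points may be coordinatised as $e_1$, $e_2$, $e_1+e_2$, which forces the three remaining slopes $p,q,r$ together with $1-p,1-q,1-r,p-q,p-r,q-r$ to lie among the finitely many fundamental elements of $\mathbb{U}_2$ (the elements $s$ with $s$ and $1-s$ both units of the ring $\mathbb{Z}[\alpha,\beta][\alpha^{-1},\beta^{-1},(1-\alpha)^{-1},(1-\beta)^{-1},(\alpha-\beta)^{-1}]$), and one verifies that no such compatible triple exists; $U_{4,6}$ follows by duality, and $P_6$ requires only a slightly larger but equally routine search of the kind already automated elsewhere in the paper. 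The $\gf(4)$ route is preferable because it sidesteps this bookkeeping entirely.
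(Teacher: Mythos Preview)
Your derivation is correct and is exactly the natural route the paper implies: the statement is presented there (without proof) as a corollary of Theorem~\ref{computer1}, cited from \cite{chunfragile}, and your argument supplies precisely that deduction---apply Theorem~\ref{computer1} via the inclusion of $\mathbb{U}_2$-representable matroids among the $\mathbb{H}_5$-representable ones, then prune $U_{2,6},U_{4,6},P_6$ from conclusion~(ii).

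One technical remark: for the map $\mathbb{U}_2\to\gf(4)$ to be a genuine ring homomorphism you must use the equivalent presentation of $\mathbb{U}_2$ over the ring $\mathbb{Z}[\alpha,\beta,\alpha^{-1},\beta^{-1},(1-\alpha)^{-1},(1-\beta)^{-1},(\alpha-\beta)^{-1}]$ (which the paper explicitly notes, just after the definitions, is interchangeable with the field-of-fractions version), since there is no ring homomorphism from the characteristic-zero field $\mathbb{Q}(\alpha,\beta)$ to $\gf(4)$. You in fact write down this ring in your alternative finite-check paragraph, so you are aware of it; just make the switch explicit when invoking the $\gf(4)$ map. This does not affect the class of representable matroids, so the pruning argument stands. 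Your homomorphism $\mathbb{U}_2\to\mathbb{H}_5$ via $\alpha\mapsto\alpha$, $\beta\mapsto\gamma$ is fine as stated.
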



\begin{figure}[htbp]
  \centering
    \includegraphics[scale=1]{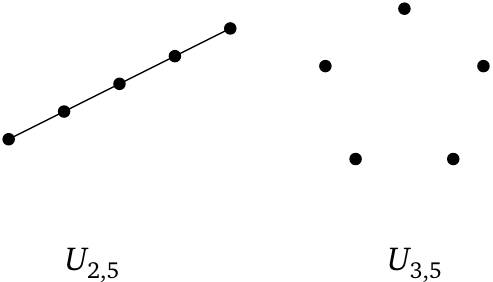}
  \caption{The 3-connected $\mathbb{H}_5$-representable fragile matroids on 5 elements.}
  \label{fig:H5_5}
\end{figure}

\begin{figure}[htbp]
  \centering
    \includegraphics[scale=1]{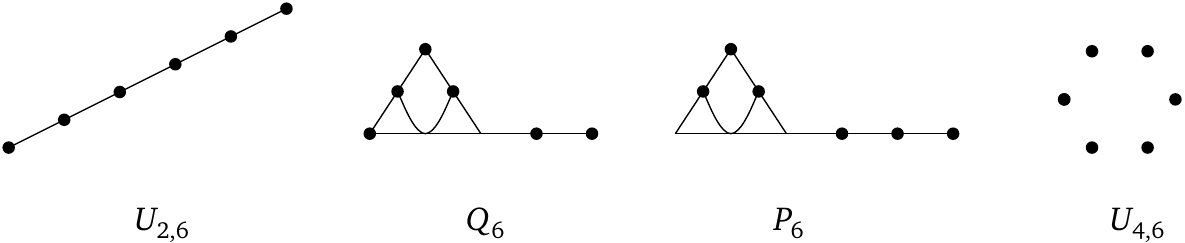}
  \caption{The 3-connected $\mathbb{H}_5$-representable fragile matroids on 6 elements.}
  \label{fig:H5_6}
\end{figure}

\begin{figure}[htbp]
  \centering
    \includegraphics[scale=1]{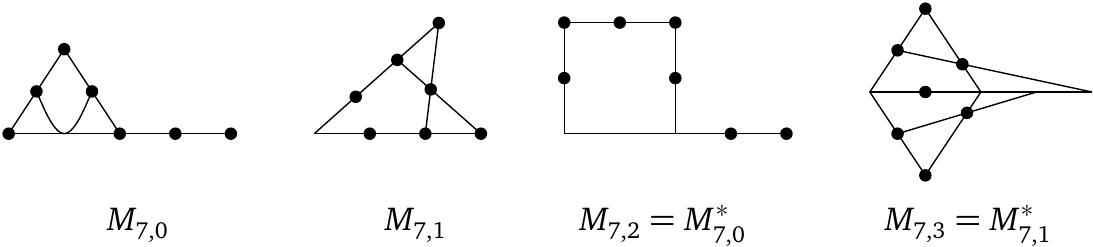}
  \caption{The 3-connected $\mathbb{H}_5$-representable fragile matroids on 7 elements.}
  \label{fig:H5_7}
\end{figure}

\begin{figure}[htbp]
  \centering
    \includegraphics[scale=1]{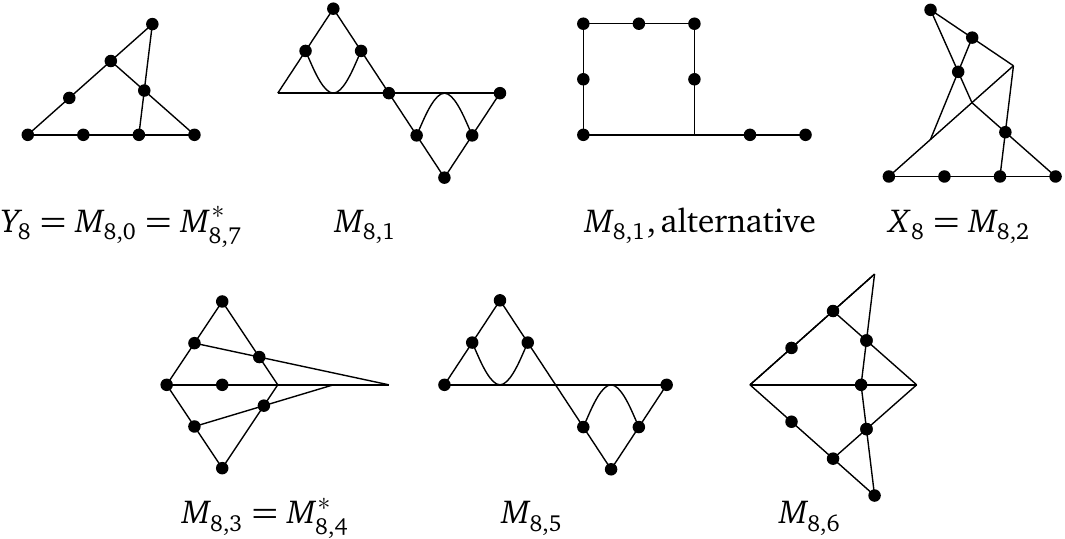}
  \caption{The 3-connected $\mathbb{H}_5$-representable fragile matroids on 8 elements.}
  \label{fig:H5_8}
\end{figure}

\begin{figure}[htbp]
  \centering
    \includegraphics[scale=1]{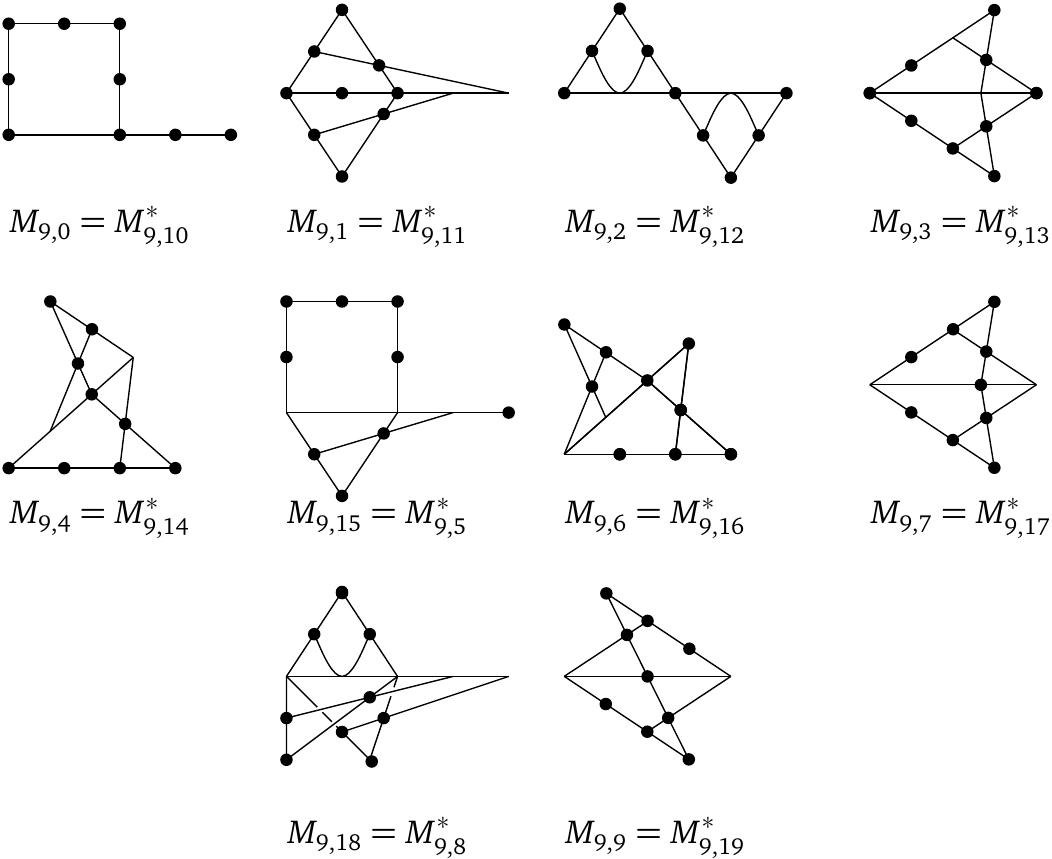}
  \caption{The 3-connected $\mathbb{H}_5$-representable fragile matroids on 9 elements.}
  \label{fig:H5_9}
\end{figure}

\begin{figure}[htbp]
  \centering
    \includegraphics[scale=1]{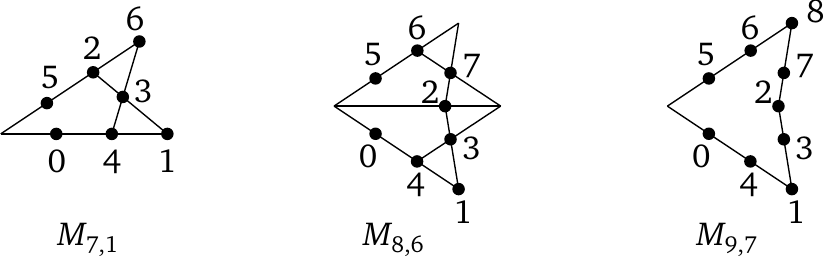}
  \caption{The matroids $M_{7,1}$, $M_{8,6}$, and $M_{9,7}$. In the right-most diagram, the 2-point lines were omitted to emphasize the fan $(1,3,2,7,8)$.} 
  \label{fig:M71}
\end{figure}

\section{The main result}
\label{sec:X}

In this paper we augment Threorem \ref{computer1} and Corollary \ref{computer2} by giving a constructive description of the $3$-connected $\mathbb{U}_2$- and $\mathbb{H}_5$-representable strictly $\{U_{2,5}, U_{3,5}\}$-fragile matroids that have an $\{X_8,Y_8,Y_8^{*}\}$-minor. Roughly speaking, in addition to gluing on wheels as in the other classes, we can also build paths of $3$-separations along the $4$-element segments and cosegments of the matroids in $\{X_8,Y_8,Y_8^{*}\}$. In order to describe the structure of these matroids formally we need some more definitions.

A set $X \subseteq E(M)$ is \textit{fully closed} if $X$ is closed in both $M^{*}$ and $M$. The \textit{full closure} of $X$, denoted $\fcl_M(X)$ is the intersection of all fully closed sets containing $X$. It can be obtained by repeatedly taking closures and coclosures until no new elements are added. We call $X$ a \textit{path-generating} set if $X$ is a $3$-separating set of $M$ such that $\fcl_M(X)=E(M)$. For example, the $4$-element segments of $Y_8$ are path-generating sets, but the triangles of $Y_8$ that meet both the $4$-element segments are not path-generating sets. It is easy to see that the notion of path-generating sets is invariant under duality.

A path-generating set $X$ thus gives rise to a natural path of $3$-separating sets $(P_1,\ldots, P_m)$, where $P_1 = X$ and each step $P_i$ is either the closure or coclosure of the $3$-separating set $P_{i-1}$. The path-generating sets in this paper will be allowable sets or fans.

Let $X$ be an $\mathcal{N}$-allowable cosegment $M$. A matroid $Q$ is an \textit{$\mathcal{N}$-allowable series extension along $X$} if $M=Q/Z$ and, for every element $z$ of $Z$ there is some element $x$ of $X$ such that $x$ is $\mathcal{N}$-contractible in $M$ and $z$ is in series with $x$ in $Q$. We also say that $Q^{*}$ is an \textit{$\mathcal{N}^{*}$-allowable parallel extension along $X$}.

In what follows, $S$ will be the elements of the 4-element segment of $X_8$, and $C$ the elements of the 4-element cosegment of $X_8$, so $E(X_8) = S \cup C$. We say that $M$ is obtained from $N$ by a \textit{\dy-step} along $A$ if $A \in \{S,C\}$ and, up to duality, $M$ is obtained from $N$ by performing a non-empty allowable parallel extension along a path-generating allowable set $A$ of $N$, followed by a generalized $\Delta-Y$ exchange on $A$. Note that, in $N$, each of $S$ and $C$ can be either a segment or a cosegment.

We also use the notation $M=\Delta_{A}^Q(N)$ and $N=\nabla_{A}^Q(N)$ if there is some allowable parallel or series extension $Q$ of $N$ along $A$ such that $M=\Delta_{A}(Q)$ or $M=\nabla_{A}(Q)$.

A sequence of matroids $M_1,\ldots, M_n$ is called a \textit{path sequence} if the following conditions hold:
\begin{enumerate}
 \item[(P1)] $M_1=X_8$; and
 \item[(P2)] For each $i\in \{1,\ldots, n-1\}$, there is some $4$-element path-generating segment or cosegment $A \in \{S,C\}$ of $M_i$ such that either: 
\begin{itemize}
 \item[(a)] $M_{i+1}$ is obtained from $M_{i}$ by a \dy-step along $A$; or
 \item[(b)] $M_{i+1}$ is obtained from $M_i$ by gluing a wheel onto an allowable subset $A'$ of $A$.
\end{itemize}
\end{enumerate}

Note in (P2) that each \dy-step described in (a) increases the number of elements by at least one, and that the wheels in (b) are only glued onto allowable subsets of 4-element segments or cosegments.

We say that a path sequence $M_1,\ldots, M_n$ \textit{describes} a matroid $M$ if $M_n\cong M$. We also say that $M$ is a matroid \textit{described by} a path sequence if there is some path sequence that describes $M$. Let $\mathcal{P}$ denote the class of matroids such that $M\in \mathcal{P}$ if and only if there is some path sequence $M_1,\ldots, M_n$ that describes a matroid $M'$ such that $M$ can be obtained from $M'$ by $0$ or more allowable parallel and series extensions. Since $X_8$ is self-dual it is easy to see that the sequence of dual matroids $M_1^{*},\ldots, M_n^{*}$ of a path sequence $M_1,\ldots, M_n$ is also a path sequence.  Thus the class $\mathcal{P}$ is closed under duality. 


We say that $M_1,\ldots, M_n$ is a \textit{path sequence ending in a $4$-element segment $A$} if $M_{n}$ is obtained from $M_{n-1}$ by a \dy-step along $A$, and $A$ is a $4$-element segment of $M_n$. We say that $M_1,\ldots, M_n$ is a \textit{path sequence ending in a $4$-element cosegment $A$} if $M_1^{*},\ldots, M_n^{*}$ is a path sequence ending in a $4$-element segment $A$. 

Suppose that  $M_1,\ldots, M_n$ is a path sequence such that $M_n$ is obtained from $M_{n-1}$ by gluing a wheel $W$ onto an allowable triangle $A'$ of $M_{n-1}$. Then $M_n=P_{A'}(M_{n-1},W)\del X$ for some subset $X$ of $A'$ containing the unique rim element of $A'$. Let $F=E(W)-X$. Then we call $M_1,\ldots, M_n$ (and $M_1^{*},\ldots, M_n^{*}$) a \textit{path sequence ending in a fan $F$}. We also call $M_1,\ldots, M_n$ a \textit{path sequence ending in a triangle $F$} or a \textit{path sequence ending in a triad $F$} if $M_1,\ldots, M_n$ is a path sequence ending in a fan $F$, where the fan $F$ is a triangle or triad of $M_{n}$ respectively.

Given a path sequence $M_1,\ldots, M_n$, a path of $3$-separations can be determined as follows: for $X_8$, we let $(S,C)$ be the path of $3$-separations. Otherwise $n\geq 2$, so we can assume by duality that either $M_1,\ldots, M_n$ is a path sequence ending in a $4$-element segment $A$ or else $M_1,\ldots, M_n$ is a path sequence ending in a fan $A$, where $M_n$ is obtained from $M_{n-1}$ by gluing a wheel onto an allowable triangle of $M_{n-1}$. We set $P_1=A$, and then define the remaining steps of the path of $3$-separations by alternately taking the closure and coclosure respectively until no new steps can be added. That is, $P_k=\cl(P_1\cup \cdots P_{k-1})$ for $k\geq 2$ and $k$ even, and $P_k=\cl^{*}(P_1\cup \cdots P_{k-1})$ for $k\geq 2$ and $k$ odd. We call the resulting path of $3$-separations $\textbf{P}=(P_1,\ldots, P_m)$ the \textit{path of $3$-separations associated with the path sequence} $M_1,\ldots, M_n$. A $3$-separation $(R,G)$ of $M_n$ is \textit{displayed by} $\textbf{P}$ if $(R,G)=(P_1\cup \cdots \cup P_{i-1},P_{i}\cup \cdots \cup P_n)$ for some $i\in \{2,\ldots, n\}$.

The following is an easy consequence of the definition of a path sequence, together with Lemma \ref{amove} and Lemma \ref{fanfragile}.


\begin{lemma}
\label{pathsequencefragile}
 Let $\mathbb{P}\in \{\mathbb{U}_2, \mathbb{H}_5\}$. If $M$ is a $3$-connected matroid described by a path sequence, then $M$ is a strictly $\{U_{2,5}, U_{3,5}\}$-fragile $\mathbb{P}$-representable matroid. Moreover, $M$ has a path of $3$-separations $\mathbf{P}=(P_1,\ldots, P_m)$ such that $P_1$ and $P_m$ are path-generating allowable sets, and, for each $i\in \{2,\ldots, m-1\}$, the elements of $P_i$ are in the guts or coguts of the $3$-separation $(P_1\cup \cdots \cup P_{i-1},P_{i}\cup \cdots \cup P_n)$ and $|P_i|\leq 3$.
\end{lemma}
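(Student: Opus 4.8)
The plan is to prove the two assertions — strict fragility together with $\mathbb{P}$-representability, and the existence of the claimed path of $3$-separations — essentially by induction on the length $n$ of a path sequence $M_1,\ldots,M_n$ describing $M$. The base case $n=1$ is $M = X_8$ (or $M$ obtained from $X_8$ by allowable parallel and series extensions): here $X_8$ is $\mathbb{P}$-representable and strictly fragile by the computer enumeration referenced in Section~\ref{sec:noX}, the allowable extensions preserve these properties by the definitions and a routine series/parallel argument, and $(S,C)$ is the required path of $3$-separations with $P_1=S$, $P_m=C$ both $4$-element path-generating allowable sets. For the inductive step I would assume $M_1,\ldots,M_{n-1}$ describes a $3$-connected strictly fragile $\mathbb{P}$-representable matroid $M_{n-1}$ with an associated path of $3$-separations, and then analyse how $M_n$ is obtained from $M_{n-1}$.

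There are two cases, as in (P2). If $M_n$ is obtained from $M_{n-1}$ by a \dy-step along $A\in\{S,C\}$, then by definition, up to duality, $M_n = \Delta_A(Q)$ where $Q$ is a non-empty allowable parallel extension of $M_{n-1}$ along the path-generating allowable set $A$. Since $M_{n-1}$ is strictly fragile with no $\mathcal{N}$-essential elements (fragility together with the computer results, or rather the fact that matroids described by path sequences have all elements flexible-free but non-essential — this needs to be tracked through the induction), $Q$ is strictly fragile and $\mathbb{P}$-representable (allowable extensions preserve this), and then Lemma~\ref{amove} (applied with $|A|\in\{3,4\}$; here $|A|=4$) gives that $\Delta_A(Q)$ is $\{U_{2,5},U_{3,5}\}$-fragile and $\mathbb{P}$-representable, with $A$ an allowable cosegment of $M_n$. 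The $3$-connectedness of $M_n$ is given by hypothesis. For the path of $3$-separations, I set $P_1 = A$ and build the rest by alternately taking closure and coclosure; Lemma~\ref{pathDY1} and Lemma~\ref{whencsegsgoup}/\ref{asegsgoup} ensure the connectivity and closure relationships are preserved from the path associated to $M_{n-1}$, so each $P_i$ for $i\in\{2,\ldots,m-1\}$ has $|P_i|\le 3$ and lies in the guts or coguts of the displayed $3$-separation. If instead $M_n$ is obtained by gluing a wheel onto an allowable subset $A'$ of a $4$-element segment or cosegment of $M_{n-1}$, then Lemma~\ref{fanfragile} directly gives that $M_n$ is strictly fragile and $\mathbb{P}$-representable (using $3$-connectedness of $M_n$ as hypothesis), and that $F = E(\mathcal{W}_r)-X$ is a fan whose spokes are non-contractible and rims non-deletable; the path of $3$-separations is then $P_1 = F$, extended by alternating closures/coclosures, with the fan triangles/triads of size $\le 3$ supplying the intermediate steps in the guts/coguts.

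The main obstacle I anticipate is bookkeeping around the path of $3$-separations: verifying that when a \dy-step or a wheel-gluing is performed at one end of $M_{n-1}$, the path of $3$-separations of $M_{n-1}$ can be ``spliced'' onto the new end-steps so that the result is genuinely a path of $3$-separations of $M_n$ (each $P_1\cup\cdots\cup P_i$ is $3$-separating, full closure is all of $E(M_n)$, and the guts/coguts membership is correct). This requires carefully combining Lemma~\ref{pathDY1} (the $\Delta$-$Y$ operation preserves $\lambda$ and closure for sets nested with $A$) with the fact that $A$ is path-generating — so $\fcl_{M_{n-1}}(A) = E(M_{n-1})$ — and checking that the new elements introduced by the parallel extension and by the generalized parallel connection are absorbed into the full closure at the expected step. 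The guts/coguts claim and the $|P_i|\le 3$ bound should then follow because, by construction, each intermediate step $P_i$ is the closure or coclosure of the union of the earlier steps minus that union, and Lemma~\ref{clandco} pins down whether a single element is in the guts or the coguts; the segments/cosegments of $X_8$ have size $4$ only at the two ends, and fan steps and intermediate closure-steps contribute at most triangles or triads. I would also need to confirm that $P_1$ and $P_m$ remain path-generating allowable sets, which is immediate from the definitions of \dy-step and wheel-gluing (the new end $A$ or $F$ is by construction path-generating and allowable) combined with the inductive hypothesis that the opposite end of $M_{n-1}$ was path-generating and allowable and is unaffected by an operation performed at the far end.
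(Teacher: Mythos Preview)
Your proposal is correct and takes essentially the same approach as the paper: the paper does not give a detailed proof but simply remarks that the lemma ``is an easy consequence of the definition of a path sequence, together with Lemma~\ref{amove} and Lemma~\ref{fanfragile}'', and your induction on the length of the path sequence, invoking Lemma~\ref{amove} for \dy-steps and Lemma~\ref{fanfragile} for wheel-gluings, is precisely how one unpacks that remark. Your additional bookkeeping for the path of $3$-separations via Lemma~\ref{pathDY1} is the natural way to handle the second assertion, which the paper leaves implicit.
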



We can now state the main result of the paper.

\begin{thm}\label{thm:mainresult}
If $M$ is a $3$-connected $\{U_{2,5}, U_{3,5}\}$-fragile $\mathbb{H}_5$-representable matroid that has an $\{X_8,Y_8,Y_8^{*}\}$-minor, then there is some path sequence that describes $M$.
\end{thm}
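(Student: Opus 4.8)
The plan is to argue by induction on $|E(M)|$. A $3$-connected matroid with an $\{X_8,Y_8,Y_8^{*}\}$-minor has at least eight elements, so the base cases are those on eight or nine elements; by Theorem \ref{computer1} these form an explicit finite list, and one checks directly that each is described by a short path sequence. Here $X_8$ is $M_1$ itself; the matroids $Y_8$ and $Y_8^{*}$ are obtained from $X_8$ by a single wheel-gluing step (gluing $\Theta_3=M(\mathcal{W}_3)$ onto a triangle of $S$, respectively $C$, and deleting that triangle, which amounts to a $\Delta$-$Y$ exchange on the triangle); and each nine-element example is obtained by appending one further \dy-step or wheel-gluing step along $S$ or $C$.

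For the inductive step we may assume $|E(M)|\geq 10$. The core of the argument is a reduction step. I would show that $M$ carries a genuine path of $3$-separations $(P_1,\dots,P_m)$ whose ends $P_1,P_m$ are path-generating allowable sets --- a $4$-element segment or cosegment $A\in\{S,C\}$ inherited from an $\{X_8,Y_8,Y_8^{*}\}$-minor, or a fan --- with every intermediate $P_i$ lying in the guts or coguts of the displayed $3$-separation and satisfying $|P_i|\le 3$; this is exactly the structure that Lemma \ref{pathsequencefragile} produces in the forward direction. One then ``peels off'' the end $P_m$, chosen \emph{away from} a fixed $\{X_8,Y_8,Y_8^{*}\}$-minor of $M$, to obtain a matroid $M'$ with $|E(M')|<|E(M)|$ that is again $3$-connected, strictly fragile, $\mathbb{H}_5$-representable, and still has an $\{X_8,Y_8,Y_8^{*}\}$-minor, and from which $M$ is recovered by a single path-sequence step. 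Concretely: if $P_m$ is a fan, delete a rim--spoke pair from it and re-glue a larger wheel, using Lemma \ref{fanfragile} together with Lemmas \ref{lem:chaingpc} and \ref{lem:growwheel}; if $P_m$ is a $4$-element segment or cosegment $A$ at a \dy-step, apply $\Delta_A$ (when $A$ is a segment of $M$) or $\nabla_A$ (when $A$ is a cosegment), invoke Lemma \ref{9.3} to see the result is $3$-connected up to series pairs, respectively parallel pairs, cosimplify (respectively simplify) away the resulting non-trivial classes to obtain $M'$, and recover $M$ as $\nabla_A^{Q}(M')$, respectively $\Delta_A^{Q}(M')$, for the corresponding allowable series, respectively parallel, extension $Q$ --- the size strictly decreasing once the path has been suitably normalised. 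That $M'$ stays in the class follows from Corollary \ref{3.7}, Corollary \ref{nreg}, Lemma \ref{amove} and Lemma \ref{fanfragile}, while Lemmas \ref{2.13}, \ref{2.16} and \ref{pathDY1} control how minors, ranks and closures move through these operations. By the inductive hypothesis $M'$ is described by a path sequence, and appending the reverse of the reduction as its final step produces a path sequence describing $M$.

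Setting this up requires first establishing the path structure of $M$. Starting from an $\{X_8,Y_8,Y_8^{*}\}$-minor $N$, I would track the $4$-element segment $S$ and cosegment $C$ of $N$ up to $M$, using that strictly fragile matroids are $3$-connected up to series and parallel classes (Proposition \ref{fcon}), that their segments and cosegments are small and satisfy the (co)independence constraints of Lemma \ref{incotri}, and that $\mathbb{H}_5$-representability --- via Corollary \ref{nreg} and Lemma \ref{H5hom} --- forbids the configurations that would otherwise obstruct the analysis. Orthogonality (Lemma \ref{clandco}) together with the commutation relations for $\Delta$-$Y$ moves (Lemmas \ref{2.18} and \ref{2.19}) then allow one to normalise the path so that the chosen end really is a fan, a $4$-element segment, or a $4$-element cosegment on which the reduction above applies and strictly decreases the size.

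The main obstacle is precisely this structural analysis and the bookkeeping that accompanies it. The delicate points are: (i) guaranteeing that a size-decreasing reduction is available at the end of the path \emph{away from} a chosen $\{X_8,Y_8,Y_8^{*}\}$-minor, so that the minor survives in $M'$; (ii) handling ``short'' ends --- a fan that is merely a triangle or triad, or a $4$-element segment whose $\Delta$-$Y$ exchange would not actually shrink $M$ --- where one must instead commute the $\Delta$-$Y$ moves or switch to the other end of the path before reducing; and (iii) tracking connectivity precisely through the generalized parallel connections and $\Delta$-$Y$ exchanges, which preserve only ``$3$-connected up to series/parallel classes'', so that the simplifications and cosimplifications at each stage destroy neither the path nor the $\{X_8,Y_8,Y_8^{*}\}$-minor. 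These issues occupy the remainder of the paper.
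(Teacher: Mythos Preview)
Your plan has the induction running in the wrong direction relative to the paper, and this creates a genuine circularity. You propose first to establish that $M$ itself carries a path of $3$-separations with the structure of Lemma \ref{pathsequencefragile}, and then to peel off an end step. But the existence of such a path structure on $M$ is essentially the conclusion you are trying to prove: Lemma \ref{pathsequencefragile} supplies it only for matroids already known to be described by a path sequence, and your suggestion to ``track $S$ and $C$ up from the minor'' is not an independent proof of that structure --- it is a restatement of the problem. Without that structure in hand you have no end $P_m$ to peel off, so your reduction never starts.

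The paper's argument runs the other way. One takes a minimum-sized counterexample $M$ and, via the Splitter Theorem, finds (up to duality) an element $x$ with $M\del x$ $3$-connected and still having an $\{X_8,Y_8,Y_8^{*}\}$-minor. By minimality, $M\del x$ --- not $M$ --- lies in $\mathcal{P}$ and \emph{therefore} carries the path of $3$-separations. All the work (Sections \ref{setup}--\ref{guts} and the two that follow) is then about how the single extra element $x$ sits relative to the path of $M\del x$: Lemma \ref{blocksorguts} trichotomises $x$ as a guts element, a blocking element, or an element spanned and cospanned by one side of every displayed $3$-separation. The guts case is killed directly (Lemma \ref{xnotinguts}); the other two are bounded to $|E(M)|\le 12$ (Lemmas \ref{bigblocking} and \ref{9elts}) and finished by a finite computer check. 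At no point does the paper exhibit a path on $M$; it derives a contradiction from the known path on $M\del x$.

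The rearrangement machinery you mention (commuting \dy-moves, Lemmas \ref{2.18}, \ref{2.19}, and the results of the path-sequence-rearrangement section) is indeed used, but applied to $M\del x$, where it is legitimate, rather than to $M$, where it is not yet available. As a minor aside, your base-case description slips: in $X_8$ the set $C$ is a cosegment, not a segment, so one does not glue a wheel onto a ``triangle of $C$'' to obtain $Y_8^{*}$.
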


The proof will take up the remaining sections. We also have the following corollary for the class of $\mathbb{U}_2$-representable matroids.

\begin{cor}
If $M$ is a $3$-connected $\{U_{2,5}, U_{3,5}\}$-fragile $\mathbb{U}_2$-representable matroid that has an $\{X_8,Y_8,Y_8^{*}\}$-minor, then there is some path sequence that describes $M$.
\end{cor}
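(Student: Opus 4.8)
The plan is to deduce the corollary from Theorem \ref{thm:mainresult} by a representability-transfer argument, rather than re-running the whole structural machinery. First I would observe that every $\mathbb{U}_2$-representable matroid is $\mathbb{H}_5$-representable. This follows because $\mathbb{U}_2$ is a sub-partial-field of $\mathbb{H}_5$ (equivalently, there is a partial-field homomorphism $\mathbb{U}_2 \to \mathbb{H}_5$): one can exhibit an explicit homomorphism sending the two parameters $\alpha,\beta$ of $\mathbb{U}_2$ into $\mathbb{H}_5$ so that each fundamental element of $\mathbb{U}_2$ maps to a unit of $\mathbb{H}_5$. Alternatively, and more cleanly for this paper, one invokes Lemma \ref{H5hom}: a $\mathbb{U}_2$-representable matroid is $\mathbb{F}$-representable for every field with at least $5$ elements, and by Lemma \ref{lem:H5sixhoms} together with the hypothesis that $M$ is $3$-connected with a $\{U_{2,5},U_{3,5}\}$-minor, having six inequivalent $\gf(5)$-representations is equivalent to $\mathbb{H}_5$-representability. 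So I would first check that a $\mathbb{U}_2$-representable matroid with a $\{U_{2,5},U_{3,5}\}$-minor does in fact have six inequivalent $\gf(5)$-representations; this is exactly the content of \cite[Corollary 3.1.3]{Semple} as cited just before Lemma \ref{H5hom}, so it is available.

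Given that, the argument is immediate: let $M$ be a $3$-connected $\{U_{2,5},U_{3,5}\}$-fragile $\mathbb{U}_2$-representable matroid with an $\{X_8,Y_8,Y_8^{*}\}$-minor. Since $M$ has a $\{U_{2,5},U_{3,5}\}$-minor (indeed $X_8$, $Y_8$, $Y_8^{*}$ each have such a minor, as they lie in the fragile class under study) and is $3$-connected, the transfer just described shows $M$ is $\mathbb{H}_5$-representable. Now $M$ satisfies all the hypotheses of Theorem \ref{thm:mainresult}, so there is a path sequence describing $M$, which is exactly the conclusion of the corollary.

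The one point that needs a sentence of care is ensuring $M$ genuinely has a $\{U_{2,5},U_{3,5}\}$-minor so that Lemma \ref{lem:H5sixhoms} applies. This is not automatic from ``$\{U_{2,5},U_{3,5}\}$-fragile'' alone, but it is automatic from having an $\{X_8,Y_8,Y_8^{*}\}$-minor: each of $X_8,Y_8,Y_8^{*}$ contains a $U_{2,5}$- or $U_{3,5}$-minor (they are built from $X_8$ in the path-sequence hierarchy whose base retains the minor, and this is established in Section \ref{sec:noX}), so $M$ does too. Hence $M$ is strictly fragile in the sense required, and the chain Lemma \ref{lem:H5sixhoms} $\Rightarrow$ $\mathbb{H}_5$-representability $\Rightarrow$ Theorem \ref{thm:mainresult} goes through.

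I do not anticipate a genuine obstacle here: the corollary is a routine specialization, and the only mild subtlety is bookkeeping the equivalence ``$\mathbb{U}_2$-representable $+$ $3$-connected $+$ $\{U_{2,5},U_{3,5}\}$-minor $\Rightarrow$ $\mathbb{H}_5$-representable,'' which the paper has already set up in Lemmas \ref{lem:H5sixhoms} and \ref{H5hom} and the surrounding citations. If one preferred to avoid even that, the alternative is to exhibit the partial-field homomorphism $\mathbb{U}_2 \to \mathbb{H}_5$ directly and cite \cite[Theorem 1.3(ii)]{pendavingh2010confinement} (homomorphisms preserve representability), which is the cleanest self-contained route.
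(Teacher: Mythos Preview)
Your proposal is correct and matches the paper's intended (implicit) argument: the corollary is stated without proof precisely because every $\mathbb{U}_2$-representable matroid is $\mathbb{H}_5$-representable (the paper notes in the introduction that the former class is a subset of the latter), so Theorem~\ref{thm:mainresult} applies directly. Of your two routes, the direct partial-field homomorphism $\mathbb{U}_2\to\mathbb{H}_5$ via \cite[Theorem 1.3(ii)]{pendavingh2010confinement} is the clean one; the detour through Lemma~\ref{lem:H5sixhoms} would require separately establishing six inequivalent $\gf(5)$-representations, which Lemma~\ref{H5hom} alone does not give.
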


\section{Path sequence rearrangement}

Path sequence descriptions are generally not unique. That is, we can have distinct path sequences $M_1,\ldots, M_n$ and $M_1',\ldots, M_n'$ such that $M_n\cong M_n'$. One reason for this is that $X_8$ has two disjoint path-generating sets, and the order in which these are used does not change the resulting matroid described by a path sequence. Another reason is because of symmetry. We will focus on the ends of path sequences in the following sections, and we often speak of path sequences with a particular ordering of steps. In this section we show when we can obtain a path sequence with a desired ordering. The results found here are mostly routine and follow from corresponding properties of the generalized $\Delta-Y$ exchange. 

We introduce some terminology here to avoid cumbersome statements, but note that these terms will not appear in the following sections. We call a path sequence a \textit{\dy-sequence} if only moves of type (P2)(a) are used. That is, if no step involves gluing a wheel onto an allowable set. Let $M_1,\ldots, M_n$ be a \dy-sequence, and remember that $S$ and $C$ are the $4$-element segment and cosegment of $X_8$ respectively. There is an associated sequence $a[M_1,\ldots, M_n]: \{2,\ldots, n\}\to \{S,C\}$ for $M_1,\ldots, M_n$ where, for $i\in \{2,\ldots n\}$, we define $a[M_1,\ldots, M_n](i)=X$ if $M_{i}$ is obtained from $M_{i-1}$ by using the set $X$. We call $a[M_1,\ldots, M_n]$ the \textit{adjacency sequence} for $M_1,\ldots, M_n$. The adjacency sequence records the order of the steps used by the path sequence. If $n = 2$ then either $S$ and $C$ are both segments of $M_2$ or both cosegments of $M_2$. In the former case, $a[M_1,M_2](2) = (C)$, and in the latter, $a[M_1,M_2](2) = S$. When there are two or more steps, there is a lot of flexibility in the adjacency sequence. First, we show that moves on disjoint path-generating sets commute.

\begin{lemma}
\label{commute1}
Let $M_1,\ldots, M_n$ be a \dy-sequence. If $a[M_1,\ldots, M_n](i) \neq a[M_1,\ldots, M_n](i+1)$ for some $i\in \{2,\ldots,n\}$, then there is some \dy-sequence $M_1',\ldots, M_n'$ such that $M_n=M_n'$, and $a[M_1',\ldots, M_n'](j)\neq a[M_1,\ldots, M_n](j)$ if and only if $j\in \{i,i+1\}$.
\end{lemma}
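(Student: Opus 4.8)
The plan is to reduce the statement to a commutativity property of the underlying generalized $\Delta\hspace{-0.5pt}$-$Y$ exchanges, using the machinery already collected in Section~\ref{preliminaries}. Fix $i$ with $a := a[M_1,\ldots,M_n]$ satisfying $a(i) = A$ and $a(i+1) = B$ where $\{A,B\} = \{S,C\}$. By construction $M_i = \Delta_A^{Q}(M_{i-1})$ or $M_i = \nabla_A^{Q}(M_{i-1})$ for some allowable parallel (resp. series) extension $Q$ of $M_{i-1}$ along $A$, and then $M_{i+1}$ is obtained from $M_i$ by the analogous operation along $B$ with its own extension $Q'$. First I would argue that $S$ and $C$ remain disjoint $4$-element path-generating segments or cosegments throughout a $\dy$-sequence: this is essentially built into the definition, since each $\dy$-step acts on one of $S,C$ and, by Lemma~\ref{2.5} together with Lemma~\ref{amove}, turns a segment into a cosegment (or vice versa) while the other of $S,C$ is untouched; path-generation is preserved because full closure is duality-invariant and the operation only alters the local structure on $A \in \{S,C\}$. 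Crucially, because the parallel/series extensions in a $\dy$-step are performed on $A$ and only add elements in parallel (resp. series) to elements of $A$, these extensions along $A$ and along $B$ act on disjoint parts of the ground set and hence also commute with each other.

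Next I would separate the extension part from the $\Delta$/$\nabla$ part. Since the extension along $A$ (adding a set $Z_A$ in parallel to elements of $A$) and the extension along $B$ (adding $Z_B$ in parallel/series to elements of $B$) involve disjoint element sets and disjoint ``attachment'' sets, performing them in either order yields the same matroid $Q''$ on ground set $E(M_{i-1}) \cup Z_A \cup Z_B$. It then remains to show that the two generalized exchanges — one on $A$, one on $B$ — commute when applied to $Q''$. This is exactly the content of Lemma~\ref{2.18} and Corollary~\ref{2.19}: if both $A$ and $B$ are coindependent segments we use Lemma~\ref{2.18}; if both are independent cosegments we use Corollary~\ref{2.19}(i); and if one is a coindependent segment and the other an independent cosegment we use Corollary~\ref{2.19}(ii). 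In each case, because $A$ and $B$ are disjoint (and $|A|,|B| \geq 2$), the hypotheses of the relevant lemma are met, and we obtain $\Delta_B(\Delta_A(Q'')) = \Delta_A(\Delta_B(Q''))$ (or the mixed/dual versions). Hence swapping the order of the two steps produces $M_{i+1}' = M_{i+1}$.

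Finally I would assemble the new sequence: set $M_j' = M_j$ for $j < i$ and $j > i+1$, let $M_i'$ be the result of performing the $B$-step (with extension $Q'$) on $M_{i-1}$, and let $M_{i+1}' = M_{i+1}$ be the result of then performing the $A$-step (with extension $Q$). The swap argument above shows $M_{i+1}' = M_{i+1}$, so in particular $M_n' = M_n$, and by inspection $a[M_1',\ldots,M_n'](j) = a(j)$ for $j \notin \{i,i+1\}$ while $a[M_1',\ldots,M_n'](i) = B \neq A = a(i)$ and $a[M_1',\ldots,M_n'](i+1) = A \neq B = a(i+1)$, giving the claimed ``if and only if''. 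One should also check that $M_1',\ldots,M_n'$ is a genuine path sequence, i.e.\ that the intermediate matroid $M_i'$ still has $A$ as a $4$-element path-generating segment or cosegment so that the $A$-step at position $i+1$ is legal; this follows from the preservation discussion in the first paragraph applied with the roles of the two steps swapped.

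\textbf{Main obstacle.} The routine but genuinely fiddly point is verifying that after the swap the intermediate matroid $M_i'$ admits the subsequent step — that is, tracking that $A$ is still a path-generating $4$-element segment-or-cosegment of $M_i'$, with the correct allowability (some element non-deletable/non-contractible as required), so that the $A$-step producing $M_{i+1}'$ is a legitimate move of type (P2)(a). This is where one must carefully use Lemma~\ref{2.5}, Lemma~\ref{amove}, and the duality-invariance of path-generation, rather than anything deep; everything else is a direct appeal to Lemma~\ref{2.18} and Corollary~\ref{2.19}.
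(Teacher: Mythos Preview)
Your proposal is correct and follows essentially the same approach as the paper: reduce to a common parallel/series-extended matroid on which both exchanges can be performed, then invoke Lemma~\ref{2.18} and Corollary~\ref{2.19} to swap the order. The only cosmetic difference is that the paper reaches the common intermediate matroid by going \emph{backward} from $M_{i+1}$ (taking $P = \nabla_C(\nabla_S(M_{i+1}))$) whereas you go \emph{forward} from $M_{i-1}$ by performing both extensions; this sidesteps your ``main obstacle'' slightly more cleanly, since allowability of the swapped step is then immediate from the description of $P$, but the content is the same.
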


\begin{proof}
Suppose that $a[M_1,\ldots, M_n](i)=C$ and $a[M_1,\ldots, M_n](i+1)=S$ for some $i\in \{2,\ldots,n\}$. Suppose that $M_i=\Delta_C^{N_i}(M_{i-1})$ and $M_{i+1}=\Delta_S^{N_{i+1}}(M_i)$. Let $Q=E(N_i)-E(M_{i-1})$ and $R=E(N_{i+1})-E(M_i)$, and consider the matroid $P=\nabla_C(\nabla_S(M_{i+1}))$. Then $P$ is a parallel extension of $M_{i-1}$ with the elements of $Q$ along $C$ and the elements of $R$ along $S$. Now $P_{i}=P\del Q$ is an allowable parallel extension of $M_{i-1}$ along $S$, so $M_{i}'=\Delta_S^{P_i}(M_{i-1})$ is obtained from $M_{i-1}$ using $S$. Next $P_{i+1}=\Delta_S(P)$ is an allowable parallel extension of $M_{i}'$ along $C$, so $M_{i+1}'=\Delta_C^{P_{i+1}}(M_{i}')$ is obtained from $M_i'$ using $C$. But now $M_{i+1}'=\Delta_C(\Delta_S(P))$, so $M_{i+1}'= M_{i+1}$ by Lemma \ref{2.18}. The argument when $M_i$ or $M_{i+1}$ use the $\nabla$ operation is symmetric, with Corollary \ref{2.19} used in place of Lemma \ref{2.18}. 
\end{proof}

The following lemma is a key to exchanging the order of the \dy-steps of a \dy-sequence. 

\begin{lemma}\label{lem:SCexchange}
  Let $M$ be such that $M = \Delta^Q_S(X_8)$ or $M = \nabla^Q_C(X_8)$. Then $M$ has an automorphism exchanging $S$ and $C$.
\end{lemma}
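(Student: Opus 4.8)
The statement claims that $X_8$, or rather its single-element extensions $\Delta^Q_S(X_8)$ and $\nabla^Q_C(X_8)$ along its distinguished $4$-element segment $S$ and cosegment $C$, admit an automorphism swapping $S$ and $C$. The plan is to first establish the claim at the level of $X_8$ itself and then push it through the $\Delta$-$Y$ machinery. So first I would exhibit an automorphism $\sigma$ of $X_8$ with $\sigma(S)=C$ and $\sigma(C)=S$. This is most cleanly done from the explicit representation of $X_8$ over $\gf(5)$ (or $\mathbb{H}_5$) recorded in Section \ref{sec:noX}: $X_8$ is self-dual, and one checks directly from its geometry that the natural duality-induced isomorphism $X_8 \to X_8^*$ can be composed with a reordering of the ground set so that the $4$-point line $S$ of $X_8$ is carried onto the $4$-point coline $C$. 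Concretely, since $E(X_8) = S \cup C$ with $|S|=|C|=4$, an automorphism swapping them is a permutation of the $8$ elements; I would verify (appealing to the computer enumeration underpinning Theorem \ref{computer1}, which already knows the automorphism group of $X_8$) that such a permutation preserves the bases.

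The second step transports $\sigma$ through the extension and $\Delta$-$Y$ operation. Suppose $M = \Delta^Q_S(X_8)$, meaning $Q$ is an allowable parallel extension of $X_8$ along the path-generating set $S$ and $M = \Delta_S(Q)$. Here $Q$ is obtained from $X_8$ by adding some elements in parallel (in the sense of being in series in the dual — the precise direction follows the definition of \emph{allowable parallel extension along a cosegment} versus the segment case; for $\Delta^Q_S$ with $S$ a segment, $Q$ is a \emph{series} extension in $X_8$ but the roles are symmetric under duality). The key point is that the elements of $Q \setminus E(X_8)$ are attached to $S$ in a way that is canonical given $S$, so the automorphism $\sigma$ of $X_8$ lifts to an automorphism $\hat\sigma$ of $Q$ that sends the elements attached to $S$ to correspondingly-attached elements of $C$ — that is, $\hat\sigma$ realizes $Q = \Delta^Q_S(X_8)$'s underlying extension as also an extension along $C$. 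Since generalized $\Delta$-$Y$ exchange is defined canonically from the matroid structure (via the generalized parallel connection with $\Theta_k$ and a natural relabeling), any matroid automorphism that sends one allowable set $A$ to another allowable set $A'$ induces an isomorphism $\Delta_A(\cdot) \to \Delta_{A'}(\cdot)$. Applying this with $A = S$, $A' = C$ and the automorphism $\hat\sigma$, together with the fact (from Lemma \ref{2.5} and the definitions) that $\Delta_S$ and $\nabla_C$ are the "same" operation up to which of $S,C$ starts as a segment, yields an automorphism of $M$ interchanging $S$ and $C$. The case $M = \nabla^Q_C(X_8)$ is handled identically by duality, using that $X_8^* \cong X_8$ and $\sigma$ is self-dual in the appropriate sense.

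The main obstacle I anticipate is bookkeeping rather than mathematical depth: being careful about which of $S$, $C$ is a segment versus a cosegment in $X_8$ and in $Q$, and checking that the parallel/series elements adjoined in forming $Q$ are attached symmetrically enough that $\hat\sigma$ genuinely exists (as opposed to merely an abstract isomorphism between two different extensions). This hinges on the observation that $X_8$ has essentially one allowable parallel extension of each size along $S$ up to isomorphism — equivalently, that the automorphism group of $X_8$ acts transitively on the elements of $S$ and likewise on those of $C$ — so that the extension data is determined by cardinality and $\sigma$ automatically matches it up. Once this is in hand, the commutation of $\sigma$ with $\Delta$ and $\nabla$ is immediate from the naturality of the constructions (formally, from Corollary \ref{3.7} and Lemmas \ref{2.13}, \ref{2.16}, which show the operations are minor-functorial, hence automorphism-equivariant). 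I would therefore keep the written proof short: exhibit $\sigma$ on $X_8$, note transitivity of $\mathrm{Aut}(X_8)$ on $S$ and on $C$, lift to $\hat\sigma$ on $Q$, and invoke equivariance of $\Delta_S / \nabla_C$.
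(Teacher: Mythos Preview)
Your proposal has a genuine gap at the very first step: there is no automorphism of $X_8$ exchanging $S$ and $C$. In $X_8$ (a rank-$4$ matroid), $S$ is a $4$-element segment with $r(S)=2$, while $C$ is a $4$-element cosegment, and since $\lambda(S)=r(S)+r(C)-r(X_8)\geq 2$ by $3$-connectivity, we get $r(C)=4$. An automorphism preserves rank, so no automorphism can send $S$ to $C$. The self-duality of $X_8$ gives only an isomorphism $X_8\to X_8^{*}$ (under which the segment $S$ corresponds to the segment $C$ of the dual), not an automorphism of $X_8$ itself; your attempt to ``compose with a reordering'' conflates these. Consequently there is nothing to lift to $Q$, and indeed $Q$ itself has no automorphism swapping $S$ and $C$ for the same rank reason (the parallel elements sit on $S$, not on $C$).

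The paper's argument sidesteps this by descending one level further, writing $X_8=\Delta_C^{R}(U_{2,5})$. Setting $Q'=\nabla_C(Q)$, one finds that $Q'$ is simply $U_{2,5}$ with certain elements placed in parallel: the elements $c_1,c_2,c_3$ (forming $C$) in parallel with $s_1,s_2,s_3$, and the new elements $p_i$ (from $Q$) also in parallel with some of the $s_i$. At this level both $S$ and $C$ are rank-$1$ sets sitting in parallel classes of a rank-$2$ matroid, and the automorphism swapping them is transparent. One then recovers $M=\Delta_S(\Delta_C(Q'))$, and Lemma~\ref{2.18} (commutativity of $\Delta$ on disjoint coindependent segments) shows this equals $\Delta_C(\Delta_S(Q'))$; the automorphism of $Q'$ therefore induces one of $M$. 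Your instinct that naturality of $\Delta$ transports automorphisms is correct --- the missing idea is that the symmetry lives on $U_{2,5}$, not on $X_8$.
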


\begin{proof}
  Assume $M = \Delta^Q_S(X_8)$. The other case follows by duality.
  We have $M = \Delta^Q_S(X_8) = \Delta^Q_S(\Delta^R_C(U_{2,5}))$, where $R$ is an allowable parallel extension of $U_{2,5}$ by three elements. For concreteness, let $S = \{s_1, s_2,s_3,s_4\}$, let $C = \{c_1,c_2,c_3,c_4\}$, suppose $E(U_{2,5}) = \{s_1, s_2, s_3, s_4, c_4\}$, and $E(R) - E(S) = \{c_1,c_2,c_3\}$ with $\{s_i,c_i\}$ a parallel pair for $i = 1,2,3$. Now $Q$ is an allowable parallel extension of $X_8$, and up to symmetry we may assume it was obtained by placing elements $p_i$ in parallel with $s_i$, for $i \subseteq \{1,2,3\}$. Let $Q'$ be the matroid obtained from $U_{2,5}$ by doing both parallel extensions, i.e. $Q' = \nabla_C(Q)$. Clearly $Q'$ has an automorphism exchanging $S$ and $C$, and therefore $M = \Delta_S(\Delta_C(Q')) = \Delta_C(\Delta_S(Q'))$ also has the desired automorphism.
\end{proof}

Next, we show that we can exchange an $S$ and a $C$. We omit the dual statement of the following result. 

\begin{corollary}
 \label{usingeachend2}
Let $M_1,\ldots, M_n$ be a \dy-sequence such that $a[M_1, \ldots, M_n]$ has $k$ entries equal to $C$ and $n-k-1$ entries equal to $S$. If $k \geq 1$, then there is some equivalent $\dy$-sequence $M_1', \ldots, M_n'$ such that 
$a[M_1', \ldots, M_n']$ has $k-1$ entries equal to $S$ and $n-k$ entries equal to $C$.
\end{corollary}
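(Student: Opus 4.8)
The plan is to reorder the sequence so that its first step uses $C$, and then transport all of the later steps across the $S\leftrightarrow C$ automorphism supplied by Lemma \ref{lem:SCexchange}.

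Since $k\geq 1$, the adjacency sequence $a[M_1,\ldots,M_n]$ has an entry equal to $C$. First I would apply Lemma \ref{commute1} repeatedly, bubbling the leftmost $C$-entry down to position $2$; every swap performed is between a $C$-entry and an $S$-entry, which differ, so the lemma applies at each stage. This produces an equivalent \dy-sequence, which I again denote $M_1,\ldots,M_n$, whose adjacency sequence has first entry $C$ and, among its remaining $n-2$ entries, exactly $k-1$ equal to $C$ and $n-1-k$ equal to $S$. In particular $M_2$ is obtained from $M_1=X_8$ by a \dy-step along the cosegment $C$, so $M_2=\nabla^Q_C(X_8)$ for some allowable series extension $Q$ of $X_8$ along $C$, and Lemma \ref{lem:SCexchange} then gives an automorphism $\sigma$ of $M_2$ exchanging $S$ and $C$.

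Next I would build the required sequence by setting $M_1'=X_8$, $M_2'=M_2$, and, for $i\geq 3$, letting $M_i'$ be the result of applying to $M_{i-1}'$ the operation corresponding under $\sigma$ to the \dy-step taking $M_{i-1}$ to $M_i$: if this step is $\Delta^{R_i}_{A_i}$ or $\nabla^{R_i}_{A_i}$ with $A_i\in\{S,C\}$, then $M_i'$ is obtained from $M_{i-1}'$ by the analogous exchange along $\sigma(A_i)$. Because the generalized $\Delta-Y$ exchange and series/parallel extensions are natural with respect to isomorphisms, a straightforward induction shows that $\sigma$ extends to an isomorphism $M_i\to M_i'$ for every $i$ (sending the extension elements added at step $i$ to their images), so $M_n'\cong M_n$ and $M_1',\ldots,M_n'$ is a \dy-sequence. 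In this new sequence step $2$ still uses $C$, while for $i\geq 3$ step $i$ uses $\sigma(A_i)$; since $\sigma$ swaps $S$ and $C$, the $k-1$ later $C$-steps become $S$-steps and the $n-1-k$ later $S$-steps become $C$-steps. Hence $a[M_1',\ldots,M_n']$ has $k-1$ entries equal to $S$ and $1+(n-1-k)=n-k$ entries equal to $C$, as required. (If literal equality $M_n'=M_n$ is wanted in place of isomorphism, one can compose with the relabeling given by the extension of $\sigma$.)

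The one point that needs care — and the main obstacle — is justifying that $\sigma$ genuinely transports the sequence: one must check that each $\sigma(A_i)$ is again a $4$-element path-generating segment or cosegment of $M_i'$, so that the corresponding \dy-step is legitimate, and that the $\sigma$-image of each allowable extension $R_i$ is again an allowable extension of the correct type. Both follow from the fact that $\sigma$, suitably extended, is an isomorphism $M_{i-1}\to M_{i-1}'$ carrying all the relevant structure, but making this precise requires careful bookkeeping of how $\sigma$ acts on the elements introduced at each step. This is the same ``naturality of $\Delta-Y$'' argument underlying Lemma \ref{commute1} and Lemma \ref{lem:SCexchange}, so I expect no essential difficulty here, only routine care.
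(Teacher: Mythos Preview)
Your argument is correct and follows essentially the same approach as the paper: use Lemma~\ref{commute1} to rearrange the sequence so that the automorphism of Lemma~\ref{lem:SCexchange} applies to $M_2$, then transport all later steps through that automorphism so that every entry from position~$3$ onward swaps between $S$ and $C$. Your choice to bubble a $C$ specifically to position~$2$ (rather than to position~$3$ as in the paper) is a minor variation that makes the final count slightly more transparent, and your explicit remark that $M_2'=M_2$ while only the later steps are transported is exactly the right way to see why the first entry stays fixed.
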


\begin{proof}
  This follows from Lemma \ref{lem:SCexchange} when $n \leq 2$. For $n \geq 3$, apply Lemma \ref{commute1} to assume $a[M_1,\ldots, M_n](2,3) = (C,C)$ or  $a[M_1,\ldots, M_n](2,3) = (S,C)$. By Lemma \ref{lem:SCexchange}, there is an automorphism of $M_2$ that exchanges $S$ and $C$. Let $M_2'$ be the image of this automorphism. Applying it to all other $M_j$, we find a $\dy$-sequence $M_1', M_2', \ldots, M_n'$ whose adjacency sequence starts with $(C,S)$ or $(S,S)$ respectively. Note that all remaining entries get swapped from $C$ to $S$ and vice versa. This yields the desired result.
\end{proof}

We now return to considering general path sequences. Since, by Lemma \ref{lem:growwheel}, gluing a wheel onto an allowable triangle contained in an allowable $4$-element segment is the same as performing a sequence of moves, where each move consists of allowable parallel-extensions followed by a $\Delta$-$Y$ exchange, it again follows easily from Lemma \ref{2.18} and Corollary \ref{2.19} that this operation commutes with moves described in (P2) that are performed on a disjoint path-generating set. When a wheel is glued onto on an allowable subset of a $4$-element segment or cosegment $A$, it is clear that $A$ can no longer be a $4$-element segment or cosegment, so no additional \dy-step can be performed on $A$.
Thus we have the following lemma, whose proof is similar to the last. 

\begin{lemma}
\label{commute3}
Let $M_1,\ldots, M_n$ be a path sequence that describes $M$, and let $F$ be a fan of $M$. If there is some $k\in \{1,\ldots, n\}$ such that $M_1,\ldots, M_k$ is a path sequence ending in the fan $F$, then $M$ can be described by a path sequence $M_1',\ldots, M_n'$ ending in the fan $F$. Moreover, if $M_1',\ldots, M_{n-1}'$ is a \dy-sequence with at least one \dy-step, then $M$ can be described by a path sequence $M_1'',\ldots, M_n''$ ending in either a $4$-element segment or cosegment $A$ of $M$, where $A\in \{S,C\}$ and $A$ is disjoint from $F$.
\end{lemma}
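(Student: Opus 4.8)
The plan is to prove the two assertions separately, each time reducing to a commutation property of the basic operations. For the first assertion, I would argue by induction on $n-k$. The base case $k=n$ is immediate. For the inductive step, suppose $M_1,\ldots,M_k$ is a path sequence ending in the fan $F$, and consider the step producing $M_{k+1}$ from $M_k$. By the discussion preceding the lemma, gluing a wheel onto an allowable triangle inside an allowable $4$-element segment $A'$ is the same as a sequence of allowable-parallel-extensions-followed-by-$\Delta$-$Y$ (via Lemma \ref{lem:growwheel}); hence the step producing $M_k$ from $M_{k-1}$ can be regarded as a run of such elementary moves on $A'$. The step producing $M_{k+1}$ is either a \dy-step along some $A\in\{S,C\}$ or a wheel-glue onto an allowable subset of such an $A$. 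If $A$ is disjoint from $A'$ (equivalently, from $F$), then by Lemma \ref{2.18} and Corollary \ref{2.19} each elementary move on $A$ commutes with each elementary move on $A'$, so we may pull all the $A$-moves to the front: we obtain an equivalent path sequence in which the $A$-step comes first and the fan-creating $A'$-moves come last, i.e. a path sequence of the form $M_1'',\ldots, M_{k+1}''$ ending in the fan $F$ with $M_{k+1}''=M_{k+1}$; now apply induction to $M_1'',\ldots, M_{k+1}'', M_{k+2},\ldots,M_n$. The remaining case is that $A$ is not disjoint from $A'$; but once a wheel has been glued onto an allowable subset of a $4$-element segment or cosegment, that set is no longer a $4$-element segment or cosegment, so no \dy-step or wheel-glue can use it again --- hence this case does not occur for any step after $M_k$, and the reordering is unobstructed. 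This establishes the first assertion.

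For the second (``Moreover'') assertion, suppose that in the path sequence $M_1',\ldots,M_n'$ produced above the truncation $M_1',\ldots,M_{n-1}'$ is a \dy-sequence containing at least one \dy-step. By the first part we have arranged that $M_n'$ is obtained from $M_{n-1}'$ by gluing a wheel onto an allowable triangle $A'\subseteq A_0$ for some $4$-element segment or cosegment $A_0\in\{S,C\}$; let $A$ be the other element of $\{S,C\}$. Since $X_8$ has the two disjoint path-generating sets $S$ and $C$, the set $A$ is disjoint from $A_0$, hence from $F$. Now $M_1',\ldots,M_{n-1}'$ being a \dy-sequence with at least one \dy-step, Corollary \ref{usingeachend2} (together with Lemma \ref{commute1}) lets us rearrange its adjacency sequence so that its final \dy-step uses the set $A$; concretely, using Lemma \ref{commute1} to move an $A$-step to position $n-1$ if one exists, and otherwise using Corollary \ref{usingeachend2} to convert the final $A_0$-step into an $A$-step. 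This yields a \dy-sequence $M_1'',\ldots,M_{n-1}''$, equivalent to $M_1',\ldots,M_{n-1}'$, in which $M_{n-1}''$ is obtained from $M_{n-2}''$ by a \dy-step along $A$ and $A$ is a $4$-element segment or cosegment of $M_{n-1}''$; and since the final wheel-glue is on $A'\subseteq A_0$, disjoint from $A$, it commutes past this last \dy-step, so we may delay it, obtaining a path sequence $M_1'',\ldots,M_n''$ describing $M$ in which $M_{n-1}''\to M_n''$ is the wheel-glue on $A'$ and hence $M_{n-1}''\to $ (the $A$-\dy-step) can be taken as the final \dy-step. Re-indexing so that the $A$-\dy-step is the terminal step gives a path sequence ending in the $4$-element segment or cosegment $A$ of $M$, with $A$ disjoint from $F$, as required. (If $A$ is a segment of $M_{n-1}''$ it ends in a $4$-element segment; if a cosegment, in a $4$-element cosegment; this is exactly the stated alternative.)

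The main obstacle I anticipate is bookkeeping rather than a genuine mathematical difficulty: one must check carefully that ``elementary move on $A'$'' and ``elementary move on $A$'' really do fall under the commutation hypotheses of Lemma \ref{2.18} and Corollary \ref{2.19} --- in particular that at each intermediate stage the relevant sets are still (co)independent (co)segments, which is needed for the $\Delta$ and $\nabla$ operations to be defined --- and that the intermediate matroids produced by the rearranged sequence are still $3$-connected, so that they genuinely form a path sequence describing the same matroid. This is where Lemma \ref{9.3} and Lemma \ref{pathDY1} are used to verify that connectivity and the guts/coguts structure along the path are preserved under the reordering. Once these invariants are in hand, the reordering itself is a finite permutation of commuting operations, and the conclusion follows.
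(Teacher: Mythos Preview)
Your proposal is correct and follows essentially the same approach that the paper sketches: the paper's ``proof'' of this lemma is just the paragraph preceding it, which notes that (via Lemma~\ref{lem:growwheel}) a wheel-glue decomposes into elementary $\Delta$-$Y$ moves, that such moves commute with \dy-steps on a disjoint path-generating set by Lemma~\ref{2.18} and Corollary~\ref{2.19}, and that once a wheel is glued onto a subset of some $A_0\in\{S,C\}$ that set is no longer available, so all subsequent steps must use the other one --- exactly your key observations. Your induction on $n-k$ is the natural way to formalize this, and your appeal to Lemma~\ref{commute1} and Corollary~\ref{usingeachend2} for the ``moreover'' clause matches the paper's ``similar to the last''.

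Two small points of presentation. First, in the second part your word ``delay'' is backwards: the wheel-glue is already last, and what you actually want is to commute it \emph{earlier}, to position $n-1$, so that the $A$-\dy-step becomes terminal. Your subsequent ``re-indexing'' sentence shows you have the right picture, but the wording should be fixed. Second, the 3-connectedness of intermediate matroids is not part of the definition of a path sequence, so there is no need to verify it during the rearrangement; the only invariants you need are that the relevant sets remain (co)independent (co)segments so that the $\Delta$/$\nabla$ operations are defined, and Lemma~\ref{pathDY1} handles that.
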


Let $M$ be a matroid described by a path sequence $M_1,\ldots, M_n$. If $M_1,\ldots, M_n$ has a subsequence of the form $M_{i}=\Delta_A^Q(M_{i-1})$ and $M_{i+1}=\nabla_A^R(M_{i})$, then $r\in E(R)-E(M_{i})$ is called an \textit{internal coguts element} and $q\in E(Q)-E(M_{i-1})$ is called an \textit{internal guts element}. Dually, for the subsequence $M_{i}^{*}$ and $M_{i+1}^{*}$ of $M_1^{*},\ldots, M_n^{*}$, we call $r\in E(R)-E(M_{i}^{*})$ an \textit{internal guts element} and $q\in E(Q)-E(M_{i-1}^{*})$ an \textit{internal coguts element}. Internal guts and coguts elements can be removed without disturbing the end steps of the path sequence, in the following sense. 

\begin{lemma}
\label{Pmintcoguts}
Let $M$ be a matroid described by a path sequence $M_1,\ldots, M_n$ ending in either a $4$-element segment, $4$-element cosegment, or fan $X$. If $q\in E(M)$ is an internal guts element and $r\in E(M)$ is an internal coguts element, then $M\del q\in \mathcal{P}$ and $M/r\in \mathcal{P}$. Moreover, there are matroids $M'$ and $M''$ that can each be described by path sequences ending in the $4$-element segment, $4$-element cosegment, or fan $X$ such that $M\del q$ is a series extension of $M'$ and $M/r$ is a parallel extension of $M''$. 
\end{lemma}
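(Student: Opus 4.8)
The plan is to handle the internal guts element $q$ directly, and deduce the statement for the internal coguts element $r$ by duality (since $\mathcal{P}$ is closed under duality, and a path sequence ending in $X$ dualizes to one ending in $X$ as well, because $X_8$ is self-dual). So fix the subsequence $M_{i} = \Delta_A^Q(M_{i-1})$, $M_{i+1} = \nabla_A^R(M_i)$ witnessing that $q \in E(Q) - E(M_{i-1})$ is internal, where $A \in \{S, C\}$. First I would pin down where $q$ sits: in $M_i = \Delta_A(Q)$ the set $A$ is a cosegment and $q$ is in series with some element $a \in A$ with $a$ contractible in $M_{i-1}$ (this is the content of the allowable parallel extension $Q$ of $M_{i-1}$). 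Then I would apply the minor rules for generalized $\Delta$-$Y$ exchange, principally Lemma \ref{2.13} (which says $\Delta_A(Q)/q = \Delta_{A-q}(Q \del q)$ when $q \in A$, $|A| \geq 3$) together with Lemma \ref{2.16}, to push the deletion of $q$ past the remaining steps $M_{i+1}, \ldots, M_n$ of the sequence. The key point is that deleting $q$ commutes with all later moves: later $\dy$-steps and wheel-gluings happen either on $A$ itself (where $q$ appears only as a parallel/series element that is inert under the relevant operations after the first $\nabla_A$) or on a path-generating set disjoint from $q$, in which case Lemma \ref{2.16}(i)--(ii), Lemma \ref{2.18}, and Corollary \ref{2.19} let the deletion slide through.

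Concretely, I would argue that $M \del q$ is a series extension of a matroid $M'$ obtained by running the \emph{same} path sequence with $q$ simply omitted from the parallel extension $Q$ (i.e.\ replacing $Q$ by $Q \del q$ and leaving all other steps formally identical). One must check two things: (1) that $Q \del q$ is still a non-empty allowable parallel extension of $M_{i-1}$ along $A$ --- this holds because $q$ was one element among possibly several placed in parallel, and if $q$ were the \emph{only} such element then deleting $q$ collapses the $\Delta$-$\nabla$ pair and $M \del q$ is already genuinely smaller; in either case, after deletion, the element $a$ of $A$ that $q$ was in series with in $M_i$ reappears, and $M\del q$ is the series extension of $M'$ by that pair (or equals $M'$); and (2) that all subsequent conditions in (P2) --- allowability, path-generating, the fan-gluing hypotheses --- are preserved, which follows because $q \notin E(M_{i-1})$ and $q$ is not one of the distinguished elements $S \cup C$ or the fan elements, so its presence or absence does not affect the $3$-separations, closures, or deletability used to certify those conditions. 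The "ending in $X$" clause is immediate: the deletion of $q$ is carried out at step $i$ and the tail $M_i, \ldots, M_n$ of the sequence (hence its final move, which witnesses the ending in a $4$-element segment, cosegment, or fan $X$) is untouched, so $M'$ is described by a path sequence with the same final step.

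The main obstacle I expect is bookkeeping the interaction between the deletion of $q$ and a later $\nabla$-step performed on the \emph{same} set $A$ (or on the closure/coclosure steps of $A$'s path). After $M_{i+1} = \nabla_A^R(M_i)$, the set $A$ has been turned back into a segment, and $q$ --- having been in series with $a \in A$ in $M_i$ --- becomes a parallel element of $a$ in $M_{i+1}$; I need to verify carefully that subsequent $\dy$-steps along $A$ (which by definition operate on the $4$-element \emph{segment/cosegment} $A$ itself, not on its parallel/series enlargements) treat $q$ as inert, so that "delete $q$" truly commutes with them. This should reduce to the defined behaviour of $\nabla_A^R$ and $\Delta_A^Q$ on allowable extensions together with Lemmas \ref{2.13}, \ref{2.16}, \ref{2.18} and Corollary \ref{2.19}, but it is the place where the argument needs the most care. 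The remaining verifications (that the result lies in $\mathcal{P}$, and that $M\del q$ is a series extension of the path-sequence-described matroid $M'$ rather than equal to it exactly when $q$ was not the lone parallel element) are routine consequences of the definition of $\mathcal{P}$, which already allows closing under arbitrary allowable series and parallel extensions.
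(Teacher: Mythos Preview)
Your proposal is correct and follows essentially the same approach as the paper's proof: both split into the two cases according to whether $Q\del q$ equals $M_{i-1}$ or remains a non-empty allowable parallel extension, then commute the deletion of $q$ past the remaining steps of the sequence via Lemma \ref{2.16}. The paper handles the internal coguts element $r$ by a direct symmetric argument rather than duality, and is terser about the commutation (simply citing Lemmas \ref{2.11} and \ref{2.16} without the bookkeeping you flag as the main obstacle), but the substance is identical.
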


\begin{proof}
Suppose some subsequence of steps of the path sequence has the form $M_{i}=\Delta_A^Q(M_{i-1})$ and $M_{i+1}=\nabla_A^R(M_{i})$. Let $r\in E(R)-E(M_{i})$. Then either $R/r=M_{i}$ or $R/r$ is an allowable series extension of $M_{i}$ along $A$. Suppose first that $R/r=M_{i}$. Then $M_{i+1}/r=Q$ by Lemma \ref{2.11}, so $M_{i+1}/r$ is a parallel extension of a matroid described by a path sequence ending in the $4$-element segment $A$. Now consider contracting the element $r$ of $M$. The contraction operation commutes with any subsequent steps of the path sequence that describes $M$ by Lemma \ref{2.16}, so $M/r$ is also is a parallel extension of a matroid described by a path sequence ending in $X$. For the second case, suppose that $R/r$ is an allowable series extension of $M_{i}$. Consider contracting $r$ from $M$. Again the contraction of $r$ commutes with any subsequent steps of the path sequence that describes $M$ by Lemma \ref{2.16}, and the $(i+1)$-th step becomes $\nabla_A^{R/r}(M_{i})$. It follows that $M/r$ is described by a path sequence ending in $X$. 

Similarly, for $q\in E(Q)-E(M_{i-1})$, either $Q\del q=M_{i-1}$ or $Q\del q$ is an allowable parallel extension of $M_{i-1}$. Suppose that $Q\del q=M_{i-1}$. Then $M_i\del q=\Delta_A(M_{i-1})$ by Lemma \ref{2.11}. Thus $M_{i+1}\del q$ is equal to $M_{i-1}$ up to series pairs by Lemma \ref{2.11}, and since the deletion of $q$ commutes with any subsequent steps of the path sequence that describes $M$ by Lemma \ref{2.16}, it follows that $M\del q\in \mathcal{P}$ and $M\del q$ is a series extension of a matroid described by a path sequence ending in $X$. Suppose that $Q\del q$ is an allowable parallel extension of $M_{i-1}$. Then the deletion of $q$ commutes with any subsequent steps of the path sequence that describes $M$ by Lemma \ref{2.16}, and the $(i+1)$-th step becomes $\Delta_A^{Q\del q}(M_{i-1})$, so $M\del q$ is described by a path sequence ending in $X$.
\end{proof}

Next we handle elements belonging to path-generating sets for matroids described by long path sequences.

\begin{lemma}
\label{parallelendelements}
Let $M$ be a matroid described by a path sequence $M_1,\ldots, M_n$ ending in a $4$-element segment $A$, where $M_1,\ldots, M_n$ has at least two \dy-steps. If $s\in E(M)-A$ is in parallel with an element of $A$, then $M\del s$ is described by a path sequence $M_1',\ldots, M_n'$ ending in $A$.
\end{lemma}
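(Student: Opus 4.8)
The strategy is to use the structure theory from the previous lemmas to move $s$ ``inward'' so that it becomes an internal guts element, and then invoke Lemma \ref{Pmintcoguts}. First I would examine where in the path sequence the element $s$ was introduced. Since $M_1,\ldots,M_n$ ends in the $4$-element segment $A$, the final step is a \dy-step $M_n = \Delta_A^Q(M_{n-1})$; by hypothesis there is at least one earlier \dy-step, and by Lemma \ref{commute3} together with Corollary \ref{usingeachend2} we may assume the penultimate nontrivial step of the sequence is a $\nabla$-step along $A$ (or we can arrange that the second-to-last \dy-step uses $A$ as well, after first using Lemma \ref{commute1} to bring the two $A$-steps adjacent). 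The point is to realise a subsequence of the form $M_i = \Delta_A^Q(M_{i-1})$, $M_{i+1} = \nabla_A^R(M_i)$, $\ldots$, $M_n = \Delta_A^{Q'}(M_{n-1})$, so that $A$ is alternately a cosegment and a segment along these steps and $s$, being parallel to an element of $A$ in $M = M_n$, is exactly an element placed in parallel during the allowable parallel extension $Q'$ — i.e. an internal guts element of the sequence (or, after the rearrangement, can be taken to be one).

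Once $s$ is identified as an internal guts element, Lemma \ref{Pmintcoguts} gives immediately that $M\del s \in \mathcal{P}$ and, more precisely, that $M\del s$ is a series extension of a matroid $M'$ described by a path sequence ending in the $4$-element segment $A$. But I need the stronger conclusion that $M\del s$ itself (not merely $M'$) is described by a path sequence ending in $A$ with the same number of elements $n$. For this I would track the lengths: deleting the single parallel element $s$ removes one element, and the series extension produced by Lemma \ref{Pmintcoguts} is over the path-generating set $A$ at the final step, so the series pair it introduces can be absorbed into the allowable parallel/series data of the last \dy-step $\Delta_A^{Q'}$, keeping the sequence a genuine path sequence ending in $A$ of the correct length. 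Concretely, if $Q' \del s = M_{n-1}$ then $M_n \del s = \Delta_A(M_{n-1})$ and I re-insert $s$ via an allowable series extension along $A$ inside the final step, using Lemma \ref{2.11} and Lemma \ref{2.16} to commute past any intervening steps exactly as in the proof of Lemma \ref{Pmintcoguts}; if instead $Q'\del s$ is still an allowable parallel extension of $M_{n-1}$, then the last step simply becomes $\Delta_A^{Q'\del s}(M_{n-1})$ and we are done.

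The main obstacle I anticipate is the case analysis needed to guarantee that $s$ can be taken to be introduced at (or commuted to) the final $\Delta_A$-step: a priori $s$ could have been placed in parallel to an element $a \in A$ at some earlier stage, and then $a$ might itself have been relabelled through subsequent \dy-steps along $A$. Handling this requires carefully chasing the natural bijections $E(\Theta_k) - A \to A$ through each exchange and verifying that ``parallel to an element of $A$ in $M_n$'' forces $s$ to survive as a parallel element only if it was last touched at a $\nabla_A \to \Delta_A$ transition — which is precisely the internal-guts situation. If $s$ were instead parallel in $M_{n-1}$ already and remained so, orthogonality (via Lemma \ref{clandco}) and the fragility of the intermediate matroids (Lemma \ref{pathsequencefragile}) should rule this out, since $A$ is coindependent in $M_n$ and the parallel class of $a$ inside a \dy-sequence stays confined. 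Once that structural point is pinned down, the rest is the routine commuting-of-operations argument already rehearsed in Lemma \ref{Pmintcoguts}.
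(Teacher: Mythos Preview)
Your plan handles only one of two genuinely different cases, and your attempt to argue away the other one is where the gap lies. The paper's proof splits on whether $s$ was introduced during some \dy-step or whether $s$ belongs to the maximal path-generating set $X$ at the \emph{other} end of the path sequence (i.e.\ $X \subseteq E(M_n) - A$ with $X \in \{S,C\}$ or a fan glued there). In the first case your idea is exactly right and matches the paper: rearrange via Corollary \ref{usingeachend2} so that $s$ becomes an internal guts element and apply Lemma \ref{Pmintcoguts}. But your final paragraph tries to show the second case cannot occur, claiming that ``orthogonality and fragility should rule this out.'' It does not. An element of the far-end path-generating set $X$ can legitimately be parallel to an element of $A$ in $M_n$; this happens, for instance, when $X$ is itself a $4$-element segment sharing a point of $\cl(A)$, or when $X$ is a fan whose end spoke lies in $\cl(A)$. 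These elements were never ``placed in parallel'' during a \dy-step along $A$, so no amount of commuting \dy-steps on $A$ will turn them into internal guts elements.

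The paper handles this second case directly: if $X$ is a $4$-element segment, re-describe $M$ by a path sequence ending in $X$ (via Corollary \ref{usingeachend2}), delete $s$ using Lemma \ref{2.13} to get a fan at that end, then use Lemma \ref{commute3} to swap back to a description ending in $A$. If $X$ is a fan with $s$ an end spoke, re-describe $M$ ending in $X$ (via Lemma \ref{commute3}), delete $s$ to shorten the fan, and again swap back. A minor point: since $A$ is a segment of $M_n$, the last step is $M_n = \nabla_A^Q(M_{n-1})$, not $\Delta_A^Q$ as you wrote; getting the direction right makes it clearer why $s$ parallel to $A$ need not come from the final parallel extension.
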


\begin{proof}
Let $s\in E(M)-A$ be an element in parallel with an element of $A$. If $s$ were added in some \dy-step, then, since $M_1,\ldots, M_n$ has at least two \dy-steps, it follows from Corollary \ref{usingeachend2} that $M$ is described by a path sequence $M_1',\ldots, M_n'$ where $s$ is an internal guts element of $M_1',\ldots, M_n'$ and the result would follow immediately from Lemma \ref{Pmintcoguts}. We may therefore assume that $s$ belongs to a maximal allowable path-generating set $X$ contained in $E(M_n)-A$. Now it follows from the path sequence construction that $s$ is the only element of $X$ in parallel with $A$. Moreover, it follows by orthogonality that $X$ is either a $4$-element segment or a fan with $|X|\geq 4$ such that $s$ is an end spoke element of $X$. We handle the two cases now.

Suppose that $X$ is a $4$-element segment. Since $M_1,\ldots, M_n$ has at least two \dy-steps, it follows from Corollary \ref{usingeachend2} that $M$ is described by a path sequence $M_1',\ldots, M_n'$ ending in $X$, so that $M_n'=\nabla_X^N(M_{n-1}')$. Hence $M\del s=\nabla_{X-s}(N/s)$ by Lemma \ref{2.13}, where $(X-s)\cup (E(N)-(E(M_{n-1}'\cup s))$ is a fan of $M\del s$. Thus $M\del s$ has a path sequence $M_1',\ldots, M_{n-1}',M_n''$, where $M_n''$ is obtained from $M_{n-1}'$ by gluing a wheel onto the triangle $X-s\subseteq X$ of $M_{n-1}'$. Since $M_1',\ldots, M_{n-1}',M_n''$ must have at least one \dy-move, it follows by Lemma \ref{commute3} that $M\del s$ has a path sequence ending in $A$.

Suppose that $X$ is a fan with $|X|\geq 4$ such that $s$ as an end spoke element. By Lemma \ref{commute3} it follows that $M$ is described by a path sequence $M_1',\ldots, M_n'$ such that $M_n'$ is obtained from $M_{n-1}'$ by gluing a wheel onto an allowable triangle of $S$, where $X$ is the corresponding fan. Since $s$ is an end spoke element of $X$ and $|X|\geq 4$, it follows that $X-s$ is a fan of $M\del s$. Moreover, it is easy to see that $M\del s$ has a path sequence $M_1',\ldots, M_{n-1}', M_n''$ where $M_n''$ is obtained from $M_{n-1}'$ by gluing a wheel onto an allowable triangle of $S$, where $X-s$ is the corresponding fan. Since $M_1',\ldots, M_{n-1}', M_n''$ has at least one \dy-move, it follows from Lemma \ref{commute3} that $M\del s$ has a path sequence ending in $A$.
\end{proof}

Let $M\in \mathcal{P}$, and let $A$ be a path-generating allowable $4$-element segment of $M$. We say that $A$ is \textit{parallel} to a $4$-element segment $A'$ if $A'$ is obtained by possibly replacing one or more deletable elements $a$ of $A$ by an element $a'$ in parallel with $a$ in $M$. It is easy to see that the parallel relation is an equivalence relation on the path-generating allowable $4$-element segments of $M$, and that if $A$ and $A'$ are equivalent, then $\Delta_A(M)\cong  \Delta_{A'}(M)$. 

\begin{lemma}
\label{tech4seg}
Let $M$ be a matroid described by a path sequence $M_1,\ldots, M_n$ that has at least one \dy-step, and let $A$ be a path-generating allowable $4$-element segment of $M$. There is a path sequence that describes $M$ such that $A$ is parallel to a $4$-element segment in $\{S,C\}$. 
\end{lemma}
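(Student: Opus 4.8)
The plan is to show that any path-generating allowable $4$-element segment $A$ of $M$ must coincide, up to the parallel equivalence relation, with one of the two ``canonical'' $4$-element sets $S$ or $C$ that are tracked through the path sequence. I would argue by induction on the length $n$ of a shortest path sequence describing $M$. For the base case $M = X_8$ the path-generating allowable $4$-element segments of $X_8$ are exactly $S$ (if $S$ is a segment) and $C$ (if $C$ is a segment), since the triangles of $X_8$ meeting both distinguished sets are not path-generating; so the claim is immediate. For the inductive step, consider the last step $M_n$ of the sequence; by duality (using that $\mathcal P$ and the notion of path-generating set are duality-invariant) it suffices to treat the case where $M_n = \Delta^Q_B(M_{n-1})$ for some $B \in \{S,C\}$, or where $M_n$ is obtained from $M_{n-1}$ by gluing a wheel onto an allowable triangle $B' \subseteq B$.

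The key structural observation is that the path of $3$-separations $\mathbf{P} = (P_1,\ldots,P_m)$ associated with the path sequence (Lemma \ref{pathsequencefragile}) has $|P_i| \le 3$ for all internal $i$, so any $4$-element segment of $M$ cannot be an internal step and must therefore lie in the full closure of one of the two ends. More precisely, $A$ must be contained in $\fcl_M(P_1)$ or $\fcl_M(P_m)$, and since $A$ itself is path-generating it must in fact have large intersection with the relevant end. I would then use orthogonality (as in the proof of Lemma \ref{parallelendelements}) to pin down that $A$ meets the final $4$-element segment or fan in all but possibly one element, and that any element of $A$ outside that end is in parallel with an element of it. Concretely: if the last step was a $\dy$-step along $B$ so that $B$ is a $4$-element segment of $M$, then $A$ and $B$ are both path-generating allowable $4$-element segments lying in the same end of $\mathbf P$, and a short case analysis on which elements they share — using that a deletable element of $A$ can only differ from the corresponding element of $B$ by a parallel element — gives that $A$ is parallel to $B \in \{S,C\}$. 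If instead $A$ lies toward the other end, apply the inductive hypothesis to $M_{n-1}$ (after using Corollary \ref{usingeachend2} and Lemma \ref{commute3} to rearrange the sequence so that the $\dy$-step we peeled off is genuinely the last one, which is possible since the sequence has at least one $\dy$-step), then re-glue the final step; since the final step is performed on a set disjoint from $A$'s end, it commutes with the structure near $A$ (Lemma \ref{2.18}, Corollary \ref{2.19}) and the parallel class of $A$ is undisturbed.

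The main obstacle I anticipate is the bookkeeping in the wheel-gluing case: when $M_n$ is obtained from $M_{n-1}$ by gluing a wheel onto $B' \subseteq B$, the set $B$ is destroyed as a $4$-element segment, so $A$ cannot be the ``current end'' and must be pushed back to $M_{n-1}$ — but one must check carefully that gluing the wheel does not create a new path-generating allowable $4$-element segment that is not parallel to anything in $\{S,C\}$. Here I would invoke Lemma \ref{lem:growwheel}, which expresses wheel-gluing as a composition of allowable parallel extensions followed by $\Delta$-$Y$ exchanges on triangles; the only new $4$-element segments this can create lie inside the resulting fan, and fans contain no $4$-element segment once $|X| \ge 4$, while the triangle case is handled by noting a triangle is not a $4$-element segment. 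Combined with Lemma \ref{commute3} to move the wheel-gluing to the end and Lemma \ref{parallelendelements} to strip parallel elements off $A$, this reduces to the $\dy$-step case already handled, completing the induction.
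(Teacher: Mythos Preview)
Your inductive strategy is different from the paper's direct construction, and it has a genuine gap at the key step. The critical case is when $A = T \cup \{t\}$, where $T$ is an allowable triangle contained in one of $S$ or $C$ (say $C$) and $t$ is some deletable element \emph{not} in $C$. You assert that ``a short case analysis on which elements they share --- using that a deletable element of $A$ can only differ from the corresponding element of $B$ by a parallel element --- gives that $A$ is parallel to $B \in \{S,C\}$.'' But in the given path sequence there is no reason for $t$ to be parallel to the fourth element $c \in C - T$; $t$ may simply be an internal guts element or the end of a fan that happens to lie in $\cl_M(T)$. The parallel relation between $A$ and $C$ does not hold in the original path sequence, so no amount of rearranging via Corollary~\ref{usingeachend2} or Lemma~\ref{commute3} will produce it.

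The paper resolves this by \emph{constructing a genuinely new path sequence}, not just reordering the old one. After arranging (via Lemma~\ref{commute3}) that $C$ is a $4$-element cosegment of $M_{n-1}$ and $M = \nabla_T(M_{n-1})$, they series-extend $M_{n-1}$ along $c$ to get $N$, and set $M_n' = \nabla_C^N(M_{n-1})$. The dual of Lemma~\ref{2.13} gives $M_n' \del c = \nabla_{C-c}(N/c) = M$, so $M_n'$ is a parallel extension of $M$ in which $c$ and $t$ are parallel; now $A$ really is parallel to $C$, and Lemma~\ref{parallelendelements} lets one delete $t$ and finish. Your proposal does not contain this construction or any substitute for it, and the induction cannot close without it. (A minor additional point: your base case $M = X_8$ has no \dy-step and so is outside the hypotheses of the lemma; the genuine base case is a single \dy-step.)
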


\begin{proof}
Assume that neither $A$ nor any $4$-element segment parallel to $A$ belongs to $\{S,C\}$. Then since $M$ is a matroid described by a path sequence, it follows that $A=T\cup \{t\}$ where $T$ is a path-generating allowable triangle contained in either $S$ or $C$, and $t$ is a deletable element of $M$. We may assume that $T\subseteq C$ by Corollary \ref{usingeachend2}. 

Now, by Lemma \ref{commute3}, we may assume that $C$ is a $4$-element cosegment of $M_{n-1}$ and that $M=\nabla_T(M_{n-1})$. We show that we can construct a new path sequence that describes $M$ ending in a $4$-element segment parallel to $A$. Let $N$ be the series extension of $M_{n-1}$ obtained by adding an element in series with $c\in C-T$. Let $M_n'=\nabla_C^N(M_{n-1})$. By the dual of Lemma \ref{2.13} it follows that $\nabla_C(N)\del c= \nabla_{C-c}(N/c)=M$, so $M_n'$ is a parallel extension of $M$ where $c$ is in parallel with the element $t$. Thus $C$ is parallel to $A$ in $M_n'$. Moreover, since $M_1,\ldots, M_n'$ is a path sequence with at least two \dy-steps, it follows from Lemma \ref{parallelendelements} that $M_n'\del t=M$ is described by a path sequence ending in the $4$-element segment $C$, as required.
\end{proof}

We have the following important consequence of Lemma \ref{tech4seg}. 

\begin{lemma}
\label{4segrecognition}
Let $M\in \mathcal{P}$, and let $A$ be a path-generating allowable $4$-element segment of $M$. If $N$ is an allowable parallel extension of $M$ along $A$, then $\Delta_A(N)\in \mathcal{P}$.
\end{lemma}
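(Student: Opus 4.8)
The goal is to show that if $M \in \mathcal{P}$ has a path-generating allowable $4$-element segment $A$, and $N$ is an allowable parallel extension of $M$ along $A$, then $\Delta_A(N) \in \mathcal{P}$. The first step is to reduce to a path-sequence description of $M$ in which $A$ is handled as a terminal segment. By definition of $\mathcal{P}$, $M$ is obtained from a matroid $M'$ described by a path sequence $M_1, \ldots, M_n$ by $0$ or more allowable parallel and series extensions; the extra parallel extensions along $A$ can be absorbed into $N$ (they are allowable parallel extensions of $M$ along $A$, so their composite with $N$ is still an allowable parallel extension), and the extra series extensions are along cosegments or spokes of fans disjoint from $A$ (since $A$ is a $4$-element \emph{segment}), so they commute with $\Delta_A$ by Lemma \ref{2.16}. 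Thus it suffices to prove the statement when $M$ itself is described by a path sequence $M_1, \ldots, M_n$.

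\textbf{Main case: $M$ is described by a path sequence with at least one \dy-step.} Here I would invoke Lemma \ref{tech4seg}: there is a path sequence describing $M$ such that $A$ is parallel to a $4$-element segment $A_0 \in \{S, C\}$; by Corollary \ref{usingeachend2} we may take $A_0 = S$. Since the parallel relation on path-generating allowable $4$-element segments satisfies $\Delta_A(M) \cong \Delta_{A_0}(M)$ (stated just before Lemma \ref{tech4seg}), and more generally $\Delta_A(N) \cong \Delta_{A_0}(N_0)$ where $N_0$ is the corresponding allowable parallel extension of $M$ along $A_0$, we may assume $A = S$ in a path sequence describing $M$. By Lemma \ref{commute3} (its "moreover" clause), we may further assume the path sequence $M_1, \ldots, M_n$ ends with $S$ a $4$-element segment of $M_n = M$; that is, $M_n = \Delta_S^{Q'}(M_{n-1})$ for an allowable parallel extension $Q'$ of $M_{n-1}$ along $S$ (or else $n = 1$ and $M = X_8$, a trivial case handled directly). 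Now $N$ is an allowable parallel extension of $M = \Delta_S(Q')$ along $S$; applying $\nabla_S$, the matroid $\nabla_S(N)$ is an allowable parallel extension of $\nabla_S(M) = Q'$ along $S$ (using Lemma \ref{2.11} and the behavior of $\nabla_S$ on parallel classes of $S$), hence also an allowable parallel extension of $M_{n-1}$ along $S$; call it $Q''$. Then $\Delta_S(N) = \Delta_S(\nabla_S(N)) $... wait, more carefully: $N$ is a parallel extension of $M = \Delta_S(Q')$, and I want to write $\Delta_S(N)$ as a step in a path sequence. Since $N$ is obtained from $M_{n-1}$ by first doing the allowable parallel extension $Q''$ along $S$ and then $\Delta_S$, we have $\Delta_S(N) = \Delta_S(\Delta_S(Q'')) $... this is not quite right because $\Delta_S$ is not idempotent. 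The clean statement is: $N$ being a parallel extension of $\Delta_S(Q')$ along $S$ means, by applying $\nabla_S$, that $N = \Delta_S(Q'')$ where $Q'' = \nabla_S(N)$ is the parallel extension of $Q'$, hence of $M_{n-1}$, obtained by carrying the new parallel elements back; then $\Delta_S(N)$ is simply obtained from $M_{n-1}$ by the allowable parallel extension $Q''$ followed by $\Delta_S$ followed by... no. Let me restate: we want $\Delta_A(N) \in \mathcal{P}$, and $N$ itself is already in $\mathcal{P}$ (as a parallel extension of $M \in \mathcal{P}$). The matroid $\Delta_A(N)$ should be recognized as $M_1, \ldots, M_{n-1}, \Delta_A^{Q''}(M_{n-1})$ where, crucially, the parallel extension $Q''$ of $M_{n-1}$ along $A$ is the composite of $Q'$ with the pullback of $N$'s new parallel elements — this composite is allowable, and $\Delta_A^{Q''}(M_{n-1}) = \Delta_A(Q'') = \Delta_A(N')$ where $N'$ is a parallel extension of $M$ matching $N$ up to the parallel relation on the segment, which gives $\Delta_A(N') \cong \Delta_A(N)$. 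Then $M_1, \ldots, M_{n-1}, \Delta_A^{Q''}(M_{n-1})$ is a valid path sequence (move (P2)(a)) describing $\Delta_A(N)$, so $\Delta_A(N) \in \mathcal{P}$.

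\textbf{Remaining case: the path sequence describing $M$ has no \dy-step}, i.e., it consists only of wheel-gluings (and $M_1 = X_8$... actually a \dy-sequence-free path sequence still starts at $X_8$, so it has at least the implicit step making $S, C$ into a segment/cosegment — re-examining, the definition shows every path sequence of length $\geq 2$ either has a \dy-step or a wheel-gluing onto a subset of $\{S,C\}$, and $M_1 = X_8$ has both $S$ and $C$ available). If $A$ is a path-generating allowable $4$-element segment of $M$ and no \dy-step was used, then either $n=1$ and $M = X_8$, where $A \in \{S,C\}$ and $\Delta_A^N(X_8)$ is a \dy-step by definition, giving a path sequence of length $2$ describing $\Delta_A(N)$; or $n \geq 2$ and only wheel-gluings occurred. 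In the latter situation, every wheel-gluing is onto an allowable triangle inside $S$ or $C$, which destroys that $4$-element segment/cosegment; orthogonality arguments (as in the proof of Lemma \ref{parallelendelements}) show the only path-generating allowable $4$-element segments of $M$ are then $S$ or $C$ themselves if untouched, or segments parallel to them, so we are back in the situation where $A$ is parallel to an element of $\{S,C\}$ and $A$ remains a $4$-element segment — meaning no wheel was glued onto the triangle of that end — and we argue exactly as above with the trivial path sequence prefix.

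\textbf{Expected main obstacle.} The delicate point is the bookkeeping of the allowable parallel extension $Q''$ of $M_{n-1}$: one must check that pulling the new parallel elements of $N$ back through $\nabla_S$ yields elements parallel to elements of $S$ in $M_{n-1}$ (not elsewhere), that the resulting extension is \emph{allowable} (i.e., $S$ remains coindependent and retains a non-deletable element, which follows since the original $Q'$ already had these properties and parallel elements of non-deletable elements are non-deletable), and that $\Delta_A(N) \cong \Delta_A^{Q''}(M_{n-1})$ up to the parallel-equivalence of segments — this is where Lemma \ref{2.11}, Lemma \ref{2.16}, and the parallel-equivalence remark before Lemma \ref{tech4seg} all get used together. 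The commuting of $\Delta_A$ with the trailing series/parallel extensions that realize $M$ from $M'$ is routine via Lemma \ref{2.16}, but needs the observation that those extensions do not involve $A$ or $\cl(A)$.
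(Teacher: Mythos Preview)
Your overall strategy matches the paper's: reduce to a matroid described by a path sequence, invoke Lemma~\ref{tech4seg} to get $A$ parallel to a segment in $\{S,C\}$, and then conclude from the definition of a path sequence. However, your execution of the main case is both overcomplicated and contains a genuine error.

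The error: you write ``$M_n = \Delta_S^{Q'}(M_{n-1})$ for an allowable parallel extension $Q'$ of $M_{n-1}$ along $S$'' while also asserting that $S$ is a $4$-element \emph{segment} of $M_n$. These are inconsistent: applying $\Delta_S$ to a parallel extension along the segment $S$ produces a matroid in which $S$ is a \emph{cosegment}. If $S$ is to be a segment of $M_n$, the last step must be $\nabla_S^{Q'}$ with $Q'$ a \emph{series} extension. Your subsequent manipulations (``applying $\nabla_S$, the matroid $\nabla_S(N)$ is an allowable parallel extension of $\nabla_S(M)=Q'$\ldots'') inherit this confusion, and your own in-line corrections (``wait, more carefully\ldots'', ``this is not quite right\ldots'') never resolve it. Also, the ``moreover'' clause of Lemma~\ref{commute3} concerns path sequences ending in a \emph{fan}; it does not give you the rearrangement you want.

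The overcomplication: you do not need the path sequence for $M$ to end at $A$, nor do you need to merge the new step with the last one. The paper simply \emph{appends} a \dy-step. Once Lemma~\ref{tech4seg} gives a path sequence $M_1,\ldots,M_n$ describing $M$ with $A$ parallel to some $A'\in\{S,C\}$, the hypothesis guarantees $A'$ is a path-generating allowable $4$-element segment of $M_n$, so (P2)(a) lets you set $M_{n+1}=\Delta_{A'}(N')$ (with $N'$ the allowable parallel extension corresponding to $N$) and $M_1,\ldots,M_n,M_{n+1}$ is a path sequence describing $\Delta_A(N)$. That is the entire argument in the \dy-step case; no merging, no $\nabla$, no $Q''$.

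Your treatment of the reduction from $M\in\mathcal{P}$ to a path-sequence description, and of the no-\dy-step case, is fine and essentially matches the paper.
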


\begin{proof}
Since $M\in \mathcal{P}$, we know that $M$ is some series-parallel extension of a matroid $M'$ described by a path sequence, so up to removing parallel or series elements and replacing $A$ by an equivalent $4$-element segment if necessary, we can assume that there is some path sequence $M_1,\ldots, M_n$ that describes $M$. Now, if $M_1,\ldots, M_n$ has no \dy-step, then $M$ is obtained from $X_8$ or $Y_8$ by gluing a wheel onto an allowable set $X'\subseteq X$ for some $X\in \{S,C\}$, so it is clear that, up to symmetry, any $4$-element segment $A$ of $M$ is in $\{S,C\}$. Hence $\Delta_A(N)\in \mathcal{P}$. Suppose that $M_1,\ldots, M_n$ has at least one \dy-step. Then it follows from Lemma \ref{tech4seg} that $A$ is equivalent to a $4$-element segment in $\{S,C\}$, so by the definition of a path sequence there is some path sequence that describes $\Delta_A(N)$, and therefore $\Delta_A(N)\in \mathcal{P}$.
\end{proof}

Next we consider path sequences that end in a fan. 

\begin{lemma}
\label{techmaxfans}
Let $M$ be a matroid described by a path sequence $M_1,\ldots, M_n$ ending in a fan $F$. If $F$ is not a maximal fan, then $F\cup \{s\}$ is a maximal fan of $M$ for some $s\in E(M)-A$. Moreover, there is a path sequence $M_1',\ldots, M_t'$ ending in $F\cup \{s\}$ that describes $M$.
\end{lemma}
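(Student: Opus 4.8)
\textbf{Proof plan for Lemma \ref{techmaxfans}.}

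The plan is to analyze the structure of the fan $F$ at the end of the given path sequence and show that, if $F$ fails to be a maximal fan, the only way this can happen is that a single extra element $s$ of $E(M)$ extends $F$ to a maximal fan, and that $s$ is accounted for by an additional step in a path sequence describing $M$. First I would recall the set-up: by the definition of ``path sequence ending in a fan $F$'', we have $M_n = P_{A'}(M_{n-1}, W)\del X$, where $A'$ is an allowable triangle of $M_{n-1}$, $W \cong M(\mathcal{W}_r)$, $X \subseteq A'$ contains the rim element of $A'$, and $F = E(W) - X$. By Lemma \ref{fanfragile}, $F$ is a fan of $M_n$ whose spoke elements are non-contractible and whose rim elements are non-deletable, and $A'$ is an allowable subset of some $4$-element segment or cosegment $A \in \{S, C\}$ of $M_{n-1}$. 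By Lemma \ref{commute3} we may assume (after possibly re-describing $M$) that $M_1, \ldots, M_{n-1}$ is a \dy-sequence, so $M_{n-1}$ has both $S$ and $C$ available, and we can take $A$ to be whichever of $S, C$ contains $A'$.

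The heart of the argument is then to pin down where an element extending $F$ can come from. Since $F$ is a fan of $M$ with specified non-deletable/non-contractible elements at its ends, any element $s$ with $F \cup \{s\}$ a fan must attach to one end of $F$ via a triangle or triad with two end-elements of $F$. I would argue by orthogonality (as in the proof of Lemma \ref{parallelendelements}) that the ``inner'' end of $F$ — the end glued to the triangle $A'$ — cannot be extended within $M$ beyond $F$ except through elements of $A$ itself, because the fragility of $M$ (Lemma \ref{incotri}) forces the relevant triangle/triad to be independent/coindependent and the elements of $A \setminus A'$ are the only candidates. Concretely, the ``missing'' element $s$ must be the element of $A \setminus A'$ that, together with $A'$, forms the $4$-element segment or cosegment $A$ of $M_{n-1}$; this is exactly one extra element, giving $F \cup \{s\}$, and any attempt to go further is blocked because no element of $M$ outside $A$ can form a triangle or triad with an end of $F$ without violating fragility or $3$-connectivity (using that $M$ is $3$-connected up to series and parallel classes via Proposition \ref{fcon}). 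This shows $F \cup \{s\}$ is a fan and, by the same obstruction applied to its ends, that it is maximal.

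Finally, to produce the promised path sequence $M_1', \ldots, M_t'$ ending in $F \cup \{s\}$, I would observe that gluing the $r$-wheel onto $A'$ and then recognizing $s \in A$ as the $(r+1)$-st fan element amounts, by Lemma \ref{lem:growwheel}, to gluing an $(r+1)$-wheel onto the $4$-element segment/cosegment $A$ of $M_{n-1}$ (or, equivalently, inserting one more \dy-step of type (P2)(a) along $A$ before the wheel-gluing, using Lemma \ref{2.18} and Corollary \ref{2.19} to reorder). So the desired path sequence is obtained from $M_1, \ldots, M_{n-1}$ by a single step of type (P2)(b) that glues the larger wheel onto $A$, with $F \cup \{s\}$ as its fan; here $t = n$ in fact suffices, but the statement only claims existence of such a sequence. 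The main obstacle I anticipate is the orthogonality bookkeeping in the previous paragraph: carefully ruling out every other way an element of $E(M)$ could extend $F$, and in particular handling the interaction between the fan $F$ and the path of $3$-separations associated with $M_1, \ldots, M_{n-1}$ so that the only new fan element is forced to lie in $A$. This requires invoking Lemma \ref{incotri}, Lemma \ref{clandco}, and the guts/coguts structure from Lemma \ref{pathsequencefragile} in combination, rather than any single clean lemma.
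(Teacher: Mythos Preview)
There is a genuine gap: you have misidentified the element $s$. You claim that the element extending $F$ must be the fourth point $a_4$ of the segment $A$, i.e.\ the unique element of $A\setminus A'$. But $A$ is a $4$-element segment of $M_{n-1}$, so $A\cong U_{2,4}$ and $a_4$ is \emph{not} parallel to any single element of $A'$. Now trace what it means for some $s\in E(M)-F$ to extend $F$ as an end spoke: if (say) $X=\{a,b\}$ and the rim end of $F$ is $r_a$ with adjacent spoke $s_a$, then $\{s,r_a,s_a\}$ must be a triangle of $M$. In $P_{A'}(M_{n-1},W)$ the line through $r_a,s_a$ is exactly the wheel triangle $\{a,r_a,s_a\}$, so after deleting $a$ the only way to get a third point on that line is for $s$ to have been \emph{parallel to $a$} in $M_{n-1}$. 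The element $a_4$ does not satisfy this, so $a_4$ does not extend $F$; the real $s$ is an element of $M_{n-1}$ parallel to a deleted spoke of $A'$, arising from earlier steps of the path sequence (an internal guts element, or an end element of the other path-generating set). This is precisely what the paper establishes: ``the end spoke elements must be parallel with elements of $A'$ in $M_{n-1}$'', and then uses that $\cl_{M_{n-1}}(A)$ meets $E(M_{n-1})-A$ in at most a rank-one set to conclude there is at most one such $s$.

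Because of this misidentification, your second step also fails. Lemma~\ref{lem:growwheel} does not give you a bigger wheel glued onto $A$; what is actually needed is to absorb $s$ into the fan by \emph{removing} $s$ from $M_{n-1}$ first. The paper does this via Lemma~\ref{parallelendelements}: when $M_1,\ldots,M_{n-1}$ has at least two \dy-steps, $M_{n-1}\del s$ is described by a path sequence ending in $A$, and then one glues the wheel onto $A'$ in $M_{n-1}\del s$ \emph{without} deleting the spoke of $A'$ that was parallel to $s$, so that this spoke now plays the role of $s$ in $F\cup\{s\}$. The case of exactly one \dy-step is handled by an explicit small-case analysis. Your outline does not contain either of these ingredients.
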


\begin{proof}
We may assume, by duality, that $M_n$ is obtained from $M_{n-1}$ by gluing a wheel onto an allowable triangle $A'$ of a $4$-element segment $A\in \{S,C\}$. Suppose that $F$ is not a maximal fan. By the path sequence that describes $M$ the fan $F$ is obtained by gluing a wheel onto a triangle $A'$ that is contained in a $4$-element segment $A$, so it follows from orthogonality that no triads meet both $F$ and $E(M)-F$. Thus any fan properly containing $F$ is an extension of $F$ by adding end spoke elements, and the end spoke elements must be parallel with elements of $A'$ in $M_{n-1}$.  But from the definition of a \dy-step we see that $\cl_{M_{n-1}}(A)$ meets $E(M_{n-1})-A$ in at most a rank one set. Thus $F\cup s$ is maximal for some end spoke element $s\in E(M)-F$. It remains to show that there is a path sequence that describes $M$ ending in $F\cup s$.

Since $M_{n-1}$ is described by a path sequence, there must be at least one \dy-step in $M_1,\ldots, M_{n-1}$. 

If there are at least two \dy-steps in the path sequence $M_1,\ldots, M_{n-1}$, then $M_{n-1}\del s$ is described by a path sequence $M_1',\ldots, M_{n-1}'$ ending in $A$ by Lemma \ref{parallelendelements}. Then $M$ is described by a path sequence $M_1',\ldots, M_{n-1}', M_n'$, where $M_n'$ is obtained from $M_{n-1}'$ by gluing a wheel onto $A'$, such that the element of $A'$ that was parallel to $s$ in $M_{n-1}$ is not deleted.

Assume that $M_1,\ldots, M_{n-1}$ has exactly one \dy-step. Then, since $M_{n-1}$ has an element in parallel with $A$, either $M_{n-1}$ is a single-element parallel extension of $Y_8$, where an element of $C$ is in parallel with an element of $S$, or $M_{n-1}$ is obtained from that matroid by gluing a wheel onto a triangle of $S\cup C-A$. In either case, $M_{n-1}\del s$ is described by a path sequence such that $A\in \{S,C\}$, and $M$ can be obtained from $M_{n-1}\del s$ by gluing a wheel onto $A'$, where the element of $A'$ that was parallel to $s$ in $M_{n-1}$ is not deleted.
\end{proof}


The following is another important consequence of Lemma \ref{tech4seg} and \ref{techmaxfans}.

\begin{lemma}
\label{trirecognition}
Let $M\in \mathcal{P}$, and let $A$ be a path-generating allowable triangle of $M$. If $N$ is an allowable parallel extension of $M$ along $A$, then $\Delta_A(N)\in \mathcal{P}$. Moreover, if $A$ is contained in a $4$-element segment of $M$, then $\Delta_A(M)\in \mathcal{P}$.
\end{lemma}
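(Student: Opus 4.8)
The plan is to reduce to the case where $M$ is actually described by a path sequence (rather than merely a series-parallel extension of such a matroid), and then split into two cases according to whether the path sequence has a $\dy$-step. First, since $M \in \mathcal{P}$, there is a matroid $M'$ described by a path sequence $M_1,\ldots, M_n$ such that $M$ is obtained from $M'$ by allowable parallel and series extensions. Using the duality invariance of $\mathcal{P}$ together with Lemma \ref{Pmintcoguts} and Lemma \ref{parallelendelements} (which let us strip internal guts/coguts elements and parallel elements without disturbing the relevant end of the path sequence), I would argue that after relabelling $A$ by an equivalent triangle if necessary, we may assume $A \subseteq E(M_n)$ and that $M_1,\ldots, M_n$ itself describes $M$. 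The key point is that since $A$ is path-generating and allowable in $M$, orthogonality forces $A$ to sit in a controlled way relative to the path of $3$-separations: either $A$ lies inside a $4$-element segment of $M$, or $A$ together with some elements forms (part of) a fan, or $A \in \{S,C\}$ directly.

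If $M_1,\ldots, M_n$ has no $\dy$-step, then $M$ is obtained from $X_8$ or $Y_8$ by gluing a wheel onto an allowable subset of some $X \in \{S,C\}$. In this situation every path-generating allowable triangle of $M$ is, up to symmetry and the parallel equivalence, a triangle inside $S$ or $C$ (one checks this directly against the small matroids, using that the fans glued on have all their triangles meeting $E(M)-F$ in a triad, hence are not path-generating). So gluing a wheel onto $A$ — equivalently, performing an allowable parallel extension along $A$ followed by $\Delta_A$, by Lemma \ref{lem:growwheel} — is a legitimate move of type (P2)(b), and $\Delta_A(N) \in \mathcal{P}$.

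If $M_1,\ldots, M_n$ has at least one $\dy$-step, I treat the two sub-cases. When $A$ is contained in a $4$-element segment $A^+$ of $M$, apply Lemma \ref{tech4seg} to obtain a path sequence describing $M$ in which $A^+$ is parallel to a $4$-element segment in $\{S,C\}$; then $A$ is parallel to a triangle contained in $S$ or $C$, and since $N$ is an allowable parallel extension along $A$, gluing the corresponding wheel onto $A$ is a move of type (P2)(b), so both $\Delta_A(N) \in \mathcal{P}$ and (taking $N = M$) $\Delta_A(M) \in \mathcal{P}$, giving the "moreover". When $A$ is not contained in a $4$-element segment, orthogonality and the path-generating hypothesis force $A$ to be a triangle of a fan $F$ of $M$ with $A$ the end triangle; by Lemma \ref{techmaxfans} we may take $F$ maximal and obtain a path sequence ending in $F$, and then a wheel can be glued onto $A$ as a move of type (P2)(b) — more precisely, $\Delta_A(N)$ is obtained by extending that terminal fan, so $\Delta_A(N) \in \mathcal{P}$.

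The main obstacle I anticipate is the case analysis for the position of the path-generating triangle $A$: showing that a path-generating allowable triangle must either lie inside a $4$-element segment or be the end triangle of a maximal fan requires a careful orthogonality argument against the path of $3$-separations associated with the path sequence, and one must be sure no exotic configuration slips through (e.g. a triangle straddling two path steps). Once that structural dichotomy is pinned down, the rest is bookkeeping with Lemmas \ref{tech4seg}, \ref{techmaxfans}, \ref{4segrecognition}, and \ref{lem:growwheel}.
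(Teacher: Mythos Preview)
Your overall strategy matches the paper's: reduce to $M$ described by a path sequence, observe that a path-generating allowable triangle $A$ either sits in a $4$-element segment or in a path-generating fan, and in each case invoke Lemma~\ref{tech4seg} or Lemmas~\ref{commute3}/\ref{techmaxfans} together with Lemma~\ref{lem:growwheel} to recognise $\Delta_A(N)$ as a (P2)(b) move. However, there are two concrete errors in your case analysis.

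First, in the no-\dy-step case you assert that every path-generating allowable triangle lies (up to symmetry and parallel equivalence) inside $S$ or $C$, justifying this by saying ``the fans glued on have all their triangles meeting $E(M)-F$ in a triad, hence are not path-generating''. This is false: a triangle $T$ contained in the glued-on fan $F$ is path-generating, since $\fcl(T)$ absorbs the adjacent rim elements, then the adjacent spokes, and so on until it contains all of $F$, and $F$ itself is path-generating. So your no-\dy-step case simply omits the fan sub-case. The paper avoids this by splitting first on ``$A$ in a $4$-element segment'' versus ``$A$ in a path-generating fan'', and only afterwards on the number of \dy-steps; the fan argument then applies uniformly.

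Second, in your fan sub-case you claim that orthogonality forces $A$ to be the \emph{end} triangle of $F$. This is not true: any triangle of $F$ is allowable (its rim element is non-deletable) and path-generating. The paper handles an arbitrary triangle $A \subseteq F$ by appealing to Lemma~\ref{lem:growwheel}: performing a parallel extension along $A$ followed by $\Delta_A$ grows the wheel regardless of where $A$ sits in $F$, so $\Delta_A(N)$ is either a series extension of $M$ or is obtained from $M_{n-1}$ by gluing on a larger wheel. Your ``end triangle'' restriction is unnecessary and, as stated, unjustified.

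Both issues are easily repaired by restructuring along the paper's lines.
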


\begin{proof}
We may assume that $M$ is a matroid described by a path sequence $M_1, \ldots, M_n$. Then, since $A$ is a path-generating allowable triangle of $M$, either $A$ is contained in a $4$-element segment or $A$ is contained in a path-generating fan of $M$.

Suppose that $A$ is contained in a $4$-element segment of $M$. If $M_1, \ldots, M_n$ has no \dy-step, then $M$ is obtained from $X_8$ or $Y_8$ by gluing a wheel onto an allowable set $X'\subseteq X$ where $X\in \{S,C\}$. In either case we can assume, up to symmetry, that $A\subseteq S$. Hence $\Delta_A(M)\in \mathcal{P}$, and if $N$ is an allowable parallel extension of $M$ along $A$, then $\Delta_A(N)$ can be obtained from $M$ by gluing a wheel onto $A\subseteq S$, so $\Delta_A(N)\in \mathcal{P}$. We may therefore assume that $M_1, \ldots, M_n$ has at least one \dy-step. Then by Lemma \ref{tech4seg} we may assume that $A$ is contained in a $4$-element segment parallel to $S$. Thus $\Delta_A(M)$ and $\Delta_A(N)$ can be obtained from $M$ by gluing a wheel onto $A\subseteq S$, so both $\Delta_A(M)$ and  $\Delta_A(N)$ belong to $\mathcal{P}$.

Assume that $A$ is contained in a path-generating fan $F$ of $M$. By Lemma \ref{commute3} and Lemma \ref{techmaxfans} we can assume that $M_1, \ldots, M_n$ is a path sequence ending in $F$. Now it follows from Lemma \ref{lem:growwheel} that $\Delta_A(N)$ is either a series extension of $M$ or else $\Delta_A(N)$ can be obtained from $M_{n-1}$ by gluing a fan onto an allowable triangle, so in either case $\Delta_A(N)\in \mathcal{P}$.
\end{proof}


Finally, we show that we can remove an element from a path sequence and obtain, essentially, another path sequence.

\begin{lemma}\label{lem:removefrompathseq}
  Let $M$ be described by a path sequence with $|E(M)| \geq 10$. There is an $e \in E(M) - (S \cup C)$ such that at least one of $M\delete e$, $M\contract e$, $\co(M\delete e)$, and $\si(M \contract e)$ is described by a path sequence.
\end{lemma}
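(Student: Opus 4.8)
The plan is to take a path sequence $M_1,\ldots,M_n$ describing $M$ and, since $|E(M)|\geq 10 > |E(X_8)| = 8$, note that $n \geq 2$, so the sequence has a final step producing $M_n = M$ from $M_{n-1}$. The argument will split according to the type of that final step, using duality to reduce the number of cases: the last step either (a) is a \dy-step along a $4$-element segment or cosegment $A \in \{S,C\}$, or (b) glues a wheel onto an allowable triangle $A' \subseteq A$. In case (b) we may by duality assume a wheel was glued onto a triangle inside a $4$-element segment, producing a fan $F$; in case (a) we may by duality assume the sequence ends in a $4$-element segment $A$.

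First I would handle the case where the sequence ends in a fan $F = E(W) - X$. By Lemma \ref{techmaxfans} we may assume $F$ is a maximal fan of $M$. A maximal fan of length at least $4$ glued onto a triangle has an end spoke or end rim element; I would pick $e$ to be an end rim element $s$ of $F$. Then $\co(M\del s)$ (if $s$ is a rim element the natural move is deletion, turning its neighbours into a series pair, or dually contraction of an end spoke) is obtained by gluing the shorter wheel $W'$, i.e. it is described by a path sequence $M_1,\ldots,M_{n-1},M_n'$ where $M_n'$ glues the $(r-1)$-wheel, using Lemma \ref{lem:growwheel} to see that peeling off one spoke-rim layer of the wheel corresponds to gluing a smaller wheel. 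If the fan glued is just a triangle (wheel of rank $3$, $|F|=3$), then there is no shorter wheel, but then $M_{n-1}$ is already a matroid described by a path sequence on $|E(M)| - 2 \geq 8$ elements obtained by a \dy-step (since $n-1 \geq 2$ forces a \dy-step somewhere, or $M_{n-1} = X_8$), and $M_n = M$ differs from a parallel extension of a $\Delta$-$Y$ image; here I would choose $e$ to be an internal guts element and invoke Lemma \ref{Pmintcoguts}, or choose an element of $A'$ not in the glued triangle.

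Next I would handle the case where the sequence ends in a $4$-element segment $A$ via a \dy-step $M_n = \Delta_A^Q(M_{n-1})$ with $Q$ a nonempty allowable parallel extension of $M_{n-1}$ along $A$. Each $\Delta$-$Y$ step adds at least one new element, namely the element(s) of $E(Q) - E(M_{n-1})$ placed in parallel with elements of $A$ before the exchange; after the exchange these become elements in a cosegment. I would take $e$ to be one such element. If there are at least two \dy-steps in the sequence, Corollary \ref{usingeachend2} and Lemma \ref{commute1} let me rearrange so that $e$ becomes an internal guts or coguts element, and then Lemma \ref{Pmintcoguts} gives that $M\del e$ or $M\contract e$ (up to a series or parallel extension, hence after $\co$ or $\si$) is described by a path sequence. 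If there is exactly one \dy-step (so $M_{n-1}$ is $X_8$ or a wheel glued onto an allowable subset of a $4$-element set), then $M$ is a small matroid built directly from $X_8$ by one parallel extension and one $\Delta$-$Y$; since $|E(M)| \geq 10$ and $|E(X_8)| = 8$, at least two elements were added in this last step, so removing one of them by deletion leaves $\Delta_A(Q\del e)$, which by Lemma \ref{2.16}(i) equals a $\Delta$-$Y$ image of a smaller (still nonempty) parallel extension, hence is described by a path sequence, and $M\del e$ is a series extension of it so $\co(M\del e)$ works; here I would need the care that $e \notin S \cup C$, which holds because the added parallel elements are the new elements and $S,C$ are the original $4$-element classes of $X_8$.

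The main obstacle I anticipate is the bookkeeping needed to guarantee $e \notin S \cup C$ while still having enough elements to remove: when the path sequence is short (one or two steps past $X_8$) the "internal guts/coguts element" mechanism of Lemma \ref{Pmintcoguts} may not directly apply because there is no $\Delta$ followed by $\nabla$ subsequence, so I must instead argue directly using Lemma \ref{2.16} and Lemma \ref{2.13} that deleting or contracting one of the freshly-added parallel/series elements commutes past the remaining (at most one) \dy-step and wheel-gluing step. The hypothesis $|E(M)| \geq 10$ is exactly what makes this possible: it forces either multiple \dy-steps, or a single \dy-step adding at least two elements, or a wheel of rank at least $4$, and in each of these situations there is a removable element outside $S \cup C$ whose removal (possibly after simplification or cosimplification) is compatible with the path-sequence structure by the commutation lemmas of this section.
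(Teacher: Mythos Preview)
Your overall strategy matches the paper's: split on the type of the last step and remove a freshly added element. However, the paper frames the argument as a minimal counterexample in $|E(M)|$, and this device is precisely what closes the gaps in your direct approach.

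There are two concrete problems. First, in the triangle-fan case $|F|=3$ (so $M_n=\Delta_{A'}(M_{n-1})$ with $A'$ the glued triangle, hence $|E(M_n)|=|E(M_{n-1})|$), your proposed choices of $e$ do not work: ``an element of $A'$ not in the glued triangle'' is vacuous since $A'$ \emph{is} the glued triangle, and invoking Lemma~\ref{Pmintcoguts} presupposes a $\Delta$--$\nabla$ subsequence in the description of $M_{n-1}$ that you have not produced. The paper instead observes that $M_{n-1}$ is then also a minimal counterexample described by a strictly shorter path sequence, so after repeating (and using Lemma~\ref{commute3} to push any wheel-gluing to the end) one may assume the sequence is a \dy-sequence with no wheel steps at all.

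Second, in your ``exactly one \dy-step'' case you write ``since $|E(M)|\geq 10$ and $|E(X_8)|=8$, at least two elements were added in this last step.'' But you allowed earlier wheel-gluing steps, so $M_{n-1}$ need not be $X_8$ and the count fails. Again the paper's reduction to a pure \dy-sequence is what makes this inference valid: once there are no wheel steps, a single \dy-step from $X_8$ with $|E(M)|\geq 10$ must add at least two elements, and then $M\del e$ (for $e$ one of the added elements) is $\Delta_S^{Q\del e}(X_8)$.

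A minor slip: if the sequence ends in a $4$-element \emph{segment} $A$, the last move is $M_n=\nabla_A^Q(M_{n-1})$, not $\Delta_A^Q$; the paper's two-step case is accordingly $M=\nabla_A^{N_n}(\Delta_A^{N_{n-1}}(M_{n-2}))$, and one contracts (not deletes) an element of $E(N_n)-E(M_{n-1})$.
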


\begin{proof}
  Let $M$ be a counterexample with $|E(M)|$ minimal. Let $M_1, \ldots, M_n$ be a path sequence that describes $M$. 
  Suppose that $M_1, \ldots, M_n$ is a path sequence ending in a fan $F$. If $|F|\geq 4$, then the result follows easily, so we may assume that $|F|=3$. Then it is easy to see that $M_{n-1}$ must be a counterexample as well. Hence we may assume $M_1, \ldots, M_n$ is a \dy-sequence.
  
  Suppose $M_1, \ldots, M_n$ has exactly one \dy-step. Up to duality, we can assume $M = \Delta_S^Q(X_8)$, with $|E(Q) - E(M)| \geq 2$. Let $e \in E(Q) - E(M)$. Then $M \delete e = \Delta_S^{Q\delete e}(X_8)$ is again described by a path sequence. Hence we may assume there are at least two \dy-steps. By Corollary \ref{usingeachend2}, we may assume that the last two \dy-steps both use $A \in \{S,C\}$, and that $A$ is a segment of $M$, so $M = \nabla_A^{N_n}(\Delta_A^{N_{n-1}}(M_{n-2}))$. Let $e \in E(N_n) - E(M_{n-1})$. If $|E(N_n) - E(M_{n-1})| \geq 2$, then $M\contract e = \nabla_A^{N_n\contract e}(M_{n-1})$ can be described by a path sequence. Hence $E(N_n) - E(M_{n-1}) = \{e\}$. Now we have
  \begin{align*}
    M \contract e = \nabla_A(M_{n-1}) = \nabla_A(\Delta_A^{N_{n-1}}(M_{n-2})) = N_{n-1}.
  \end{align*}
  But $N_{n-1}$ is an allowable parallel extension of $M_{n-2}$, so $\si(M\contract e) = M_{n-2}$, which can be described by a path sequence, as desired.  
\end{proof}

\section{The setup}
\label{setup}

We start the proof of our main result as follows.

\begin{lemma}
\label{pathseqminors}
Let $M$ be a matroid described by a path sequence. Then $M$ has a minor in $\mathcal{S}=\{M_{8,6}, X_8,Y_8,Y_8^{*}\}$.
\end{lemma}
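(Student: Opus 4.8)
The plan is to induct on $|E(M)|$, using the path sequence $M_1,\ldots,M_n$ that describes $M$ (together with any series/parallel extensions needed to reach $M$, which only add elements and clearly preserve having a minor in $\mathcal{S}$, so we may assume $M$ itself is described by a path sequence). For the base case, $M_1 = X_8 \in \mathcal{S}$. For the inductive step, I would apply Lemma \ref{lem:removefrompathseq}: provided $|E(M)| \geq 10$, there is an element $e \in E(M) - (S \cup C)$ such that one of $M\delete e$, $M\contract e$, $\co(M\delete e)$, $\si(M\contract e)$ is described by a path sequence; by induction that matroid has a minor in $\mathcal{S}$, and since all four operations produce minors of $M$ (series/parallel simplification being a composition of deletions/contractions), $M$ also has a minor in $\mathcal{S}$.

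This reduces everything to the finitely many matroids described by path sequences with $|E(M)| \leq 9$. These are exactly the matroids covered by cases (i)--(v) of Theorem \ref{computer1} that happen to lie in $\mathcal{P}$; more concretely, a path sequence with at most $9$ elements starting at $X_8$ either is $X_8$ itself, or is obtained from $X_8$ (or from $Y_8$, $Y_8^*$, which arise by a single $\dy$-step) by one short \dy-step or by gluing a single small wheel onto an allowable subset of $S$ or $C$. I would enumerate these: $X_8$, $Y_8$, $Y_8^*$ are in $\mathcal{S}$ directly; and the remaining ones (obtained by gluing a $3$- or $4$-wheel onto a triangle inside $S$ or $C$, or by a $\dy$-step producing a $9$-element matroid) all contain $M_{8,6}$ as a minor. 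Checking the latter is the one genuinely computational point, but it is a bounded check: $M_{8,6}$ is the rank-$3$ matroid built from $M_{7,1}$ by gluing a wheel (Figure \ref{fig:M71}), and every $9$-element matroid in $\mathcal{P}$ either deletes/contracts down to it or down to $X_8$/$Y_8$/$Y_8^*$. Since we already know (Theorem \ref{computer1}, Corollary \ref{computer2}) the complete list of $3$-connected strictly fragile matroids on $\leq 9$ elements, one simply reads off which ones lie in $\mathcal{P}$ and verifies each has a minor in $\mathcal{S}$.

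The main obstacle I anticipate is precisely the small-case verification: one must be careful that the reduction in Lemma \ref{lem:removefrompathseq} bottoms out correctly, i.e. that every matroid in $\mathcal{P}$ with between, say, $8$ and $9$ elements genuinely reaches a member of $\mathcal{S}$ under the allowed moves, and in particular that $M_{8,6}$ (rather than some other $8$-element matroid) is the right extra matroid to include in $\mathcal{S}$. This is where the explicit structure of path sequences of length $\leq 9$ — and the role of $M_{8,6}$ as the small matroid obtained from $M_{7,1}$ by gluing a wheel — must be used. Everything else is a routine induction.
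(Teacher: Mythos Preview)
Your proposal is correct and follows essentially the same approach as the paper: use Lemma~\ref{lem:removefrompathseq} to reduce to matroids described by a path sequence with at most $9$ elements, and then carry out a finite case check (which the paper likewise defers, citing the relevant figures and \cite[Lemma 4.6]{chunfragile}). The only cosmetic differences are that the paper does not phrase the reduction as an explicit induction and omits the enumeration of the small cases entirely.
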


\begin{proof}
By Lemma \ref{lem:removefrompathseq}, it suffices to verify the lemma for $|E(M)| \leq 9$. We omit this easy finite case check, which includes matroids from Figures \ref{fig:H5_8}, \ref{fig:H5_9}, as well as some matroids that are not simple or not cosimple. See also \cite[Lemma 4.6]{chunfragile}.
\end{proof}

Our strategy is to prove that the class of $\mathbb{U}_2$- or $\mathbb{H}_5$-representable fragile matroids with an $\mathcal{S}$-minor is contained in $\mathcal{P}$. If $M$ is 3-connected and has a proper $M_{8,6}$-minor, but no minor in $\{X_8, Y_8, Y_8^*\}$, then $M$ has a minor isomorphic to one of $M_{9,7}, M_{9,9}, M_{9,17}, M_{9,19}$. This implies we are in case (ii) or (v) of, respectively, Theorems \ref{3.5} and \ref{3.7}. Hence our strategy will yield the main result.

In what follows, let $M$ be a $\mathbb{P}$-representable fragile matroid $M$ with an $\{X_8,Y_8,Y_8^{*}\}$-minor. Suppose that $M$ is not in the class $\mathcal{P}$, and that $M$ is minimum-sized with respect to this property. Then $M$ is $3$-connected because $\mathcal{P}$ is closed under series-parallel extensions. Moreover, the dual $M^{*}$ is also not in $\mathcal{P}$ because $\mathcal{P}$ is closed under duality. By inspecting the small matroids Figure \ref{9elts}, it is easy to see that $M$ must have at least $10$ elements. Thus, by the Splitter Theorem and duality, we can assume there is some element $x$ of $M$ such that $M\del x$ is also a $3$-connected $\mathbb{P}$-representable fragile matroid with an $\{X_8,Y_8,Y_8^{*}\}$-minor. By the assumption that $M$ is minimum-sized with respect to being outside the class $\mathcal{P}$, it follows that $M\del x\in \mathcal{P}$. Thus $M\del x$ is described by a path sequence $M_1,\ldots, M_n$. We show that there are three possibilities for the structure of $x$ in $M$ relative to the path of $3$-separations associated with $M_1,\ldots, M_n$.

Let $(A,B)$ be a $k$-separation of $M\del x$. Recall that $x$ \textit{blocks} $(A,B)$ if neither $(A\cup x, B)$ nor $(A, B\cup x)$ is a $k$-separation of $M$. We use the following characterisation frequently. 

\begin{prop}
\label{blockclosure}
\cite[Proposition 3.5]{geelen2001conn}
 Let $(A,x,B)$ be a partition of $M$. If $(A,B)$ is an exact $k$-separation of $M\del x$, then $x$ blocks $(A,B)$ if and only if $x$ is not a coloop of $M$, $x\notin \cl_M(A)$ and $x\notin \cl_M(B)$.
\end{prop}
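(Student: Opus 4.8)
The statement to prove is Proposition~\ref{blockclosure}, which is a cited result from Geelen, Gerards, and Whittle (\cite{geelen2001conn}). Since this is an external citation being stated for use, I'll sketch how one would prove it from scratch.

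\medskip

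The plan is to prove both directions using the basic submodularity of the connectivity function $\lambda$, together with the standard fact that for a matroid $M$ and element $x$, the rank function of $M \del x$ restricted to a subset equals that of $M$, while $r_M(A \cup x) - r_{M\del x}(A) \in \{0,1\}$, with the value $0$ exactly when $x \in \cl_M(A)$. Write $\lambda_N(A) = r_N(A) + r_N(E(N) - A) - r(N)$ for the connectivity function. First I would recall that $(A,B)$ being an exact $k$-separation of $M\del x$ means $\lambda_{M\del x}(A) = k - 1$ and $|A|, |B| \geq k$. I would then compute $\lambda_M(A \cup x)$ and $\lambda_M(A)$ in terms of the closure conditions: since $r_M(A) = r_{M\del x}(A)$ and $r_M(B) = r_{M \del x}(B)$ always, and $r(M) = r(M\del x)$ unless $x$ is a coloop of $M$, the differences $\lambda_M(A) - \lambda_{M\del x}(A)$ and $\lambda_M(A \cup x) - \lambda_{M\del x}(A)$ are controlled entirely by whether $x \in \cl_M(A)$, $x \in \cl_M(B)$, and whether $x$ is a coloop.

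\medskip

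For the forward direction, suppose $x$ blocks $(A,B)$; I want to deduce $x$ is not a coloop, $x \notin \cl_M(A)$, and $x \notin \cl_M(B)$. If $x$ were a coloop of $M$, then $(A \cup x, B)$ would be a $k$-separation of $M$ (the partition sizes are fine and a coloop adds exactly $1$ to both rank and the total rank, leaving $\lambda$ unchanged), contradicting the blocking hypothesis. If $x \in \cl_M(A)$, then $r_M(A \cup x) = r_M(A) = r_{M\del x}(A)$, and since $x$ is not a coloop $r(M) = r(M \del x)$, so $\lambda_M(A\cup x) = \lambda_{M\del x}(A) = k-1$, making $(A \cup x, B)$ a $k$-separation of $M$ — again a contradiction. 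Symmetrically $x \notin \cl_M(B)$. For the converse, suppose $x$ is not a coloop, $x \notin \cl_M(A)$, and $x \notin \cl_M(B)$; I want $x$ to block $(A,B)$. Since $x \notin \cl_M(A)$, $r_M(A \cup x) = r_M(A) + 1$, so $\lambda_M(A \cup x) = \lambda_{M\del x}(A) + 1 = k$ (using that $x$ is not a coloop so the total rank is unchanged), hence $(A \cup x, B)$ is not a $k$-separation of $M$. Symmetrically, using $x \notin \cl_M(B)$, $(A, B \cup x)$ is not a $k$-separation. So $x$ blocks $(A,B)$.

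\medskip

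The main point requiring care is bookkeeping the three cases for $r(M)$ versus $r(M\del x)$ and $r_M(\cdot \cup x)$ versus $r_M(\cdot)$ — in particular making sure that ``$x$ is not a coloop'' is exactly what decouples ``$x \in \cl_M(A)$ for every superset situation'' from the total-rank change, so that the increment of $\lambda$ when moving $x$ from one side to the other is $0$ or $1$ according to the closure condition. One should also double-check the degenerate interplay: a coloop $x$ satisfies $x \in \cl_M(A)$ only if... actually a coloop is in no closure of a proper subset, so the coloop case and the closure cases are handled by genuinely separate arguments, which is why all three conditions appear in the statement. No serious obstacle is expected; this is a routine rank-counting argument, which is presumably why the paper simply cites it rather than reproving it.
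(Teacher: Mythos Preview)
The paper does not prove this proposition at all; it is simply cited from \cite{geelen2001conn} and used as a black box. Your sketch is a correct, self-contained verification via the standard rank bookkeeping, and nothing more is needed. One minor wording quibble: your closing remark that ``a coloop is in no closure of a proper subset'' should read ``a coloop is in the closure of no set not containing it,'' but you use the correct fact in the actual argument, so this does not affect the proof.
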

 
We can now locate $x$. 

\begin{lemma}
\label{blocksorguts}
Let $M$ and $M\del x$ be $3$-connected $\{U_{2,5}, U_{3,5}\}$-fragile matroids. If $M\del x$ is described by a path sequence with associated path of $3$-separations $\mathbf{P}$, then either:
\begin{enumerate}
 \item[(i)] $x$ is in the guts of some $3$-separation displayed by $\mathbf{P}$; or
 \item[(ii)] $x$ blocks some $3$-separation displayed by $\mathbf{P}$; or 
 \item[(iii)] for each $3$-separation $(R,G)$ of $M$ displayed by $\mathbf{P}$, there is some $X\in \{R,G\}$ such that $x\in \cl_M(X)$ and $x\in \cl_M^{*}(X)$.
\end{enumerate}
\end{lemma}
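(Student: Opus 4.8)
The plan is to analyze the position of $x$ relative to each $3$-separation $(R,G)$ displayed by $\mathbf{P}$ and show that if neither (i) nor (ii) holds, then (iii) must hold. So suppose that $x$ is not in the guts of any displayed $3$-separation, and that $x$ blocks no displayed $3$-separation. Fix a $3$-separation $(R,G)$ of $M\del x$ displayed by $\mathbf{P}$; thus $(R,G)=(P_1\cup\cdots\cup P_{i-1}, P_i\cup\cdots\cup P_n)$ for some $i$, and since $M\del x$ is $3$-connected this is an exact $3$-separation. By Proposition \ref{blockclosure}, since $x$ does not block $(R,G)$ and $x$ is not a coloop of $M$ (as $M$ is $3$-connected), we have $x\in\cl_M(R)$ or $x\in\cl_M(G)$; say $x\in\cl_M(R)$. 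What remains to show is that also $x\in\cl_M^{*}(R)$.

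To establish this I would use Lemma \ref{clandco}, which says that for the partition $(R,\{x\},G)$ of $E(M)$, we have $x\in\cl_M(R)$ if and only if $x\notin\cl_M^{*}(G)$. So it suffices to show $x\notin\cl_M^{*}(G)$, i.e. that $x$ is not in the coguts of $(R,G)$. Here I would invoke duality: $M^{*}\del$ nothing — rather, since $M^{*}$ and $M^{*}\del x = (M/x)^{*}$... but we are deleting $x$, not contracting, so the clean duality is between $M$ and $M\del x$. Instead the key point is that $x$ being in the coguts of $(R,G)$ in $M$ would mean $x\in\cl^{*}_M(R)\cap\cl^{*}_M(G)$, hence in particular $x\in\cl^{*}_M(G)$, and then by Lemma \ref{clandco} $x\notin\cl_M(R)$, contradicting $x\in\cl_M(R)$ unless $x$ is also outside $\cl_M(G)$ — wait, that is consistent. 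Let me reconsider: the genuine obstacle is ruling out that $x$ is in the guts of $(R,G)$ when we are also in the case $x\in\cl_M(R)$. Actually $x$ in the guts means $x\in\cl_M(R)\cap\cl_M(G)$; we have assumed $x$ is in the guts of \emph{no} displayed $3$-separation, so $x\notin\cl_M(G)$. Combined with $x\in\cl_M(R)$, Lemma \ref{clandco} gives $x\notin\cl_M^{*}(G)$, and then a second application (to the reversed partition, or rather: $x\notin\cl_M(G)$ iff $x\in\cl^{*}_M(R)$) gives $x\in\cl_M^{*}(R)$. Thus for this particular $(R,G)$ we get $X=R$ works. The remaining subtlety is that the choice $x\in\cl_M(R)$ versus $x\in\cl_M(G)$ might vary from one displayed $3$-separation to another, but the statement of (iii) only requires, for each displayed $(R,G)$ separately, \emph{some} $X\in\{R,G\}$ with $x\in\cl_M(X)\cap\cl^{*}_M(X)$ — so consistency across separations is not needed.

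Concretely, the steps are: (1) reduce to the case where (i) and (ii) both fail, and fix an arbitrary displayed $3$-separation $(R,G)$, noting it is exact since $M\del x$ is $3$-connected; (2) apply Proposition \ref{blockclosure} with the failure of (ii) to conclude $x\in\cl_M(R)$ or $x\in\cl_M(G)$, and by relabelling assume $x\in\cl_M(R)$; (3) use the failure of (i) to conclude $x\notin\cl_M(G)$; (4) apply Lemma \ref{clandco} to the partition $(G,\{x\},R)$: since $x\notin\cl_M(G)$, we get $x\in\cl^{*}_M(R)$; (5) conclude $x\in\cl_M(R)\cap\cl^{*}_M(R)$, so (iii) holds with $X=R$ for this separation, and since $(R,G)$ was arbitrary among displayed separations, (iii) holds. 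I expect step (2) to be where care is needed: one must be sure that the displayed $3$-separation is \emph{exact} (so that Proposition \ref{blockclosure} applies) and that $x$ is genuinely not a coloop — both follow from $3$-connectivity of $M$ and $M\del x$, but these hypotheses need to be explicitly cited. The main conceptual obstacle is simply recognizing that the three alternatives are not required to be "uniform" across all displayed separations; once that is clear, the proof is a short orthogonality argument via Lemmas \ref{clandco} and Proposition \ref{blockclosure}.
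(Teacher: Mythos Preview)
Your proposal is correct and follows essentially the same approach as the paper's proof: assume (i) and (ii) fail, apply Proposition~\ref{blockclosure} to get $x\in\cl_M(R)$ for some side, use the failure of (i) to get $x\notin\cl_M(G)$, and then invoke Lemma~\ref{clandco} to conclude $x\in\cl_M^{*}(R)$. You are in fact slightly more careful than the paper in spelling out that the displayed $3$-separation is exact and that $x$ is not a coloop, and in noting that (iii) only demands a (possibly varying) choice of $X$ for each displayed separation.
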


\begin{proof}
Assume that neither (i) nor (ii) holds, and let $(R,G)$ be a $3$-separation of $M\del x$ displayed by a maximal path-generating set. Then, by Proposition \ref{blockclosure}, we may assume that $x\in \cl_M(R)$. Now $x\notin \cl_M(G)$ because (i) does not hold, so $x\in \cl_M^{*}(R)$ by Lemma \ref{clandco}. Thus (iii) holds.
\end{proof}

We consider the three possible cases of Lemma \ref{blocksorguts} in the next three sections respectively. In case (i) we will obtain a direct contradiction to the assumption that $M\notin \mathcal{P}$. In cases (ii) and (iii) we bound the size of $M$ so that $M$ is at most a $12$-element matroid. This reduces the proof to a finite case-analysis, from which the contradiction to the assumption that $M\notin \mathcal{P}$ is then obtained.

Note that by Lemma \ref{pathseqminors} we only need to keep the path sequence to keep the $\mathcal{S}$-minor.

\section{Guts case}
\label{guts}

In this section, with $M$ and $x\in E(M)$ as in Section \ref{setup}, we assume that $x$ is in the guts of some $3$-separation displayed by the path of $3$-separations $\mathbf{P}$ associated with a path sequence that describes $M\del x$. 

First we show that $x$ is not in the closure of a $4$-element segment.

\begin{lemma}
\label{cantbesegmentguts}
 If $M\del x$ has a $4$-element segment $S$, then $x\notin \cl_M(S)$.
\end{lemma}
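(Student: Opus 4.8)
Suppose for contradiction that $M\del x$ has a $4$-element segment $S$ with $x\in \cl_M(S)$. The key structural fact is that, because $x$ is in the guts of some $3$-separation displayed by $\mathbf{P}$, the matroid $M$ is still a minimum-sized counterexample, and in particular $M$ is strictly fragile. I would first argue that $S$ is a path-generating allowable $4$-element segment of $M\del x$: it is path-generating because it is one of the two distinguished $4$-element segments/cosegments appearing in the path sequence (a $4$-element segment that survives in $M\del x$ must be parallel to $S$ or $C$ of $X_8$, by the discussion around Lemma~\ref{tech4seg}), and it is allowable because $M\del x$ is strictly fragile with no $\mathcal{N}$-essential elements, so some element of $S$ is non-deletable and $S$ is coindependent by Lemma~\ref{incotri}.

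Next I would analyse the position of $x$. Since $x\in \cl_M(S)$ and $|S|=4$, the set $S\cup x$ contains a circuit of size at most $5$, and $S\cup x$ is a segment of $M$ of size $5$ (every $3$-subset is a triangle). Because $M$ has at least $10$ elements and is strictly fragile, Lemma~\ref{incotri} forces any such large segment to be coindependent, so in fact $S\cup x$ is a coindependent $5$-element segment, i.e. $M|(S\cup x)\cong U_{2,5}$. Now I use Lemma~\ref{4segrecognition} (via Lemma~\ref{tech4seg}): $M\del x$ lies in $\mathcal{P}$ with path-generating allowable $4$-element segment $S$, and $M$ is obtained from $M\del x$ by adding $x$ as an element in the closure of $S$. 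The claim is that $M$ is then a single-element \emph{allowable parallel extension of $M\del x$ along $S$} — but this is false as stated because $x$ is not parallel to an element of $S$; rather $x$ sits freely on the line through $S$. So I must instead recognize $M$ directly inside a path sequence: up to replacing $S$ by an equivalent $4$-element segment in $\{S_{X_8},C_{X_8}\}$ (Lemma~\ref{tech4seg}), $M\del x$ is described by a path sequence in which $S$ is a $4$-element segment, and adding the fifth collinear point $x$ means $M$ itself has $S\cup x\cong U_{2,5}$ as a path-generating set; hence $M$ is described by a path sequence obtained by inserting an appropriate \dy-step or by noticing that $M$ is a single-element extension that can be absorbed into the construction from $U_{2,5}$. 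This contradicts $M\notin\mathcal{P}$.

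I expect the main obstacle to be precisely this last recognition step: translating "$x$ is a fifth point on the line spanned by $S$" into membership of $\mathcal{P}$. The cleanest route is probably to observe that $S$ being a $4$-element segment in a path sequence means $M\del x$ arises (up to duality and earlier steps) from some $N$ via a \dy-step along $S$ where $S$ was a coindependent segment; the presence of a fifth collinear point $x$ forces the matroid $N$ (before the $\Delta$-$Y$ step) to have had a $U_{2,5}$-restriction, and then $M = \nabla_S^{\,Q}(\cdots)$ for an allowable parallel extension $Q$ accommodating $x$, so $M$ is described by a path sequence by definition — or, if $S$ is a $4$-element segment of $X_8$ or $Y_8$ itself with no \dy-step, a direct check on the small matroids shows no such $x$ can be added while keeping $M$ strictly fragile with $|E(M)|\ge 10$. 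Alternatively, and more robustly, one shows $x$ must then be flexible in $M$: since $S\cup x\cong U_{2,5}$ is a coindependent segment and $x$ lies on a long line, deleting $x$ keeps the $\{U_{2,5},U_{3,5}\}$-minor (trivially), and contracting $x$ also keeps it because $\Delta_{S}(M)$ contains the structure needed — giving a contradiction to fragility of $M$ unless in fact $M\in\mathcal P$. Either way the contradiction is reached; the delicate part is choosing which of these two contradictions (fragility vs. $\mathcal P$-membership) is available, and I would argue the fragility contradiction is the safer one to push through rigorously, using Lemma~\ref{amove} to see that $x$ would be flexible.
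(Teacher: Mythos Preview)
You have correctly identified the key structural fact, namely that $S\cup x$ is a $U_{2,5}$-restriction of $M$ (using that $M$ is $3$-connected, hence simple, so $x$ is not parallel to any element of $S$). However, both of the routes you then pursue are far more complicated than necessary, and the one you call ``safer'' actually targets the wrong element.

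The paper's proof is a single line: once $M|(S\cup x)\cong U_{2,5}$, take any $y\in E(M)-\cl_M(S\cup x)$. Such a $y$ exists because $M\del x$ has an $X_8$-minor and hence rank at least $4$, while $\cl_M(S\cup x)$ has rank $2$. Deleting $y$ leaves the $U_{2,5}$-restriction intact; contracting $y$ also leaves it intact because $y\notin\cl_M(S\cup x)$. Hence $y$ is flexible, contradicting fragility of $M$. No path-sequence recognition, no allowability, no Lemma~\ref{amove} is needed.

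Your second approach tries to show that $x$ itself is flexible. But $x$ is deletable by hypothesis, and in the setup $M$ is assumed fragile, so $x$ is \emph{not} contractible; your proposed justification via $\Delta_S(M)$ does not establish that $M/x$ has a $\{U_{2,5},U_{3,5}\}$-minor (indeed, contracting $x$ collapses $S$ to a parallel class, destroying the obvious $U_{2,5}$). Your first approach, recognising $M$ directly as a member of $\mathcal{P}$, could in principle be made to work but requires exactly the delicate path-sequence manipulations you flag as problematic, and is entirely unnecessary here. The fix is simply to switch the flexible element from $x$ to an element outside the line.
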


\begin{proof}
Suppose that $M\del x$ has a $4$-element segment $S$, and that $x\in \cl_M(S)$. Then $M$ has a $U_{2,5}$-restriction $S\cup \{x\}$, so $y\in E(M)-\cl_M(S\cup x)$ is flexible, a contradiction because $M$ is fragile.
\end{proof}


Next we show that $M\del x$ cannot be described by a path sequence ending in a $4$-element segment or cosegment. 

\begin{lemma}
\label{nodypath}
$M\del x$ is not described by a path sequence ending in either a $4$-element segment or cosegment. 
\end{lemma}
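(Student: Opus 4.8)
We argue by contradiction. Suppose $M\del x$ is described by a path sequence $M_1,\ldots,M_n$ ending in a $4$-element segment or cosegment $A$; by duality we may assume $A$ is a $4$-element segment of $M_n = M\del x$, so $M_n = \nabla_A^{N}(M_{n-1})$ for some allowable series extension $N$ of $M_{n-1}$ along $A$. Since $x$ is in the guts of a $3$-separation $(R,G)$ displayed by the associated path $\mathbf{P}$, and the end term of $\mathbf{P}$ is $P_1 = A$, we may take $G$ to be the part containing $A$, so $x\in\cl_M(R)\cap\cl_M(G)$ where $G\supseteq A$ and $G = \fcl_{M\del x}(A)$ up to the earlier steps. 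The key structural fact to extract from ``$A$ is path-generating'' and ``$M$ is $3$-connected'' is that $x$, lying in the guts of $(R,G)$, must in fact lie in $\cl_M(\fcl_{M\del x}(A))$; the plan is to push $x$ all the way into the closure of $A$ itself, using orthogonality and the fact that the intermediate steps $P_2,P_3,\ldots$ are single guts/coguts elements (Lemma~\ref{pathsequencefragile}).

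\textbf{Key steps.} First I would show that $x\in\cl_M(A)$. Since $A$ is a $4$-element segment and $x\in\cl_M(R)$ with $G\supseteq A$, apply Lemma~\ref{clandco} to get $x\in\cl^*_M(R)$, so $x$ is in both the closure and coclosure of $R$. Now walk in along the path $\mathbf{P}=(P_1,\ldots,P_m)$ with $P_1=A$: for each $i$ the element in $P_i$ ($2\le i\le m-1$) is a single guts or coguts element of the displayed separation $(P_1\cup\cdots\cup P_{i-1},\,P_i\cup\cdots\cup P_m)$ by Lemma~\ref{pathsequencefragile}. A routine orthogonality/closure argument (repeatedly applying Lemma~\ref{clandco} and the definition of guts/coguts) shows that if $x\in\cl_M(P_i\cup\cdots\cup P_m)\cap\cl^*_M(P_i\cup\cdots\cup P_m)$ then also $x\in\cl_M(P_{i+1}\cup\cdots\cup P_m)\cap\cl^*_M(P_{i+1}\cup\cdots\cup P_m)$; iterating, $x$ lands in $\cl_M(A)$. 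Then Lemma~\ref{cantbesegmentguts} applies directly: $x\notin\cl_M(A)$ for a $4$-element segment $A$ of $M\del x$, a contradiction. (If instead the walk from the $R$-side forces $x$ into $\cl_M$ of a proper end segment, one still contradicts fragility exactly as in Lemma~\ref{cantbesegmentguts}.)

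\textbf{Main obstacle.} The delicate point is the ``walk-in'' step: showing that $x$ being in the guts of \emph{some} displayed $3$-separation forces it into the closure of the \emph{terminal} segment $A$, rather than merely into the closure of one of the bigger sets $P_1\cup\cdots\cup P_{i-1}$. The resolution uses that $\mathbf{P}$ is obtained by alternately taking closures and coclosures from $A$, so each $P_i$ ($i\ge 2$) is contained in $\cl(P_1\cup\cdots\cup P_{i-1})$ or $\cl^*(P_1\cup\cdots\cup P_{i-1})$; combined with $x$ lying in both guts and coguts of the separation (via Lemma~\ref{clandco}), orthogonality forces $x$ to be ``absorbed'' at each step on the $A$-side. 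One must also handle the possibility that $x\in\cl_M(R)$ only, where $R$ is the non-$A$ side; then by Lemma~\ref{clandco} $x\notin\cl^*_M(G)$, but $x$ being in the guts of $(R,G)$ means $x\in\cl_M(G)$, so again $x\in\cl_M(G)\cap\cl^*_M(G)$ fails — so this case does not arise, and we are genuinely on the $A$-side. Once $x\in\cl_M(A)$ is established, Lemma~\ref{cantbesegmentguts} (equivalently its proof: $M$ would have a $U_{2,5}$-restriction $A\cup\{x\}$ and a flexible element outside its closure) closes the argument. The cosegment case is the dual, swapping closure and coclosure throughout and using the dual of Lemma~\ref{cantbesegmentguts}.
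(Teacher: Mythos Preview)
Your approach has a genuine gap: the ``walk-in'' argument does not work, and in fact the conclusion you are aiming for (that $x\in\cl_M(A)$) need not hold.

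First, you misapply Lemma~\ref{clandco}. If $x$ is in the \emph{guts} of $(R,G)$, then $x\in\cl_M(R)\cap\cl_M(G)$; Lemma~\ref{clandco} applied to $x\in\cl_M(G)$ gives $x\notin\cl^*_M(R)$, the opposite of what you assert. So your starting hypothesis ``$x\in\cl_M(R)\cap\cl^*_M(R)$'' is false. Second, your indexing walks the wrong way: moving from $P_i\cup\cdots\cup P_m$ to $P_{i+1}\cup\cdots\cup P_m$ moves \emph{away} from $A=P_1$, not toward it. Third, even reoriented, the inductive step fails: if $P_j$ is a coguts step then $r(P_1\cup\cdots\cup P_j)>r(P_1\cup\cdots\cup P_{j-1})$, so there is no reason for $x\in\cl_M(P_1\cup\cdots\cup P_j)$ to imply $x\in\cl_M(P_1\cup\cdots\cup P_{j-1})$. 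Concretely, $x$ can sit in the guts of a separation far down the path without being spanned by the terminal segment $A$; Lemma~\ref{cantbesegmentguts} is therefore not available.

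The paper's proof takes a completely different route, and it crucially uses the minimality of $M$ (which your argument never invokes --- itself a warning sign, since the entire section is a minimal-counterexample argument). Treat the cosegment case: since $x$ is a guts element, $x\in\cl_M(E(M)-(A\cup x))$, so by Lemma~\ref{whencsegsgoup} the set $A$ is still an allowable $4$-element cosegment of $M$, and $\nabla_A(M)$ is well-defined. By Lemma~\ref{2.16}, $\nabla_A(M)\del x=\nabla_A(M\del x)$, which is an allowable parallel extension of $M_{n-1}$; in particular some $a'\notin A\cup x$ is parallel to an element of $A$ in $\nabla_A(M)$. Now $M':=\nabla_A(M)\del a'$ is strictly smaller than $M$, hence $M'\in\mathcal{P}$ by minimality. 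Since $A$ is a path-generating allowable $4$-element segment of $M'$, Lemma~\ref{4segrecognition} (with Lemma~\ref{2.11}) gives $M\in\mathcal{P}$, a contradiction. The segment case is symmetric. The key idea you are missing is this reduction via a \dy-operation on $A$ together with minimality, rather than trying to force $x$ into $\cl_M(A)$.
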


\begin{proof} 
Suppose that $M\del x$ is described by a path sequence $M_1,\ldots, M_n$ ending in a $4$-element cosegment $A$. Then $A$ is an allowable $4$-element cosegment of $M$ by Lemma \ref{whencsegsgoup}, so $\nabla_A(M)$ is well-defined. Moreover, it follows from Lemma \ref{2.16} that $\nabla_A(M)\del x=\nabla_A(M\del x)$, which is an allowable parallel extension of $M_{n-1}$. Now, by the definition of a \dy-step, there is at least one element $a'\notin A\cup x$ such that $a'$ is parallel with an element of $A$ in $\nabla_A(M)$. Let $M'=\nabla_A(M)\del a'$. Then $|E(M')|<|E(M)|$, so $M'\in \mathcal{P}$ by the minimality of $M$. Now, by Lemma \ref{2.11}, an allowable parallel-extension of $M'$ along $A$ by $a'$ followed by a $4$-element generalized $\Delta$-$Y$ exchange gives a matroid isomorphic to $M$. But $M'\in \mathcal{P}$ and $A$ is a path-generating allowable $4$-element segment of $M'$, so it follows from Lemma \ref{4segrecognition} that $M\in \mathcal{P}$, a contradiction to the assumption that $M\notin \mathcal{P}$. The argument when $M\del x$ is described by a path sequence ending in a $4$-element segment is symmetric.    
\end{proof}

Next we deal with the case where $M\del x$ is described by a path sequence ending in a fan.

\begin{lemma}
\label{nobigfansforguts}
$M\del x$ is not described by a path sequence ending in a fan with at least $4$ elements. 
\end{lemma}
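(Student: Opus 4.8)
The plan is to argue by contradiction, along the lines of the proof of Lemma~\ref{nodypath}. Suppose that $M\del x$ is described by a path sequence $M_1,\ldots,M_n$ ending in a fan $F$ with $|F|\ge 4$; the goal is to deduce $M\in\mathcal P$. By Lemma~\ref{techmaxfans} we may assume $F$ is a maximal fan of $M\del x$, and by duality we may assume $M_n$ is obtained from $M_{n-1}$ by gluing a wheel onto an allowable triangle $A'$ contained in a $4$-element segment $A\in\{S,C\}$ of $M_{n-1}$. Let $\mathbf P=(P_1,\ldots,P_m)$ be the associated path of $3$-separations, so $P_1=F$ and every $3$-separation $(R,G)$ displayed by $\mathbf P$ satisfies $F\subseteq R$. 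Since $x$ is in the guts of one such displayed separation, $x\in\cl_M(R)\cap\cl_M(G)$, so by Lemma~\ref{clandco} $x\notin\cl^*_M(R)$ and $x\notin\cl^*_M(G)$; in particular $x\notin\cl^*_M(F)$.

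First I would show that $F$ is a fan of $M$: its triangles are circuits of $M$ automatically, and if some triad $T^*$ of $M\del x$ contained in $F$ were not a triad of $M$, then $T^*\cup\{x\}$ would be a cocircuit of $M$, forcing $x\in\cl^*_M(T^*)\subseteq\cl^*_M(F)$, a contradiction. The argument now splits according to whether or not $x\in\cl_M(F)$.

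The case $x\in\cl_M(F)$ is the main obstacle. Here the aim is to prove that $F\cup\{x\}$ is again a fan of $M$, with $x$ a new end spoke element. Since $x\in\cl_M(F)$, the position of $x$ in the flat spanned by $F$ is heavily constrained by orthogonality with the triads of $F$: if $x$ lay on a $4$-point line through a triangle $T\subseteq F$, then the triad of $F$ adjacent to $T$ would meet one of the circuits $T'\cup\{x\}$ with $T'\subsetneq T$ in a single element, contradicting orthogonality; a placement of $x$ on an internal triad of $F$ is incompatible with $x$ lying in the guts (rather than blocking) by Proposition~\ref{blockclosure}; and the remaining placements are ruled out using orthogonality with the other triads of $F$ together with the maximality of $F$ in $M\del x$. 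With $F\cup\{x\}$ identified as a fan of $M$, the matroid $M$ is obtained from $M_{n-1}$ by gluing a wheel (of rank one larger than before, if necessary) onto an allowable subset of the $4$-element segment $A$ and deleting a suitable set (Lemma~\ref{lem:growwheel}); this is a move of type (P2)(b), so $M$ is described by a path sequence, contradicting $M\notin\mathcal P$.

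In the case $x\notin\cl_M(F)$ one checks, using Lemma~\ref{clandco} again, that $x\notin\cl^*_M(F)$ while $x\in\cl_M(E(M)-(F\cup\{x\}))\cap\cl^*_M(E(M)-(F\cup\{x\}))$; thus $F$ is a maximal fan of $M$ attached to the rest of $M$ through a single end element, and $x$ does not obstruct it. Applying Lemma~\ref{lem:removefrompathseq} to $M\del x$ (and using $|F|\ge 4$), there is an element $f$ of $F$ so that one of $M\del x\del f$, $M\del x\contract f$, $\co(M\del x\del f)$, $\si(M\del x\contract f)$ is described by a path sequence; since $x$ lies in the full closure of $E(M)-(F\cup\{x\})$, the corresponding reduction $M'$ of $M$ by $f$ is again a $3$-connected $\mathbb P$-representable $\{U_{2,5},U_{3,5}\}$-fragile matroid with an $\{X_8,Y_8,Y_8^*\}$-minor (the $M_{8,6}$-only case being excluded by $|E(M)|\ge 10$ and the remarks preceding Section~\ref{guts}) with fewer elements than $M$. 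By minimality $M'\in\mathcal P$, and $M$ is recovered from $M'$ by re-introducing $f$ as an end element of the fan $F-f$; by Lemma~\ref{trirecognition} (together with Lemmas~\ref{techmaxfans} and~\ref{lem:growwheel}) this keeps us inside $\mathcal P$, so $M\in\mathcal P$, the desired contradiction.
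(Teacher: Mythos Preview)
Your case split on whether $x\in\cl_M(F)$ is unnecessary, and the argument in the first case has a genuine gap. Since $x$ is in the guts of a displayed separation $(R,G)$ with $F\subseteq R$, you already have $x\in\cl_M(G)\subseteq\cl_M(E(M\del x)-F)$. This alone is enough: pick any allowable triad $T\subseteq F$ (one exists since $|F|\ge4$); by Lemma~\ref{whencsegsgoup}, $T$ is an allowable triad of $M$, so $\nabla_T(M)$ is fragile, and by Lemma~\ref{lem:growwheel} there is some $f\in F-T$ parallel to an element of $T$ in $\nabla_T(M)$. Set $M'=\nabla_T(M)\del f$; minimality gives $M'\in\mathcal P$, and since $T$ is a path-generating allowable triangle of $M'$, Lemma~\ref{trirecognition} yields $M\in\mathcal P$. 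This is exactly the paper's argument, and it works whether or not $x\in\cl_M(F)$.

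Your treatment of the case $x\in\cl_M(F)$ does not go through. The orthogonality arguments you sketch do not force $x$ to be an end spoke of a fan $F\cup\{x\}$. Concretely, if both end spokes $a,c$ of $F$ survive (i.e.\ $X=\{b\}$ in the gluing), then the guts of $(F,E(M\del x)-F)$ is the line through $a$ and $c$, and $x$ in that guts gives a triangle $\{a,c,x\}$ joining the two \emph{ends} of $F$; the set $F\cup\{x\}$ is then wheel-like, not a fan, and your conclusion ``$M$ is obtained from $M_{n-1}$ by gluing a wheel of rank one larger'' fails. Your triad-adjacency orthogonality argument only rules out $x$ landing on an \emph{internal} triangle of $F$; it says nothing about the end configuration. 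In the second case your invocation of Lemma~\ref{lem:removefrompathseq} is also loose: that lemma only produces an element of $E(M\del x)-(S\cup C)$, not necessarily one in $F$, and your reconstruction step needs $T$ to be path-generating in the smaller matroid $M'$ \emph{containing $x$}, which you do not verify. The paper's direct $\nabla_T$ reduction sidesteps all of this.
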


\begin{proof}
Suppose that $M\del x$ is described by a path sequence $M_1,\ldots, M_n$ ending in a fan $F$. The fan $F$ must contain a path-generating allowable triad $T$ of $M\del x$ because $F$ has at least $4$ elements. Since $x\in \cl_M(E(M\del x)-F)$, it follows from Lemma \ref{asegsgoup} and Lemma \ref{whencsegsgoup} that $F$ is a fan of $M$ and that $T$ is a path-generating allowable triad of $M$. Moreover, it follows from Lemma \ref{lem:growwheel} that $\nabla_A(M)$ has at least one element $f\in F-T$ in parallel with an element of $T$. Let $M'=\nabla_T(M)\del f$. Then $|E(M')|<|E(M)|$, so $M'\in \mathcal{P}$ by the minimality of $M$. Now, by Lemma \ref{2.11}, an allowable parallel-extension of $M'$ along $A$ by $f$ followed by a $3$-element $\Delta$-$Y$-exchange gives a matroid isomorphic to $M$. But $M'\in \mathcal{P}$ and $A$ is a path-generating allowable triangle of $M'$, so it follows from Lemma \ref{trirecognition} that $M\in \mathcal{P}$, a contradiction to the assumption that $M\notin \mathcal{P}$. 
\end{proof}


We can now finish off the first case from Lemma \ref{blocksorguts}. 

\begin{lemma}
\label{xnotinguts}
The element $x$ is not in the guts of a $3$-separation displayed by the path of $3$-separations associated with a path sequence that describes $M\del x$.
\end{lemma}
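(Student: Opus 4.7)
The plan is to derive a contradiction with the standing assumption $M \notin \mathcal{P}$ by showing that the hypothesis ``$x$ in the guts'' forces $M$ into $\mathcal{P}$.

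By Lemmas \ref{nodypath} and \ref{nobigfansforguts}, any path sequence describing $M\del x$ can end neither in a 4-element segment or cosegment nor in a fan of size at least four. Hence every such sequence ends in a 3-element fan $F$, which (by the structure of wheel-gluing) is either a triangle or a triad. By Lemma \ref{techmaxfans} I may take $F$ to be maximal. The triangle and triad subcases are symmetric via duality of the $\Delta$/$\nabla$-operations, so I focus on the case that $F = \{a,b,c\}$ is a triangle.

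Let $(R,G)$ be a displayed 3-separation in whose guts $x$ lies, with $\mathbf{P} = (P_1,\ldots,P_m)$ and $P_1 = F$. I first reduce to the case $R = F$. Any 4-element segment of $M\del x$ containing $F$ would, together with $x$, violate Lemma \ref{cantbesegmentguts}; hence no such segment exists, so $\cl_{M\del x}(F) = F$. For interior displayed 3-separations (those with $R \supsetneq F$), the rearrangement lemmas of Section 5, in particular Lemmas \ref{commute3} and \ref{parallelendelements}, allow us to construct an alternative path sequence description of $M\del x$ whose associated path places $x$ in the guts of the first 3-separation $(F, E(M\del x) - F)$. Once $R = F$, we have $x \in \cl_M(F)$; since $M$ is 3-connected, $x$ is not parallel to any element of $F$, and $F \cup x$ is therefore a 4-element segment of $M$.

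Finally, I argue $M \in \mathcal{P}$. The last step of the path sequence for $M\del x$ may be written as $M\del x = \Delta^{Q}_T(M_{n-1})$, where $T$ is an allowable triangle in a 4-element path-generating (co)segment of $M_{n-1}$ and $Q$ is the allowable parallel extension of $M_{n-1}$ along $T$ corresponding to the wheel-gluing. Since $F$ is identified with $T$ after relabeling and $F \cup x$ is a 4-element segment of $M$, the element $x$ extends $Q$ to a larger allowable parallel extension $Q'$ of $M_{n-1}$ along $T$. Then Lemmas \ref{2.11}, \ref{2.13}, and \ref{2.16} give $M = \Delta^{Q'}_T(M_{n-1})$, and Lemma \ref{trirecognition} places $M$ in $\mathcal{P}$, contradicting the minimal counterexample choice of $M$.

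The main obstacle is the reduction to $R = F$: for interior displayed 3-separations, repositioning $x$ so that it lies in the guts of the first 3-separation of some path sequence description requires a careful application of the rearrangement machinery from Section 5 and a case analysis of the closure and coclosure steps defining $\mathbf{P}$. The final extension-and-recombination step is routine given the preceding structure.
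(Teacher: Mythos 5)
The central gap is your reduction to $R = F$. Being in the guts of a displayed $3$-separation $(R,G)$ only gives $x \in \cl_M(R)\cap\cl_M(G)$; when $(R,G)$ is an interior separation this does not yield $x\in\cl_M(F)$, and the rearrangement results you invoke (Lemmas \ref{commute3} and \ref{parallelendelements}) reorder or shorten path sequences --- they have no power to relocate $x$ into the closure of the end triangle. (Your auxiliary claim that a $4$-element segment containing $F$ would ``violate Lemma \ref{cantbesegmentguts}'' is also a non sequitur: that lemma forbids $x$ from lying in the closure of such a segment, not the segment's existence.) The interior-guts situation is precisely the hard part of this lemma, and the paper handles it by a mechanism your proposal lacks: since $x$ lies in the closure of the far side of the displayed separation, Lemma \ref{whencsegsgoup} (or Lemma \ref{asegsgoup}) lifts the end triad or triangle $A$ to an allowable set of $M$ itself, and then $\nabla_A(M)$ (respectively $\Delta_A(M)$) either lies in $\mathcal{P}$ --- which forces $M\in\mathcal{P}$ via Lemma \ref{2.11} and Lemma \ref{trirecognition}, a contradiction --- or is a \emph{new} minimum-sized counterexample whose $x$-deletion is $M_{n-1}$, described by the shortened path sequence ending in a $4$-element segment or cosegment, contradicting Lemma \ref{nodypath}. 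Without this recursion you have no argument when $x$ is spanned only by larger displayed sides.

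There are two further problems. First, the triangle and triad endings are not interchangeable by duality: dualizing turns the guts hypothesis into a coguts hypothesis and the deletion $M\del x$ into a contraction of $M^*$, which is outside the standing setup of Sections \ref{setup} and \ref{guts}; the paper must treat the two endings separately, and only the triangle ending admits the subcase $x\in\cl_M(A)$ that your argument addresses. Second, your final step is internally inconsistent and unproven: a path sequence ending in a triangle has last step $\nabla_A(M_{n-1})$ for an allowable triad $A$ of $M_{n-1}$ (writing the last step as $\Delta^Q_T(M_{n-1})$ with $T$ a triangle produces a triad end, not a triangle end), and the assertion that $x$ ``extends $Q$ to a larger allowable parallel extension $Q'$ with $M=\Delta^{Q'}_T(M_{n-1})$'' is exactly what would need proof: one must verify allowability and that undoing the exchange on $M$ returns an allowable extension of $M_{n-1}$, and membership in $\mathcal{P}$ further requires that the relevant $4$-element segment is one on which a path-sequence step is permitted. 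Even in the subcase $x\in\cl_M(A)$ the paper does not argue directly; it forms $\Delta_{A\cup x}(M)$, finds a series element $a'\in A'-A$ (dual of Lemma \ref{cantbesegmentguts}), uses the minimality of $M$ to conclude $\Delta_{A\cup x}(M)/a'\in\mathcal{P}$, and only then recovers $M\in\mathcal{P}$ via Lemma \ref{2.11} and the dual of Lemma \ref{4segrecognition}.
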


\begin{proof}
Seeking a contradiction, suppose that $x$ is in the guts of a $3$-separation of $M\del x$. Then it follows from Lemma \ref{nodypath} and Lemma \ref{nobigfansforguts} that $M\del x$ is described by a path sequence ending in a triad or triangle. Moreover, since $|E(M\del x)|\neq 8$, some step of the path sequence that describes $M\del x$ must be a \dy-step. 

Suppose first that $M\del x$ is described by a path sequence $M_1,\ldots, M_n$ ending in a triad. We may assume, by Corollary \ref{usingeachend2} and Lemma \ref{commute3}, that the $(n-1)$-th step uses $A'\in \{S,C\}$, that $M_{n-1}$ has a $4$-element segment $A'$, and that $M_n=\Delta_A(M_{n-1})$ for some allowable triangle $A\subseteq A'$ of $M_{n-1}$. Now since $x$ is a guts element of the path generated by $A$, it follows from Lemma \ref{whencsegsgoup} that $A$ is an allowable triad of $M$, so the matroid $\nabla_A(M)$ is well-defined. Moreover, it follows from Lemma \ref{2.11} and Lemma \ref{2.16} that $\nabla_A(M)\del x=\nabla_A(M\del x)=M_{n-1}$, so $A$ is contained in an allowable $4$-element segment $A'$ of $\nabla_A(M)$ by Lemma \ref{asegsgoup}. Now if $\nabla_A(M)\in \mathcal{P}$, then it follows from Lemma \ref{2.11} and Lemma \ref{trirecognition} that $M\in \mathcal{P}$, a contradiction. Therefore $\nabla_A(M)$ is also a minimum-sized counterexample, and $x$ is in the guts of the path of $3$-separations associated with the path sequence $M_1,\ldots, M_{n-1}$ that describes $\nabla_A(M)\del x=M_{n-1}$. But $M_{n-1}$ is described by a path sequence $M_1,\ldots, M_{n-1}$ ending in a $4$-element segment $A'$, which contradicts Lemma \ref{nodypath}. 

We may therefore assume that $M\del x$ can only be described by a path sequence $M_1,\ldots, M_n$ ending in a triangle. We may assume by Corollary \ref{usingeachend2} and Lemma \ref{commute3} that $M_1,\ldots, M_{n-1}$ is a path sequence ending in a $4$-element cosegment $A'$, and that $M_n=\nabla_A(M_{n-1})$ for some allowable triad $A\subseteq A'$ of $M_{n-1}$. Suppose that $x\in \cl_M(A)$. Then $A\cup x$ is a path-generating allowable $4$-element segment of $M$, so $\Delta_{A\cup x}(M)$ is a well-defined fragile matroid. Moreover, there is an element $a'\in A'-A$ in series with an element of $A\cup x$ by the dual of Lemma \ref{cantbesegmentguts}. Since $M$ is minimum-sized outside of $\mathcal{P}$ it follows that $\Delta_{A\cup x}(M)/a'\in \mathcal{P}$. But then $M\in \mathcal{P}$ by Lemma \ref{2.11} and the dual of Lemma \ref{4segrecognition}; a contradiction. Now assume that $x\notin \cl_M(A)$. Now $A$ is an allowable triangle of $M$ by Lemma \ref{asegsgoup}, so $\Delta_A(M)$ is a well-defined fragile matroid and $A$ is contained in a $4$-element cosegment $A'$ of $\Delta_A(M)$. If $\Delta_A(M)\in \mathcal{P}$, then it follows from Lemma \ref{2.11} and the dual of Lemma \ref{trirecognition} that $M\in \mathcal{P}$; a contradiction. Thus $\Delta_A(M)$ is a minimum-sized counterexample and $x$ is in the guts of the path of $3$-separations associated with the path sequence $M_1,\ldots, M_{n-1}$ that describes $\Delta_A(M)\del x=\Delta_A(M\del x)=M_{n-1}$. But $M_{n-1}$ is described by a path sequence $M_1,\ldots, M_{n-1}$ ending in a $4$-element cosegment; a contradiction to Lemma \ref{nodypath}.       
\end{proof}

\section{Blocking case}

In this section, we assume that the matroid $M$ satisfies the conclusion of Lemma \ref{blocksorguts} (ii). That is, $M$ has an element $x$ of $M$ such that: 
\begin{enumerate}
 \item[(B1)] $M\del x$ is $3$-connected and $M\del x \in \mathcal{P}$; and
 \item[(B2)] $x$ blocks some $3$-separation displayed by the path of $3$-separations $\mathbf{P}$ associated with a path sequence that describes $M\del x$. 
\end{enumerate}

Our strategy in this section is to show that a minimum-sized matroid $M$ with respect to properties (B1) and (B2) has at most $11$ elements. The fact that $M$ is minimum-sized gives the following straightforward restriction on the elements of $E(M)-\{x\}$.

\begin{lemma}
\label{forbiddenelements}
There is no element $y\in E(M)-\{x\}$ such that:
\begin{enumerate}
 \item[(i)] $y$ is deletable in $M\del x$, $M\del x,y$ is $3$-connected, $M\del x,y\in \mathcal{P}$, and $x$ blocks some $3$-separation displayed by the path of $3$-separations $\mathbf{P}'$ associated with a path sequence that describes $M\del x,y$; or
 \item[(ii)] $y$ is contractible in $M\del x$, $M\del x/y\in \mathcal{P}$, and $x$ blocks some $3$-separation displayed by the path of $3$-separations $\mathbf{P}'$ associated with a path sequence that describes $\si(M\del x/y)$. 
\end{enumerate}
\end{lemma}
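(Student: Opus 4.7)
The plan is to exploit the minimality of $M$ in both cases. I focus on case (i); case (ii) will follow by a parallel argument applied to $\si(M/y)$ in place of $M\del y$, using the dual versions of the connectivity and path-sequence lemmas.

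Set $M' = M\del y$. The straightforward inheritance properties are: $M'$ is $\mathbb{P}$-representable as a restriction of $M$; it is fragile, since any flexible element of $M'$ would also be flexible in $M$; and it has an $\{X_8,Y_8,Y_8^{*}\}$-minor, because $M'\del x = M\del x,y \in \mathcal{P}$ has such a minor by Lemma \ref{pathseqminors}. Moreover, with the old $x$ in its role, $M'\del x = M\del x,y$ is $3$-connected and in $\mathcal{P}$, and $x$ blocks a $3$-separation displayed by $\mathbf{P}'$, so the analogues of (B1) and (B2) hold for $M'$ with the element $x$.

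The two non-trivial verifications that remain are (a) that $M'$ is $3$-connected, and (b) that $M' \notin \mathcal{P}$; together these would contradict the minimality of $M$, since $|E(M')| < |E(M)|$. For (a) I would apply Proposition \ref{fcon} with the $3$-connected minor $M\del x,y$: a $2$-separation of $M'$ would force a small side to be a series pair (inherited from a triad through $y$ in $M$) or a parallel class, and fragility of $M$ combined with the blocking structure of $x$ should rule these configurations out. If a series pair genuinely arises, the argument passes to $\co(M')$ instead, whose blocking behavior and path description are preserved under cosimplification.

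The main obstacle is (b). My approach is the contrapositive: if $M' \in \mathcal{P}$, then some path sequence describes $M'$ and $x$ blocks a displayed $3$-separation of it; using the rearrangement machinery of the previous section (in particular Lemmas \ref{commute1}, \ref{commute3}, \ref{parallelendelements}, and \ref{lem:removefrompathseq}) together with the specific way $y$ is attached to $M'$ under hypothesis (i)—namely, that $y$ is deletable in $M\del x$, so $y$ sits in $M$ either as a parallel element to something in the path-generating sets of $M'$ or in the guts of a displayed $3$-separation—one can insert $y$ back into the path sequence of $M'$ and produce a path sequence for $M$. This would contradict $M \notin \mathcal{P}$. The delicate point is that the blocking behavior of $x$ with respect to $\mathbf{P}'$ must remain recoverable after $y$ is added, which requires examining how $y$ sits relative to the guts/coguts structure of the displayed $3$-separations and invoking Lemma \ref{blocksorguts} to place $y$.
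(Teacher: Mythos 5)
There is a genuine gap, and it starts with which minimality is being invoked. In this section the paper does not minimize over matroids outside $\mathcal{P}$: it takes $M$ minimum-sized \emph{with respect to properties (B1) and (B2)}. Under that reading, Lemma \ref{forbiddenelements} is essentially immediate: hypotheses (i) and (ii) state verbatim that the strictly smaller matroid $M\del y$ (respectively $M/y$, up to simplification) together with the same element $x$ satisfies (B1) and (B2) — $(M\del y)\del x=M\del x,y$ is $3$-connected and in $\mathcal{P}$, and $x$ blocks a displayed $3$-separation of a path sequence describing it — contradicting the choice of $M$. Nothing needs to be said about whether $M\del y$ lies in $\mathcal{P}$, nor about $3$-connectivity of $M\del y$ itself (the $3$-connectivity required by (B1) is that of $(M\del y)\del x$, which is given). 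Your proposal instead reads the minimality as ``minimum-sized counterexample to $M\in\mathcal{P}$'' and therefore sets itself the two extra tasks (a) $M\del y$ is $3$-connected and (b) $M\del y\notin\mathcal{P}$; neither is needed, and (a) is not even true in general ($x$ could lie in a triad of $M$ containing $y$, so $M\del y$ is only $3$-connected up to series pairs, and passing to $\co(M\del y)$ changes the matroid whose minimality you would be contradicting).

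The more serious problem is your route to (b). You propose to show the contrapositive ``if $M\del y\in\mathcal{P}$ then $M\in\mathcal{P}$'' by ``inserting $y$ back'' into a path sequence, asserting that since $y$ is deletable in $M\del x$ it must sit in $M$ either parallel to a path-generating set or in the guts of a displayed $3$-separation. Deletability is a statement about preserving a $\{U_{2,5},U_{3,5}\}$-minor and says nothing about how $y$ attaches geometrically; $y$ could, for instance, block a displayed $3$-separation of $M\del y$ or span/cospan an end of the path. Determining how a single element attaches to a matroid described by a path sequence is precisely the content of Lemma \ref{blocksorguts} and of Sections 7--9, and in the blocking and spanning cases the paper never concludes $M\in\mathcal{P}$ directly — it only bounds $|E(M)|$ and finishes with a finite (computer) check. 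So the argument you sketch for (b) is, in effect, assuming a strengthened form of the main theorem in the middle of its own proof; the rearrangement lemmas (\ref{commute1}, \ref{commute3}, \ref{parallelendelements}, \ref{lem:removefrompathseq}) do not deliver it. The fix is simply to use the minimality actually in force: the existence of $y$ as in (i) or (ii) hands you a smaller matroid satisfying (B1) and (B2), and that alone is the contradiction.
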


We make the following observations about rank that we will use for finding elements that can be contracted while preserving the property (B2).

\begin{lemma}
\label{outofspan}
Let $M$ be a matroid in $\mathcal{P}$, and $A\subseteq E(M)$.
\begin{enumerate}
 \item[(i)] If $A$ is an allowable $4$-element cosegment of $M$, and $R\subseteq E(M)-A$ is a $3$-separating set in $M$, then $r(R\cup\{a,a'\})=r(R)+2$ for all distinct pairs $a,a'\in A$.
 
 \item[(ii)] If $A=(a_1,\ldots, a_n)$ is a fan with $n\geq 5$ such that the ends of $A$ are rim elements, and $R\subseteq E(M)-A$ is a $3$-separating set in $M$, then $r(R\cup \{a_1, a_n\})=r(R)+2$. 
 
 \item[(iii)] If $M$ is described by a path sequence ending in $A$, and $(R,G)$ is a $3$-separation displayed by the associated path of $3$-separations with $A\subseteq R$, and $y,z\in G$ are contractible elements in coguts steps of the path of $3$-separations, then $r(R\cup \{y,z\})=r(R)+2$.
\end{enumerate}
\end{lemma}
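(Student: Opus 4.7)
The plan is to prove all three parts by orthogonality: given a putative rank shortfall, extract a short circuit $C$ and exhibit a triad or cocircuit meeting $C$ in exactly one element. Parts (i) and (ii) follow this recipe using triads drawn from the cosegment or the fan itself.

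For (i), an allowable cosegment of $M$ is independent in $M$ (being coindependent in $M^*$) and satisfies $r(\{a,a'\}) = 2$, so submodularity gives $r(R \cup \{a,a'\}) \leq r(R)+2$. Suppose the shortfall $r(R \cup \{a,a'\}) \leq r(R)+1$; then there is a circuit $C \subseteq R \cup \{a,a'\}$ with $C - \{a,a'\} \subseteq R$ and $C \cap \{a,a'\} \neq \emptyset$. Since $|A| = 4$ we may pick $a_1, a_2 \in A \setminus \{a,a'\}$, and both $\{a, a_1, a_2\}$ and $\{a', a_1, a_2\}$ are triads (as $A$ is a cosegment). Because $R \cap A = \emptyset$, one of these triads meets $C$ in a single element, contradicting orthogonality. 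For (ii) the same argument works: $a_1$ being a rim end makes $\{a_1, a_2, a_3\}$ a triad of $M$, and $n \geq 5$ guarantees $a_n \notin \{a_2, a_3\}$; symmetrically $\{a_{n-2}, a_{n-1}, a_n\}$ is a triad avoiding $a_1$, so any shortfall circuit meets one of these triads in exactly one element, again a contradiction.

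For (iii) the triads inside $A$ are no longer directly useful, so I would exploit the coguts structure together with contractibility. The claim I would establish is that each contractible element $y$ in a coguts step $P_j$ lies in a cocircuit $C^*_y$ disjoint from $R$. The reason is structural: in the path sequence description, a coguts element arises either as an internal coguts element added by a series extension immediately preceding a $\nabla$-step, hence sitting in a $2$-cocircuit or triad contained in $P_{j-1} \cup P_j$; or as a rim element of a glued fan, in which case it belongs to a triad of the fan. In both cases the associated cocircuit lies strictly to the right of $R$. Once this is in hand, if $r(R \cup \{y\}) \leq r(R)$ then a circuit $C \subseteq R \cup y$ with $y \in C$ satisfies $C \cap C^*_y = \{y\}$, contradicting orthogonality. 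The step for $z$ is handled similarly: if $r(R \cup \{y,z\}) < r(R)+2$ while $y \notin \cl_M(R)$, a circuit $C' \subseteq R \cup \{y,z\}$ contains $z$, and orthogonality against $C^*_z$ (which also avoids $R$, with some care needed when $y$ and $z$ belong to the same coguts step so that $y \notin C^*_z$) delivers the contradiction.

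The main obstacle is the structural claim that a contractible coguts element always sits in a cocircuit disjoint from $R$. Verifying this demands a case analysis across the operations a path sequence permits: $\nabla$-steps after allowable series extensions, wheel gluings that produce fan rim elements, and configurations arising from the reorderings made possible by Corollary \ref{usingeachend2} and Lemma \ref{commute3}. For each flavour of coguts element one inspects how it fits into the cocircuit structure of $M$, using Lemma \ref{Pmintcoguts}, Lemma \ref{parallelendelements}, and Lemma \ref{techmaxfans} to reduce to a canonical representative. Parts (i) and (ii) boil down to one-shot orthogonality checks; (iii) is where the bulk of the work lies.
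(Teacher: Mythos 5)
The paper states this lemma without proof (it is offered as an observation), so there is no argument of the authors' to compare against; judged on its own terms, your proofs of (i) and (ii) are correct and complete: the shortfall circuit lies in $R\cup\{a,a'\}$ (respectively $R\cup\{a_1,a_n\}$), it is disjoint from the rest of $A$, and a triad inside the cosegment (respectively the triad at either fan end, using $n\geq 5$) then meets it in exactly one element, contradicting orthogonality.

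Part (iii), however, has a genuine gap. First, the part of your structural claim that you present as the main obstacle is actually free: if $z$ lies in a coguts step $P_k$, then by definition $z\in\cl^*\bigl((P_k\cup\cdots\cup P_m)-z\bigr)$, so some cocircuit through $z$ is contained in $P_k\cup\cdots\cup P_m$ and hence avoids $R$; equivalently, by Lemma \ref{clandco}, $z\notin\cl(P_1\cup\cdots\cup P_{k-1})$. No case analysis over the path-sequence operations is needed for this, and it already gives $y\notin\cl(R)$ as well as the whole case where $y$ and $z$ lie in different coguts steps (for the later element $z$ one has $R\cup\{y\}\subseteq P_1\cup\cdots\cup P_{k-1}$). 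The only case with real content is the one you explicitly defer: $y$ and $z$ in the same coguts step, and that is precisely the case the paper uses (for example, in Lemma \ref{bigblocking} the three elements of $E(N)-E(M_{n-2})$ are all added at a single \dy-step). There you need a cocircuit through $z$ avoiding both $R$ and $y$, or some other reason why no circuit of $R\cup\{y,z\}$ contains both, and the justification you offer --- that $z$ sits in a $2$-cocircuit or triad contained in $P_{j-1}\cup P_j$ --- is asserted rather than proved and is doubtful as stated: the series pair $\{z,a\}$ created by the allowable series extension does not in general survive the ensuing $\nabla_{A'}$-exchange (after the exchange the natural cocircuit through $z$ is $\{z\}\cup(A'-a)$, which meets the active end of the path and hence possibly $R$ itself), and if two added elements are in series with the same element of the cosegment they may remain a series pair of $M$, in which case no cocircuit through $z$ avoids $y$ and your orthogonality step collapses. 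So the same-step case needs an actual argument --- for instance tracking cocircuits of the form $\{z\}\cup(A'-a)$ and $\{y\}\cup(A'-a'')$ with $a\neq a''$ through the subsequent steps, or invoking fragility and $3$-connectivity up to series pairs to exclude the degenerate configurations --- and without it your proof of (iii) is incomplete.
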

 
We first show that $M\del x$ is not described by a path sequence ending in a large fan.
\begin{lemma}
\label{nobigfans}
$M\del x$ is not described by a path sequence ending in a fan with at least $5$ elements. Moreover, if $M\del x$ is described by a path sequence $M_1,\ldots, M_n$ ending in a fan with $4$ elements, then $M_n$ is obtained from $M_{n-1}$ by gluing a wheel onto an allowable triad of $M_{n-1}$.
\end{lemma}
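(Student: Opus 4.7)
The plan is to argue by contradiction. Suppose $M\del x$ is described by a path sequence $M_1,\ldots,M_n$ ending in a fan $F$ such that either $|F|\geq 5$, or $|F|=4$ and $M_n$ is obtained from $M_{n-1}$ by gluing a wheel onto an allowable triangle. By Lemma \ref{techmaxfans} I may assume $F$ is maximal, and by Corollary \ref{usingeachend2} together with duality I may assume the last step is $M_n=P_T(M_{n-1},\mathcal{W}_r)\del X$, where $T=\{a,b,c\}$ is an allowable triangle inside a $4$-element segment $A\in\{S,C\}$ of $M_{n-1}$, $X\subseteq T$ with $b\in X$, and $F=E(\mathcal{W}_r)\setminus X$ has size $2r-|X|$.

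The overall strategy is to locate an element $y\in F$ (necessarily distinct from $x$) whose removal, by deletion if $y$ is a spoke and contraction if $y$ is a rim as licensed by Lemma \ref{fanfragile}, produces a smaller matroid $M'$ for which (i) $M'\in\mathcal{P}$ via a path sequence with a shorter terminal fan, (ii) $M'$ is $3$-connected (possibly after simplification or cosimplification), and (iii) $x$ still blocks some $3$-separation displayed by the new path of $3$-separations $\mathbf{P}'$. Such a $y$ then contradicts Lemma \ref{forbiddenelements}.

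For $|F|\geq 5$, I handle the subcase $|X|\leq 2$ by taking $y$ to be a spoke end of $F$ lying in $T\setminus X$ (nonempty since $|X|\leq 2$). Deleting $y$ corresponds to using $X\cup\{y\}$ in the same wheel-gluing step, producing the fan $F-y$ of size $\geq 4$; $3$-connectedness is preserved because a fan of length $\geq 3$ survives and the remainder of $M\del x$ is untouched, and the terminal step of the path of $3$-separations merely shrinks by one element, so $x$ still blocks the same $3$-separation. For the subcase $|X|=3$ and $r\geq 4$, I instead contract an end rim of $F$: in the wheel this creates a parallel pair of adjacent spokes, and cosimplification collapses $\mathcal{W}_r$ to $\mathcal{W}_{r-1}$ (using the graph-theoretic identity $\si(\mathcal{W}_r/r_j)\cong\mathcal{W}_{r-1}$ valid for $r\geq 4$), yielding a valid shorter path sequence, with the blocking property of $x$ again inherited. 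In both subcases Lemma \ref{forbiddenelements} supplies the contradiction.

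For $|F|=4$ via triangle gluing, so $r=3$ and $|X|=2$, up to symmetry $X=\{a,b\}$, giving $F=\{c,r_2,s_3,r_3\}$ with $c\in A$ the unique spoke end of $F$ lying in $A$. Take $y=c$: then $M\del x\del c=P_T(M_{n-1},\mathcal{W}_3)\del T$, which is described by the same path sequence with $X'=T$ in place of $X$, and whose terminal fan is the $3$-element triad $\{r_2,s_3,r_3\}$. The only $3$-connectedness concern is a possible $2$-cocircuit $\{r_2,s_3\}$, which would force the triangle $\{c,r_2,s_3\}$ to be a cocircuit of $M_n$; but $\{s_2,r_2,s_3\}$ is a triangle and not a cocircuit of $\mathcal{W}_3$, and the generalized parallel connection with $M_{n-1}$ cannot promote it into one, so this danger does not occur. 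Hence $y=c$ gives the forbidden element via Lemma \ref{forbiddenelements}(i). The main obstacle throughout is the careful verification of $3$-connectedness and of the cocircuit structure inherited from the generalized parallel connection, which I would control by combining Lemma \ref{9.3} with the coindependence of $A$ in $M_{n-1}$ from Lemma \ref{incotri}.
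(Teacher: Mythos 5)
Your overall strategy is the paper's: shrink the terminal fan by removing an end element and contradict Lemma \ref{forbiddenelements}, and your treatment of the $4$-element-fan case (delete the spoke end $c$ to obtain a path sequence ending in the triad $F-c$, then apply Lemma \ref{forbiddenelements}(i)) is essentially identical to the paper's. However, there are two real problems. First, your opening reduction ``by Corollary \ref{usingeachend2} together with duality I may assume the last step glues a wheel onto a triangle'' is not available here. The hypotheses (B1)--(B2) and Lemma \ref{forbiddenelements} are stated for the fixed pair $(M,x)$ with $x$ \emph{deleted}; dualizing replaces $M\del x$ by $M^{*}/x$, and neither the minimality of $M$ with respect to (B1)--(B2) nor Lemma \ref{forbiddenelements} transfers to that situation. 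Corollary \ref{usingeachend2} only exchanges the roles of $S$ and $C$ in \dy-steps; it cannot turn a triad-gluing last step into a triangle-gluing one, since the guts/coguts structure of the fan is determined by the matroid itself. The paper avoids this entirely by splitting on the fan, not on the gluing: either $F$ has a spoke end (delete it) or both ends of $F$ are rim elements (contract one), a dichotomy that covers triangle- and triad-glued fans uniformly.

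Second, and more substantively, in your case where both ends of $F$ are rim elements you contract ``an end rim'' and assert that ``the blocking property of $x$ is again inherited.'' That is exactly the point that needs an argument: blocking is preserved under deletion (closures only shrink), but under contraction of $r$ the closure of the other side becomes $\cl_M(G\cup r)-r$, so $x$ may cease to block if $x\in\cl_M(G\cup r)$, i.e.\ if $r\in\cl_M((E(M)-F)\cup x)$. One of the two rim ends can indeed fail this; the paper uses Lemma \ref{outofspan}(ii) (the rank of a $3$-separating set outside $F$ jumps by $2$ when both rim ends are added, while adding $x$ raises rank by at most $1$) to guarantee that \emph{some} rim end $r$ satisfies $r\notin\cl_M((E(M)-F)\cup x)$, and only that $r$ is contracted. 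Without this selection step your appeal to Lemma \ref{forbiddenelements}(ii) does not go through. (Minor further slips: contracting a rim of a wheel creates a parallel pair of spokes, so it is simplification, not cosimplification, that yields $\mathcal{W}_{r-1}$, consistent with the $\si(M\del x/y)$ in Lemma \ref{forbiddenelements}(ii).)
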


\begin{proof}
Suppose that $M\del x$ is described by a path sequence ending with a fan $F$ of $M\del x$ such that $|F|\geq 5$. By Lemma \ref{techmaxfans} or its dual we may assume that $F$ is a maximal fan. Suppose that $s\in F$ is a spoke end of $F$. Then $s$ is deletable in $M\del x$, $M\del x,s$ is $3$-connected, $M\del x,s$ is described by a path sequence ending in the fan $F-s$ with $|F-s|\geq 4$. Since $x$ blocks some $3$-separation of $M\del x$ displayed by the path of $3$-separations associated with $F$, it follows that $x$ blocks a $3$-separation of $M\del x,s$ displayed by the path of $3$-separations associated with $F-s$. Then the element $s$ contradicts Lemma \ref{forbiddenelements} (i). We may therefore assume that the ends of $F$ are rim elements. Then it follows from Lemma \ref{outofspan} (ii) that at least one of the ends of $F$, say $r$, satisfies $r\notin \cl_M((E(M)-F)\cup x)$. Then it is easy to see that $r$ is an element that contradicts Lemma \ref{forbiddenelements} (ii). 

For the second statement, suppose that $M\del x$ is described by a path sequence $M_1,\ldots, M_n$ such that $A\in \{S,C\}$ is a $4$-element segment of $M_{n-1}$, and that $M_n$ is obtained from $M_{n-1}$ by gluing a wheel onto an allowable triangle $A'\subseteq A$ of $M_{n-1}$. Let $F$ be the corresponding $4$-element fan. By Lemma \ref{techmaxfans} we may assume that $F$ is a maximal fan. Let $s\in F$ be the end spoke element of $F$. Then $M\del x,s$ is $3$-connected, and $M\del x,s$ is described by the path sequence $M_1,\ldots, M_{n-1}, M\del x,s$ ending in the triad $F-s$. Since $x$ blocked a $3$-separation of $M\del x$ displayed by the path of $3$-separations associated with $M_1,\ldots, M_n$ it follows that $x$ also blocks a $3$-separation of $M\del x,s$ displayed by the path of $3$-separations associated with $M_1,\ldots, M_{n-1}, M\del x,s$. But then $s$ contradicts Lemma \ref{forbiddenelements} (i). 
\end{proof}

Finding elements to remove such that the path sequence structure is preserved is important in what follows. We now prove some technical results that will enable us to find such elements. The first observation is on $4$-element segments and, by duality, on $4$-element cosegments.

\begin{lemma}
\label{Pm4segs}
If $M'$ is described by a path sequence $M_1,\ldots, M_n$ ending in a $4$-element segment $A$, then there is some $a\in A$ such that $M'\del a$ is $3$-connected and $M_1,\ldots, M_{n-1}, M'\del a$ is a path sequence where $M'\del a$ is obtained from $M_{n-1}$ by gluing a wheel onto $A-a$. Moreover, if $M'=\nabla_A^Q(M_{n-1})$ for some $Q$ such that $|E(Q)-E(M_{n-2})|\geq 2$, then there are at least two such elements in $A$.
\end{lemma}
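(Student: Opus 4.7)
The plan is to derive an explicit description of $M' \del a$ and then identify it as a wheel-gluing on $M_{n-1}$. Unpacking the hypothesis, we have $M' = \nabla_A^Q(M_{n-1})$, where $A$ is an allowable $4$-element cosegment of $M_{n-1}$ and $Q$ is an allowable series extension of $M_{n-1}$ along $A$. Let $Z := E(Q) - E(M_{n-1})$, so each $z \in Z$ is in series with some element of $A$ in $Q$. For any $a \in A$, the dual of Lemma \ref{2.13} gives
\[
  M' \del a \;=\; \nabla_A(Q) \del a \;=\; \nabla_{A-a}(Q/a),
\]
with $A - a$ still a triad of $Q/a$, since triads of $Q$ disjoint from the contracted element persist.

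The crucial step is to recognize $\nabla_{A-a}(Q/a)$ as the matroid obtained from $M_{n-1}$ by gluing a wheel onto the triad $A - a$. The matroid $Q/a$ is a series extension of $M_{n-1}/a$ along the triad $A - a$ (after removing a loop, should $a$ have a series partner in $Z$). Applying $\nabla_{A-a}$ then converts this configuration, by the dual of Lemma \ref{lem:growwheel}, into a wheel-fan attached to $M_{n-1}$ along $A - a$. The rim length of the resulting wheel is determined by $|Z|$ together with any parallel extensions contributed by the preceding $\dy$-step $M_{n-1} = \Delta_A^{Q'}(M_{n-2})$, and the distinguished rim element absorbs the role of $a$.

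For the choice of $a$, I proceed as follows. By the allowability of the cosegment $A$ in $M_{n-1}$, at least one element $x^{\circ} \in A$ is non-contractible; any $a \in A \setminus \{x^{\circ}\}$ then makes $A - a$ an allowable triad, so that the candidate sequence $M_1, \ldots, M_{n-1}, M' \del a$ satisfies (P2)(b). Such an $a$ must also be deletable in $M'$: this holds because the non-empty $\dy$-step $\nabla_A^Q$ introduces at least one deletable element in the $4$-element segment $A$ of $M'$ (each series element of $Z$ becomes, in $M'$, parallel to its partner in $A$). Three-connectedness of $M' \del a$ then follows from Lemma \ref{fanfragile} applied to the wheel-gluing. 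For the ``moreover'' statement, the hypothesis $|E(Q) - E(M_{n-2})| \geq 2$ guarantees at least two deletable elements in $A$ of $M'$, one for each ``extra'' element contributed across the last two $\dy$-steps, yielding two valid choices for $a$.

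The main obstacle will be the precise wheel-identification in the second paragraph. Matching $\nabla_{A-a}(Q/a)$ with a wheel-gluing on $M_{n-1}$ requires careful tracking of the ground-set bijection and a case analysis depending on whether $a$ has a series partner in $Z$. The underlying algebraic identity is the dual of Lemma \ref{lem:growwheel}, but applying it in the presence of prior parallel extensions contributed by the $\Delta$-step at $M_{n-1}$ will demand some bookkeeping to correctly record the rim and spoke labels of the resulting fan.
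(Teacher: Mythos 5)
Your overall route is the same as the paper's: apply the dual of Lemma \ref{2.13} to write $M'\del a=\nabla_{A-a}(Q/a)$ and then recognise the result, via (the dual of) Lemma \ref{lem:growwheel}, as $M_{n-1}$ with a wheel glued onto the triad $A-a$. The genuine gap is in your choice of $a$. For the conclusion you need $M'\del a$ to be built on $M_{n-1}$ itself, and this happens precisely when $a$ has a series partner $q\in E(Q)-E(M_{n-1})$ in $Q$: then $Q/a$ is, after the natural relabelling of $q$ as $a$, still an allowable series extension of $M_{n-1}$, so $\nabla_{A-a}(Q/a)$ is obtained from $M_{n-1}$ by gluing a wheel onto the allowable triad $A-a$. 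That is exactly the paper's choice of $a$. Your criterion --- any $a\in A$ other than a non-contractible element $x^{\circ}$ of the cosegment --- admits elements with no series partner in $Z=E(Q)-E(M_{n-1})$; for such an $a$, the matroid $Q/a$ is a series extension of $M_{n-1}/a$, not of $M_{n-1}$, so $\nabla_{A-a}(Q/a)$ is not a wheel-gluing on $M_{n-1}$ and $M_1,\ldots,M_{n-1},M'\del a$ is not a path sequence of the required form. (If $Z$ is a single element in series with $a_0$, then $a=a_0$ is the only valid choice, whereas your rule allows two others.) You flag ``whether $a$ has a series partner in $Z$'' as bookkeeping inside the wheel identification, but it is in fact the selection criterion; and your parenthetical treatment of that case has it backwards: contracting $a$ never produces a loop, and it is when $a$ \emph{does} have a series partner that the base matroid stays equal to $M_{n-1}$.

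Two further points. The assertion that each series element of $Z$ becomes, in $M'=\nabla_A(Q)$, parallel to its partner in $A$ is false (already in the graphic picture of a $\Delta$-$Y$ exchange, an edge parallel to a triangle edge is not parallel to the corresponding star edge afterwards), so the deletability argument built on it is unsound --- and unnecessary, since the lemma only asserts the path-sequence structure and $3$-connectivity. Also, Lemma \ref{fanfragile} cannot supply the $3$-connectivity of $M'\del a$, because it takes $3$-connectivity of the glued matroid as a hypothesis; the paper gets it directly from the fact that $a$ lies on the $4$-element segment $A$ of the $3$-connected matroid $M'$. Finally, once the correct criterion for $a$ is in place, the ``moreover'' statement should be argued by exhibiting a second element of $A$ with a series partner when at least two elements are added; counting ``deletable elements of $A$'' does not track this.
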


\begin{proof}
Since $M_1,\ldots, M_n$ is a path sequence ending in $A$ we may assume that $M_n=\nabla_{A}^{Q}(M_{n-1})$ for some $Q$. Let $a\in A$ be an element that is in series with an element $q\in E(Q)-E(M_{n-1})$ in $Q$. Then $M'\del a$ is $3$-connected since $A$ is a $4$-element segment of $M'$, and $M'\del a=\nabla_{A-a}^{Q/q}(M_{n-1})$ by Lemma \ref{2.13}. Thus $M'\del a$ is obtained from $M_{n-1}$ by gluing a wheel onto the allowable triad $A-a$ of $M_{n-1}$. 
\end{proof}

We next consider path sequences ending in a triad or triangle.

\begin{lemma}
 \label{Pmtris}
Let $M_1,\ldots, M_{n-1}$ be a path sequence ending in a $4$-element cosegment $A$, and let $M'=\Delta_{A'}(M_{n-1})$ for some allowable triad $A'\subseteq A$ of $M_{n-1}$. If $M'$ is $3$-connected, and $M'$ cannot be described by a path sequence where $A'$ is contained in a fan with at least $4$-elements, then either:
\begin{enumerate}
 \item[(i)] $M'\del a'$ is $3$-connected for some $a'\in A'$, and $M_1,\ldots, M_{n-2}, M'\del a'$ is a path sequence, where $M'\del a'$ is obtained from $M_{n-2}$ by gluing a wheel onto $A-a'$; or
 \item[(ii)] $M'/a$ is $3$-connected for $a\in A-A'$, and $M_1,\ldots, M_{n-2}$ is a path sequence that describes $M'/a$.
\end{enumerate}
\end{lemma}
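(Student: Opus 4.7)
The plan is to analyse the structure of $M'$ by unpacking how $M_{n-1}$ arises from $M_{n-2}$. Since $M_1,\ldots,M_{n-1}$ is a path sequence ending in the $4$-element cosegment $A$, I would first write $M_{n-1}=\Delta_A^P(M_{n-2})$, where $A$ is a $4$-element segment of $M_{n-2}$ and $P$ is a non-empty allowable parallel extension of $M_{n-2}$ along $A$. Let $Q=E(P)-E(M_{n-2})$, and for each $q\in Q$ let $\pi(q)\in A$ denote the unique element of $A$ in parallel with $q$ in $P$; each such pair $\{q,\pi(q)\}$ becomes a series pair in $M_{n-1}$. Write $a$ for the unique element of $A-A'$. (Since $A'$ is a triad of $M_{n-1}$, I read the operation in the statement as $\nabla_{A'}$, so $M'=\nabla_{A'}(\Delta_A(P))$.)

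The two outcomes correspond to a dichotomy on whether $a\in\pi(Q)$. If $a\notin\pi(Q)$, then $a$ has no attached series partner in $M_{n-1}$, and I would aim to show outcome (ii): that $M'/a$ is $3$-connected and described by $M_1,\ldots,M_{n-2}$. The idea is that contracting $a$ unwinds both the $\Delta_A$-step and the $\nabla_{A'}$-step, recovering $M_{n-2}$. Concretely, I would combine Lemma \ref{2.11} (giving $\nabla_A\circ\Delta_A=\mathrm{id}$) with the dual of Lemma \ref{2.16}(ii) to move the contraction of $a$ past the $\nabla_{A'}$ step, and then invoke the fan-restriction hypothesis to rule out any element of $E(M')-A$ ending up parallel to elements of $A'$ in $M'$.

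If instead $a\in\pi(Q)$, then there is a series pair $\{a,q\}$ in $M_{n-1}$ for some $q\in Q$, and I would aim for outcome (i): an element $a'\in A'$ such that $M'\del a'$ is $3$-connected and is obtained from $M_{n-2}$ by gluing a $3$-wheel onto the triangle $A-a'$ of $M_{n-2}$. I would locate $a'$ by using the dual of Lemma \ref{2.13} to express $\nabla_{A'}(M_{n-1})\del a'$ as a $\nabla$-operation on the $2$-element subset $A'-a'$ applied to $M_{n-1}/a'$; by Lemma \ref{lem:growwheel}, this identifies with the desired wheel-gluing, and the $3$-connectivity of $M'\del a'$ is forced by that of $M'$ together with the fan-restriction hypothesis.

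The principal obstacle is that neither Lemma \ref{2.13} nor Lemma \ref{2.16} applies cleanly to computing $M'/a$ or $M'\del a'$, because $a\in\cl^*_{M_{n-1}}(A')$: the extra element $a\in A-A'$ lies in the coclosure of the triad $A'$, so the standard commutations of $\nabla_{A'}$ with contraction or deletion of $a$ fail. The technical heart of the argument is to reason with the expression $M'=\nabla_{A'}(\Delta_A(P))$ directly, transferring through $P$ via Lemma \ref{2.11}, and to use the fan-restriction hypothesis to exclude the pathological cases in which extra parallel or series classes appear around $A'$. Once this bookkeeping is done, matching the subcases on $\pi$ against outcomes (i) and (ii) becomes straightforward.
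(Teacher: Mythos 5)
Your setup matches the paper's (unpack $M_{n-1}=\Delta_A^{Q}(M_{n-2})$, track where the extension elements attach, and compute via Lemmas \ref{2.11}, \ref{2.13}, \ref{2.16} and \ref{lem:growwheel}; reading the operation as $\nabla_{A'}$ is right), but your organizing structural claim is false: a parallel pair $\{q,\pi(q)\}$ of the extension does \emph{not} become a series pair of $M_{n-1}=\Delta_A(Q)$. The parallel relation lives only inside the extension matroid $Q$. For instance, $X_8\cong\Delta_S(R)$ where $R$ is $U_{2,5}$ with three elements added in parallel to elements of $S$; if your claim held, $X_8$ would contain three series pairs, but it is $3$-connected. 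The same applies to every matroid produced by a \dy-step, which is exactly why path sequences can describe $3$-connected matroids at all.

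More seriously, your dichotomy is inverted relative to the conclusions, so in the two basic configurations you would be trying to prove false statements. The correct split (the paper's) is on whether some deletable $a'\in A'$ has a parallel partner in $E(Q)-E(M_{n-2})$. If yes, then $M'\del a'=M_{n-1}/a'=\Delta_{A-a'}(Q\del a')$, and since $Q\del a'$ is again a parallel extension of a copy of $M_{n-2}$ along $A$, Lemma \ref{lem:growwheel} identifies this with a wheel glued onto $A-a'$, giving (i). If no, then every extension element is parallel to $a\in A-A'$; $3$-connectivity of $M'$ forces there to be exactly one such element (two of them would survive as a parallel pair of $M'$), and then $M'/a=Q\del a\cong M_{n-2}$, giving (ii). In your case ``$a\in\pi(Q)$'' with a single extension element parallel to $a$, you aim for (i), but there $Q\del a'$ is $M_{n-2}$ with $a'$ removed and a parallel element attached to $a$, so the wheel-gluing identification is unavailable, and the true outcome is (ii). In your case ``$a\notin\pi(Q)$'' you aim for (ii), but $M'/a=Q\del a$ still contains the extension elements (in parallel with members of $A'$), so it is not isomorphic to $M_{n-2}$; the true outcome there is (i), and your hope that the fan-restriction hypothesis excludes this configuration is not an argument. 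You also never supply the step that (ii) genuinely needs, namely using the $3$-connectivity of $M'$ to pin the extension down to one element. Finally, your ``principal obstacle'' is a red herring for $M'\del a'$: since $a'\in A'$, the dual of Lemma \ref{2.13} gives $M'\del a'=\nabla_{A'-a'}(M_{n-1}/a')=M_{n-1}/a'$ directly, and then Lemma \ref{2.13} gives $\Delta_{A-a'}(Q\del a')$, which is exactly how the paper proceeds.
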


\begin{proof}
Since $A'$ is not contained in a fan with at least $4$-elements, it follows that $M'\del a'$ is $3$-connected for each deletable element $a'\in A'$. Moreover it follows from Lemma \ref{2.13} and Lemma \ref{2.16} that $M'\del a'=M_{n-1}/a'=\Delta_{A-a'}(Q\del a')$, where $Q$ is the matroid such that $M_{n-1}=\Delta_A^Q(M_{n-2})$. Therefore (i) holds if $a'$ is in parallel with an element of $E(Q)-E(M_{n-2})$ in $Q$.

Now suppose that $M'\del a'\notin \mathcal{P}$ for all deletable elements $a'\in A'$. Since $M'\del a'=M_{n-1}/a'=\Delta_{A-a'}(Q\del a')$ it follows that $Q\del a'$ is not isomorphic to an allowable extension of $M_{n-2}$ for each deletable $a'\in A'$. But $Q$ must add at least one element in parallel with a deletable element of $A$ by the definition of a \dy-step, so there is a single element of $E(Q)-E(M_{n-2})$ and that element is in parallel with the element $a\in A-A'$. Then it follows from Lemma \ref{2.11} and Lemma \ref{2.13} that $M'/a=Q\del a$. But $Q\del a$ is isomorphic to $M_{n-2}$ because $a$ was the only element in parallel with an element of $E(Q)-E(M_{n-2})$. Therefore (ii) holds.
\end{proof}

Now we eliminate $4$-element segments. 

\begin{lemma}
 \label{nolongmiddle}
If $|E(M\del x)|\geq 10$, then $M\del x$ does not have a path-generating allowable $4$-element segment.
\end{lemma}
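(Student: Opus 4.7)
Suppose for contradiction that $M\del x$ has a path-generating allowable $4$-element segment $A$. The plan is to exhibit a deletable element of $A$ that violates Lemma \ref{forbiddenelements}. The size hypothesis $|E(M\del x)| \geq 10$ will be used to guarantee that the path sequence describing $M\del x$ has enough structure (in particular, at least one \dy-step) for the rearrangement results to apply.

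First I would use Corollary \ref{usingeachend2}, Lemma \ref{commute3}, and Lemma \ref{tech4seg} to rearrange the path sequence describing $M\del x$; after possibly replacing $A$ by an equivalent $4$-element segment parallel to it, I may assume $M\del x$ is described by a path sequence $M_1, \ldots, M_n = M\del x$ ending in $A$, so $A = P_1$ in the associated path of $3$-separations $\mathbf{P}$. By Lemma \ref{cantbesegmentguts}, $x \notin \cl_M(A)$. Then I would apply Lemma \ref{Pm4segs} to produce an element $a \in A$ such that $M\del x, a$ is $3$-connected and $M_1, \ldots, M_{n-1}, M\del x, a$ is a path sequence whose final step glues a wheel onto the triad $A - a$ of $M_{n-1}$. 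This already gives the first three conditions of Lemma \ref{forbiddenelements}(i): $a$ is deletable in $M\del x$, the matroid $M\del x, a$ is $3$-connected, and $M\del x, a \in \mathcal{P}$.

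The remaining and main step is to check that $x$ blocks some $3$-separation of $M\del x, a$ displayed by the new path of $3$-separations $\mathbf{P}'$. Let $(R, G)$ be a displayed $3$-separation of $M\del x$ blocked by $x$, so $A \subseteq R$. The first step of $\mathbf{P}'$ is a fan $F$ with $A - a \subseteq F \subseteq (A - a) \cup P_2$ (since the extra elements of the fan come from parallel-class elements originally placed by the $\nabla$-extension), and the subsequent steps of $\mathbf{P}'$ correspond to the steps of $\mathbf{P}$ beyond $P_2$. Hence either $(R - a, G)$ (if $R \supsetneq A$) or $(F, E(M\del x, a) - F)$ (if $R = A$) is a $3$-separation displayed by $\mathbf{P}'$. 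Using $a \in \cl_{M\del x}(A - a)$ and $F \subseteq \cl_M(A)$, the conditions $x \notin \cl_M(R)$ and $x \notin \cl_M(G)$ transfer to the appropriate closures in $M\del x, a$, so by Proposition \ref{blockclosure}, $x$ blocks the chosen $3$-separation, yielding the desired contradiction with Lemma \ref{forbiddenelements}(i).

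The main obstacle is the final paragraph: showing that a $3$-separation blocked by $x$ in $M\del x$ gives rise to a displayed $3$-separation of $\mathbf{P}'$ that $x$ still blocks. This requires careful bookkeeping of closures, particularly handling the case $R = A$ (where the natural candidate $(R - a, G)$ is not displayed by $\mathbf{P}'$ and one must instead use $(F, E(M\del x, a) - F)$), and the fact that the first two steps of $\mathbf{P}$ may merge into the first step of $\mathbf{P}'$ once $a$ is removed.
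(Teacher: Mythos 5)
Your overall skeleton is the same as the paper's: rearrange the path sequence (Corollary \ref{usingeachend2}, Lemmas \ref{commute3} and \ref{tech4seg}) so that it ends in $A$, delete the element $a$ supplied by Lemma \ref{Pm4segs}, and contradict Lemma \ref{forbiddenelements}(i). However, there is a genuine gap at the very first step. You assert that $|E(M\del x)|\geq 10$ by itself guarantees that the path sequence describing $M\del x$ has at least one \dy-step; this is exactly what Lemma \ref{tech4seg} and the notion of a path sequence ``ending in a $4$-element segment'' (and hence Lemma \ref{Pm4segs}) require, but it does not follow from the size bound alone. A member of $\mathcal{P}$ on ten or more elements with a path-generating allowable $4$-element segment can a priori be described by a wheel-gluing-only path sequence: for instance $X_8$ with a wheel glued onto an allowable triad of $C$, producing a fan with five or more elements while leaving $S$ as a path-generating allowable $4$-element segment. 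In that situation no \dy-step is available and your argument cannot start. The paper closes this case by invoking Lemma \ref{nobigfans}, which was proved earlier in the section using the standing blocking hypothesis (B2), the minimality of $M$, and Lemma \ref{forbiddenelements}: under those assumptions $M\del x$ is not described by a path sequence ending in a fan with at least five elements, and a $4$-element fan end must come from a triad, which together with $|E(M\del x)|\geq 10$ forces at least one \dy-step. You need this step (or an equivalent argument using the blocking hypothesis) made explicit.

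A secondary point concerns your transfer-of-blocking paragraph, which the paper leaves implicit after Lemma \ref{Pm4segs}, so I cannot compare it with the paper's own bookkeeping; but as written it is not quite right. The extra elements of the fan $F$ are the surviving series (coguts) elements introduced by the $\nabla$-extension, not ``parallel-class'' elements, and in general they lie in the coclosure rather than the closure of $A-a$; in particular $F\not\subseteq \cl_M(A)$, and these elements appear in a coguts step of $\mathbf{P}$ rather than in $P_2$. So the closure computations you sketch before applying Proposition \ref{blockclosure} need to be redone with $\cl^{*}$ for those elements, taking care when the separation blocked by $x$ has the fan elements on its other side. The strategy is right and matches the paper; these are the two places to repair.
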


\begin{proof}
Seeking a contradiction, suppose that $M\del x$ has a path-generating allowable $4$-element segment $A$. Let $M_1,\ldots, M_n$ be a path sequence that describes $M\del x$. Since $|E(M\del x)|\geq 10$ and, by Lemma \ref{nobigfans}, $M\del x$ is not described by a path sequence ending in a fan with at least $5$ elements, it follows that $M_1,\ldots, M_n$ has at least one \dy-step. Thus $A\in \{S,C\}$ by Lemma \ref{tech4seg}. Hence, by Corollary \ref{usingeachend2} and Lemma \ref{commute3}, we can assume that $M_1,\ldots, M_n$ is a path sequence ending in $A$. But then it follows from Lemma \ref{Pm4segs} that $M\del x$ has an element that contradicts Lemma \ref{forbiddenelements} (i).
\end{proof}

We can now prove the main result of this section:

\begin{lemma}
\label{bigblocking}
$|E(M\del x)|\leq 10$.
\end{lemma}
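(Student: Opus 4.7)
Assume for contradiction that $|E(M\del x)| \geq 11$. Let $M_1, \ldots, M_n$ be a path sequence describing $M\del x$. The plan is to produce an element $y \in E(M)\setminus\{x\}$ satisfying the hypotheses of Lemma \ref{forbiddenelements}, yielding a contradiction. By Lemma \ref{nobigfans}, the sequence does not end in a fan with at least $5$ elements, and any $4$-fan ending must come from gluing a wheel onto an allowable triad. By Lemma \ref{nolongmiddle} together with its dual statement (proved by the same argument using the dual of Lemma \ref{Pm4segs} and Lemma \ref{forbiddenelements}(ii) in place of (i)), $M\del x$ has no path-generating allowable $4$-element segment and no path-generating allowable $4$-element cosegment. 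Hence the path sequence ends in a triangle, a triad, or a $4$-fan of the triad-gluing type.

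In each case the candidate $y$ is produced by the structural lemmas of Section 5. For a triangle ending $A$, Corollary \ref{usingeachend2} and Lemma \ref{commute3} allow one to arrange $M_n = \nabla_A(M_{n-1})$ with $A \subseteq A' \in \{S,C\}$ the $4$-element cosegment of $M_{n-1}$; Lemma \ref{Pmtris} then supplies either a deletable $a' \in A$ such that $M_1,\ldots,M_{n-2},(M\del x)\del a'$ is a valid path sequence for $(M\del x)\del a'$, or a contractible $a \in A' \setminus A$ such that $M_1,\ldots, M_{n-2}$ describes $(M\del x)/a$. The triad-ending case is handled dually. For the $4$-fan case, Lemma \ref{techmaxfans} allows one to assume the fan is maximal, and then the triad-gluing description from Lemma \ref{nobigfans} produces a removable end element, replacing the $4$-fan with a $3$-fan in a new path sequence.

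Finally, I verify that $x$ still blocks a $3$-separation displayed by the new path of $3$-separations. For deletion the identity $\cl_{M\del y}(Z) = \cl_M(Z)\setminus\{y\}$ immediately transfers the blocking condition $x \notin \cl_M(R)\cup \cl_M(G)$ of Proposition \ref{blockclosure}. For contraction, using $\cl_{M/y}(Z) = \cl_M(Z\cup y)\setminus\{y\}$, the side containing $y$ is immediate and the opposite side $G$ reduces to ruling out $x \in \cl_M(G\cup y)\setminus \cl_M(G)$; assuming this bad case, Lemma \ref{clandco} applied to the partition $(R\setminus y,\{x\},G\cup y)$ forces $x \notin \cl^{*}_M(R\setminus y)$, yet the cosegment or triad structure containing $y$ together with the coguts condition $x \in \cl^{*}_M(R)$ from blocking will produce $x \in \cl^{*}_M(R\setminus y)$, a contradiction. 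Thus $y$ satisfies Lemma \ref{forbiddenelements}, completing the proof. The main obstacle is this coclosure analysis in the contraction subcase, where the interaction between $x$'s coclosure in $M$ and the cocircuits of $M\del x$ supporting the path-generating cosegment or triad (which may either persist as triads of $M$ or extend to $4$-cocircuits involving $x$) must be tracked carefully.
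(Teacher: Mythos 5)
There is a genuine gap, and it sits exactly where the real difficulty of this lemma lies. You assert that the dual of Lemma \ref{nolongmiddle} holds ``by the same argument,'' so that $M\del x$ has no path-generating allowable $4$-element cosegment. But the situation is not self-dual: the hypotheses (B1)--(B2) are about $M\del x$, so removing an element $y$ by \emph{deletion} transfers the blocking condition of Proposition \ref{blockclosure} trivially ($x\notin\cl_M(R)$ and $x\notin\cl_M(G)$ persist), whereas removing $y$ by \emph{contraction} does not: $\cl_{M/y}(G)=\cl_M(G\cup y)-y$, so $x$ may fall into the closure of the far side after contracting $y$, and then Lemma \ref{forbiddenelements}(ii) gives no contradiction. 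This is why the paper never proves a dual of Lemma \ref{nolongmiddle}; in its proof of this lemma it must keep the cosegment/triad endings alive, first establish (via a long case analysis) that the path sequence has at least two \dy-steps, then show $x$ blocks the specific $3$-separation $(A,B)$, and finally produce \emph{two} contractible elements --- an internal coguts element $r$ and an element $c$ coming from the other end of the path sequence --- for each of which minimality forces the ``bad case,'' i.e.\ $r,c\in\cl_M(A\cup x)$; the contradiction then comes from the rank bound of Lemma \ref{outofspan}(iii), not from any single removal.

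Your attempt to patch the contraction subcase in the last paragraph is circular. For a partition $(R-y,\{x\},G\cup y)$, Lemma \ref{clandco} makes $x\in\cl^{*}_M(R-y)$ \emph{equivalent} to $x\notin\cl_M(G\cup y)$, which is precisely the statement you are trying to establish; asserting that ``the cosegment or triad structure containing $y$ together with $x\in\cl^{*}_M(R)$ will produce $x\in\cl^{*}_M(R-y)$'' is therefore assuming the conclusion, and it is in fact false in general: in the paper's argument the bad case $x\in\cl_M(A\cup r)-\cl_M(A)$ genuinely occurs for the internal coguts element $r$ (that is how $r\in\cl_M(A\cup x)$ is derived), so no single contraction can be guaranteed to preserve blocking. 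Consequently your proposed single-element reduction cannot reach the hypotheses of Lemma \ref{forbiddenelements} in the cosegment, triad, and triangle-over-cosegment cases, and the proof as written does not go through; the missing ingredients are the ``at least two \dy-steps'' claim, the claim that $x$ blocks $(A,B)$ itself, and the two-element argument against Lemma \ref{outofspan}(iii).
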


\begin{proof}
Let $M_1,\ldots, M_n$ be a path sequence that describes $M\del x$. We may assume by Lemma \ref{nobigfans} and Lemma \ref{nolongmiddle} that $M\del x$ cannot be described by a path sequence ending in a either a $4$-element segment or a fan with at least $5$ elements. Seeking a contradiction, suppose that $|E(M\del x)|\geq 11$. Clearly $M_1,\ldots, M_n$ must have at least one \dy-step.

\begin{claim}
\label{atleast2}
$M_1,\ldots, M_n$ has at least two \dy-steps.
\end{claim}

\begin{subproof}
We consider the cases for the end step of $M_1,\ldots, M_n$.

First suppose that $M_1,\ldots, M_n$ is a path sequence ending in a $4$-element cosegment $A$. Then $M_n=\Delta_A^{N}(M_{n-1})$ for some $N$. Seeking a contradiction, suppose that $M_1,\ldots, M_n$ has exactly one \dy-step. Now $|E(N)-E(M_{n-1})|\geq 2$ because $|E(M\del x)|\geq 11$. Then it follows from the dual of Lemma \ref{Pm4segs} that there are at least two elements $a,a'\in A$ such that $M\del x/a$ and $M\del x/a'$ are $3$-connected and $M\del x/a, M\del x/a'\in \mathcal{P}$, and $x$ blocks a $3$-separation of at least one of $M\del x/a$ and $M\del x/a'$ by Lemma \ref{outofspan} (i); a contradiction to Lemma \ref{forbiddenelements} (ii). 

Now suppose that $M\del x$ is described by a path sequence $M_1,\ldots, M_n$ such that $A$ a $4$-element cosegment of $M_{n-1}$, and that $M_n$ is obtained from $M_{n-1}$ by gluing a wheel onto an allowable triad $A'\subseteq A$ of $M_{n-1}$. Let $F$ be the corresponding $4$-element fan. Assume that $M\del x$ is described by a path sequence with exactly one \dy-step and let $A=C$. By Corollary \ref{usingeachend2} and Lemma \ref{commute3}, and the fact that $M\del x$ cannot be described by a path sequence ending in either a $4$-element segment or cosegment, it follows that $M\del x$ can also be described by a path sequence whose end step is a fan $F'$ corresponding to a wheel glued onto an allowable triad of $S$. That is, $M_1,M_2, M_3$ is a path sequence that describes $M\del x$, where $M_2$ is obtained from $X_8$ by gluing a wheel onto $A'$, $M_3=\Delta_S^N(M_2)$, and $M_4$ is obtained from $M_3$ by gluing a wheel onto an allowable subset of $S$. Then $|E(N)-E(M_2)|\geq 2$ because $|E(M\del x)|\geq 11$. Suppose that $F'$ is a $4$-element fan, and let $s\in F'$ be the end spoke element. Then $M\del x,s$ is $3$-connected and has a path sequence $M_1,M_2, M_3$. Hence by Lemma \ref{commute3} $M\del x,s$ can be described by a path sequence ending in $F$. But $x$ must still block a $3$-separation of $M\del s$; a contradiction to Lemma \ref{forbiddenelements} (i). Thus we may assume that $F'$ is a triangle of $M\del x$. Then it follows from Lemma \ref{Pmtris} and the fact that $|E(N)-E(M_2)|\geq 2$ that there is some $s\in F'$ such that $M\del x,s$ is $3$-connected and $M\del x,s$ is described by a path sequence $M_1,M_2,M\del x,s$. But $M\del x,s$ has a path sequence ending in $F$ by Lemma \ref{commute3}, and $x$ must still block a $3$-separation of $M\del s$; a contradiction to Lemma \ref{forbiddenelements} (i).

Suppose that $M\del x$ is described by a path sequence $M_1,\ldots, M_n$ ending in a triangle. By Corollary \ref{usingeachend2} and Lemma \ref{commute3}, we can assume that $M\del x$ is described by a path sequence $M_1,\ldots, M_n$ such that $A$ a $4$-element segment of $M_{n-2}$, $M_{n-1}=\Delta_A^N(M_{n-2})$, and finally $M_n=\nabla_{A'}(M_{n-1})$ for some allowable triad $A'\subseteq A$ of $M_{n-1}$. Moreover, $|E(N)-E(M_{n-2})|\geq 2$ because $|E(M\del x)|\geq 11$. Then it follows from Lemma \ref{Pmtris} that, for some $a'\in A'$, $M\del x, a'$ is $3$-connected and $M\del x, a'$ is described by a path sequence ending $M_1,\ldots, M_{n-1}, M\del x, a'$ ending in a fan $F$ spanned by $A-a'$. But $x$ must block a $3$-separation of $M\del x,a'$; a contradiction to Lemma \ref{forbiddenelements} (i).
 
Finally, suppose that $M_1\ldots M_n$ is a path sequence ending in a triad. Then, by Corollary \ref{usingeachend2} and Lemma \ref{commute3}, we can assume that $A$ is a $4$-element cosegment of $M_{n-2}$, that $M_{n-1}=\nabla_A^N(M_{n-2})$, and finally that $M_n=\Delta_{A'}(M_{n-1})$ for some allowable triangle $A'\subseteq A$ of $M_{n-1}$. Moreover, we can assume that $M\del x$ is not described by a path sequence ending in a fan $F$ with $|F|\geq 4$, a $4$-element segment or cosegment, or triangle. Thus, if $M_1\ldots M_n$ has exactly one \dy-step, then $x$ must block the $3$-separation $(A', E(M\del x)-A')$. Furthermore $|E(N)-E(M_{n-2})|=3$ because $|E(M\del x)|\geq 11$. Then $M\del x$ has an element $y\in E(N)-E(M_{n-2})$ that contradicts Lemma \ref{forbiddenelements} (ii), since at most one element from $E(N)-E(M_{n-2})$ can be spanned by $A'\cup x$ by Lemma \ref{outofspan} (iii). Thus there are at least two \dy-steps in the path sequence $M_1,\ldots, M_n$.
\end{subproof}

By \ref{atleast2} the path sequence $M_1\ldots M_n$ has at least two \dy-steps. Thus, by Corollary \ref{usingeachend2} and Lemma \ref{commute3}, we may assume that the path sequence $M_1\ldots M_n$ has least two \dy-steps on $A$. We can assume by Corollary \ref{usingeachend2} that $A=S$.

We first assume that $M_n$ is obtained from $M_{n-1}$ by gluing a $4$-element fan onto an allowable triad $A'\subseteq A$ of $M_{n-1}$. Let $a\in A-A'$, and let $G=E(M\del x)-F$. Suppose that $x\notin \cl_M(F\cup a)$. Then $x$ must block some $3$-separation $(X,Y)$ of $M\del x$ where $F\cup a\subseteq X$. Let $s\in F$ be the end spoke element of the fan $F$. Then $M\del x,s$ is $3$-connected and can be described by a path of $3$-separations $M_1,\ldots, M_{n-1}$ ending in the $4$-element cosegment $(F-s)\cup a$. But then $(X-s,Y)$ is a $3$-separation of $M\del x,s$ displayed by the path of $3$-separations associated with $M_1,\ldots, M_{n-1}$, and $x$ blocks $(X-s,Y)$. Thus $s$ contradicts Lemma \ref{forbiddenelements} (i). We may therefore assume that $x\in \cl_M(F\cup a)$. Then $x$ must block the $3$-separation $(F, G)$ of $M\del x$. Since $M_1,\ldots, M_n$ has at least two \dy-steps on $A$ it follows that there is an internal coguts element $r\in G$. But then, up to parallel elements, $M\del x/r$ is described by a path sequence ending in $F$ by Lemma \ref{Pmintcoguts}, and $x$ blocks $(F,G-r)$ in $M/r$ because $r\notin \cl_M(F\cup a)$ and hence $x\notin \cl_M(F\cup r)$. Thus $r$ contradicts Lemma \ref{forbiddenelements} (ii).   

It remains to consider the cases where $M_1,\ldots, M_{n-1}$ is a path sequence ending in a $4$-element segment or cosegment $A$, and $M_1,\ldots, M_{n}$ is either a path sequence ending in a $4$-element segment $A$ or a path sequence ending in a triangle or triad $A'\subseteq A$. Let $B=E(M\del x)-A$.

\begin{claim}
$x$ blocks the $3$-separation $(A,B)$.  
\end{claim}

\begin{subproof}
We show that $x\notin \cl_M(A)$ and $x\notin \cl_M(B)$, so the claim will follow from Proposition \ref{blockclosure}.

Since $x$ blocks some $3$-separation $(R,G)$ of $M\del x$ displayed by the path of $3$-separations generated by the end step $A$ or $A'\subseteq A$ of $M_1,\ldots, M_n$, it follows from Proposition \ref{blockclosure} that $x\notin \cl_M(A)$. 

Assume that $M_1,\ldots, M_n$ is a path sequence ending in a triad $A'\subseteq A$. Suppose that $x\in \cl_M(B)$. Then since $x$ blocks a $3$-separation $(R,G)$ displayed by the path of $3$-separations associated with $M_1,\ldots, M_n$ where $A\subseteq R$ and $G\subseteq B$. Since $x\in \cl_M(B)$ the triad $A'$ of $M\del x$ is also an allowable triad of $M$ by Lemma \ref{whencsegsgoup}, so $\nabla_{A'}(M)$ is fragile. Moreover, by Lemma \ref{2.16}, we have $\nabla_{A'}(M)\del x=\nabla_{A'}(M\del x)=M_{n-1}$. Now $x$ must block a $3$-separation $(R,G)$ displayed by the path of $3$-separations associated with $M_1,\ldots, M_{n-1}$. But $M_1,\ldots, M_{n-1}$ is a path sequence ending in a $4$-element segment $A$, which contradicts Lemma \ref{nolongmiddle}.

Suppose that $M_1,\ldots, M_n$ is a path sequence ending in a $4$-element cosegment $A$, then it follows from the dual of Lemma \ref{Pm4segs} that there is some $a\in A$ such that $M\del x/a$ is described by a path sequence ending in a fan $F$ that contains $A-a$. Thus by Lemma \ref{forbiddenelements} (ii) $x$ does not block a $3$-separation of $M\del x/a$ displayed by the path of $3$-separations generated by $F$. That is, $x$ does not block a $3$-separation of the form $(R-a,G)$ in $M/a$ where $A\subseteq R$ and $G\subseteq B$. Thus $x\in \cl_M(G\cup a)-\cl_M(G)$ by Lemma \ref{blockclosure}, and $a\in \cl_M(G\cup x)\subseteq \cl_M(B\cup x)$ by the Mac Lane-Steinitz exchange property. But $a\notin \cl_M(B)$, so it follows that $\cl_M(B)\neq \cl_M(B\cup x)$. Hence $x\notin \cl_M(B)$.

Finally, suppose that $M_1,\ldots, M_n$ is a path sequence ending in a triangle $A'\subseteq A$. Suppose that $x\in \cl_M(B)$. Then $x$ blocks a $3$-separation $(R,G)$ displayed by the path of $3$-separations associated with $M_1,\ldots, M_n$ where $A\subseteq R$ and $G\subseteq B$. By Lemma \ref{asegsgoup} the set $A'$ is an allowable triangle of $M$, and $\Delta_{A'}(M)\del x=\Delta_{A'}(M\del x)=M_{n-1}$. But $x$ must still block a $3$-separation $(R,G)$ displayed by the path of $3$-separations associated with $M_1,\ldots, M_{n-1}$. But $M_1,\ldots, M_{n-1}$ is a path sequence ending in a $4$-element cosegment $A$, so now by the above argument $x\notin \cl_{M_{n-1}}(B)=\cl_M(B)$, which contradicts the assumption that $x\in \cl_M(B)$.  
\end{subproof}

Since $M_1,\ldots, M_n$ has at least two \dy-steps it follows that there is an internal coguts element $r\in B$.  Then it follows from Lemma \ref{Pmintcoguts} that, up to parallel elements, $M\del x/r$ is described by a path sequence ending in the same allowable set as $M\del x$. By Lemma \ref{forbiddenelements} (ii) $x$ does not block a $3$-separation of $M\del x/r$, so in particular $x$ does not block $(A,B-r)$ in $M/r$. Hence $x\in \cl_M(A\cup r)-\cl_M(A)$ by Lemma \ref{blockclosure}. It follows from the Mac Lane-Steinitz exchange property that $r\in \cl_M(A\cup x)$.

\begin{claim}
\label{rands}
There is some $c\in B-r$ such that $M\del x/c$ can be described by a path sequence ending in the same allowable set as $M_1,\ldots, M_n$. 
\end{claim}

\begin{subproof}
By Corollary \ref{usingeachend2} and Lemma \ref{commute3} we can assume there is a path sequence $M_1',\ldots, M_n'$ describing $M\del x$ ending in a $4$-element cosegment $C\subseteq B$ or a triangle or triad $C'\subseteq C\subseteq B$. If $C$ is a $4$-element cosegment, then it follows from the dual of Lemma \ref{Pm4segs} that $M\del x/c$ is described by a path sequence ending in a fan $F$ containing $C-c$ for some $c\in C$. But again by Corollary \ref{usingeachend2} and Lemma \ref{commute3} $M\del x/c$ can be described by a path sequence ending in either the cosegment $A$ or the triad or triangle $A'\subseteq A$.  

Suppose that $M_1',\ldots, M_n'$ is a path sequence ending in a triangle or triad $C'\subseteq C\subseteq B$. We may assume by Corollary \ref{usingeachend2} and Lemma \ref{commute3} that $M_1',\ldots, M_{n-1}'$ is a path sequence ending in $C$. By Lemma \ref{forbiddenelements} (i) and Lemma \ref{Pmtris} or its dual, it follows that $M\del x/c$ is $3$-connected and described by a path sequence $M_1',\ldots, M_{n-2}'$ or $M_1',\ldots, M_{n-2}', M\del x/c$. Again by Corollary \ref{usingeachend2} and Lemma \ref{commute3} it follows that $M\del x/c$ can be described by a path sequence ending in either the cosegment $A$ or the triad or triangle $A'\subseteq A$.
\end{subproof}

It follows from Lemma \ref{forbiddenelements} (ii) that $x$ does not block the $3$-separation $(A,B-c)$ of $M/c$. Then $x\in \cl_M(A\cup c)-\cl_M(A)$ by Lemma \ref{blockclosure}, so $c\in \cl_M(A\cup x)$ by the Mac Lane-Steinitz exchange property. But now we have $c,r\in \cl_M(A\cup x)$, which contradicts Lemma \ref{outofspan} (iii).     
\end{proof}

\section{Spanning and cospanning case}

In this section, we assume that the matroid $M$ satisfies the conclusion of Lemma \ref{blocksorguts} (iii). That is, $M$ has an element $x$ of $M$ such that: 
\begin{enumerate}
 \item[(i)] $M\del x$ is $3$-connected and $M\del x \in \mathcal{P}$; and
 \item[(ii)] For each $3$-separation $(R,G)$ displayed by the path of $3$-separations $\mathbf{P}$ associated with a path sequence that describes $M\del x$, there is some $X \in \{R,G\}$ such that $x \in \cl_M(X)$ and $x\in \cl_M^*(X)$. 
\end{enumerate}

\begin{lemma}
\label{spanningend}
If $M\del x$ is described by a path sequence $M_1,\ldots, M_n$ ending in a $4$-element segment or cosegment $A$, then $\cl^{*}_M(A)$ spans and cospans $x$ in $M$.
\end{lemma}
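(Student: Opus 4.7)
The plan is to analyze the first $3$-separation $(A, B)$ of $M \del x$ displayed by $\mathbf{P}$, where $B = E(M \del x) - A$, and to invoke the spanning/cospanning hypothesis applied to this $3$-separation. It yields some $X \in \{A, B\}$ with $x \in \cl_M(X) \cap \cl_M^{*}(X)$. If $X = A$, the conclusion is immediate: $\cl_M^{*}(A)$ cospans $x$ since $\cl_M^{*}(\cl_M^{*}(A)) = \cl_M^{*}(A) \ni x$, and spans $x$ since $x \in \cl_M(A) \subseteq \cl_M(\cl_M^{*}(A))$.

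Suppose instead, for a contradiction, that $X = B$. Then by Lemma \ref{clandco}, $x \notin \cl_M(A) \cup \cl_M^{*}(A)$. The plan is to show $M \in \mathcal{P}$, contradicting the choice of $M$. Using the duality-closure of $\mathcal{P}$, I may assume that $A$ is a $4$-element segment of $M \del x$; the cosegment case follows by duality. By Lemma \ref{asegsgoup}, $A$ is an allowable $4$-segment of $M$, and by Lemma \ref{amove}, $\Delta_A(M)$ is a strictly $\{U_{2,5}, U_{3,5}\}$-fragile, $\mathbb{P}$-representable matroid. Since $A$ is coindependent in $M\del x$ by Lemma \ref{incotri}, Lemma \ref{2.16}(i) gives $\Delta_A(M)\del x = \Delta_A(M\del x)$. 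From the path-sequence description $M\del x = \nabla_A(Q)$, where $Q$ is an allowable series extension of $M_{n-1}$ along $A$ with $A$ an independent cosegment of $Q$, the dual of Lemma \ref{2.11} yields $\Delta_A(M)\del x = Q \in \mathcal{P}$.

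By Lemma \ref{2.11}, $M = \nabla_A(\Delta_A(M))$, so the dual of Lemma \ref{4segrecognition} (applied with the trivial series extension of $\Delta_A(M)$ along $A$) would force $M \in \mathcal{P}$ as soon as $\Delta_A(M) \in \mathcal{P}$, producing the required contradiction. The hard part will thus be to show $\Delta_A(M) \in \mathcal{P}$. By Lemma \ref{pathDY1}, the condition $x \in \cl_M(B) \cap \cl_M^{*}(B)$ is preserved upon passing to $\Delta_A(M)$. I expect the triads $\{a, p, x\}$ of $\Delta_A(M)$ arising from the $2$-cocircuits $\{a, p\}$ of $Q$ (with $a \in A$ and $p$ a series-added element) to place $x$ in a coguts position of the $3$-separation of $Q$ displayed by the maximal cosegment $A^{+} \supseteq A$. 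The plan is then to exploit this coguts placement, via Lemmas \ref{whencsegsgoup} and \ref{Pmintcoguts}, to extend the path-sequence description of $Q$ by $x$, thereby placing $\Delta_A(M)$ in $\mathcal{P}$.
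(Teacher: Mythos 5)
Your opening reduction is fine and matches the paper's: if the displayed $3$-separation has $x$ spanned and cospanned on the side containing $A$, the conclusion is immediate, and otherwise one aims to contradict $M\notin\mathcal{P}$ by exhibiting $M\in\mathcal{P}$ via a $\Delta$/$\nabla$ exchange on $A$ and (the dual of) Lemma \ref{4segrecognition}. But there are two genuine gaps. First, you cannot ``assume $A$ is a segment, the cosegment case follows by duality'': in this framework $x$ is \emph{deleted} in both cases, and dualizing turns the hypothesis into a statement about $M^{*}\contract x=(M\del x)^{*}$, not about $M^{*}\del x$, so the dual of the segment statement is not an instance of the lemma. This is why the paper treats the two cases with separate (dual-flavoured but distinct) arguments: in the cosegment case it works with $\nabla_A(M)$, uses Lemma \ref{whencsegsgoup} (which needs exactly the failure of spanning/cospanning by $A$) and deletes an element $y$ of $N$ parallel to an element of $A$; in the segment case it works with $\Delta_A(M)$ and contracts a series element $q$ of $Q$.

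Second, and more seriously, your argument hinges on showing $\Delta_A(M)\in\mathcal{P}$, which you explicitly defer as ``the hard part'' with only a plan. That plan does not go through: $\Delta_A(M)$ has the same number of elements as $M$, so minimality of $M$ gives you nothing about it, and Lemma \ref{Pmintcoguts} only lets you \emph{remove} internal guts/coguts elements from a matroid already described by a path sequence — it is not a recognition principle for showing that the one-element extension of $Q$ by $x$ lies in $\mathcal{P}$. Certifying that such single-element extensions stay in $\mathcal{P}$ is precisely the overall difficulty the paper's whole guts/blocking/spanning case analysis is designed to handle, so assuming it here is essentially circular. The paper's proof avoids this exact trap: by the definition of a \dy-step, $Q=\Delta_A(M)\del x$ has an element $q\in E(Q)-E(M_{n-1})$ in series with an element of $A$, and the branch hypothesis $x\in\cl^{*}_M(B)$ (with $B=E(M\del x)-\cl^{*}(A)$, a sharper negation than your $B=E(M\del x)-A$, and needed to ensure $x$ does not destroy the series pair) guarantees $q$ is still in series with an element of $A$ in $\Delta_A(M)$ itself. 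Then $\Delta_A(M)\contract q$ is strictly smaller than $M$, so minimality puts it in $\mathcal{P}$, and the dual of Lemma \ref{4segrecognition}, applied with the \emph{non-trivial} series extension $\Delta_A(M)$ of $\Delta_A(M)\contract q$ along $A$, yields $M=\nabla_A(\Delta_A(M))\in\mathcal{P}$, the desired contradiction. Without this size-reducing step your argument does not close.
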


\begin{proof}
Assume that $M_1,\ldots, M_n$ is a path sequence for $M\del x$ ending in a $4$-element cosegment $A$. We may assume that $M_n=\Delta_A^N(M_{n-1})$ for some $N$. Seeking a contradiction, suppose that $x$ is not spanned and cospanned by $A$. Then $A$ is an allowable $4$-element cosegment of $M$ by Lemma \ref{whencsegsgoup}, so $\nabla_A(M)$ is fragile. Moreover, $\nabla_A(M)\del x=N$ by the dual of Lemma \ref{2.16} (ii) and Lemma \ref{2.11}. Now there is some $y\in E(N)-E(M_{n-1})$ in parallel with an element of $A$ in $N$ by the definition of a \dy-step. Let $M'=\nabla_A(M)\del y$. Then $M'\in \mathcal{P}$ by the minimality of $M$, and $M'$ has a path-generating allowable $4$-element segment $A$. But now $M=\Delta_A^{N'}(M')$ for some $N'$. Hence $M\in \mathcal{P}$ by Lemma \ref{4segrecognition}; a contradiction.

Assume that $M\del x$ is described by a path sequence $M_1,\ldots, M_n$ ending in a $4$-element segment $A$. Let $M_n=\nabla_A^Q(M_{n-1})$, and let $B=E(M\del x)-\cl^{*}(A)$. Suppose that $x\in \cl_M(B)$ and $x\in \cl_M^{*}(B)$. Then $A$ is an allowable $4$-element segment of $M$ by Lemma \ref{asegsgoup}, so $\Delta_A(M)$ is fragile. Now $\Delta_A(M)\del x=\Delta_A(M\del x)=Q$ by Lemma \ref{2.16}. Thus $\Delta_A(M)\del x$ has at least one element $q\in Q$ in series with some element of $A$, and since $x\in \cl_M^{*}(B)$ it follows that $\Delta_A(M)$ has some element of $Q$ in series with some element of $A$. By the minimality of $M$ it follows that $\Delta_A(M)/q\in \mathcal{P}$. But then it follows from the dual of Lemma \ref{4segrecognition} that $M\in \mathcal{P}$, which is a contradiction.
\end{proof}

The next result follows using essentially the same argument as Lemma \ref{nobigfansforguts}.

\begin{lemma}
\label{fanspancospan}
If $M\del x$ is described by a path sequence ending in a fan $F$ with $|F|\geq 4$, then $x\in \cl_M(F)$ and $x\in \cl_M^{*}(F)$.
\end{lemma}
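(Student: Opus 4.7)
The plan is to argue by contradiction, mirroring the structure of the proof of Lemma \ref{nobigfansforguts}, but using the new hypothesis in place of ``$x$ is in the guts.'' So suppose for a contradiction that $M$ satisfies the spanning-and-cospanning condition (iii) with respect to $M\del x$ and its path of $3$-separations, but that the displayed $3$-separation whose ``end'' is the fan $F$ has its other side, rather than $F$, spanning and cospanning $x$. Concretely, writing $B = E(M\del x) - F$, we assume
\begin{equation*}
  x \in \cl_M(B) \quad \text{and} \quad x \in \cl_M^{*}(B).
\end{equation*}

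Because $|F|\geq 4$, the fan $F$ contains a path-generating allowable triad $T$ of $M\del x$ (the ``end'' triad of the fan). The key observation is that $x\in \cl_M(B)$ means $x\in \cl_M(E(M)-(T\cup x))$, so Lemma \ref{whencsegsgoup} promotes $T$ to a path-generating allowable triad of $M$ itself; the rest of the fan $F$ remains a fan of $M$ by the analogous use of Lemma \ref{asegsgoup} (applied in the dual where needed), just as in the proof of Lemma \ref{nobigfansforguts}. Hence $\nabla_T(M)$ is a well-defined fragile $\mathbb{P}$-representable matroid (by the dual of Lemma \ref{amove}), and by Lemma \ref{lem:growwheel} there is an element $f\in F-T$ that is in parallel with some element of $T$ in $\nabla_T(M)$.

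Now set $M' := \nabla_T(M)\del f$. Then $|E(M')|<|E(M)|$, so the minimality of $M$ forces $M'\in \mathcal{P}$. Observe that $T$ is a path-generating allowable triangle of $M'$ contained in (what is left of) the fan $F$, and that, by Lemma \ref{2.11}, $M$ is recovered from $M'$ by first adding $f$ back as an allowable parallel extension along $T$ and then performing $\Delta_T$. Lemma \ref{trirecognition} then yields $M\in \mathcal{P}$, contradicting the standing assumption that $M\notin \mathcal{P}$. The only substantive point to check is that the hypothesis $x\in \cl_M(B)\cap \cl_M^{*}(B)$ is indeed strong enough to drive the triad promotion via Lemma \ref{whencsegsgoup} and to keep the whole fan intact in $M$; this is the step that replaces the ``$x$ is in the guts'' input used in Lemma \ref{nobigfansforguts}, and I expect it to be the only place that needs any care.
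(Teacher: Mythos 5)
Your proposal is correct and is essentially the paper's argument: the paper omits the proof with the remark that it "follows using essentially the same argument as Lemma \ref{nobigfansforguts}", which is exactly what you do — under hypothesis (iii) the failure of the conclusion forces $x\in \cl_M(B)\cap\cl_M^{*}(B)$ with $B=E(M\del x)-F$, and then $x\in\cl_M(B)$ supplies the input needed for Lemma \ref{whencsegsgoup}, the parallel element from Lemma \ref{lem:growwheel}, minimality, and Lemma \ref{trirecognition} to contradict $M\notin\mathcal{P}$.
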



Next we consider path sequences ending in a triangle or triad.

\begin{lemma}
\label{trispan}
Let $M_1,\ldots, M_n$ be a path sequence that describes $M\del x$.
\begin{enumerate}
\item[(a)] If $M_1,\ldots, M_n$ is a path sequence ending in a triad $A$, and $x$ is spanned and cospanned by $E(M\del x)-A$, then $\nabla_A(M)$ is a minimum-sized counterexample such that $\nabla_A(M)\del x$ is described by the path sequence $M_1,\ldots, M_{n-1}$ and satisfies Lemma \ref{blocksorguts} (iii).

\item[(b)]  If $M_1,\ldots, M_n$ is a path sequence ending in a triangle $A$, and $x$ is spanned and cospanned by $E(M\del x)-\cl^{*}(A)$, then $\Delta_A(M)$ is a minimum-sized counterexample such that $\Delta_A(M)\del x$ is described by the path sequence $M_1,\ldots, M_{n-1}$ and satisfies Lemma \ref{blocksorguts} (iii).
\end{enumerate}
\end{lemma}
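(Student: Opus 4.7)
Plan for part (a); part (b) will follow by applying the same argument to $M^{*}$ (which is also a minimum-sized counterexample since $\mathcal{P}$ is closed under duality). The strategy is to verify in turn that $\nabla_A(M)$ is a strictly fragile $\mathbb{P}$-representable matroid with an $\{X_8,Y_8,Y_8^{*}\}$-minor, that $\nabla_A(M)\del x = M_{n-1}$, that $\nabla_A(M)\notin\mathcal{P}$ (so $\nabla_A(M)$ is a minimum-sized counterexample, since $|E(\nabla_A(M))|=|E(M)|$), and finally that $x$ satisfies Lemma \ref{blocksorguts}(iii) in $\nabla_A(M)$ relative to the path $\mathbf{P}'$ of $3$-separations associated with $M_1,\ldots,M_{n-1}$.

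The first two properties are direct. The spanning hypothesis $x\in\cl_M(E(M\del x)-A)$ combined with Lemma \ref{whencsegsgoup} makes $A$ an allowable cosegment of $M$, so $\nabla_A(M)$ is defined and, by the dual of Lemma \ref{amove}, is strictly $\{U_{2,5},U_{3,5}\}$-fragile and $\mathbb{P}$-representable. The cospanning hypothesis and Lemma \ref{clandco} give $x\notin\cl_M^{*}(A)$, so the dual of Lemma \ref{2.16}(ii) yields $\nabla_A(M)\del x=\nabla_A(M\del x)$. The final step of the sequence $M_1,\ldots,M_n$ expresses $M_n=M\del x$ as obtained from $M_{n-1}$ by gluing a $3$-wheel onto a triangle and deleting that triangle; up to relabeling this is $\Delta_A(M_{n-1})$, so Lemma \ref{2.11} gives $\nabla_A(M_n)=M_{n-1}$.

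For $\nabla_A(M)\notin\mathcal{P}$, I would argue by contradiction: in $M_{n-1}$ the triangle $A$ lies in a $4$-element path-generating segment $A^{*}\in\{S,C\}$ (the allowable set from which the wheel was glued in the final step), and the dual of Lemma \ref{pathDY1} together with the cospanning hypothesis gives $x\notin \cl_{\nabla_A(M)}(A^{*})$, so $A^{*}$ persists as a $4$-element segment of $\nabla_A(M)$ with $A$ a path-generating allowable triangle inside it. If $\nabla_A(M)$ were in $\mathcal{P}$, Lemma \ref{trirecognition} applied with the trivial parallel extension $N=\nabla_A(M)$ along $A$ would give $\Delta_A(\nabla_A(M))\in\mathcal{P}$, and Lemma \ref{2.11} identifies this matroid as $M$, contradicting $M\notin\mathcal{P}$.

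For Lemma \ref{blocksorguts}(iii), every $3$-separation $(R,G)$ displayed by $\mathbf{P}'$ is a partition of $E(M_{n-1})$ with the triangle $A$ on one side (since $\mathbf{P}'$ refines through path-generating allowable sets that are either disjoint from or contain $A$); extending each side by $\{x\}$ yields a $3$-separation of $M\del x=M_n$ displayed by $\mathbf{P}$, which is obtained from $\mathbf{P}'$ by prepending $A$ as the new end step. The dual of Lemma \ref{pathDY1} applied to the partition with $A$ on one side preserves the closure and coclosure of $x$ relative to each side when passing between $M$ and $\nabla_A(M)$. Since $x$ satisfies (iii) in $M$ relative to $\mathbf{P}$ by the standing hypothesis of this section, the witness $X\in\{R,G\}$ with $x\in\cl_M(X)\cap\cl_M^{*}(X)$ transfers to give $x\in\cl_{\nabla_A(M)}(X)\cap\cl_{\nabla_A(M)}^{*}(X)$. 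The main obstacle will be pinning down this coherence between the two paths $\mathbf{P}$ and $\mathbf{P}'$; once that correspondence is established, the closure transfer via Lemma \ref{pathDY1} is routine.
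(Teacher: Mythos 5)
Your treatment of part (a) follows the paper's proof in all essentials: Lemma \ref{whencsegsgoup} (via the spanning hypothesis) makes $A$ an allowable triad of $M$, the dual of Lemma \ref{amove} gives that $\nabla_A(M)$ is fragile and representable, the dual of Lemma \ref{2.16}(ii) together with Lemma \ref{2.11} gives $\nabla_A(M)\del x=\nabla_A(M_n)=M_{n-1}$, and the contradiction $M=\Delta_A(\nabla_A(M))\in\mathcal{P}$ when $\nabla_A(M)\in\mathcal{P}$ is exactly the paper's step (the paper cites Lemma \ref{4segrecognition}, but Lemma \ref{trirecognition}, which you use, is the apt reference since $A$ is a triangle of $\nabla_A(M)$). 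Two small slips: it is the \emph{spanning} hypothesis $x\in\cl_M(E(M\del x)-A)$ that, via Lemma \ref{clandco}, yields $x\notin\cl^{*}_M(A)$, not the cospanning one; and since the last step of the sequence is a $\Delta$-exchange on a triangle, $E(M_{n-1})=E(M_n)$, so no ``extension of each side by $\{x\}$'' is needed when comparing displayed separations. As for the transfer of Lemma \ref{blocksorguts}(iii), the ``main obstacle'' you defer is precisely what the paper settles in one sentence: every $3$-separation displayed by the path associated with $M_1,\ldots,M_{n-1}$ is also displayed by the path associated with $M_1,\ldots,M_n$; your appeal to the dual of Lemma \ref{pathDY1} to carry $\cl_M$ and $\cl^{*}_M$ over to $\nabla_A(M)$ is the correct (and in the paper implicit) complement to that observation, so this part of your plan is fine.

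The genuine misstep is the opening claim that part (b) ``follows by applying the same argument to $M^{*}$.'' It does not, because $(M\del x)^{*}=M^{*}/x$: the hypotheses concern the \emph{deletion} $M\del x$, and dualizing turns the deleted element into a contracted one, so there is no reason $M^{*}\del x$ is $3$-connected, lies in $\mathcal{P}$, or is described by a path sequence ending in a triad — the standing setup of the section is tied to the fixed element $x$ with $M\del x$ $3$-connected. As the statement of (b) itself signals (the new counterexample is $\Delta_A(M)$, and its hypothesis involves $E(M\del x)-\cl^{*}(A)$ rather than $E(M\del x)-A$), one must rerun the argument on $M$ with the dual operations: Lemma \ref{asegsgoup} in place of Lemma \ref{whencsegsgoup} to see that $A$ is an allowable triangle of $M$, Lemma \ref{2.16}(i) to get $\Delta_A(M)\del x=\Delta_A(M\del x)=M_{n-1}$, and the dual of the recognition lemma for non-membership in $\mathcal{P}$, with the stronger spanning hypothesis of (b) guaranteeing that the relevant cosegment structure of $M_{n-1}$ survives in $\Delta_A(M)$. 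This is what the paper means by ``essentially the same argument,'' and it is an argument on $M$ itself, not an application of (a) to $M^{*}$.
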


\begin{proof}
We prove (a), with the proof of (b) being essentially the same argument. Suppose that that $M\del x$ can be described by a path sequence $M_1,\ldots, M_n$ ending in a triad $A$, where $M_n$ is obtained from $M_{n-1}$ by gluing a wheel onto an allowable triangle of $S$. Since $x$ is spanned and cospanned by $E(M\del x)-A$ it follows from Lemma \ref{whencsegsgoup} that $A$ is an allowable triad of $M$. Hence $\nabla_A(M)$ is fragile. If $\nabla_A(M)\in \mathcal{P}$, then so is $M$ by Lemma \ref{2.11} and Lemma \ref{4segrecognition}; a contradiction. Thus $\nabla_A(M)\notin \mathcal{P}$, so $\nabla_A(M)$ is also a minimum-sized counterexample. Moreover, we see that $\nabla_A(M)\del x=M_{n-1}$ by Lemma \ref{2.16}, so $\nabla_A(M)\del x$ is described by the path sequence $M_1,\ldots, M_{n-1}$. Since Lemma \ref{blocksorguts} (iii) holds for $M\del x$, and every $3$-separation displayed by the path of $3$-separations associated with the path sequence $M_1,\ldots, M_{n-1}$ is also displayed by the path of $3$-separations associated with the path sequence $M_1,\ldots, M_n$, it follows that Lemma \ref{blocksorguts} (iii) holds for $\nabla_A(M)\del x$.
\end{proof}

As a consequence of Lemma \ref{trispan} we can assume that $M$ is chosen so that if $M\del x$ is described by a path sequence ending in a triangle or triad $A$, then $x$ is spanned and cospanned by $\cl^{*}(A)$.

We can now show that a bound can be obtained unless $M\del x$ is described by a path sequence ending in a fan.

\begin{lemma}
 \label{bighasfan}
If $M\del x$ is described by a path sequence $M_1,\ldots, M_n$ ending in a fan $F$ with $|F|\geq 4$, then $M\del x$ is obtained from $X_8$ by gluing a wheel onto an allowable subset. Moreover, if $M\del x$ cannot be described by a path sequence ending in a fan $F$ with $|F|\geq 4$, then $|E(M\del x)|\leq 11$.
\end{lemma}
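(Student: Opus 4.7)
The approach is to prove the two claims separately. For the first claim, assume that $M\del x$ is described by a path sequence $M_1,\ldots,M_n$ ending in a fan $F$ with $|F|\geq 4$, and suppose for contradiction that $n\geq 3$. By Lemma \ref{fanspancospan} we have $x\in \cl_M(F)\cap \cl^*_M(F)$, and by Lemma \ref{techmaxfans} we may assume $F$ is maximal in $M\del x$. The key idea is to build an alternative path sequence for $M\del x$ ending in a $4$-element segment, cosegment, triangle, or triad $A$ disjoint from $F$, and show that $x\in \cl^*_M(A)$. Since Lemma \ref{clandco} converts this to $x\notin \cl_M(E(M)-A-x)$, and since $F\subseteq E(M)-A-x$, the inclusion $x\in \cl_M(F)\subseteq \cl_M(E(M)-A-x)$ yields the desired contradiction.

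To build such an alternative description, I split on whether the prefix $M_1,\ldots,M_{n-1}$ contains a $\dy$-step. If it does, Corollary \ref{usingeachend2} together with Lemma \ref{commute3} rearranges the path sequence into one ending in a $4$-element segment or cosegment $A\in\{S,C\}$ disjoint from $F$, and Lemma \ref{spanningend} then gives $x\in\cl^*_M(A)$. Otherwise, $M\del x$ arises from $X_8$ by wheel-gluings alone. Since each wheel-gluing must be performed on an allowable subset of a current $4$-element path-generating $A\in\{S,C\}$, and such a gluing destroys that $4$-segment, at most one wheel can be glued on $S$ and one on $C$; so $n=3$ and the earlier step is a wheel on the opposite end yielding a fan $F'$ disjoint from $F$. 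Wheel-gluings on disjoint allowable sets commute (via Lemmas \ref{2.18} and \ref{2.19}), so I may put the wheel giving $F'$ last, and apply Lemma \ref{fanspancospan} when $|F'|\geq 4$ or the minimality consequence stated after Lemma \ref{trispan} when $|F'|\leq 3$ to obtain $x\in \cl^*_M(F')$, completing the contradiction.

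For the second claim, let $M_1,\ldots,M_n$ be a path sequence for $M\del x$ ending, by hypothesis, in one of a $4$-element segment, cosegment, triangle, or triad $A$. In each case $x\in\cl^*_M(A)$, either by Lemma \ref{spanningend} or by the minimality assumption following Lemma \ref{trispan}. I will bound both the number of $\dy$-steps and the element growth per step to conclude $|E(M\del x)|\leq 11$. The strategy parallels the proof of Lemma \ref{bigblocking} from the blocking case: internal guts and coguts elements produced by consecutive $\dy$-steps on the same $A\in\{S,C\}$ can be removed via Lemma \ref{Pmintcoguts} while preserving the end structure of the path sequence, and the spanning-cospanning condition for $x$ survives in the smaller matroid. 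Such a removal would produce a smaller fragile counterexample, contradicting the minimality of $|E(M)|$, unless the number of $\dy$-steps is at most three; combined with the $8$ elements of $X_8$ this forces $|E(M\del x)|\leq 11$.

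The main obstacle in Part 2 is the detailed case analysis by the type of end $A$ together with the adjacency pattern of the $\dy$-steps, controlled via Corollary \ref{usingeachend2}, Lemma \ref{commute3}, Lemma \ref{Pm4segs} and its dual, and the rank constraints of Lemma \ref{outofspan}. In particular, one must use $x\in\cl^*_M(A)$ (rather than $x$ blocking the end $3$-separation, as in the blocking case) to rule out configurations where many internal guts or coguts elements would otherwise be absorbed into the end structure without permitting a reduction to a smaller counterexample.
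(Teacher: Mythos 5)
Your first part is essentially the paper's own argument: Lemma \ref{fanspancospan} pins $x$ to the fan end, the rearrangement results (Corollary \ref{usingeachend2}, Lemma \ref{commute3}, and the commutation of wheel-gluings on disjoint ends) produce a description ending at the opposite end, and Lemma \ref{spanningend}, Lemma \ref{fanspancospan}, or the choice of $M$ made after Lemma \ref{trispan} pins $x$ to that end as well; your explicit appeal to Lemma \ref{clandco} to turn the two attachments into a contradiction is exactly what the paper leaves implicit, so this half is fine.

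The second part, however, has a genuine gap. You propose to imitate Lemma \ref{bigblocking}: remove internal guts and coguts elements via Lemma \ref{Pmintcoguts} and ``contradict the minimality of $|E(M)|$''. But the minimality in force in this section is that $M$ is a minimum-sized matroid outside $\mathcal{P}$; in the blocking section this is converted into the usable restriction Lemma \ref{forbiddenelements}, whose statement is specifically about $x$ blocking a displayed $3$-separation, and no analogue for condition (iii) of Lemma \ref{blocksorguts} is stated or proved. If you contract an internal coguts element $r$, then $M/r\in\mathcal{P}$ holds automatically by minimality, which is not a contradiction: minimality only bites when, as in Lemma \ref{spanningend} or the guts/fan arguments, membership of the smaller matroid in $\mathcal{P}$ can be lifted back to $M$ by a recognition lemma (Lemma \ref{4segrecognition} or Lemma \ref{trirecognition}); your removal step provides no such lift, and you have not shown the spanning/cospanning condition survives the removal either. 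Moreover your arithmetic target is wrong: ``at most three \dy-steps'' does not give $|E(M\del x)|\leq 11$, since each \dy-step can add up to three elements (three steps could give up to $8+9=17$ elements). To reach $11$ you must show there is at most \emph{one} \dy-step, and also observe that under the hypothesis every wheel-gluing yields a fan of size $3$ and hence adds no elements. The paper obtains ``at most one \dy-step'' by reusing the part-one mechanism rather than by element removal: with at least two \dy-steps, Corollary \ref{usingeachend2} and Lemma \ref{commute3} give path-sequence descriptions of $M\del x$ ending at each of the two ends, and Lemma \ref{spanningend} (together with the choice of $M$ after Lemma \ref{trispan}) would force $x$ to be spanned and cospanned by both ends simultaneously, which is impossible.
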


\begin{proof}
Suppose that $M\del x$ is described by a path sequence $M_1,\ldots, M_n$ ending in a fan $F$ with $|F|\geq 4$. Then $x$ is spanned and cospanned by $F$ by Lemma \ref{fanspancospan}. Assume that $M_n$ is obtained from $M_{n-1}$ by gluing a wheel onto an allowable subset of $S$. Seeking a contradiction, suppose that $M\del x$ is described by a path sequence $M_1',\ldots, M_n'$ ending in either a $4$-element segment or cosegment $C$ or a fan $F'$, where $M_n'$ is obtained from $M_{n-1}'$ by gluing a wheel onto an allowable subset of $C$. Suppose $M_1',\ldots, M_n'$ is a path sequence ending in either a $4$-element segment or cosegment $C$. Then, since $x$ is spanned and cospanned by $F$, it follows that $x$ cannot be spanned and cospanned by $\cl^{*}(C)$; a contradiction to Lemma \ref{spanningend}. Suppose that $M_1',\ldots, M_n'$ is a path sequence ending in a fan $F'$, where $M_n'$ is obtained from $M_{n-1}'$ by gluing a wheel onto an allowable subset of $C$. Then, since $x$ is spanned and cospanned by $F$, it follows from the choice of $M$ that $|F'|\geq 4$. But $x$ cannot be spanned and cospanned by $F'$; a contradiction to Lemma \ref{fanspancospan}. Therefore the path sequence $M_1,\ldots, M_n$ has no \dy-steps.

For the second statement, suppose that $M\del x$ cannot be described by a path sequence ending in a fan $F$ with $|F|\geq 4$, but that $|E(M\del x)|\geq 12$. Then it follows that $M\del x$ is described by a path sequence $M_1,\ldots, M_n$ with at least two \dy-steps. By Lemma \ref{commute3} and Corollary \ref{usingeachend2} we can assume that $M_1,\ldots, M_n$ is a path sequence ending in either a $4$-element segment or cosegment $S$, or a triangle or triad $T$ where $M_n$ is obtained from $M_{n-1}$ by gluing a wheel onto an allowable subset of $S$. In the former case it follows from Lemma \ref{spanningend} that $x$ is spanned and cospanned by $\cl^{*}(S)$, while in the latter case it follows from the choice of $M$ that $x$ is spanned and cospanned by $\cl^{*}(T)$. But, by Lemma \ref{commute3} or Corollary \ref{usingeachend2} together with the choice of $M$, it follows that $M\del x$ is also described by a path sequence $M_1',\ldots, M_n'$ is a path sequence ending in either a $4$-element segment or cosegment $C$. Since $x$ is spanned and cospanned by either $\cl^{*}(S)$ or $\cl^{*}(T)$, and $M_1',\ldots, M_n'$ has at least two \dy-steps, it follows that $x$ cannot be spanned and cospanned by $\cl^{*}(C)$; a contradiction to Lemma \ref{spanningend}. Therefore $M_1,\ldots, M_n$ has at most one \dy-step, and hence $|E(M\del x)|\leq 11$.
\end{proof}

Let $F$ be a fan of a matroid $M'\del x$ with fan ordering $(f_1,\ldots, f_n)$. We say that $x$ is on a \textit{guts line} of $F$ if there are fan elements $f_i$ and $f_j$ of $F$ with $i<j$ such that $x$ is in the guts of the $3$-separation $(R,G)$ of $M'$, where $R=\{f_i, f_{i+1},\ldots, f_{j-1}, f_j\}$.

\begin{lemma}
 \label{fangutsline}
Let $M_1,\ldots, M_n$ be a path sequence ending in a fan $F$ with $|F|\geq 4$ that describes $M\del x$. Then $x$ is not on a guts line of $F$.
\end{lemma}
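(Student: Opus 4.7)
Argue by contradiction: suppose $x$ is on a guts line of $F$, so there are indices $i<j$ with $R=\{f_i,\ldots,f_j\}$ and $(R,G)$ a $3$-separation of $M$ having $x\in G$ in the guts. Write $R_1=\{f_1,\ldots,f_{i-1}\}$ (possibly empty), $R_2=\{f_{j+1},\ldots,f_n\}$, and $H=E(M\del x)-F$, so that $G-x=H\cup R_1\cup R_2$. The displayed $3$-separations of $\mathbf{P}$ touching $F$ are exactly those of the form $(H\cup\{f_1,\ldots,f_{k-1}\},\{f_k,\ldots,f_n\})$ for $k$ in the appropriate range, together with $(H,F)$.

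I first dispose of the ``boundary'' cases. If $R=F$, Lemma~\ref{fanspancospan} gives $x\in\cl^*_M(F)$; Lemma~\ref{clandco} then forces $x\notin\cl_M(H)=\cl_M(G-x)$, contradicting the guts hypothesis. If $R=\{f_i,\ldots,f_n\}$ is a fan suffix, then $(E(M\del x)-R,R)$ is a displayed $3$-separation of $\mathbf{P}$, and condition~(iii) of Lemma~\ref{blocksorguts} requires $x$ to be spanned \emph{and} cospanned by one side; this, together with Lemma~\ref{clandco}, is incompatible with the guts consequences $x\notin\cl^*_M(R)$ and $x\notin\cl^*_M(G-x)$.

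In the remaining cases ($j<n$) I use the two displayed $3$-separations $(H\cup R_1\cup R,R_2)$ and $(H\cup R_1,R\cup R_2)$ that bracket $R$. Applying condition~(iii) to each, and using Lemma~\ref{clandco} together with $x\in\cl_M(R)\subseteq\cl_M(R\cup R_2)\cap\cl_M(H\cup R_1\cup R)$, I rule out the configurations in which $x$ is spanned and cospanned by $R_2$ or by $H\cup R_1$. Only the configuration
\[
  x\in\cl_M(H\cup R_1\cup R)\cap\cl^*_M(H\cup R_1\cup R)\cap\cl_M(R\cup R_2)\cap\cl^*_M(R\cup R_2)
\]
survives. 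To refute it, examine the nested family of displayed $3$-separations for $i\leq k\leq j+1$; by monotonicity of $\cl$ and $\cl^*$ along the nested sides, combined with Lemma~\ref{clandco}, there is a unique transition index $k^*\in\{i+1,\ldots,j+1\}$ at which $x$ switches from being spanned and cospanned by the fan-suffix side to being spanned and cospanned by the complementary side. Writing $L_{k^*-1}=H\cup\{f_1,\ldots,f_{k^*-2}\}$ and $A_{k^*}=\{f_{k^*},\ldots,f_n\}$, the displayed $3$-separations adjacent to $k^*$ together with the MacLane--Steinitz exchange property yield $f_{k^*-1}\in\cl_M(L_{k^*-1}\cup x)\cap\cl_M(A_{k^*}\cup x)$ while $f_{k^*-1}\notin\cl_M(L_{k^*-1})\cup\cl_M(A_{k^*})$.

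The main obstacle is the final step. I expect to exploit the explicit wheel structure of $M\del x$ granted by Lemma~\ref{bighasfan} (no $\dy$-step occurs in the path sequence) to identify the fan triangle $T$ through $f_{k^*-1}$ such that adjoining $x$ makes $T\cup\{x\}$ a $4$-element segment of $M$. This segment is coindependent by Lemma~\ref{incotri} and allowable thanks to the non-deletable rim element of $T$, so Lemma~\ref{4segrecognition}, applied after rearranging the path sequence via Lemmas~\ref{commute3} and~\ref{tech4seg}, should place $M$ in $\mathcal{P}$, contradicting the minimality of $M$.
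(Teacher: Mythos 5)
Your argument has a genuine gap, and it sits exactly where you flag it: the ``final step'' is not a routine verification but the heart of the lemma, and the route you sketch toward it does not cover the general case. Your transition-index bookkeeping (via Lemma \ref{blocksorguts}(iii), Lemma \ref{clandco} and MacLane--Steinitz) at best produces an element $f_{k^*-1}$ lying in two mixed closures; it does not force $x$ to lie on a line with a \emph{triangle} of the fan. Being in the guts of $(R,G)$ with $R=\{f_i,\ldots,f_j\}$ and $j-i\geq 3$ simply does not imply that $x$ spans a $4$-element segment with three consecutive fan elements, so the reduction you hope to invoke (adjoining $x$ to a fan triangle $T$ to get a segment $T\cup\{x\}$, then Lemma \ref{4segrecognition}) only addresses the case where the guts line is short. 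The paper's proof needs three separate reductions: (1) a claim ruling out any $4$-element segment $\{x,f_i,f_{i+1},f_{i+2}\}$, proved by applying $\Delta_A$ to $M$ itself, contracting the series elements $S=\cl^*_M(A)-A$, and using minimality of $M$ to conclude $\Delta_A(M)/S\in\mathcal{P}$, whence $M\in\mathcal{P}$ by the dual of Lemma \ref{4segrecognition} and Lemma \ref{2.11}; (2) the case where $x$ forms a triangle with a rim end of $F$, handled by a $\Delta_T$ move and the claim; and (3) the remaining case where $R$ is longer, handled not by closure bookkeeping but by taking an allowable triad $T\subseteq R$ (allowable in $M$ because $x\in\cl_M(G)$ gives $x\notin\cl^*_M(R)$ by orthogonality), performing $\nabla_T(M)$, deleting the elements that Lemma \ref{lem:growwheel} puts in parallel with $T$, and invoking minimality plus Lemma \ref{trirecognition}. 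Your proposal has no analogue of (3), and it is precisely the case your closure analysis cannot reduce away.

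A secondary issue: even in the short-guts-line case, your plan to apply Lemma \ref{4segrecognition} ``after rearranging the path sequence via Lemmas \ref{commute3} and \ref{tech4seg}'' is misdirected, since those lemmas apply to matroids already described by path sequences, whereas $M$ is the presumed counterexample outside $\mathcal{P}$. The correct mechanism (as in the paper's claim) is to perform the exchange on $M$, pass to a strictly smaller fragile matroid by contracting the series elements created by the exchange, apply the minimality of $M$ to place that smaller matroid in $\mathcal{P}$, and only then use Lemma \ref{4segrecognition} (or its dual) together with Lemma \ref{2.11} to pull $M$ back into $\mathcal{P}$. Your boundary-case eliminations using Lemma \ref{fanspancospan} and Lemma \ref{clandco} are fine as far as they go, but they are not where the difficulty lies.
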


\begin{proof}
Seeking a contradiction, suppose that $x$ is on a guts line of the fan $F$. 

\begin{claim}
\label{dyredfanseg2}
For each triangle $\{f_i,f_{i+1},f_{i+2}\}$ of $M\del x$ contained in $F$, there is no $4$-element segment $A=\{x,f_i,f_{i+1},f_{i+2}\}$ of $M$.
\end{claim}

\begin{subproof}
Suppose that $A=\{x,f_i,f_{i+1},f_{i+2}\}$ is a $4$-element segment of $M$ for some triangle $\{f_i,f_{i+1},f_{i+2}\}$ of $M\del x$ contained in $F$. Since the rim element $f_{i+1}$ is contractible in $M\del x$, it follows that $f_{i+1}$ is contractible in $M$. Thus $f_{i+1}$ is non-deletable in $M$, so it follows that $A$ is a path-generating allowable $4$-element segment of $M$. Thus $\Delta_A(M)$ is fragile and $A$ is an allowable $4$-element cosegment of $\Delta_A(M)$ with non-contractible element $f_{i+1}$. Let $S=\cl_{M}^{*}(A)-A$. Since $F$ is a fan of $M\del x$ with $|F|\geq 4$, it follows that $S$ is non-empty. Moreover, the members of $S$ are in series with contractible elements of $A$ in $\Delta_A(M)$ because $A\cup S$ cannot contain a $5$-element cosegment by the dual of Lemma \ref{cantbesegmentguts}. Let $M'=\Delta_A(M)/S$. Then $M'\in \mathcal{P}$ because $M$ is minimum-sized with respect to not being in $\mathcal{P}$. But $M'$ has a path-generating allowable $4$-element cosegment $A$, so it follows from the dual of Lemma \ref{4segrecognition} and Lemma \ref{2.11} that $M\in \mathcal{P}$; a contradiction.
\end{subproof}

We can assume, by \ref{dyredfanseg2}, that $x$ is not spanned by a triangle contained in $F$. Suppose that $T=\{x, f_1,f_2\}$ is a triangle of $M$, where $f_1$ a rim element of $F$. Then $T$ is an allowable triangle of $M$, and $\Delta_T(M)$ has an allowable $4$-element cosegment $A=T\cup \{f_3\}$. But then it follows from \ref{dyredfanseg2} that $(\Delta_T(M))^{*}\in \mathcal{P}$, hence $M\in \mathcal{P}$ by Lemma \ref{trirecognition}; a contradiction.

We may therefore assume that $x$ is in the guts of the $3$-separation $(R,G)$ of $M\del x$, where $R=\{f_i, f_{i+1},\ldots, f_{j-1}, f_j\}$ for non-consecutive $i,j\in \{1,\ldots, n\}$. Since $x\in \cl_M(G)$ it follows from orthogonality that $x\notin \cl_M^{*}(R)$, so an allowable triad $T=\{f_k,f_{f+1},f_{k+2}\}$ of $M\del x$ that is contained in $R$ is an allowable triad of $M$. It follows from Lemma \ref{lem:growwheel} that there is some non-empty set $S\subseteq \{f_{k-1},f_{k+3}\}$ of elements of $\nabla_T(M)\del x$ in parallel with elements of $T$. Now $\nabla_T(M)\del S\in \mathcal{P}$ because $M$ is minimum-sized with respect to not being in $\mathcal{P}$, and $T$ is a path-generating allowable triangle of $\nabla_T(M)\del S$. Hence $M\in \mathcal{P}$ by Lemma \ref{trirecognition}; a contradiction.
\end{proof}

We now bound the size of $F$ when $M\del x$ is described by a path sequence ending in a fan $F$. 

\begin{lemma}
\label{smallspanningfans}
If $M\del x$ is described by a path sequence ending in a fan $F$, then $|F|\leq 5$ and $x$ blocks $(T, E(M\del x)-T)$ for some triad $T$ of $F$.
\end{lemma}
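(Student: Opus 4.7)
The proof needs to establish two assertions: that $x$ blocks $(T, E(M\del x) - T)$ for some triad $T$ of $F$, and that $|F| \leq 5$. Write $F = (f_1, \ldots, f_n)$ for a fan ordering. The plan is to first invoke Lemma \ref{fanspancospan} to obtain $x \in \cl_M(F) \cap \cl_M^*(F)$, and Lemma \ref{fangutsline} to ensure $x$ is not on any guts line of $F$. The main preliminary step is to establish a pair of constraints, both by essentially the same argument as Claim \ref{dyredfanseg2} in the proof of Lemma \ref{fangutsline}: for each triangle $\{f_i, f_{i+1}, f_{i+2}\}$ of $F$, the set $\{x, f_i, f_{i+1}, f_{i+2}\}$ is not a $4$-element segment of $M$, and dually, for each triad $\{f_j, f_{j+1}, f_{j+2}\}$ of $F$, the set $\{x, f_j, f_{j+1}, f_{j+2}\}$ is not a $4$-element cosegment of $M$. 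In the dual case, the middle spoke element is non-contractible in $M\del x$ (as a fan spoke) and hence in $M$, making the putative 4-cosegment path-generating and allowable, at which point applying the dual of Lemma \ref{4segrecognition} gives $M \in \mathcal{P}$, a contradiction.

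For the blocking assertion, since $x \in \cl_M^*(F)$ there is a cocircuit $C^*$ of $M$ with $x \in C^* \subseteq F \cup \{x\}$. As $M$ is 3-connected, $|C^*| \geq 4$. I would choose $C^*$ of minimum size among such cocircuits and let $T := C^* - x$. If $|T| = 3$, then $T$ is a triad of $M\del x$ contained in $F$, with $T \cup \{x\}$ a 4-cocircuit of $M$. The no-4-cosegment constraint forces $r_M(T \cup \{x\}) = 4$, giving $x \notin \cl_M(T)$; and $x \in \cl_M^*(T)$ together with Lemma \ref{clandco} gives $x \notin \cl_M(E(M\del x) - T)$. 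By Proposition \ref{blockclosure}, $x$ then blocks $(T, E(M\del x) - T)$. The case $|C^*| \geq 5$ would need to be ruled out using fan structure: a triad of $F$ contained in $C^* - x$ would be a triad of $M$ as well (case (a) in the cocircuit analysis), and cocircuit elimination in $M$ between $C^*$ and this triad would produce a smaller cocircuit through $x$ inside $F \cup \{x\}$, contradicting the choice of $C^*$.

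For the bound $|F| \leq 5$, I would suppose for contradiction that $|F| \geq 6$. Consider the $3$-separations $(R_i, G_i)$ of $M\del x$ given by $R_i = \{f_1, \ldots, f_i\}$ for $3 \leq i \leq n-1$, along with the mirror separations from the opposite end of $F$; all are displayed by the path of $3$-separations associated with the path sequence. By hypothesis (iii), for each such separation exactly one side spans and cospans $x$. The no-4-segment and no-4-cosegment constraints force that $R_3$ and its mirror do not span and cospan $x$, so the corresponding $G$-sides do. The triad $T$ identified above must then lie in the ``middle'' of $F$. Combining the spanning and cospanning relations at consecutive cuts across $T$ with the Mac Lane--Steinitz exchange property forces $x \in \cl_M(\{f_k\})$ or $x \in \cl_M^*(\{f_k\})$ for an element $f_k$ adjacent to $T$ in the fan, contradicting 3-connectivity of $M$.

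The main obstacle will be the size bound: carefully tracking which side spans and cospans $x$ for every overlapping $3$-separation displayed along $F$, and verifying that for $|F| \geq 6$ these constraints collectively contradict either the no-4-segment and no-4-cosegment rules or Lemma \ref{fangutsline}. The blocking assertion itself, once the correct minimal cocircuit $C^*$ is located, should follow cleanly from Proposition \ref{blockclosure} and the dual no-4-cosegment constraint.
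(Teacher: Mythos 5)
There are two genuine gaps. First, in your blocking argument the dangerous case is $x\in \cl_M(T)$, and your attempt to exclude it does not work: from ``$T\cup\{x\}$ is not a $4$-element cosegment'' you cannot conclude $r_M(T\cup\{x\})=4$. Indeed, if $T\cup\{x\}$ is a cocircuit of $M$ then $T$ cannot also be a cocircuit of $M$, so $T\cup\{x\}$ is never a cosegment and your constraint is vacuous here; moreover $x\in\cl_M(T)$ together with $T\cup\{x\}$ being a $4$-cocircuit only makes $T\cup\{x\}$ an exactly $3$-separating set, which is perfectly compatible with $3$-connectivity. The paper has to work hard precisely at this point: it uses the fact (from Lemma \ref{bighasfan}) that $M\del x$ is obtained from $X_8$ by gluing a wheel, contracts down to exhibit a fragile single-element extension of $M_{7,0}$, identifies it as $Y_8$, deduces that $\{f_{i-1},f_i,x\}$ or $\{f_i,f_{i+1},x\}$ is a triangle of $M$, and then contradicts Lemma \ref{fangutsline}; and when $x\notin\cl_M(T)$ but $x$ still fails to block, it runs an iterated $\nabla_T$ reduction along the fan to again reach a guts-line contradiction. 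None of this is replaced by your cocircuit-elimination sketch, which in addition assumes that a cocircuit $C^*-x$ of size at least $4$ inside the fan must contain a fan triad that is a triad of $M$ — neither part of that is automatic.

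Second, the bound $|F|\leq 5$ is not proved: your plan (``combining the spanning and cospanning relations \ldots forces $x\in\cl_M(\{f_k\})$ or $x\in\cl^*_M(\{f_k\})$'') is exactly the step you flag as the main obstacle, and no argument is offered for it; it is not clear it can be made to work. The paper instead uses the minimality of the counterexample $M$: if $|F|\geq 5$ one deletes a spoke end, or (using Lemma \ref{outofspan}(ii)) contracts a rim element, to obtain a strictly smaller matroid still described by a path sequence ending in a fan with $x$ blocking a $3$-separation nested in it, contradicting the choice of $M$; this also pins the blocked separation to a single triad $T$ of $F$. A smaller point: your dual ``no-$4$-cosegment'' constraint is justified by ``non-contractible in $M\del x$ and hence in $M$'', which is the wrong direction of minor-closure (the correct route is that the spoke is deletable in $M\del x$, hence deletable in $M$, hence non-contractible by fragility), and the segment/cosegment exclusion in the paper's Claim \ref{dyredfanseg2} also needs the non-empty series set $S$ and the minimality of $M$, not just the dual of Lemma \ref{4segrecognition}.
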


\begin{proof}
Suppose that $M\del x$ is described by a path sequence ending in a fan $F$. We can assume that $F$ is a maximal fan of $M$ by Lemma \ref{techmaxfans}, and that $x\in \cl_M(F)$ and $x\in \cl_M^{*}(F)$ by Lemma \ref{fanspancospan}.

\begin{claim}
\label{blocking}
$x$ blocks a $3$-separation $(R,G)$ of $M\del x$, where $R=\{f_i,f_{i+1},\ldots, f_j\}$ for some $i<j$.
\end{claim}

\begin{subproof}
Seeking a contradiction, suppose that $x$ does not block any $3$-separation of $M\del x$ nested with $F$. Let $T=\{f_{i-1}, f_i, f_{i+1}\}$ be a triad of $F$. Suppose $x\in \cl_M(T)$. Since $M\del x$ is obtained from $X_8$ by gluing a wheel onto an allowable subset of $S$ or $C$, we can obtain an $M_{7,0}$ minor with triad $T$ by contracting the rim elements of $F$ outside of the span of $T$, then contracting elements of $C$ until $T$ is spanning, and then simplifying the resulting matroid. Removing the same elements of $M$ gives a fragile extension of $M_{7,0}$ by $x$, and the only such matroid is $Y_8$ where $\{f_{i-1},f_i,x\}$ or $\{f_i,f_{i+1},x\}$ is a triangle. Therefore either $\{f_{i-1},f_i,x\}$ or $\{f_i,f_{i+1},x\}$ is a triangle of $M$. But then $x$ is on a guts line of $F$; a contradiction to Lemma \ref{fangutsline}. Thus $x\notin \cl_M(T)$. Since we assume that $x$ cannot block $(T, E(M\del x)-T)$, it follows that $x\in \cl(E(M\del x)-T)$ by Lemma \ref{blockclosure}. Hence $\nabla_T(M)$ is a fragile matroid, and $\nabla_T(M)\del x=\nabla_T(M\del x)$. By Lemma \ref{lem:growwheel} the matroid $\nabla_T(M)\del x$ has a fan $F'=F-P$ where $P\subseteq \{f_{i-1}, f_{i+1}\}$ is some non-empty set of elements in parallel with elements of $F'$. Let $M'=\nabla_T(M)\del P$ and let $T'$ be the triangle of $M'$ containing $f_i$. Then we see that $x\in \cl_{M'}(F')$, and that for any subset $F''$ such that $T'\subseteq \cl_{M'}(F'')$, $x\in \cl_{M'}(F'')$ if and only if $x\in \cl_{M}(F''\cup T)$, while for subsets $F''$ such that $T'$ is not contained in $\cl_{M'}(F'')$, $x\in \cl_{M'}(F'')$ if and only if $x\in \cl_{M}(F'')$. Thus $x$ does not block a $3$-separation of $M'\del x$ nested with $F'$. But we can repeat this process until $F'$ has a single triad $T''$, so that $x\in \cl_{M'}(T'')$. Then $x$ is on a guts line of $F$; a contradiction to Lemma \ref{fangutsline}.   
\end{subproof}

Now by \ref{blocking} we may assume that $x$ blocks the $3$-separation $(R,G)$. We use the minimality of $M$ to show that $|F|\leq 5$. If $|F|\geq 5$ and $F$ has a spoke end $s$, then $M\del x,s$ is described by a path sequence ending in the fan $F-s$ and $x$ blocks a $3$-separation nested with $F-s$. Thus if $|F|\geq 5$ then we can assume that the ends of $F$ are rim elements. If $R$ contains at least two triads, then it follows from Lemma \ref{outofspan} (ii) that there is some rim element $r\in R$ such that $\si(M\del x/r)$ is described by a path sequence ending in a fan $F'$ contained in $F-r$ and $x$ blocks a $3$-separation nested with $F'$. Thus we can assume $R$ contains a single triad. Similarly, if $F\cap G$ has at least two rim elements, then there is some rim element $g\in F\cap G$ such that $\si(M\del x/g)$ is described by a path sequence ending in a fan $F'$ contained in $F-g$ and $x$ blocks a $3$-separation nested with $F'$. Therefore $|F|\leq 5$, and $x$ blocks $(T, E(M\del x)-T)$ for some triad $T$ of $F$.
\end{proof}

Thus a largest minimum-sized counterexample $M$ has 11 elements. 

\begin{lemma}
\label{9elts}
$|E(M\del x)|\leq 11$.
\end{lemma}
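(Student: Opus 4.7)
The plan is to split into two cases using Lemma~\ref{bighasfan}, one of which directly yields the bound and the other reduces $M\del x$ to a very concrete form where a simple element count suffices. If $M\del x$ admits no path sequence ending in a fan $F$ with $|F|\geq 4$, then Lemma~\ref{bighasfan} immediately gives $|E(M\del x)|\leq 11$, and there is nothing more to do in that case.

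In the non-trivial case, $M\del x$ can be described by a path sequence ending in a fan $F$ with $|F|\geq 4$, and I would rely on the second conclusion of Lemma~\ref{bighasfan}: the path sequence has no \dy-step, so $M\del x$ has the explicit form $P_T(X_8, \mathcal{W}_r)\del X$, where $T$ is an allowable triangle or triad of $X_8$, an $r$-wheel has been glued along $T$, and $X\subseteq T$ contains the unique rim element of $T$. By Lemma~\ref{fanfragile}, the fan $F$ of $M\del x$ produced by this gluing is $E(\mathcal{W}_r)-X$, so $|F|=2r-|X|$. Counting the ground set of the parallel connection and then the deletion gives
\[
|E(M\del x)| \,=\, 8+2r-3-|X| \,=\, 5 + |F|.
\]

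It then suffices to apply Lemma~\ref{smallspanningfans}, which says the end fan satisfies $|F|\leq 5$. Combined with the displayed identity, this yields $|E(M\del x)|\leq 10\leq 11$, finishing the second case. The genuine obstacle in proving Lemma~\ref{9elts} has already been cleared upstream: Lemma~\ref{bighasfan} does the structural work of cutting the problem down to a single wheel glued onto $X_8$, and Lemma~\ref{smallspanningfans} handles the more delicate argument that this wheel cannot be too large when $x$ is forced to be spanned and cospanned by $F$. What remains for the present statement is just the short element count sketched above.
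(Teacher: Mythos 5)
Your proposal is correct and follows the same route as the paper: split on whether $M\del x$ has a path sequence ending in a fan with at least $4$ elements, use Lemma \ref{bighasfan} to get the bound of $11$ in one case and the ``single wheel glued onto $X_8$'' structure in the other, then apply Lemma \ref{smallspanningfans} to bound the fan. Your explicit count $|E(M\del x)| = 5 + |F| \leq 10$ in the fan case is exactly the arithmetic the paper leaves implicit.
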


\begin{proof}
We may assume by Lemma \ref{bighasfan} that $M\del x$ is described by a path sequence $M_1,M_2$ ending in a fan $F$ with $|F|\geq 4$. Then $|F|\leq 5$ by Lemma \ref{smallspanningfans}. Hence $|E(M\del x)|\leq 10$.
\end{proof}

\section{Wrapping up}

Finally we can prove our main result.

\begin{proof}[Proof of Theorem \ref{thm:mainresult}]
  Suppose $M$ is a minimal counterexample to the theorem. Up to duality, let $x \in E(M)$ be such that $M\delete x$ is 3-connected and has a $U_{2,5}-$ or $U_{3,5}$-minor. By Lemma \ref{blocksorguts}, there are three possibilities for $x$. By Lemmas \ref{xnotinguts}, the first case cannot occur. By Lemma \ref{bigblocking} and Lemma \ref{9elts}, a minimal counterexample has at most 12 elements. The result now follows from a finite case check, which we verified by computer in \cite[Lemma 4.5]{chunfragile}.
\end{proof}

\bibliographystyle{acm}
\bibliography{fr}
\end{document}